\numberwithin{equation}{section}
\theoremstyle{plain}
\newtheorem{theorem}{Theorem}[section]
\newtheorem{lemma}[theorem]{Lemma}
\newtheorem{corollary}[theorem]{Corollary}
\newtheorem{proposition}[theorem]{Proposition}
\newtheorem{definition}[theorem]{Definition}
\newtheorem{remark}[theorem]{Remark}
\newcommand{\R}{\mathbb{R}}
\newcommand{\N}{\mathbb{N}}
\newcommand{\bA}{\mathbf{A}}
\newcommand{\bB}{\mathbf{B}}
\newcommand{\A}{\mathcal{A}}
\newcommand{\M}{\mathcal{M}}
\newcommand{\W}{\mathcal{W}}
\newcommand{\hW}{\hat{\mathcal{W}}}
\newcommand{\wei}[1]{\langle #1 \rangle}
\def\XXint#1#2#3{{\setbox0=\hbox{$#1{#2#3}{\int}$ }
\vcenter{\hbox{$#2#3$ }}\kern-.6\wd0}}
\title[Equations with singular-degenerate coefficients]{Well-posedness for a class of parabolic equations with singular-degenerate coefficients}
\author[J. Fang]{Junyuan Fang}
\address[J. Fang]{Department of Mathematics, University of Tennessee, 227 Ayres Hall,
1403 Circle Drive, Knoxville, TN 37996-1320 }
\email{jfang9@vols.utk.edu}
\author[T. Phan]{Tuoc Phan}
\address[T. Phan]{Department of Mathematics, University of Tennessee, 227 Ayres Hall,
1403 Circle Drive, Knoxville, TN 37996-1320}
\email{tphan2@utk.edu}
\subjclass[2020]{35K65, 35K67, 35K20, 35A01, 35B45, 35D30}
\keywords{Weighted Sobolev spaces, Singular-degenerate coefficients, Existence, Uniqueness, Gradient regularity estimates, Divergence form}
\begin{document}

\begin{abstract} This paper studies a class of linear parabolic equations with measurable coefficients in divergence form whose volumetric heat capacity coefficients are assumed to be in some Muckenhoupt class of weights.
As such, the coefficients can be degenerate, singular, or both degenerate and singular. A class of weighted parabolic cylinders with a non-homogeneous quasi-distance function, and a class of weighted parabolic Sobolev spaces intrinsically suitable for the class of equations, are introduced. Under some smallness assumptions on the mean oscillations of the coefficients, regularity estimates, existence, and uniqueness of weak solutions in the weighted Sobolev spaces are proved. To achieve the results, we apply the level-set argument introduced by Caffarelli and Peral.  Several weighted inequalities and a weighted Aubin-Lions compactness theorem for sequences in weighted parabolic Sobolev spaces are established.
\end{abstract}
\maketitle
\section{Introduction} 
\subsection{Problem settings and motivations} \label{mov-sec} This work investigates the existence, uniqueness, and regularity estimates of solutions to the following class of parabolic equations  
\begin{equation} \label{main-eqn}
\left\{
\begin{array}{cccl}
\beta(x) u_t - \textup{div}(\bA(x,t) \nabla u) & = & \textup{div}(F)  & \quad  \text{in} \quad \Omega_T, \\[4pt]
u & =  & 0 & \quad \text{on} \quad \partial_p \Omega_T.
\end{array} \right.
\end{equation}
Here in \eqref{main-eqn}, $T \in (0,\infty)$ is fixed, $\Omega \subset \mathbb{R}^n$ is an open bounded set with boundary $\partial \Omega$, 
\begin{equation*} 
\Omega_T = \Omega \times (0, T], \quad \text{and} \quad \partial_p \Omega_T = (\overline{\Omega} \times \{0\}) \cup (\partial \Omega \times (0, T]).
\end{equation*}
Also, $F: \Omega_T \rightarrow \mathbb{R}^n$ is a given forcing term which is assumed to be in suitable Lebesgue spaces, and $\bA: \Omega_T \rightarrow \mathbb{R}^{n\times n}$ is a measurable matrix-valued function satisfying the following ellipticity and boundedness conditions: there is a constant $\nu \in (0,1)$ such that
\begin{equation} \label{ellip-cond}
\nu |\xi|^2 \leq \wei{\bA(x,t) \xi, \xi} \quad \text{and} \quad |\bA(x,t)| \leq \nu^{-1}, 
\end{equation}
for all $(x,t) \in \Omega_T$ and for all $\xi=(\xi_1, \xi_2,\ldots, \xi_n) \in \mathbb{R}^n$. Moreover, $\beta: \mathbb{R}^n \rightarrow [0, \infty)$ is a measurable function that is assumed to be in some Muckenhoupt class of weights. Due to this, the coefficient $\beta$ could vanish or blow up at some points $x \in \overline{\Omega}$.  As such, the coefficients of equation \eqref{main-eqn} could be singular or degenerate.  When $\beta =1$, the equation \eqref{main-eqn} is reduced to the classical second order parabolic equations in divergence form. 

\smallskip
In terms of modeling, the coefficient $\beta$ in \eqref{main-eqn} represents the volumetric heat capacity, which is the product of the material's density and its specific heat capacity. Meanwhile, the coefficient matrix $\bA$ is known as the thermal conductivity matrix, reflecting the anisotropic or orthotropic nature of materials in which heat conduction varies with directions. In this work, the coefficient matrix $\bA$ is assumed to be uniformly elliptic and bounded as in \eqref{ellip-cond}. Our primary concern is on the volumetric heat capacity coefficient $\beta$, where $\beta$ is allowed to blow up (be singular) or vanish (be degenerate) at certain points in $\overline{\Omega}$. The problem setting of this work is different from those available in the research line such as \cites{Chia-1, Chia-3, Fabes-1, Fabes, Tru-1, Tru-2, BS} and \cites{CMP, CJP, DPS, DP, DP1, DP2, DP3, DPT1, DPT2, MNS, MNS-1, MP}, in which the singularity and degeneracy typically arise either solely from the coefficient matrix $\bA$ or from both $\bA$ and $\beta$ in some balanced way.

\smallskip
Mathematical models in which the volumetric heat capacity coefficient $\beta$ becomes singular or degenerate play a vital role in accurately capturing heat transfer in materials undergoing phase transitions, or in domains involving vacuum,  and heterogeneous media. In Stefan type problems, for example, the inclusion of latent heat leads to an effective heat capacity that may become extremely large or even singular at the phase-change temperature. See \cites{AS, AA, PVPF, FKRH, Visintin} for details and related models. On the other hand, degenerate models arise in systems involving vacuum layers or low-density gases, where the heat capacity and conductivity may vanish locally, causing a breakdown of standard parabolicity and requiring specialized analytical tools such as entropy methods and weighted energy estimates. For example, see \cites{DIT, Li-Xi, PSW}. These singular and degenerate behaviors are not mere mathematical abstractions; they are essential for modeling thermal energy storage, composite materials, and insulation systems, where an accurate representation of sharp transitions or near-inert thermal regions is crucial for both physical fidelity and engineering design. More details and discussions can be found in  \cites{AS, crank, FKRH, HGH, Visintin}, for instance.

\smallskip
We also note that equations with singular or degenerate coefficients, as in \eqref{main-eqn}, arise in pure mathematics as well.  In particular, in the study of fine regularity and quantitative estimates of solutions to nonlinear equations, the class of equations \eqref{main-eqn} arises as the linearized equations of such nonlinear problems. In this context, the unknown solutions are implicitly encoded in the coefficients, which naturally become singular or degenerate. See \cites{DH, JX1, JX2, KLY} for examples of the study of nonlinear fast diffusion equations arising in porous media. See also \cites{Pop-2, JX, Le, STT} for a few examples on the studies of equations with singular-degenerate coefficients in mathematical finance, and geometric analysis.

\smallskip
This paper provides sufficient conditions on $\beta$ and the analysis of weighted Sobolev spaces through which the existence, uniqueness, and regularity estimates for solutions to \eqref{main-eqn} are proved. Besides the potential applications in both pure and applied mathematics that we just addressed, this work is the continuation of the research program in \cites{CMP, CJP, DPS, DP, DP1, DP2, DP3, DPT1, DPT2, MP}  which investigates fine properties and estimates of solutions to elliptic and parabolic equations with singular-degenerate coefficients. It is important to emphasize that this work provides a forward step in the program, as the class of equations \eqref{main-eqn} is not studied in the mentioned work. The results obtained in this paper, therefore, complement well those established in \cites{CMP, CJP, DPS, DP, DP1, DP2, DP3, DPT1, DPT2, MP}. They can be also considered as a further development in the research line \cites{Chia-1, Chia-2, Chia-3, BS, Fabes-1, Fabes, Tru-1, Tru-2} in which the H\"{o}lder regularity theory is established for elliptic equations with singular-degenerate coefficients in various settings.

\subsection{Main results} To state the main results, let us introduce a class of weighted function spaces that are relevant to \eqref{main-eqn}. For each $p \in (1, \infty)$ and for a given non-negative and locally integrable function $\beta$, we denote $\W^{1,p}(\Omega_T, \beta)$ the weighted Sobolev space consisting of functions $u \in L^p((0, T), W^{1,p}(\Omega))$ such that
\[
\beta(x) u_t \in L^p((0, T), W^{-1, p}(\Omega)).
\]
The space $\W^{1,p}(\Omega_T, \beta)$ is endowed with the norm
\begin{equation}\label{Norm-W1p}
\|u\|_{\W^{1,p}(\Omega_T, \beta)} = \|u\|_{L^p((0, T), W^{1,p}(\Omega))} + \| \beta u_t\|_{L^p((0, T), W^{-1, p}(\Omega))}, 
\end{equation}
for $u \in \W^{1,p}(\Omega_T, \beta)$. Here, let us recall that $W^{1,p}(\Omega)$ denotes the usual Sobolev space, and $W^{1,p}_0(\Omega)$ is the completion of the set of compactly supported functions $C_0^\infty(\Omega)$ in $W^{1,p}(\Omega)$. In addition, $W^{-1,p}(\Omega)$ is the dual space of $W^{1,p'}_0(\Omega)$, where $\frac{1}{p'}+ \frac{1}{p} =1$.  The analysis properties, weighted inequalities, and compactness of sequences of functions related to $\W^{1,p}(\Omega_T, \beta)$ are investigated in Section \ref{wei-ses} and Section \ref{Fn-spaces} below. 

\smallskip
We also denote by $\W^{1,p}_*(\Omega_T, \beta)$ the closure in $\W^{1,p}(\Omega_T)$ of the set 
\[
\big\{ \varphi \in C^\infty([0, T], C_0^\infty(\Omega)):\ \varphi=0 \ \text{in}\ \Omega \times [0, \epsilon]  \ \text{for some sufficiently small}\ \epsilon \in (0,T)\big\}.
\]
The space $\W^{1,p}_*(\Omega_T, \beta)$ is endowed with the same norm as in \eqref{Norm-W1p}, that is,
\[
\|u\|_{\W^{1,p}_{*} (\Omega_T, \beta)} = \|u\|_{L^p((0, T), W^{1,p}(\Omega))} + \| \beta u_t\|_{L^p((0, T), W^{-1, p}(\Omega))}
\]
for $u \in \W^{1,p}_*(\Omega_T, \beta)$. A function $u \in \W^{1,p}_{*} (\Omega_T, \beta)$ is said to be a weak solution to \eqref{main-eqn} if
\begin{align*}
& -\int_{\Omega_T} \beta(x) u(x,t) \partial_t \varphi(x,t) dx dt + \int_{\Omega_T} \wei{\bA(x,t) \nabla u(x,t), \nabla \varphi (x,t)} dx dt \\
& = -\int_{\Omega_T} \wei{F(x,t), \nabla \varphi(x,t)} dx dt, 
\end{align*}
for every $\varphi \in C^\infty([0, T], C^\infty_0(\Omega))$ satisfying $\varphi =0$ in $\Omega \times [T-\epsilon, T]$ for some sufficiently small $\epsilon \in (0,T)$.

\smallskip
Besides \eqref{ellip-cond} and \eqref{beta-cond}, we also assume that the mean oscillations of the matrix $\bA$ and the weight $\beta$ are sufficiently small. To introduce this, we need several notations. For a locally integrable function $f$ defined on $\Omega\subset \R^{n}$ and for a measurable subset $E\subset \Omega$, we denote 
\[
(f)_{E}=\fint_{E}f(x)dx = \frac{1}{|E|} \int_{E} f(x) dx,
\]
where $|E|$ denotes the Lebesgue measure of $E$. In addition, for $r>0$ and $x_0 \in \mathbb{R}^n$, $B_r(x_0)$ denotes the ball in $\mathbb{R}^n$ of radius $r$ centered at $x_0$.  Then, the mean oscillation of $\bA$ and the weighted mean oscillation of $\beta$ are defined by 
\begin{equation} \label{BMO-defi}
\begin{split}
& \Theta_{\bA}(\Omega_T, R_0)   = \sup_{\rho \in (0, R_0)}\, \sup_{z_0 =(x_0, t_0) \in \Omega_T}\Big(\fint_{Q_{\rho, \beta}(z_0) \cap \Omega_T} \Theta_{\bA, r, x_0} (x,t)^2 dxdt\Big)^{1/2}, \\[4pt]
& \Theta_{\beta}(\Omega, R_0) = \sup_{\rho \in (0, R_0)}\,  \sup_{x_0 \in \Omega}\Big(\frac{1}{\beta(B_{\rho}(x_0))}\int_{B_\rho(x_0)} \Theta_{\beta, r, x_0}(x)^2dx\Big)^{1/2}.
\end{split}
\end{equation}
In \eqref{BMO-defi}, $Q_{\rho,\beta}(z_0)$ is the weighted parabolic cylinder of radius $\rho>0$ centered at $z_0$ (see Definition \eqref{cylinder-def} below for details), 
\[
\beta(E)=\int_{E}\beta(x)dx
\] for every bounded measurable set $E \subset\R^n$, and 
\begin{equation} \label{point-oss-def}
\begin{split}
& \Theta_{\bA, r, x_0} (x,t)= |\bA(x,t) - (\bA)_{B_{\rho}(x_0) \cap \Omega} (t)|, \\[4pt]
& \Theta_{\beta, r, x_0}(x) = | \beta(x) - (\beta)_{B_{\rho}(x_0)}| \beta(x)^{-1/2}.
\end{split}
\end{equation}
In addition, we assume throughout the paper that the weight $\beta$ in \eqref{main-eqn}  satisfies the following conditions
\begin{equation} \label{beta-cond}
\beta^{-1} \in A_{1+\frac{2}{n_0}} \quad \text{and} \quad [\beta^{-1}]_{A_{1+\frac{2}{n_0}}} \leq M_0,  \quad \text{for} \quad n_0 = \max\{n, 2\}
\end{equation}
and for some given $M_0 \geq 1$. See Definition \ref{A-p-def} for more details on the definition of the $A_{q}$ class of Muckenhoupt weights for $q \in [1, \infty)$. By Remark \ref{A-p-remark}-(ii) below, we see that \eqref{beta-cond} is equivalent to 
\[
\beta^{\frac{n_0}{2}} \in A_{1+ \frac{n_0}{2}} \quad \text{and} \quad [\beta^{\frac{n_0}{2}}]_{A_{1+\frac{n_0}{2}}} \leq M_0^{\frac{n_0}{2}}.
\]

\bigskip
The following theorem is the main result of the paper.
\begin{theorem} \label{main-theorem} Let $p \in (1, \infty)$, $\nu \in (0,1)$, and $M_0 \geq 1$. Then there exists a constant $\delta = \delta (n, \nu, p, M_0) \in (0,1)$ sufficiently small such that the following assertions hold. Assume that  \eqref{ellip-cond} and \eqref{beta-cond} hold, $\partial \Omega \in C^1$, and
\begin{equation} \label{small-ness}
\Theta_{\bA}(\Omega_T, R_0) + \Theta_{\beta} (\Omega, R_0)< \delta,
\end{equation}
for some $R_0 \in (0,1)$. Then, for every $F \in L^p(\Omega_T)^n$ with $T>0$, there exists a unique weak solution $u \in \W^{1,p}_{*} (\Omega_T, \beta)$ solving \eqref{main-eqn}. Moreover,
\begin{equation} \label{main-thm-est}
\|u\|_{\W^{1,p}_{*}(\Omega_T,\beta)}\leq N \|F\|_{L^p(\Omega_T)},
\end{equation}
where $N = N(n, \nu, p, M_0, R_0, \Omega, T) >0$.
\end{theorem}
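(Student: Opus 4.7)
The plan is to prove the a priori estimate \eqref{main-thm-est} first for sufficiently smooth right-hand sides by the Caffarelli--Peral level-set method, and then to deduce existence in $\W^{1,p}_*(\Omega_T,\beta)$ by approximation and uniqueness from linearity. The three main ingredients are (i) an $L^2$-baseline, (ii) a Lipschitz-type interior and boundary gradient bound for a reference problem with frozen coefficients, and (iii) a perturbation/comparison estimate in which the smallness \eqref{small-ness} enters.

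For ingredient (i), the hypothesis \eqref{beta-cond} places both $\beta$ and $\beta^{-1}$ in $L^{1+\varepsilon}_{\mathrm{loc}}$ by the self-improving property of Muckenhoupt weights, so a Galerkin scheme in $W^{1,2}_0(\Omega)$ together with the weighted energy identity
\[
\tfrac{1}{2}\int_\Omega \beta(x)|u(x,t)|^2\,dx + \int_0^t\!\!\int_\Omega \langle \bA\nabla u,\nabla u\rangle = -\int_0^t\!\!\int_\Omega \langle F,\nabla u\rangle
\]
produces a solution $u\in \W^{1,2}_*(\Omega_T,\beta)$ with $\|u\|_{\W^{1,2}_*(\Omega_T,\beta)}\le N\|F\|_{L^2}$; the weighted Aubin--Lions compactness promised in the introduction is what ensures the Galerkin limit has a well-defined trace at $t=0$ and satisfies the weak formulation. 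For ingredient (ii), given a weighted parabolic cylinder $Q=Q_{\rho,\beta}(z_0)\subset\Omega_T$ with $\rho<R_0$, I let $v$ solve the reference equation $\beta_0 v_t-\operatorname{div}(\bB(t)\nabla v)=0$ on $2Q$ with $v=u$ on $\partial_p(2Q)$, where $\beta_0=(\beta)_{B_\rho(x_0)}$ and $\bB(t)=(\bA)_{B_\rho(x_0)\cap\Omega}(t)$. The non-homogeneous quasi-distance defining $Q_{\rho,\beta}$ is designed so that the time rescaling $s=t/\beta_0$ maps $Q$ to a standard parabolic cylinder independently of $\beta_0$, reducing the reference problem to a classical parabolic equation with coefficients depending only on time; standard interior and boundary regularity (after $C^1$-flattening $\partial\Omega$, plus a reverse H\"older self-improvement) then yields
\[
\|\nabla v\|_{L^\infty(Q)}^{q_0}\le N\fint_{2Q}|\nabla v|^{q_0}\,dx\,dt
\]
for some $q_0\in(1,2)$, with $N$ independent of $\beta_0$.

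For ingredient (iii), the difference $w=u-v$ solves a perturbation equation with source $(\bA-\bB)\nabla u$, $(\beta-\beta_0)u_t$, and $F$; testing against $w$ and using \eqref{small-ness} together with the weighted H\"older and Poincar\'e inequalities (which follow from \eqref{beta-cond}) yields a comparison bound
\[
\Big(\fint_Q|\nabla w|^{q_0}\Big)^{1/q_0}\le N\delta^{\kappa}\Big(\fint_{4Q}|\nabla u|^{q_0}\Big)^{1/q_0}+N\Big(\fint_{4Q}|F|^{q_0}\Big)^{1/q_0}
\]
for some $\kappa>0$. Combined with the $L^\infty$-bound on $\nabla v$, this produces the Caffarelli--Peral good-$\lambda$ inequality
\[
\big|\{\mathcal{M}(|\nabla u|^{q_0})>K\lambda\}\big|\le C\delta^{\kappa}\big|\{\mathcal{M}(|\nabla u|^{q_0})>\lambda\}\big|+\big|\{\mathcal{M}(|F|^{q_0})>\eta\lambda\}\big|,
\]
where $\mathcal{M}$ is a maximal operator adapted to the cylinders $Q_{\rho,\beta}$. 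Integrating against $\lambda^{p/q_0-1}$ and using the $L^{p/q_0}$-boundedness of $\mathcal{M}$ gives $\|\nabla u\|_{L^p(\Omega_T)}\le N\|F\|_{L^p(\Omega_T)}$ once $\delta$ is small; the bound on $\beta u_t$ in $L^p((0,T),W^{-1,p}(\Omega))$ then follows from the equation itself. Existence for arbitrary $F\in L^p$ is obtained by approximating $F$ by functions in $L^2\cap L^p$, solving by (i), and passing to the limit using the a priori estimate; uniqueness is immediate from linearity.

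The main obstacle I expect is ingredient (ii): producing the reference Lipschitz bound and the comparison constant \emph{uniformly} in $\beta_0\in(0,\infty)$ and in the Muckenhoupt constant $M_0$, so that the final constants depend only on $n,\nu,p,M_0,R_0,\Omega,T$. This uniformity is exactly why one is forced to work with the weighted cylinders $Q_{\rho,\beta}$ and the intrinsic weighted function spaces developed in the preceding sections, rather than with the standard parabolic ones; in particular, verifying that the weighted Poincar\'e inequality, the Aubin--Lions compactness, and the $L^{p}$-boundedness of $\mathcal{M}$ all respect the non-homogeneous scaling is where most of the delicate analysis concentrates. The boundary case, where $C^1$-flattening of $\partial\Omega$ interacts with the weighted cylinders, is the other technical point, but after flattening it should parallel the interior argument.
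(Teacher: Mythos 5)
Your overall strategy — Caffarelli--Peral level sets, comparison with a frozen-coefficient reference equation, weighted cylinders intrinsic to the non-homogeneous scaling, boundary flattening, and an $L^2$ Galerkin baseline — matches the paper's. But there is a genuine gap in ingredient (iii), and it is precisely the difficulty the paper is organized around.

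You write the perturbation source as $(\bA-\bB)\nabla u$, $(\beta-\beta_0)u_t$, and $F$, and assert that testing the $w$-equation against $w$ plus weighted H\"older and Poincar\'e yields the comparison bound. This does not close. The term $(\beta-\beta_0)u_t$ cannot be handled by an energy estimate: the only information on $u_t$ is $\beta u_t \in L^2(W^{-1,2})$, so $(\beta-\beta_0)u_t = (1-\beta_0/\beta)\,\beta u_t$ with a factor $1-\beta_0/\beta$ that is unbounded wherever $\beta$ degenerates. If you instead write the source in terms of the smoother reference solution — $(\bA-\bB)\nabla v$ and $(\beta-\beta_0)v_t$ with $v_t,\nabla v$ controlled by the Lipschitz estimate (this is what the paper's Lemma \ref{compare-lemma-1} does) — you obtain a Caccioppoli estimate for $w$, but its right-hand side retains $\int w^2\beta$-type terms that cannot be absorbed by the weighted Poincar\'e inequality; they must be made small a priori. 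The paper establishes exactly this $L^2(Q)$-smallness of $w$ in Lemma \ref{L2-comparision} via a compactness-contradiction argument relying on the weighted Aubin--Lions theorem (Proposition \ref{compactness-lemma}), and then bounds the weighted $L^2$-integral $\int w^2\beta$ by interpolation (Lemma \ref{embedd-lemma}). In your proposal the weighted Aubin--Lions theorem only appears to justify the Galerkin limit's trace at $t=0$; in the paper it is the engine of the entire perturbation step. Relatedly, you prescribe $v=u$ on $\partial_p(2Q)$ to obtain a test-friendly $w$; the paper does not impose any boundary matching — instead the compactness argument simultaneously produces a $v$ close to $u-(u)_Q$ in $L^2$, and the contradiction scheme is what makes the construction work uniformly over the varying weights $(\beta)_{B_r(x_0)}$.

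There is also a minor normalization issue worth flagging: the intrinsic time-height is $r^2\Psi_{\beta,x_0}(r)$ with $\Psi_{\beta,x_0}(r)=(\beta^{n_0/2})_{B_r}^{2/n_0}$, not $r^2(\beta)_{B_r}$; these are comparable under \eqref{beta-cond} but not equal for $n\geq 3$, and the uniform Lipschitz estimate in Lemma \ref{Lipschitz-constant} is calibrated to the former. Finally, the paper proves the $L^p$ estimate for $p\geq 2$, recovers $p\in(1,2)$ by duality, and handles existence for $p>2$ by trivially extending $(u,F)$ to $\Omega\times(-1,T)$ and applying the local-in-time, global-in-space estimate of Theorem \ref{main-thm-2}; your approximation-by-$L^2\cap L^p$ route gets to the same place but the extension trick sidesteps the compatibility of the initial trace.
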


\smallskip
The novelty of this work lies in the discovery of the conditions \eqref{beta-cond} and \eqref{small-ness} on the coefficient $\beta$ in \eqref{main-eqn}, through which the analysis theory on the Sobolev space $\W^{1,p}(\Omega_T, \beta)$ is developed and an Aubin-Lions type compactness theorem (Proposition \ref{compactness-lemma} below) is established in Section \ref{wei-ses} and Section \ref{Fn-spaces} below. In addition, a quasi-distance geometrically encoded in \eqref{main-eqn} is discovered, and the corresponding non-homogeneous weighted parabolic cylinders are introduced in Section \ref{wei-ses}. From these results, a perturbation method can be implemented and the regularity estimates and the existence of solutions in $\W^{1,p}_{*}(\Omega_T, \beta)$ stated in Theorem \ref{main-theorem} are proved. To the best of our knowledge, these results and Theorem \ref{main-theorem} appear for the first time in the literature.

\smallskip
We note that $\Theta_{\bA}(\Omega_T, R_0) $ defined in \eqref{BMO-defi} is usually referred to as partial mean oscillations of $\bA$, a concept introduced in \cites{KK1, KK2}. It is clear that if the spatial mean oscillation of $\bA(x,t)$ is sufficiently small in small balls uniformly in the time variable, then $\bA$ satisfies \eqref{small-ness}.  For example, we consider $\bA(x,t) = a(t) \tilde{\bA}(x)$, then $\bA$ satisfies \eqref{small-ness} if $a(t)$ is bounded and measurable,  and the mean oscillation of $\tilde{\bA}(x)$ is sufficiently small. In other words, the smallness on the mean oscillation of $a(t)$ is not required. We also note that $\Theta_{\beta}(\Omega_T, R_0)$ is known as the weighted mean oscillation of $\beta$ with respect to the weight $\beta$. See \cite{M-W} for more details on the analysis of weighted mean oscillation functions. As demonstrated in \cites{CMP, CJP}, a typical example of $\beta$ satisfying \eqref{small-ness} is $\beta(x) = |x|^\alpha$ for $x \in \Omega = B_1$ and $|\alpha|$ sufficiently small.  

\smallskip
It is important to emphasize that similar interior and local boundary regularity estimates, as well as local-in-time but global in $\Omega$ regularity estimates, are also established in this paper. See Theorems \ref{inter-theorem}, \ref{inter-theorem-bdry}, and \ref{main-thm-2} below. In particular, when $\beta \equiv 1$, our results are reduced to known results; for example, see \cites{B, BW1, BW2, KK1, KK2}.  More importantly, it is possible to relax the condition $\partial \Omega \in C^1$ and replace it by some flatness assumptions such as those considered in \cites{B, BW1, BW2, HNP}. Similarly, existence and regularity estimates of solutions to \eqref{main-eqn} in weighted, weighted mixed-norm spaces, or Lorentz spaces as in \cites{DK, DP1, DP2, DPT1, DPT2, Phan} can also be established. However, we do not pursue these directions to avoid additional technical complexity.  

\subsection{Main difficulties and approaches} 
We establish the regularity estimates for solutions by using the level set method introduced in \cite{CP}. To implement the approach, we use a perturbation argument combined with the freezing coefficient technique. This requires to locally approximate the solutions of \eqref{main-eqn} in each parabolic cylinder of small radius $r$ centered at $z_0 = (x_0, t_0) \in \overline{\Omega}_T$ by solutions to auxiliary equations whose coefficients are the 
spatial averages over the ball $B_r(x_0)$ of the coefficients in \eqref{main-eqn}:
\begin{equation}\label{v-eqn-main-idea}
(\beta)_{B_r(x_0)} v_t - \text{div}\left(\bA_{B_r(x_0)}(t) \nabla v\right) = 0.
\end{equation}
The non-standard part in this process is that the equation for $u-v$ contains perturbation terms consisting of the time derivative of unknown solutions multiplied by $\beta(x) - (\beta)_{B_r(x_0)}$. Due to the lack of regularity in the derivative in time variable for the corresponding parabolic equations and of $\beta(x) - (\beta)_{B_r(x_0)}$, it is not possible to use standard energy estimates to directly obtain the smallness of $u-v$ in energy spaces. To overcome this, we employ a contradiction argument to control $u$ and $u-v$ in energy spaces. These tasks are accomplished in Lemmas \ref{u-L2-Cac}, \ref{L2-comparision}, and Proposition \ref{L2-gradient-comparision} for the interior case, and in Lemmas \ref{bdry-Poincare}, \ref{L2-comparision-bdry}, and Proposition \ref{L2-gradient-comparision-bdry} for the boundary case. The challenge in the contradiction argument is that the coefficients $\beta$ vary, and then the function spaces $\W^{1,2}(Q_{r, \beta}, \beta)$ to which solutions belong also change. A key ingredient in this analysis relies on a compactness theorem, Proposition \ref{compactness-lemma}, in which a compactness result is proved for a sequence of functions $\{u_k\}_k$ with $u_k \in \W^{1,2}(Q_{r, \beta_k}, \beta_k)$ for each $k$. The conditions on the weights given in \eqref{beta-cond} and \eqref{small-ness} are used crucially in these arguments. The Proposition \ref{compactness-lemma} can be considered as a general and weighted version of the classical Aubin-Lions compactness theorem. We note that as $\beta$ is not nice, some lemmas on energy estimates, such as Lemma \ref{u-L2-Cac} and Lemma \ref{bdry-Poincare}, are not standard and they seem to appear for the first time in this work. 

\smallskip
It is also worth mentioning that for the perturbation method to work, Lipschitz estimates that are uniform in $r$, $x_0$, and $(\beta)_{B_r(x_0)}$ are needed for solutions to the class of equations \eqref{v-eqn-main-idea}.  This is not possible in standard parabolic cylinders as the ellipticity constants in \eqref{v-eqn-main-idea} depend on $(\beta)_{B_r(x_0)}$, which can be very tiny or extremely large as $r$ and $x_0$ vary depending on the degeneracy or singularity of $\beta$. To overcome this difficulty, we introduce in Section \ref{cylinder-distance-sec} a family of non-homogeneous parabolic cylinders that are invariant under scalings and dilations for \eqref{main-eqn}. See Section \ref{scaling-sec} below for some special dilation properties of the class of equations \eqref{main-eqn}. In addition, a non-homogeneous quasi-distance function encoded in \eqref{main-eqn} is also discovered and proved in this section, in which the condition \eqref{beta-cond} is employed. Based on these constructions, and by employing suitable scalings and dilations, we establish the desired uniform Lipschitz estimates in the weighted parabolic cylinders for \eqref{v-eqn-main-idea} and its boundary counterpart in Section \ref{Lips-est-sec}.
\subsection{Relevant literature}
The literature on regularity theory and well-posedness of solutions to singular or degenerate elliptic and parabolic equations is extremely rich. Let us only describe results closely related to this work.

\smallskip
Theories on well-posedness and regularity estimates in Sobolev spaces for equations with singular or degenerate coefficients have recently been developed; see \cites{BDGP, BD, CMP, DR, DPS, DP, DP1, DP2, DP3,  DPT1, DPT2, MNS, MNS-1}, for examples. In particular, the paper \cites{CMP} studies a class of elliptic equations in which the singularity and degeneracy arise from $\bA$. Well-posedness and regularity estimates in suitable weighted Sobolev spaces were proved under some smallness assumption on a weighted mean oscillation of the coefficients $\bA$. See also \cite{BDGP} for similar results. It is noted that the problem settings in \cites{BDGP, CMP} are similar to those in the classical works \cites{Chia-1, Chia-2, Chia-3, Fabes, Fabes-1, Tru-1, Tru-2} in which Harnack inequalities and H\"{o}lder regularity estimates are established. In a different direction, classes of elliptic and parabolic equations of the Caffarelli-Silvestre extension type are investigated in \cites{BD, DPS, DP, DP1, DP2, DP3, MNS, MNS-1, MP}. Similar well-posedness and regularity estimates were developed for weighted and mixed-norm weighted Sobolev spaces. We also note that H\"{o}lder and Schauder regularity estimates for similar classes of equations were recently developed in \cites{STV1, STV2} for linear elliptic equations, \cites{AFV, AFV2} for linear parabolic equations,  and in \cite{JS} for fully nonlinear elliptic equations. 

\smallskip
The works most closely related to the present paper are \cites{DR, CJP, JX, DPT1, DPT2} and \cites{JX1, JX2}. In \cites{DPT1, DPT2}, the following class of equations is studied
\begin{equation} \label{DPT-eqn}
x_n^{-\alpha} u_t - \text{div}(\bA(x,t) \nabla u)  =f, \quad   x = (x', x_n) \in \mathbb{R}^{n-1} \times (0, \infty), \ t \in  \ (0,\infty),
\end{equation}
where $\alpha \in (0, 2)$. Regularity estimates in suitable weighted Sobolev and mixed-norm weighted Sobolev spaces are proved. In recent work \cite{DR}, the same type of equations as \eqref{DPT-eqn} with $\alpha=2$ and additional lower order terms is studied, and similar results are established. See also the recent work \cite{JX} for similar results in which equations with singular-degenerate coefficients on the boundary with co-dimension 2. In addition, results on H\"{o}lder and Schauder regularity estimates were proved in \cites{JX1, JX2} for a similar class of equations as \eqref{DPT-eqn} that arises in porous media. It is also worth mentioning that a class of equations similar to \eqref{main-eqn}, but in non-divergence form, has been studied recently in \cite{CJP}. Under some conditions on $\beta$ which are slightly different from \eqref{beta-cond}, Krylov-Safonov Harnack inequalities and H\"{o}lder regularity estimates were proved.

\smallskip

Note that the class of equations in \eqref{DPT-eqn} differs from \eqref{main-eqn} because the coefficient $\beta$ in \eqref{main-eqn} is general and it can be singular or degenerate anywhere in the whole domain $\overline{\Omega}$, whereas in \eqref{DPT-eqn}, the considered weight is specific and is singular along the boundary. In addition, $\partial \Omega$ in \eqref{main-eqn} is not flat as in \eqref{DPT-eqn}. As such, the class of equations \eqref{main-eqn} has not been studied in the existing literature. The results of the paper are new, and they have potential applications as discussed in Section \ref{mov-sec}. Note that the key difference of this work is the introduction of \eqref{beta-cond}, through which a class of weighted parabolic cylinders endowed with a weighted quasi-distance function is introduced. Moreover, a general and weighted version of the Aubin-Lions compactness theorem is proved. These results are of independent interest. To the best of our knowledge, the results in this paper are the first available ones in the research line that treats the well-posedness and regularity of solutions to the parabolic equation \eqref{main-eqn} in Sobolev spaces. 
\subsection{Organization of the paper} The rest of the paper is organized as follows. In the next section, Section \ref{wei-ses}, we recall the definitions of classes of Muckenhoupt weights and summarize several of their essential properties that we need. Some important weighted inequalities are also stated and proved in this section. Then, in Section \ref{Fn-spaces}, we define the functional spaces needed in the paper and prove their essential properties. In particular, a weighted version of the Aubin-Lions compactness theorem is proved in this section. In Section \ref{Lips-est-sec}, the interior and boundary Lipschitz estimates are established for simple equations. The interior regularity estimate theorem, Theorem \ref{inter-theorem}, local energy estimates, and local interior perturbation techniques are performed and proved in Section \ref{interior section}. The corresponding boundary theory is presented in Section \ref{bdr-section}, where Theorem \ref{inter-theorem-bdry} provides boundary regularity estimates on flat portions of the boundary. Then, in Section \ref{global-section}, Theorem \ref{main-thm-2}, which provides spatially global regularity estimates for solutions, is stated and proved using boundary flattening techniques and partition of unity. The proof of Theorem \ref{main-theorem} is also given in this section. We conclude the paper with several appendices that provide the proofs of several technical lemmas and estimates.

\section{Weighted inequalities and weighted parabolic cylinders}  \label{wei-ses}
\subsection{Weights and weighted inequalities}
\begin{definition} \label{A-p-def}  Let $q \in (1, \infty)$, and $\mu : \mathbb{R}^n \rightarrow [0, \infty)$ be a locally integrable function. We say that $\mu$ belongs to the class of \emph{$A_q$ Muckenhoupt  weights} if $[\mu]_{A_q} < \infty$, where
\[
[\mu]_{A_q}=\sup_{r>0,\, x_0 \in \mathbb{R}^n}\left(\fint_{B_r(x_0)}\mu(x)\, dx \right)\left(\fint_{B_r(x_0)}\mu(x)^{-\frac{1}{q-1}}\, dx\right)^{q-1}.
\]
\end{definition}
\begin{remark} \label{A-p-remark} The following important properties on Muckenhoupt weights follow directly from the definition.
\begin{itemize}
\item[\textup{(i)}] If $\mu \in A_{q}$ with $q \in (1, \infty)$, then  $[\mu]_{A_q} \geq 1$.
\item[\textup{(ii)}] If $\mu$ is a weight such that $\mu^{-1} \in A_{1+\frac{2}{n_0}}$ for $n_0 = \max\{n, 2\}$, then $\mu^{\frac{n_0}{2}} \in A_{1+ \frac{n_0}{2}}$ and
\[
[\mu^{\frac{n_0}{2}}]_{A_{1+\frac{n_0}{2}}} = [\mu^{-1}]_{A_{1+ \frac{2}{n_0}}}^{\frac{n_0}{2}}.
\]
Moreover, $\mu \in A_2$ and $[\mu]_{A_2} \leq [\mu^{-1}]_{A_{1+\frac{2}{n_0}}}$.
\end{itemize}
\end{remark}
\begin{proof} We start with the proof of (i). As $q\in (1,\infty)$,  for any ball $B\subset \R^n$, it follows from H\"{o}lder's inequality  that 
\begin{align} \label{1-mu-ball}
1 = \fint_{B} \mu(x)^{\frac{1}{q}} \mu(x)^{-\frac{1}{q}} dx \leq  \left(\fint_{B} \mu(x) dx \right)^{\frac{1}{q}} \left(\fint_{B} \mu(x)^{-\frac{1}{q-1}} dx \right)^{1-\frac{1}{q}}\leq [\mu]_{A_q}^{\frac{1}{q}}.
\end{align}
Hence, (i) follows for $q>1$. 

\smallskip
Note that the first assertion in (ii) follows directly from Definition \ref{A-p-def} with some simple manipulation.  For the second assertion in (ii), it is sufficient to consider the case $n\geq 3$. For any ball $B \subset \mathbb{R}^n$, by H\"{o}lder's inequality, we get
\[
\fint_{B} \mu(x) dx \leq \left(\fint_{B} \mu(x)^{\frac{n}{2}} dx \right)^{\frac{2}{n}}.
\]
Therefore,
\begin{align*}
[\mu]_{A_2}&= \sup_{B}\left(\fint_{B} \mu(x) dx\right)\left(\fint_{B} \mu(x)^{-1} dx\right)\\
&\leq \sup_{B} \left(\fint_{B} \mu(x)^{\frac{n}{2}} dx \right)^{\frac{2}{n}}\left(\fint_{B} \mu(x)^{-1} dx\right)= [\mu^{-1}]_{A_{1+\frac{2}{n}}}.
\end{align*}
The proof is then completed.
\end{proof}

\smallskip
As the weight $\beta(x)$ considered throughout the paper satisfies \eqref{beta-cond}, i.e.,
\begin{equation*}
\beta^{-1} \in A_{1+\frac{2}{n_0}} \quad \text{and} \quad [\beta^{-1}]_{A_{1+\frac{2}{n_0}}} \leq M_0  \quad \text{for } n_0 =\max\{n,2\},
\end{equation*}
it follows from Remark~\ref{A-p-remark}\, (ii) that $\beta^{\frac{n_0}{2}} \in A_{1+\frac{n_0}{2}}$ and $\beta\in A_2$, with
\[
[\beta^{\frac{n_0}{2}}]_{A_{1+\frac{n_0}{2}}}\leq M_0^{\frac{n_0}{2}} \quad \text{and}\quad [\beta]_{A_2}\leq M_0.
\]

Consequently, they have useful doubling properties stated in the following lemma. We refer the reader to \cite[Proposition~7.2.8, p.~521]{Grafakos-2} for the proof and for related results on more general weights.

\begin{lemma} \label{property} Let $M_0 \geq 1$, and let $\beta$ be a weight satisfying \eqref{beta-cond}. For $\bar{\beta}(x) = [\beta(x)]^{\frac{n_0}{2}}, \ x \in \R^n$, the following assertions hold.
\begin{itemize}
\item[\textup{(i)}] There exists a constant $N_1 = N_1(n, M_0) > 1$, called the doubling constant, such that
\begin{equation*} \label{doubling const}
\bar{\beta}(B_{2\rho}(x)) \leq N_1\, \bar{\beta}(B_\rho(x)) \quad \text{for all balls } B_\rho(x) \subset \mathbb{R}^n.
\end{equation*}
\item[\textup{(ii)}] There exist constants $N_2 = N_2(n, M_0) > 0$ and $\zeta_0 = \zeta_0(n, M_0) \in (0,1)$ such that
\begin{equation*} \label{reverse holder}
\bar{\beta}(S_1) \leq N_2 \left( \frac{|S_1|}{|S_2|} \right)^{\zeta_0} \bar{\beta}(S_2) \quad \text{for all measurable sets } S_1 \subset S_2 \subset \mathbb{R}^n.
\end{equation*}
\item[\textup{(iii)}] There exists a constant $\eta = \eta(n, M_0, \theta) \in (0,1)$ such that
\[
\bar{\beta}(S_1) \leq \eta\, \bar{\beta}(S_2)
\]
for all measurable sets $S_1 \subset S_2 \subset \mathbb{R}^n$ with $|S_1| \leq \theta\, |S_2|$ and $\theta \in (0,1)$. In particular, one may take
\[
\eta = 1 - (1 - \theta)^{1 + \frac{n_0}{2}} M_0^{-\frac{n_0}{2}}.
\]
\end{itemize}
Similar conclusions also hold for $\beta$ in place of $\bar{\beta}$.
\end{lemma}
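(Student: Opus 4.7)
The plan is to reduce all three assertions to a single ``master inequality'' derived from H\"{o}lder's inequality and the $A_q$ condition. The first observation is that Remark~\ref{A-p-remark}-(ii) already identifies $\overline{\beta}$ as an element of $A_{1+n_0/2}$ with $[\overline{\beta}]_{A_{1+n_0/2}} \leq M_0^{n_0/2}$. Setting $q = 1 + n_0/2$ and $\mu = \overline{\beta}$, the task is therefore to prove (i)--(iii) for a generic $A_q$ weight $\mu$ with $[\mu]_{A_q} \leq M_0^{n_0/2}$. The master inequality, valid for any ball $B \subset \mathbb{R}^n$ and any measurable $E \subset B$, reads
\[
\left(\frac{|E|}{|B|}\right)^{q} \leq [\mu]_{A_q}\,\frac{\mu(E)}{\mu(B)}.
\]
It follows by applying H\"{o}lder to the identity $|E| = \int_E \mu^{1/q}\mu^{-1/q}\,dx$ with exponents $q$ and $q/(q-1)$, enlarging the integral of $\mu^{-1/(q-1)}$ from $E$ to $B$, and invoking the $A_q$ bound on the ball $B$. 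The calculation is the localized version of the one already performed in Remark~\ref{A-p-remark}-(i).

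\medskip
Assertion (i) is then an immediate corollary: apply the master inequality with $E = B_\rho(x)$ inside $B = B_{2\rho}(x)$, so that $|E|/|B| = 2^{-n}$ and one may take $N_1 = 2^{n(1+n_0/2)}M_0^{n_0/2}$. Assertion (iii) is obtained, first in the case where $S_2$ is a ball, by applying the master inequality to $E = S_2 \setminus S_1$: the hypothesis $|S_1|\leq \theta|S_2|$ forces $|E|/|S_2| \geq 1-\theta$, hence
\[
\frac{\mu(S_1)}{\mu(S_2)} \leq 1 - (1-\theta)^{q}[\mu]_{A_q}^{-1} \leq 1 - (1-\theta)^{1+n_0/2}M_0^{-n_0/2},
\]
which is exactly the explicit $\eta$ stated in the lemma. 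The extension to an arbitrary measurable $S_2$ is by a Whitney-type covering combined with the doubling property in (i).

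\medskip
Assertion (ii) is the self-improving reverse doubling and is the only place where the master inequality alone is not enough. The plan is to invoke Lemma~\ref{R-Holder}-(i), which supplies $\gamma = \gamma(n,q,M_0) > 0$ and $N = N(n,q,M_0)$ with $(\fint_{B}\mu^{1+\gamma}\,dx)^{1/(1+\gamma)} \leq N(\mu)_B$ for every ball $B$. A single application of H\"{o}lder to $\mu(S_1) = \int_{S_1} \mu \cdot 1\,dx$ with $S_1$ contained in a ball $S_2$ then produces $\mu(S_1) \leq N(|S_1|/|S_2|)^{\gamma/(1+\gamma)}\mu(S_2)$, so $\zeta_0 = \gamma/(1+\gamma)$ works when $S_2$ is a ball; the general measurable case follows again by covering.

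\medskip
The step I expect to be the main obstacle is the passage from ball statements to statements for arbitrary measurable $S_2$ in (ii) and (iii). The $A_q$ condition and the reverse H\"{o}lder inequality are intrinsically ball conditions, so the direct arguments above only control pairs $E \subset B$ with $B$ a ball; transferring to a general measurable $S_2$ requires a Whitney-type covering together with the doubling in (i). This explains why (i) is logically prior and why Proposition~7.2.8 of \cite{Grafakos-2} packages the three statements into a single result. For the applications of this lemma elsewhere in the paper, $S_2$ typically arises as a ball or as a super-level set naturally decomposable into balls, so the ball case of the argument above is already sufficient.
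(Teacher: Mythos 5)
The paper offers no proof of this lemma, delegating instead to Proposition~7.2.8 in \cite{Grafakos-2}, so there is no internal argument to compare against directly. Your derivation of the master inequality $(|E|/|B|)^q \leq [\mu]_{A_q}\,\mu(E)/\mu(B)$ for a measurable subset $E$ of a ball $B$, and its application to (i) and to (iii) when $S_2$ is a ball, is correct and is in substance the same as Grafakos's argument. Likewise your proof of (ii) for $S_2$ a ball via Lemma~\ref{R-Holder} plus H\"older is correct, with $\zeta_0=\gamma/(1+\gamma)$.

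The step you flag as ``the main obstacle''---extending (ii) and (iii) from balls to arbitrary measurable $S_2$ by a Whitney covering---cannot be carried out, because (ii) and (iii) are in fact false for arbitrary measurable $S_2$. In $n=1$ (so $n_0=2$, $\overline\beta=\beta$), take $\beta(x)=|x|^{-1/2}$, so $\beta^{-1}=|x|^{1/2}\in A_2$ satisfies \eqref{beta-cond}. Let $S_1=(-1,1)$ and $S_2=(-1,1)\cup(N,N+K)$ with $1\ll K\ll N$. Then $\beta(S_1)=4$ while $\beta(S_2\setminus S_1)=2(\sqrt{N+K}-\sqrt N)\approx K/\sqrt N$ can be made arbitrarily small, so $\beta(S_1)/\beta(S_2)\to 1$, whereas $|S_1|/|S_2|=2/(2+K)\to 0$. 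This violates both (ii) and (iii) for any choice of $N_2$, $\zeta_0$, and $\eta<1$. The same example shows why the covering strategy cannot help: for Whitney cubes $Q_j\subset S_1$, the local ratio $|S_1\cap Q_j|/|Q_j|$ equals $1$, so summing the ball estimates produces no smallness. The lemma as printed overstates what \cite{Grafakos-2} actually asserts, which is for $S_2$ a cube or ball; every use of parts (ii)--(iii) in the paper (e.g.\ in Appendix~\ref{Appendix A} and in Lemma~\ref{measure density-bdry}) takes $S_2$ to be a ball, so nothing downstream is affected, but you should simply drop the claimed covering extension rather than leave it as an asserted but undone step.
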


The following lemma on the reverse H\"{o}lder property of $A_q$-weights is well known. We also refer the reader to~\cite[Theorem~7.2.2, p.~514; Corollary~7.2.6, p.~519]{Grafakos-2} for the proof.
\begin{lemma}\label{R-Holder} Let $M_0 \geq 1$ and $q \in (1, \infty)$. There exist a sufficiently small constant $\gamma = \gamma(n,q, M_0)>0$ and a constant $N = N(n, q, M_0)\geq 1$ such that the following assertions hold.
\begin{itemize}
\item[\textup{(i)}] If $\mu \in A_{q}$ satisfies $[\mu]_{A_q} \leq M_0$, then for every ball $B\subset\mathbb{R}^n$,
\[
\left( \fint_{B} \mu(x)^{1+\gamma} dx \right)^{\frac{1}{1+\gamma}} \leq N \fint_{B} \mu(x) dx.
\]

\item[\textup{(ii)}] If $\mu \in A_{q}$ satisfies $[\mu]_{A_q} \leq M_0$, then $\mu \in A_{q-\gamma}$ and
\[ [\mu]_{A_{q-\gamma}} \leq N.
\]
\end{itemize}
\end{lemma}
Next, let us state and prove the following lemma that provides the control of the un-weighted Lebesgue norm by an $L^{2}$-weighted norm.
\begin{lemma}\label{L-q-2-wei} Let $q \in (1,2]$, $M_0 \geq 1$, and $ \mu \in A_q $ with $[\mu]_{A_q} \leq M_0 $. Then, there exist a sufficiently small constant $\gamma = \gamma(n,  q, M_0) \in (0, q-1)$ and a constant $ N = N(n, q, M_0) > 0$ such that  for every ball $ B \subset \mathbb{R}^n $ and for $ g \in L^2(B, \mu)$,
\[
\left( \fint_{B} |g(x)|^{\frac{2}{q - \gamma}} \, dx \right)^{\frac{q - \gamma}{2}} 
\leq N \left( \frac{1}{\mu(B)} \int_{B} |g(x)|^2 \mu(x) \, dx \right)^{1/2}.
\]
\end{lemma}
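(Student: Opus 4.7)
The estimate is a weighted-to-unweighted comparison, so the natural tool is H\"{o}lder's inequality combined with the structure of the $A_q$ class. The plan is to write
\[
|g|^{2/(q-\gamma)} = \bigl(|g|^2 \mu\bigr)^{1/(q-\gamma)} \cdot \mu^{-1/(q-\gamma)}
\]
and to apply H\"{o}lder's inequality with exponents $q-\gamma$ (which is strictly greater than $1$ since $\gamma < q-1$) and its conjugate $(q-\gamma)/(q-\gamma-1)$. This is essentially the ``dual-weight'' expansion used to recognise the $A_{q-\gamma}$ quantity.

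After the H\"{o}lder step, the dual factor becomes
\[
\left( \fint_{B} \mu(x)^{-1/(q-\gamma-1)}\,dx\right)^{q-\gamma-1},
\]
which is exactly the $A_{q-\gamma}$ expression for $\mu$ times $(\mu)_B^{-1}$. So the next step is to pick $\gamma$ small enough that Lemma \ref{R-Holder}-(ii) applies to the hypothesis $[\mu]_{A_q}\le M_0$, guaranteeing $\mu \in A_{q-\gamma}$ with $[\mu]_{A_{q-\gamma}} \le N(n,q,M_0)$. Plugging this bound in, the dual factor is controlled by $N/(\mu)_B$. The remaining terms, after normalising $\fint_B$ versus $\int_B$, combine to produce the factor $1/\mu(B) = 1/(|B|(\mu)_B)$ on the right-hand side, which matches the claimed form.

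Finally, one takes the $(q-\gamma)/2$-power of both sides. The bookkeeping of exponents is routine: with $p := 2/(q-\gamma) \in (1,2)$, one verifies $p/(2-p) = 1/(q-\gamma-1)$ and $pq'/2 = 1$, so the powers of $|B|$ and of $(\mu)_B$ align to give precisely $\bigl(\mu(B)^{-1}\int_B |g|^2\mu\bigr)^{1/2}$ on the right.

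\textbf{Main obstacle.} There is no serious analytic difficulty; the only delicate point is the choice of $\gamma$. It must be strictly less than $q-1$ so that the H\"{o}lder exponent $p = 2/(q-\gamma)$ remains below $2$ (otherwise the dual weight factor disappears and the inequality degenerates), and simultaneously small enough that the self-improvement of Muckenhoupt weights in Lemma \ref{R-Holder}-(ii) can be invoked to keep $[\mu]_{A_{q-\gamma}}$ controlled by a constant depending only on $n$, $q$, and $M_0$. Taking $\gamma$ to be the minimum of these two thresholds completes the argument.
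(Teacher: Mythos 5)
Your proposal follows the same route as the paper: the decomposition $|g|^{2/(q-\gamma)} = (|g|^2\mu)^{1/(q-\gamma)}\mu^{-1/(q-\gamma)}$, H\"{o}lder with exponents $q-\gamma$ and $(q-\gamma)/(q-\gamma-1)$, and the self-improvement $\mu \in A_{q-\gamma}$ from Lemma \ref{R-Holder}-(ii) are exactly what the paper uses, and the exponent bookkeeping works out as you indicate. One small note: the identity ``$pq'/2 = 1$'' you quote is not obviously correct under any natural reading of $q'$, but the correct relations $p(q-\gamma)=2$ and $p/(2-p)=1/(q-\gamma-1)$ already close the argument, so this does not affect the validity of your proof.
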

\begin{proof} By Lemma \ref{R-Holder}, there exists a sufficiently small constant $\gamma = \gamma(n, q, M_0) \in (0, q-1)$ such that $\mu \in A_{q-\gamma}$ and
\begin{equation}\label{mu-q-gamma-1}
[\mu]_{A_{q-\gamma}} \leq N_0,
\end{equation}
where $N_0 = N_0(n, q, M_0)\geq 1$. We write
\[
\fint_{B} |g(x)|^{\frac{2}{q-\gamma}} dx = \fint_{B} \bigl[|g(x)|^2\mu(x)\bigr]^{\frac{1}{q-\gamma}} \mu(x)^{-\frac{1}{q-\gamma}}dx,
\]
and apply H\"{o}lder's inequality together with \eqref{mu-q-gamma-1} to obtain
\begin{align*}
\left(\fint_{B} |g(x)|^{\frac{2}{q-\gamma}} dx \right)^{\frac{q-\gamma}{2}}& \leq \left(\fint_{B} |g(x)|^2 \mu(x) dx \right)^{\frac{1}{2}} \left(\fint_{B} \mu(x)^{-\frac{1}{q-\gamma-1}} dx \right)^{\frac{q- \gamma-1}{2}} \\
& \leq [\mu]_{A_{q-\gamma}}^{\frac{1}{2}}\left(\fint_B\mu(x)dx\right)^{-\frac{1}{2}} \left(\fint_{B} |g(x)|^2 \mu(x) dx \right)^{\frac{1}{2}} \\
& \leq N\left(\frac{1}{\mu(B)}\int_{B} |g(x)|^2 \mu(x) dx \right)^{\frac{1}{2}},
\end{align*}
where $N = N(n, q, M_0)\geq 1$. This completes the proof.
\end{proof}

\begin{lemma} \label{wei-Les} For every $M_0 \geq 1$, there exist constants $\gamma=\gamma(n, M_0) \in (0, \frac{2}{n})$ sufficiently small and $N = N(n, M_0)\geq 1$ such that the following assertions hold for every weight $\beta$ satisfying \eqref{beta-cond}.
\begin{itemize}
\item[\textup{(i)}] If $n\geq 3$, then for every $g \in L^{\frac{2n}{n(1+\gamma)-2}}(B_1)$,
\begin{equation} \label{beta-1.inq}
 \frac{1}{\beta(B_1)}\int_{B_1} g(x)^2 \beta(x) dx \leq N \left(\fint_{B_{1}} |g(x)|^{\frac{2n}{n(1+\gamma)-2}} dx \right)^{\frac{n(1+\gamma)-2}{n}}.
\end{equation}
 
\item[\textup{(ii)}] If $n=1,2$, then for every $g \in L^{\frac{2(1+\gamma)}{\gamma}}(B_1)$,
\[
\frac{1}{\beta(B_1)}\int_{B} g(x)^2 \beta(x) dx    \leq N \left(\fint_{B_{1}} |g(x)|^{\frac{2(1+\gamma)}{\gamma}} dx \right)^{\frac{\gamma}{1+\gamma}}.
\]

\end{itemize}
In particular, the embedding $W^{1,2}(B_1) \hookrightarrow L^2(B_1, \beta)$ is compact.
\end{lemma}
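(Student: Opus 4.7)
The plan is to apply H\"older's inequality on $B_1$ to split the integrand $g^2\beta$, separating $g^2$ from a power of $\beta$, and then invoke a reverse H\"older-type estimate to absorb the high integrability of $\beta$ back into its $L^1$-average $\fint_{B_1}\beta=\beta(B_1)/|B_1|$. The dimensional split in (i) versus (ii) reflects that the applicable reverse H\"older exponent is different in the two cases. The final compactness claim then follows by composing a continuous embedding $L^p(B_1)\hookrightarrow L^2(B_1,\beta)$ supplied by (i) or (ii) with the Rellich-Kondrachov compact embedding $W^{1,2}(B_1)\hookrightarrow L^p(B_1)$ for a suitable sub-Sobolev exponent $p$.

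For part (i), with $n\geq 3$ and $n_0=n$, the heart of the argument is to establish
\[
\Big(\fint_{B_1}\beta^q\Big)^{1/q}\leq N\fint_{B_1}\beta
\]
for some $q>1$ compatible with the Sobolev exponent. Expanding $[\beta^{-1}]_{A_{1+2/n}}\leq M_0$ yields $(\fint_{B_1}\beta^{-1})(\fint_{B_1}\beta^{n/2})^{2/n}\leq M_0$, and the elementary Cauchy-Schwarz bound $(\fint_{B_1}\beta^{-1})^{-1}\leq \fint_{B_1}\beta$ furnishes the bridge $(\fint_{B_1}\beta^{n/2})^{2/n}\leq M_0\fint_{B_1}\beta$. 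Since $\beta^{n/2}\in A_{1+n/2}$ with bounded constant by Remark \ref{A-p-remark}(ii), Lemma \ref{R-Holder}(i) supplies a small $\gamma'=\gamma'(n,M_0)>0$ with $(\fint_{B_1}\beta^{(n/2)(1+\gamma')})^{1/(1+\gamma')}\leq N\fint_{B_1}\beta^{n/2}$; raising this to the $2/n$ power and chaining with the bridge gives the target estimate at $q=n(1+\gamma')/2$. Setting $\gamma=2\gamma'/(n(1+\gamma'))\in(0,2/n)$ arranges $q$ to be the H\"older conjugate of $p/2$ for $p=2n/(n(1+\gamma)-2)$; H\"older's inequality on $B_1$, substitution of the target bound, and division by $\beta(B_1)$ then produce \eqref{beta-1.inq}.

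Part (ii) is more direct: for $n=1,2$ we have $n_0=2$, so Remark \ref{A-p-remark}(ii) gives $\beta\in A_2$ with $[\beta]_{A_2}\leq M_0$, and Lemma \ref{R-Holder}(i) applied to $\beta$ itself produces a small $\gamma=\gamma(n,M_0)>0$ with $(\fint_{B_1}\beta^{1+\gamma})^{1/(1+\gamma)}\leq N\fint_{B_1}\beta$; H\"older's inequality with the conjugate pair $(p/2,1+\gamma)$ at $p=2(1+\gamma)/\gamma$, followed by division by $\beta(B_1)$, yields the claimed bound. For the compactness assertion, for $n\geq 3$ the exponent $p=2n/(n(1+\gamma)-2)$ lies strictly below the Sobolev exponent $2^*=2n/(n-2)$ since $\gamma>0$, while for $n=1,2$ the exponent $p=2(1+\gamma)/\gamma$ is merely finite; in either case Rellich-Kondrachov delivers $W^{1,2}(B_1)\hookrightarrow L^p(B_1)$ compactly, and composing with $L^p(B_1)\hookrightarrow L^2(B_1,\beta)$ from (i) or (ii) gives the compactness. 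I expect the principal obstacle to be identifying the role of the $A_{1+2/n}$ hypothesis on $\beta^{-1}$ in part (i): it is precisely this condition that permits the reverse H\"older output $(\fint_{B_1}\beta^{n/2})^{2/n}$ to be controlled by $\fint_{B_1}\beta$, a direction opposite to Jensen's inequality, and without it the reverse H\"older bound on $\beta^{n/2}$ alone would fail to close the estimate.
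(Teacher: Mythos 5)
Your proof is correct and produces the same exponents as the paper's, but it reaches the key weight inequality in part (i) by a genuinely different route. The paper first upgrades $\beta^{-1}\in A_{1+2/n}$ to $\beta^{-1}\in A_{1+2/n-\gamma}$ via the self-improving (openness) property of Muckenhoupt classes, i.e.\ Lemma~\ref{R-Holder}(ii), and then reads the bound $\bigl(\fint_{B_1}\beta^{n/(2-\gamma n)}\bigr)^{(2-\gamma n)/n}\le N\bigl(\fint_{B_1}\beta^{-1}\bigr)^{-1}\le N\fint_{B_1}\beta$ directly off the $A_{1+2/n-\gamma}$ definition plus the Cauchy--Schwarz bridge \eqref{1-mu-ball}. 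You instead keep the original $A_{1+2/n}$ condition, apply the bridge \emph{first} to extract $(\fint_{B_1}\beta^{n/2})^{2/n}\le M_0\fint_{B_1}\beta$, and then boost the exponent with the reverse H\"older inequality for $\beta^{n/2}\in A_{1+n/2}$, i.e.\ Lemma~\ref{R-Holder}(i). Your reparameterization $\gamma=2\gamma'/(n(1+\gamma'))$ is exactly what identifies your conjugate pair with the paper's: one can check that $n/(2-\gamma n)=(n/2)(1+\gamma')$, so the H\"older split is the same. In short, the paper invokes part (ii) of the Gehring-type lemma where you invoke part (i); the two are well-known to be equivalent, so neither is more general, but the paper's route is marginally shorter while yours isolates more explicitly the role of the $A_{1+2/n}$ hypothesis on $\beta^{-1}$ as the source of the anti-Jensen bound $(\fint\beta^{n/2})^{2/n}\lesssim\fint\beta$. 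Parts (ii) and the compactness argument coincide with the paper's.
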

\begin{proof} We begin with the proof of (i). By Lemma \ref{R-Holder}, there exist constants $\gamma = \gamma(n, M_0) \in (0, \frac{2}{n})$ and $N_0 = N_0(n, M_0)\geq 1$ such that
\begin{equation} \label{mu-q-gamma}
\beta^{-1} \in A_{1+\frac{2}{n}-\gamma}\quad \text{and}\quad
[\beta^{-1}]_{A_{1+\frac{2}{n}-\gamma}} \leq N_0.
\end{equation}
Then, by H\"{o}lder's inequality, the definition of Muckenhoupt weights, and \eqref{mu-q-gamma}, we obtain
\begin{align*}
\fint_{B_{1}} |g(x)|^2 \beta(x)dx
&\leq \left(\fint_{B_{1}} \beta(x)^{\frac{n}{2-\gamma n}} dx\right)^{\frac{2-\gamma n}{n}}\left(\fint_{B_{1}} |g(x)|^{\frac{2n}{n(1+\gamma)-2}} dx \right)^{\frac{n(1+\gamma)-2}{n}} \\
& \leq  [\beta^{-1}]_{A_{1+\frac{2}{n} -\gamma}} \left(\fint_{B_{1}} \beta(x)^{-1} dx\right)^{-1} \left(\fint_{B_{1}} |g(x)|^{\frac{2n}{n(1+\gamma)-2}} dx \right)^{\frac{n(1+\gamma)-2}{n}}  \\
& \leq N_0 \left(\fint_{B_{1}} \beta(x) dx\right)\left(\fint_{B_{1}} |g(x)|^{\frac{2n}{n(1+\gamma)-2}} dx \right)^{\frac{n(1+\gamma)-2}{n}},
\end{align*}
where in the last step we used an estimate as in \eqref{1-mu-ball} with $q=2$.  This yields \eqref{beta-1.inq}.  

\smallskip
Next, we prove (ii). From the assumption, we note that $\beta \in A_2$. Then, by Lemma \ref{R-Holder}, there exists a sufficiently small number $\gamma = \gamma(n, M_0)>0$ such that
\[
\left(\fint_{B_1} \beta(x)^{1+\gamma} dx \right)^{\frac{1}{1+\gamma}} \leq N\fint_{B_1} \beta(x) dx,\quad \text{where}\quad N = N(n, M_0)>1.
\]
From this, and by applying H\"{o}lder's inequality, we obtain
\begin{align*}
\fint_{B_{1}} |g(x)|^2 \beta(x)dx & \leq  \left(\fint_{B_{1}} |g(x)|^{\frac{2(1+\gamma)}{\gamma}} dx \right)^{\frac{\gamma}{1+\gamma}} \left(\fint_{B_{1}} \beta(x)^{1+\gamma} dx\right)^{\frac{1}{1+\gamma}} \\
& \leq N \left(\fint_{B_{1}} |g(x)|^{\frac{2(1+\gamma)}{\gamma}} dx \right)^{\frac{\gamma}{1+\gamma}} \left(\fint_{B_{1}} \beta(x) dx\right).
\end{align*}
Hence, this proves (ii).

\smallskip
Finally, observe that the last assertion in the lemma follows from the first assertion and the fact that the embedding $W^{1,2}(B_1) \hookrightarrow L^{p}(B_1)$ is compact for $p < 2^*$. The proof of the lemma is completed.
\end{proof}
\smallskip
The following lemma on interpolation inequality will be used in the local energy estimates for the equations considered in the paper.
\begin{lemma} \label{embedd-lemma} For $M_0 \geq 1$, there exist $\theta =\theta(n, M_0) \in (0,1)$ sufficiently close to $1$ and $N = N(n, M_0)>0$ such that the following assertions hold. Suppose that $\beta(x)$ is a weight satisfying \eqref{beta-cond}, and that $u\in L^2(\Gamma, W^{1,2}(B_r(x_0)))$ with some open interval $\Gamma\subset \R$. Then
\[
\begin{split}
& \frac{1}{(\beta)_{B_r(x_0)}}\fint_{Q} |u(x,t)|^2 \beta(x)dx dt  \\
& \leq N  \Big(\fint_{Q} |u|^2 dxdt \Big)^{1-\theta} \left[ \Big(\fint_{Q} |u|^{2} dxdt \Big)^{\theta} + r^{2\theta} \Big(\fint_{Q} |\nabla u|^{2} dxdt \Big)^{\theta} \right],
\end{split}
\]
where $Q= B_r(x_0) \times \Gamma$. The same assertion also holds when $Q$ is replaced by the upper-half cylinder $Q^+ = B_r^+(x_0) \times \Gamma$.
\end{lemma}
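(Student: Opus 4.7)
The plan is to combine the weighted‑to‑unweighted $L^{p_0}$ bound from Lemma \ref{wei-Les} with the classical Gagliardo–Nirenberg interpolation inequality, applied slicewise in $t$, and then to average in $t$ via Hölder's inequality. The mechanism that produces $\theta$ close to $1$ is the following: Lemma \ref{wei-Les} controls the weighted average by an unweighted $L^{p_0}$–average with $p_0>2$, where for $n\geq 3$ one has $p_0 = 2n/(n(1+\gamma)-2)$, so as the reverse‑Hölder parameter $\gamma=\gamma(n,M_0)$ shrinks, $p_0$ approaches the Sobolev exponent $2^*=2n/(n-2)$ and the Gagliardo–Nirenberg exponent $\theta = n(1/2-1/p_0)=1-n\gamma/2$ approaches $1$. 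An analogous remark applies for $n\leq 2$, where $p_0=2(1+\gamma)/\gamma$ can be taken arbitrarily large.

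At each fixed $t\in\Gamma$, I would translate and rescale $B_r(x_0)$ to the unit ball by setting $\tilde u(y,t)=u(x_0+ry,t)$ and $\tilde\beta(y)=\beta(x_0+ry)$ and apply Lemma \ref{wei-Les} to $\tilde u(\cdot,t)$; since the Muckenhoupt constant of $\tilde\beta$ equals that of $\beta$, unscaling gives
\[
\frac{1}{\beta(B_r(x_0))}\int_{B_r(x_0)}|u(x,t)|^2\beta(x)\,dx \leq N\left(\fint_{B_r(x_0)}|u(x,t)|^{p_0}\,dx\right)^{2/p_0}.
\]
Next I would apply Gagliardo–Nirenberg on $B_1$ in the form $\|v\|_{L^{p_0}(B_1)}\leq N\|v\|_{W^{1,2}(B_1)}^{\theta}\|v\|_{L^2(B_1)}^{1-\theta}$ with $\theta=n(1/2-1/p_0)$ and unscale, noting that $\|\nabla\tilde u\|_{L^2(B_1)}^2$ transforms into $Cr^2\fint_{B_r(x_0)}|\nabla u|^2$ while $L^q$‑averages of $\tilde u$ over $B_1$ equal those of $u$ over $B_r(x_0)$. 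This produces the pointwise‑in‑$t$ estimate
\[
\left(\fint_{B_r(x_0)}|u(\cdot,t)|^{p_0}\right)^{2/p_0} \leq N\Big(\fint_{B_r(x_0)}|u(\cdot,t)|^2+r^2\fint_{B_r(x_0)}|\nabla u(\cdot,t)|^2\Big)^{\theta}\Big(\fint_{B_r(x_0)}|u(\cdot,t)|^2\Big)^{1-\theta}.
\]

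Finally, using the identity $\frac{1}{(\beta)_{B_r(x_0)}}\fint_Q|u|^2\beta\,dxdt=\fint_\Gamma\frac{1}{\beta(B_r(x_0))}\int_{B_r(x_0)}|u|^2\beta\,dx\,dt$, I would integrate the pointwise estimate over $t\in\Gamma$ and apply Hölder's inequality with conjugate exponents $1/\theta$ and $1/(1-\theta)$ to factor the time integrals into products of averages over $Q$, then invoke $(a+b)^{\theta}\leq a^{\theta}+b^{\theta}$ to split the zero‑order and gradient contributions, arriving at the stated bound. The upper‑half cylinder case $Q^+=B_r^+(x_0)\times\Gamma$ follows by the same argument once the obvious half‑ball analog of Lemma \ref{wei-Les} is recorded; that analog is routine, because the weighted Hölder step in its proof only uses the $A_q$‑property of $\beta$ on balls, together with the doubling property from Lemma \ref{property}, which gives $\beta(B_r^+(x_0))\sim\beta(B_r(x_0))$ when $x_0$ lies on the flat boundary, while Gagliardo–Nirenberg on half‑balls is classical. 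The only delicate point throughout the argument is keeping $\theta$ and all constants dependent only on $(n,M_0)$, which is guaranteed by the translation‑ and scale‑invariance of the Muckenhoupt norms; no substantive conceptual obstacle arises.
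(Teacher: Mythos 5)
Your proposal is correct and follows essentially the same route as the paper: rescale to the unit ball, apply Lemma \ref{wei-Les} slicewise in $t$, interpolate the $L^{p_0}$ norm between $L^2$ and $L^{2^*}$ (you call it Gagliardo--Nirenberg; the paper writes it as Hölder interpolation plus the Sobolev inequality, which is the same thing), and integrate in time with Hölder's inequality. The computed exponent $\theta = n(1/2 - 1/p_0) = 1 - n\gamma/2$ matches the paper's.
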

\begin{proof} We only provide the proof when the spatial domain is $B_r(x_0)$, as the proof for the upper-half cylinder case is similar. By using the dilation $(x,t) \mapsto (r (x-x_0), t)$, we can assume without loss of generality that $r=1$ and $x_0 = 0$.  

\smallskip
We consider the case $n \geq 3$. Due to \eqref{beta-cond} and Lemma \ref{wei-Les}-(i), there exist numbers $\gamma = \gamma(n, M_0) \in (0, \frac{2}{n})$ small and $N = N(n, M_0)\geq 1$ such that 
\begin{align*}
\fint_{B_{1}} |u(x,t)|^2 \beta(x)dx \leq N \left(\fint_{B_{1}} |u(x,t)|^{\frac{2n}{n(1+\gamma)-2}} dx \right)^{\frac{n(1+\gamma)-2}{n}}  \left(\fint_{B_{1}} \beta(x) dx\right),
\end{align*}
for a.e.~$t \in \Gamma$. On the other hand,  it follows from H\"{o}lder's inequality and the Sobolev inequality that
\begin{align*}
\|u(\cdot, t)\|_{L^{\frac{2n}{n(1+\gamma)-2}}(B_1)}^2 & \leq \|u(\cdot, t)\|_{L^2(B_1)}^{2(1-\theta)} \|u(\cdot, t)\|_{L^{2^*}(B_1)}^{2\theta} \\
& \leq N(n)\|u(\cdot, t)\|_{L^2(B_1)}^{2(1-\theta)}  \Big[ \| u(\cdot, t)\|_{L^{2}(B_1)}^{2\theta}+ \|\nabla u(\cdot, t)\|_{L^{2}(B_1)}^{2\theta}\Big],
\end{align*}
where 
\[
\theta = \frac{n}{2} - \frac{n(1+\gamma) -2}{2} = 1-\frac{n \gamma}{2} \in (0, 1) \quad \text{and} \quad 2^* = \frac{2n}{n-2}.
\]
As a result, we obtain
\begin{align*}
& \frac{1}{(\beta)_{B_1}} \fint_{B_{1}} |u(x,t)|^2 \beta(x)dx \\
& \leq N \Big(\fint_{B_{1}} |u(x,t)|^2 dx \Big)^{1-\theta} \left[ \Big(\fint_{B_{1}} | u(x,t)|^{2} dx \Big)^{\theta} + \Big(\fint_{B_{1}} |\nabla u(x,t)|^{2} dx \Big)^{\theta} \right]
\end{align*}
for a.e.~$t \in \Gamma$, where $N = N(n, M_0)>0$. Now, we integrate the last inequality with respect to $t$ on $\Gamma$, and then use H\"{o}lder's inequality and the triangle inequality in the time integration to obtain
\begin{align*}
& \frac{1}{(\beta)_{B_1}} \fint_{Q} |u(x,t)|^2 \beta(x)dx dt \\
& \leq N\Big(\fint_{Q} |u(x,t)|^2 dx \Big)^{1-\theta} \left[\Big(\fint_{Q} | u(x,t)|^{2} dxdt \Big)^{\theta}  + \Big(\fint_{Q} |\nabla u(x,t)|^{2} dxdt \Big)^{\theta} \right].
\end{align*}
This completes the proof of the lemma when $n \geq 3$.

\smallskip
Next, we consider the case $n=1, 2$. By  Lemma \ref{wei-Les}-(ii), there exists a sufficiently small number $\gamma = \gamma(n, M_0)>0$ such that
\[
\frac{1}{(\beta)_{B_1}}\fint_{B_{1}} |u(x,t)|^2 \beta(x)dx  \leq N \left(\fint_{B_{1}} |u(x,t)|^{\frac{2(1+\gamma)}{\gamma}} dx \right)^{\frac{\gamma}{1+\gamma}}.
\]
Then, the result follows similarly by using an interpolation inequality and the Sobolev inequality, as we did in the case $n\geq 3$. The proof of the lemma is completed.
\end{proof}

\subsection{Weighted parabolic cylinders and a quasi-distance function}  \label{cylinder-distance-sec}
Let $M_0 \geq 1$ be a fixed number and let $\beta: \mathbb{R}^n \rightarrow [0, \infty)$ be a weight satisfying \eqref{beta-cond}:
\begin{equation*} 
\beta^{-1} \in A_{1+\frac{2}{n_0}} \quad \text{and} \quad [\beta^{-1}]_{A_{1+\frac{2}{n_0}}} \leq M_0  \quad \text{for}\quad  n_0 =\max\{n,2\}.
\end{equation*}
Now, for $z_0 = (x_0, t_0) \in \R^{n}\times \R$ and for $r>0$, let us define the \emph{weighted parabolic cylinder} of radius $r$ centered at $z_0$ by
\begin{equation} \label{cylinder-def}
Q_{r, \beta}(z_0) =B_r(x_0)  \times \Gamma_{\beta, z_0}(r), \quad \text{where}   \quad \Gamma_{\beta, z_0} (r) =(t_0 -r^2 \Psi_{\beta,x_0}(r), t_0],
\end{equation}
in which
\begin{equation} \label{t-height}
\Psi_{\beta, x_0}(r)=  (\beta^{\frac{n_0}{2}})_{B_r(x_0)}^{\frac{2}{n_0}} = \Big( \fint_{B_{r}(x_0)} \beta(x)^{\frac{n_0}{2}} dx \Big)^{\frac{2}{n_0}}.
\end{equation}
By a simple computation, we have
\begin{equation}\label{heights}
r^2 \Psi_{\beta, x_0}(r) =\left\{
\begin{array}{cl}
   \frac{1}{2} r\, \beta(B_r(x_0)) & \text{if}\quad n = 1,\\[6pt]
 \sigma_n^{-\frac{2}{n}} \left[\beta^{\frac{n}{2}}(B_r(x_0))\right]^{\frac{2}{n}} & \text{if}\quad n\geq 2,
\end{array}\right.
\end{equation}
where $\sigma_n$ denotes the Lebesgue measure of the unit ball $B_1\subset \R^n$, 
\[
\beta(B_r(x_0)) = \int_{B_r(x_0)} \beta(x) dx, \quad \text{and}  \quad \beta^{\frac{n}{2}}(B_r(x_0)) = \int_{B_r(x_0)} \beta(x)^{\frac{n}{2}} dx.
\]
It then follows that the class of weighted cylinders satisfies the following inclusion property:
\[
Q_{r_1,\, \beta}(z_0) \subset Q_{r_2,\, \beta}(z_0), \quad \text{for\ } \, 0<r_1 \leq r_2, \  \forall\, z_0 \in \mathbb{R}^{n+1}.
\]
The Figure \ref{weighted cylinders} below illustrates our definitions on the class of weighted cylinders $Q_{r,\beta}(z_0)$ under special scenarios of the weight $\beta$.
\begin{figure}[h]
\centering
\definecolor{color2}{RGB}{229, 117, 117}
\resizebox{0.88\columnwidth}{!}{
\begin{tikzpicture}[scale=0.37] 
\fill[purple, fill opacity=0.4] 
(-10, 0) arc[start angle=0, end angle=180, x radius=2, y radius=0.7]  
-- (-14, 0) -- (-14, -6) -- (-10, -6) -- cycle ;
\fill[purple, fill opacity=0.4]      
(-10,-6) arc[start angle=0, end angle=-180, x radius=2, y radius=0.7];               
\fill[black] (-12, 0) circle (2.5pt);
\draw[thick] (-12,0) -- (-10.5,0.45); \node at (-10.2, 0.97){\Large {$r$}};
\draw[thick][<->] (-15, 0) -- (-15, -6);
\node at (-18.2, -3) {\large{{$r^2\Psi_{\beta, x_0}(r)$}}};
\node at (-12.7, 0){\Large{$z_0$}};
\node at (-12,-8.8){\Large{$ \beta(x)> 1$}};
\draw [thick] (-12, 0) ellipse (-2 and 0.7);
\draw [thick] (-14,0) -- (-14,-6);
\draw [thick] (-10,0) -- (-10,-6);
\draw [thick] (-14,-6) arc[start angle=180, end angle=360, x radius=2, y radius=0.7];
\draw [thick, dashed] (-10,-6) arc[start angle=0, end angle=180, x radius=2, y radius=0.7];
\fill[orange, fill opacity=0.2] 
(0,-2) arc[start angle=0, end angle=180, x radius=2, y radius=0.7]  
-- (-4, -2) -- (-4, -6) -- (0, -6) -- cycle ;
\fill[orange, fill opacity=0.2]      
(0,-6) arc[start angle=0, end angle=-180, x radius=2, y radius=0.7];             
\fill[black] (-2, -2) circle (2.5pt);
\draw[thick] (-2,-2) -- (-0.5,-1.55); \node at (-0.2,-1.0) {\Large{$r$}};
\draw[thick][<->] (0.7,-2) -- (0.7,-6);
\node at (1.7, -3.8) {\Large{$r^2$}};
\node at (-2.7,-2){\Large{$z_0$}};
\node at (-2,-8.8){\Large{$\beta(x)=1$}};
\draw [thick] (-2,-2) ellipse (2 and 0.7);
\draw [thick] (-4,-2) -- (-4,-6);
\draw [thick] (0,-2) -- (0,-6);
\draw [thick] (-4,-6) arc[start angle=180, end angle=360, x radius=2, y radius=0.7];
\draw [thick, dashed] (0,-6) arc[start angle=0, end angle=180, x radius=2, y radius=0.7];
\fill[purple, fill opacity=0.40] 
(10,-4) arc[start angle=0, end angle=180, x radius=2, y radius=0.7]  
-- (6, -4) -- (6, -6) -- (10, -6) -- cycle ;
\fill[purple, fill opacity=0.40]      
(10,-6) arc[start angle=0, end angle=-180, x radius=2, y radius=0.7]; 
\fill[black] (8, -4) circle (2.5pt);
\draw[thick] (8,-4) -- (9.5,-3.55); \node at (9.8,-3) {\Large{$r$}};
\draw[thick][<->] (10.7,-4) -- (10.7,-6);
\node at (14, -5) {\large{{$r^2\Psi_{\beta, x_0}(r)$}}};
\node at (7.3,-4){\Large{$z_0$}};
\node at (8,-8.8){\Large{$0<\beta(x)< 1$}};
\draw [thick] (8,-4) ellipse (2 and 0.7);
\draw [thick] (10,-4) -- (10,-6);
\draw [thick] (6,-4) -- (6,-6);
\draw [thick] (6,-6) arc[start angle=180, end angle=360, x radius=2, y radius=0.7];
\draw [thick, dashed] (10,-6) arc[start angle=0, end angle=180, x radius=2, y radius=0.7];
\end{tikzpicture}}
\caption{Weighted cylinders $Q_{r,\, \beta}(z_0)$ centered at $z_0=(x_0, t_0)$.}\label{weighted cylinders}
\end{figure}
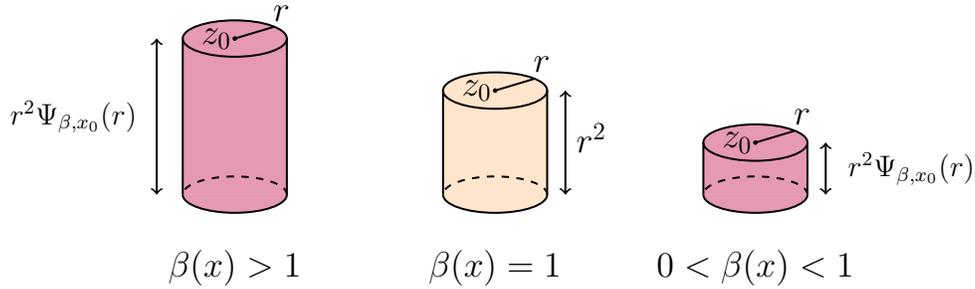

In addition, we denote the upper half ball in $\mathbb{R}^n$ by
\[
B_r^+(x_0) = \left\{x = (x', x_n) \in \mathbb{R}^{n-1} \times (0, \infty): x \in B_r(x_0)\right\},
\]
and 
its flat boundary portion by
\begin{equation} \label{flat-bdr}
T_r(x_0) =  B_r(x_0) \cap \big(\mathbb{R}^{n-1} \times \{0\}\big).
\end{equation}
In addition, the upper half parabolic weighted cylinder of radius $r$ centered at $z_0=(x_0, t_0)$ is denoted by
\begin{equation}\label{half cylinder}
Q_{r, \beta}^+(z_0) = B_r^+(x_0) \times \Gamma_{\beta, z_0}(r).
\end{equation}
Throughout the paper, when $z_0 =(0, 0)$ and $x_0 = 0$, we omit $z_0$ and $x_0$ in the notation. For example, we write
\[
 Q_{r, \beta} = Q_{r, \beta}(0), \quad B_r = B_r(0),\quad T_r = T_r(0) \quad \Psi_\beta(r) = \Psi_{\beta, 0}(r), \quad \Gamma_{\beta}(r) = \Gamma_{\beta, 0} (r).
\]
The same definitions can be defined for $Q_{r, \beta}^+$ and $B_r^+$.

\smallskip
Next, we introduce a quasi-distance that endows the class of parabolic weighted cylinders. To begin with, given $z_0=(x_0,t_0)\in\R^{n}\times \R$, we let $h_{x_0}: [0,\infty)\rightarrow [0,\infty)$ be defined by
\begin{equation*}
h_{x_0}(r)=\left\{
 \begin{array}{cl}
 0 \quad &\text{if}\quad r=0,\\[4pt]
r^2\Psi_{\beta,x_0}(r) \quad &\text{if}\quad r\in(0,\infty),\\
 \end{array}\right.
\end{equation*}
where $\Psi_{\beta, x_0}$ is given in \eqref{t-height}.  From  \eqref{heights}, it follows that $h_{x_0}(r)$ is strictly increasing in $r$. Hence, its inverse function, $h^{-1}_{x_0}$, exists.

\smallskip
Now, for any $z=(x,t)\in \R^{n} \times \R$ with $t\leq t_0$, we denote
\begin{equation*}
\begin{aligned}
\tilde \rho_{\beta}(z,z_0)
=\inf\big\{r>0: z\in Q_{r,\beta}(z_0)\big\}=\max\big\{|x-x_0|,\ h_{x_0}^{-1}(t_0-t)\big\}.
\end{aligned}
\end{equation*}
Then, for  any $z=(x,t)\in \R^{n}\times \R$, we define
\begin{equation}\label{rho def}
\rho_{\beta}(z, z_0)=
\left\{
\begin{array}{rl}
\tilde \rho_{\beta}(z, z_0) \quad &\text{if}\quad t\leq t_0,\\[4pt]
\tilde \rho_{\beta}(z_0, z)\quad &\text{if}\quad t>t_0.
\end{array}\right.
\end{equation}
Intuitively, $\rho=\rho_{\beta}(z, z_0)$ represents the spatial radius for which $z$ lies on the parabolic boundary $\partial_pQ_{\rho,\beta}(z_0)$ if $t \leq t_0$ or $z_0$ lies on the parabolic boundary $\partial_pQ_{\rho,\beta}(z)$ when $t >t_0$. 
From the definition in \eqref{rho def}, it is straightforward to see that
\begin{equation}\label{d-r}
|x-x_0|\leq\rho_{\beta}(z, z_0),\quad \text{for all}\quad z=(x,t),\ z_0=(x_0,t_0)\in \R^{n} \times \R.
\end{equation}
Additionally, for any $z_0 = (x_0, t_0) \in \R^{n}\times \R$ and $r>0$, we also have
\begin{equation}\label{cylinder-quasi}
Q_{r,\beta}(z_0) = \{z = (x,t) \in \R^{n}\times \R: \quad \rho_{\beta} (z_0, z) <r \ \text{ and }\  t  \leq t_0\}.
\end{equation}
\smallskip
It turns out that $\rho_\beta$ defines a quasi-metric on $\R^{n}\times \R$, as stated in the following lemma, whose proof is given in Appendix \ref{Appendix A}.

\begin{lemma}\label{quasi-metric lemma} Suppose that \eqref{beta-cond} holds. Then, the function $\rho_\beta$ defined in \eqref{rho def}  is a quasi-metric. In particular, there exists a constant $\Lambda= \Lambda (n, M_0) \geq 1$ such that
\begin{equation}\label{quasi-tri}
\rho_{\beta}(z_0,\bar{z})\leq \Lambda \big[\rho_{\beta}(z_0,z_1)+\rho_{\beta}(z_1,\bar{z})\big], \quad \forall z_0,\ z_1,\ \bar{z} \ \in \ \R^{n} \times \R.
\end{equation}
Moreover, $\Lambda$ can be chosen as
\[
\Lambda(n,M_0)=\max\{2^{\frac{1}{2\zeta_0}}N_2^{\frac{1}{n\zeta_0}},\, 2\}\geq 2,
\]
where $N_2(n,M_0)>1$ and $\zeta_0(n,M_0)\in (0,1)$ are the constants in Lemma \ref{property}.
\end{lemma}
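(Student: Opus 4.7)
The plan is to verify positivity, symmetry, and the quasi-triangle inequality separately. Positivity and the property $\rho_\beta(z,z_0)=0\iff z=z_0$ are immediate from \eqref{rho def}, \eqref{d-r}, and the strict monotonicity of $h_{x_0}$; symmetry follows by unwinding \eqref{rho def} and noting that $\rho_\beta(z,z')$ always takes the form $\max\{|x-x'|,h_{x^*}^{-1}(t^*-t_*)\}$, where $(x^*,t^*)$ is whichever of $z,z'$ has the larger time. All the substance is in the quasi-triangle inequality \eqref{quasi-tri}, and for that I will reduce by symmetry to $\bar t\leq t_0$, bound the spatial component via $|x_0-\bar x|\leq |x_0-x_1|+|x_1-\bar x|\leq a+b$ with $a=\rho_\beta(z_0,z_1)$, $b=\rho_\beta(z_1,\bar z)$, and split the temporal analysis into three sub-cases indexed by the position of $t_1$.

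Two of these sub-cases are easy. If $t_1\leq \bar t$, the definition gives $t_0-\bar t\leq t_0-t_1\leq h_{x_0}(a)\leq h_{x_0}(a+b)$; if $t_1\geq t_0$, it gives $t_0-\bar t\leq t_1-\bar t\leq h_{x_1}(b)$, which is at most $h_{x_0}(a+b)$ via the containment $B_b(x_1)\subset B_{a+b}(x_0)$ (using $|x_0-x_1|\leq a$) together with the closed-form expression $h_{x_0}(r)^{n_0/2}=\sigma_n^{-1}r^{n_0-n}\overline{\beta}(B_r(x_0))$, where $\overline{\beta}:=\beta^{n_0/2}$, derived from \eqref{heights} uniformly in dimension. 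Both of these sub-cases admit $\Lambda=1$. The genuinely non-trivial sub-case is $\bar t\leq t_1\leq t_0$, in which unwinding the definitions yields $t_0-\bar t\leq h_{x_0}(a)+h_{x_1}(b)$. There, the inclusions $B_a(x_0),\,B_b(x_1)\subset B_{a+b}(x_0)$ combined with the closed form give $h_{x_0}(a)^{n_0/2}+h_{x_1}(b)^{n_0/2}\leq 2\,h_{x_0}(a+b)^{n_0/2}$, and the elementary convexity estimate $(A+B)^{n_0/2}\leq 2^{n_0/2-1}(A^{n_0/2}+B^{n_0/2})$, valid since $n_0\geq 2$, then produces $h_{x_0}(a)+h_{x_1}(b)\leq 2\,h_{x_0}(a+b)$.

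The final step is to absorb the factor $2$ into the argument of $h_{x_0}$, and this is where the Muckenhoupt hypothesis \eqref{beta-cond} enters via Lemma \ref{property}(ii). Applied to $B_r(x_0)\subset B_{\Lambda r}(x_0)$, it gives $\overline{\beta}(B_{\Lambda r}(x_0))\geq N_2^{-1}\Lambda^{n\zeta_0}\overline{\beta}(B_r(x_0))$, so the closed form yields the scale-growth estimate $h_{x_0}(\Lambda r)^{n_0/2}/h_{x_0}(r)^{n_0/2}\geq N_2^{-1}\Lambda^{n_0-n+n\zeta_0}$. Requiring this ratio to exceed $2^{n_0/2}$ forces $\Lambda\geq 2^{1/(2\zeta_0)}N_2^{1/(n\zeta_0)}$, which is precisely the constant claimed in the lemma (the maximum against $2$ merely handles the trivial sub-cases where the spatial component dominates). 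The principal obstacle in the argument is not any single estimate but the bookkeeping around the basepoint-dependence of $h_{x_0}$: collapsing a sum $h_{x_0}(a)+h_{x_1}(b)$ at two different centers into a single $h_{x_0}$ evaluated at a comparable radius requires simultaneously the geometric containment $B_b(x_1)\subset B_{a+b}(x_0)$, the identity $h_{x_0}(r)^{n_0/2}=\sigma_n^{-1}r^{n_0-n}\overline{\beta}(B_r(x_0))$ to convert everything into weight integrals, and the quantitative power-type growth of $\overline{\beta}$ from Lemma \ref{property}(ii) to finally swallow the constant $2$.
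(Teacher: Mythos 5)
Your proof is correct, and it arrives at the same constant $\Lambda = \max\{2^{1/(2\zeta_0)}N_2^{1/(n\zeta_0)},\,2\}$, but the organization is genuinely cleaner than the paper's in the hard case. The paper normalizes by assuming $\rho_\beta(z_0,\bar z) > \rho_\beta(z_0,z_1) + \rho_\beta(z_1,\bar z)$ at the outset, splits on whether the middle time lies past $t_0$ or not (two cases, one giving $\Lambda=2$), and in the remaining case runs a contradiction argument: supposing $\rho_\beta(z_0,\bar z) > \bar\Lambda\max\{a,b\}$, it uses Lemma \ref{property}(ii) to show $h_{x_0}(a) + h_{x_1}(b) < h_{x_0}(\rho_\beta(z_0,\bar z))$, hence $(t_0-t_1)+(t_1-\bar t) < t_0-\bar t$, a contradiction. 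You instead handle all three time-orderings of $t_1$ directly, get $\Lambda = 1$ for free in the two easy sub-cases by exploiting the closed-form identity $h_{x_0}(r)^{n_0/2}=\sigma_n^{-1}r^{n_0-n}\overline\beta(B_r(x_0))$ with the ball containment, and then prove the inequality $h_{x_0}(a)+h_{x_1}(b)\leq h_{x_0}(\Lambda(a+b))$ constructively in the hard sub-case. The ingredients are identical — the closed form of $h_{x_0}$, containment of balls, and the power-growth estimate from Lemma \ref{property}(ii) — so this is fundamentally the same argument rearranged, but avoiding the contradiction makes the role of each hypothesis more transparent and exhibits exactly where the two factors of $\Lambda$ come from (one for summing the two contributions, one for the doubling).

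Two small remarks. First, your power-mean step $(A+B)^{n_0/2}\leq 2^{n_0/2-1}(A^{n_0/2}+B^{n_0/2})$ is unnecessary: once you know $h_{x_0}(a)^{n_0/2}\leq h_{x_0}(a+b)^{n_0/2}$ and $h_{x_1}(b)^{n_0/2}\leq h_{x_0}(a+b)^{n_0/2}$, taking $(n_0/2)$-th roots gives $h_{x_0}(a)\leq h_{x_0}(a+b)$ and $h_{x_1}(b)\leq h_{x_0}(a+b)$, hence $h_{x_0}(a)+h_{x_1}(b)\leq 2h_{x_0}(a+b)$ immediately. Second, the final computation $\Lambda \geq (2^{n_0/2}N_2)^{1/(n_0-n+n\zeta_0)}$ gives the stated $2^{1/(2\zeta_0)}N_2^{1/(n\zeta_0)}$ only when $n\geq 2$ (so $n_0=n$); for $n=1$ it gives the smaller $(2N_2)^{1/(1+\zeta_0)}$. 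So "precisely the constant claimed in the lemma" is a mild overstatement — the claimed constant is a valid upper bound for all $n$, but for $n=1$ your computation actually shows something slightly sharper.
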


\section{Function spaces, a compactness lemma, and weak solutions} \label{Fn-spaces}
\subsection{Function spaces and a compactness lemma} We begin with the following definition of function spaces that are needed for studying \eqref{main-eqn}.

\begin{definition} \label{W-space} Let $p \in (1, \infty)$, $\Omega \subset \mathbb{R}^n$ be a domain, and $S < T$ be real numbers. Also, let $\beta: \Omega\rightarrow [0, \infty)$ be a locally integrable function. A function $u \in L^p((S, T), W^{1,p}(\Omega))$ is said to be in $\W^{1,p}(\Omega \times (S,T), \beta)$ if 
\[   \beta u_t \in L^{p}((S, T), W^{-1, p}(\Omega)), 
\]  
where $W^{-1,p}(\Omega)$ denotes the dual space of $W^{1, p'}_0(\Omega)$ with $\frac{1}{p} + \frac{1}{p'} =1$. We write
\begin{align*}
\|u\|_{\W^{1,p}(\Omega \times (S,T), \beta)} & = \|u\|_{L^p((S,T), W^{1,p}(\Omega))}  +  \|\beta u_t\|_{ L^{p}((S, T), W^{-1, p}(\Omega))}.
\end{align*}
\end{definition} 

\smallskip
\noindent
The following definition will be needed in the study of local boundary regularity for equation \eqref{main-eqn}.
\begin{definition}  \label{hat-W-space} Let $B^+ = B_r^+(x_0)$ be an upper ball with some $r>0$ and $x_0 \in \mathbb{R}^n$. Let $S < T$ be real numbers, $p \in (1, \infty)$, and let $\beta: B^+ \rightarrow [0, \infty)$ be a locally integrable function. A function $u \in L^p((S, T), W^{1,p}(B^+))$ is said to be in $\hW^{1,p}(B^+ \times (S,T), \beta)$ if $u \vert_{T_r(x_0) \times (S, T)} =0$ in the sense of trace, and  
\[  
\beta u_t \in L^{p}((S, T), W^{-1, p}(B^+)), 
\]  
where $W^{-1,p}(B^+)$ denotes the dual space of $W^{1, p'}_0(B^+)$  with $\frac{1}{p} + \frac{1}{p'} =1$. We write
\begin{align*}
\|u\|_{\W^{1,p}(B^+ \times (S,T), \beta)} & = \|u\|_{L^p((S,T), W^{1,p}(B^+))}  +  \|\beta u_t\|_{ L^{p}((S, T), W^{-1, p}(B^+))}.
\end{align*}
\end{definition}
\noindent
Note that from the definitions, if $\beta(x)=1$, then
\begin{align*}
& \W^{1,p}(\Omega \times (S,T))=\W^{1,p}(\Omega \times (S,T), 1), \quad \text{and}\\
& \hW^{1,p}(B^+ \times (S,T)) =\hW^{1,p}(B^+ \times (S,T), 1),
\end{align*}
where $\W^{1,p}(\Omega \times (S,T))$ and $\hW^{1,p}(B^+ \times (S,T))$ are the standard un-weighted parabolic Sobolev function spaces.

\smallskip
\begin{lemma} \label{conti-weak-space}
Let $M_0 \geq 1$, and let $\beta$ be a weight satisfying \eqref{beta-cond}. Then, for every $u \in \W^{1,2}(\Omega \times (S,T), \beta)$ and for every $\varphi \in C_0^\infty(\Omega)$, we have $u \varphi \in L^\infty((S, T), L^2(\Omega, \beta))$, and $u \varphi$ has an equivalent function in $C([S,T], L^2(\Omega, \beta))$. Moreover, for each $h>0$ sufficiently small, we have
\begin{equation} \label{h-S-integrate-0402}
\begin{split}
& \int_{S}^{T-h}\| [u(\cdot, t+h) - u(\cdot, t)] \varphi\|_{L^2(\Omega, \beta)}^2 dt \\
& \leq  2 h^{1/2}\|\beta u_t\|_{L^2((S,T), W^{-1, 2}(\Omega))}  \|u\varphi^2\|_{L^2((S, T), W^{1,2}(\Omega))}^2.
\end{split}
\end{equation}
The same assertions also hold when $\W^{1,2}(\Omega \times (S,T), \beta)$ is replaced by $\hat{\W}^{1,2}(B_r^+(x_0) \times (S,T), \beta)$ with $\varphi \in C_0^\infty(B_r(x_0))$ for any $r>0$ and $x_0 \in \mathbb{R}^n$.
\end{lemma}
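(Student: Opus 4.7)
The plan is to build a Lions-Magenes type integration-by-parts identity for $u\varphi$ via time-mollification and then read off all three assertions from it. The key observation is that because $\varphi\in C_0^\infty(\Omega)$, the function $u(\cdot,t)\varphi^2$ lies in $W^{1,2}_0(\Omega)$ for a.e.\ $t$, so the duality pairing with $\beta u_t(\cdot,t)\in W^{-1,2}(\Omega)$ is well defined; moreover $\beta$ is $t$-independent and therefore commutes with mollification in $t$.

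Concretely, extend $u$ in time by reflection across $t=S$ and $t=T$, and set $u_\varepsilon=u*_t\eta_\varepsilon$ for a standard one-dimensional mollifier $\eta_\varepsilon$. Then $u_\varepsilon$ is smooth in $t$, $u_\varepsilon\to u$ in $L^2((S,T),W^{1,2}(\Omega))$, and $\beta(u_\varepsilon)_t=(\beta u_t)_\varepsilon\to \beta u_t$ in $L^2((S,T),W^{-1,2}(\Omega))$. For the smooth $u_\varepsilon$ the product rule yields
\[
\frac{1}{2}\frac{d}{dt}\int_\Omega \beta u_\varepsilon^2\varphi^2\, dx=\langle\beta(u_\varepsilon)_t,\, u_\varepsilon\varphi^2\rangle_{W^{-1,2},W^{1,2}_0}.
\]
Integrating on $[t_1,t_2]\subset(S,T)$ and passing $\varepsilon\to 0$ produces
\[
\frac{1}{2}\int_\Omega \beta u(t_2)^2\varphi^2\,dx-\frac{1}{2}\int_\Omega \beta u(t_1)^2\varphi^2\,dx=\int_{t_1}^{t_2}\langle\beta u_s,\, u\varphi^2\rangle\,ds.
\]
Since the right-hand side is absolutely continuous in $(t_1,t_2)$, after modification on a null set of times $u\varphi\in C([S,T],L^2(\Omega,\beta))$, which subsumes the $L^\infty((S,T),L^2(\Omega,\beta))$ claim.

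For the increment estimate, apply the same identity but with the $s$-independent test function $[u(\cdot,t+h)-u(\cdot,t)]\varphi^2\in W^{1,2}_0(\Omega)$: for a.e.\ $t\in(S,T-h)$,
\[
\int_\Omega \beta[u(t+h)-u(t)]^2\varphi^2\,dx=\int_t^{t+h}\langle\beta u_s,\,[u(t+h)-u(t)]\varphi^2\rangle\,ds.
\]
Bounding the pairing by $\|\beta u_s\|_{W^{-1,2}}\,\|[u(t+h)-u(t)]\varphi^2\|_{W^{1,2}}$, pulling the $s$-independent factor outside the $s$-integral, and applying Cauchy-Schwarz first in $s$ on $(t,t+h)$ and then in $t$ on $(S,T-h)$, together with the Fubini bound $\int_S^{T-h}\!\int_t^{t+h}\|\beta u_s\|_{W^{-1,2}}^2\,ds\,dt\le h\|\beta u_t\|_{L^2((S,T),W^{-1,2})}^2$ and the triangle inequality $\|[u(t+h)-u(t)]\varphi^2\|_{W^{1,2}}\le\|u(t+h)\varphi^2\|_{W^{1,2}}+\|u(t)\varphi^2\|_{W^{1,2}}$ followed by translation invariance of the $L^2_t$ norm, yields \eqref{h-S-integrate-0402}.

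The main technical obstacle is justifying the identity $\beta(u_\varepsilon)_t=(\beta u_t)_\varepsilon$ in the dual space $W^{-1,2}(\Omega)$ despite $\beta$ not being in $L^\infty$; this is handled by testing against $\phi\in C_0^\infty(\Omega)$, for which $\beta\phi$ is locally integrable under \eqref{beta-cond}, and applying Fubini to commute the time-convolution with the duality pairing. Equally delicate is the endpoint behavior at $t=S$ and $t=T$, which the reflection extension resolves while preserving the relevant norms up to constants. The half-space version is verbatim: the trace hypothesis $u|_{T_r(x_0)\times(S,T)}=0$ combined with the compact support of $\varphi\in C_0^\infty(B_r(x_0))$ forces $u\varphi^2\in W^{1,2}_0(B_r^+(x_0))$, so the same mollification-and-duality argument transfers directly.
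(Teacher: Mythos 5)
Your proof is correct in substance and takes a genuinely different, more classical route than the paper's. The paper establishes the energy identity by invoking "a suitable modification in time" (i.e., a Steklov-average argument), and derives the increment estimate by applying Cauchy--Schwarz on $(t,t+h)$ with $\|\beta u_\tau\|_{L^2((t,t+h),W^{-1,2})}\le\|\beta u_t\|_{L^2((S,T),W^{-1,2})}$, then integrating in $t$. You instead mollify in time (with reflection across the endpoints), pass to the limit to get the Lions--Magenes identity, and then apply Cauchy--Schwarz in $s$ first, keep the localized $L^2((t,t+h))$ factor, and apply Cauchy--Schwarz in $t$ together with Fubini. Traced carefully, your bookkeeping yields
\[
\int_S^{T-h}\|(u(\cdot,t+h)-u(\cdot,t))\varphi\|^2_{L^2(\Omega,\beta)}\,dt
\le 2h\,\|\beta u_t\|_{L^2((S,T),W^{-1,2}(\Omega))}\,\|u\varphi^2\|_{L^2((S,T),W^{1,2}(\Omega))},
\]
which is linear in $h$ rather than $h^{1/2}$, dimensionally homogeneous, and strictly stronger than \eqref{h-S-integrate-0402}. (Incidentally, the paper's displayed right-hand side in \eqref{h-S-integrate-0402} is dimensionally off: integrating their own pointwise bound in $t$ produces $2h^{1/2}\|\beta u_t\|_{L^2 W^{-1,2}}\|u\varphi^2\|_{L^1((S,T),W^{1,2})}$, not $\|u\varphi^2\|^2_{L^2 W^{1,2}}$. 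Either form suffices for the only application in Proposition \ref{compactness-lemma}, where all that is needed is uniform decay as $h\to 0$, so the discrepancy is harmless.) One point you should make explicit: the passage from absolute continuity of $t\mapsto\|u(t)\varphi\|^2_{L^2(\Omega,\beta)}$ to strong continuity of $t\mapsto u(t)\varphi$ in $L^2(\Omega,\beta)$ also requires weak continuity, which follows by pairing against $\psi\in C^\infty_0(\Omega)$ (so $\varphi\psi\in W^{1,2}_0(\Omega)$) and using absolute continuity of $t\mapsto\int_{t_1}^t\langle\beta u_s,\varphi\psi\rangle\,ds$; the paper is equally terse on this step, but it is worth recording.
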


\begin{proof} The proof is a modification of the classical one when $\beta\equiv1$. Let $w (x,t)= \beta(x) u(x,t)$ with $(x, t) \in \Omega \times (S, T)$. Due to Lemma \ref{embedd-lemma}, 
\begin{equation} \label{tau-a.e.0401}
\| u(\cdot, t)\|_{L^2(\Omega, \beta)} <\infty \quad \text{for a.e.} \ t \in (S,T).
\end{equation}
We then fix some $s \in (S, T)$ so that 
\begin{equation}  \label{a.e.tau.fit}
 \| u(\cdot, s)\|_{L^2(\Omega, \beta)} <\infty  \quad \text{and} \quad \| u(\cdot, s)\|_{W^{1,2}(\Omega)} < \infty.
\end{equation}
Now, from Lemma \ref{embedd-lemma} and by taking a suitable modification in the time variable if needed, we have
\[
\frac{d}{dt} \| \big(u(\cdot, t) - u(\cdot, s)\big)\varphi\|_{L^2(\Omega, \beta)}^2 = 2 \int_{\Omega} w_t(x,t) \big(u(x,t) - u(x, s)\big) \varphi(x)^2 dx
\]
for any $\varphi \in C_0^\infty(\Omega)$. Therefore, for a.e.~$t \in (S, T)$, by Minkowski's inequality and by applying H\"{o}lder's inequality in the time integration, we obtain
\begin{equation}\label{u-s-t}
\begin{aligned}
& \|\big(u(\cdot, t) -  u(\cdot, s)\big) \varphi\|_{L^2(\Omega, \beta)}^2  =2 \int_s^t \int_{\Omega} w_t(x,\tau) \big(u(x,\tau) - u(x, s)\big) \varphi(x)^2 dx d\tau\\
 & \leq 2 \int_s^t  \|w_t(x,\tau)\|_{W^{-1, 2}(\Omega)} \|\big(u(\cdot,\tau) -  u(\cdot, s)\big)\varphi^2\|_{W^{1,2}(\Omega)} d\tau\\
 & \leq 2\|w_t\|_{L^2((s,t), W^{-1, 2}(\Omega))} \Big[ \|u \varphi^2\|_{L^2((s,t), W^{1,2}(\Omega))} + |t-s|^{\frac{1}{2}} \|u(\cdot, s)\varphi^2\|_{W^{1,2}(\Omega)}\Big].
\end{aligned}
\end{equation}
Thus,
\begin{align*}
& \| u(\cdot, t)\varphi \|_{L^2(\Omega, \beta)}^2  \leq 2 \| u(\cdot, s)\varphi\|_{L^2(\Omega, \beta)}^2 \\
& \quad + 4\|w_t\|_{L^2((S,T), W^{-1, 2}(\Omega))} \Big[ \|u \varphi^2\|_{L^2((S,T), W^{1,2}(\Omega))} + |T-S|^{\frac{1}{2}} \|u(\cdot, s)\varphi^2\|_{W^{1,2}(\Omega)}\Big]
\end{align*}
for a.e.~$t \in (S, T)$. From this and \eqref{a.e.tau.fit}, we have $u\varphi \in L^\infty((S, T), L^2(\Omega, \beta))$. On the other hand, from \eqref{u-s-t} and \eqref{tau-a.e.0401},
\[
\lim_{\tau \rightarrow t} \|\big(u(\cdot, \tau) - u(\cdot, t)\big )\varphi \|_{L^2(\Omega, \beta)} =0, \quad \text{for a.e. }t \in (S,T).
\]
Hence, $u \varphi$ has an equivalent function in $C([S,T], L^2(\Omega, \beta))$.  

\smallskip
Now, it remains to prove \eqref{h-S-integrate-0402}. As in the first line of \eqref{u-s-t}, we also have
\[
\| \big(u(\cdot, t +h) - u(\cdot, t)\big)\varphi\|_{L^2(\Omega, \beta)}^2 =  \int_{\Omega} \left(\int_{t}^{t+h}w_t(x,\tau) \big(u(x,t+h) - u(x, t)\big) \varphi^2d\tau \right)dx
\]
for a.e.~$t \in (S, T-h)$. It follows from Fubini's theorem and H\"{o}lder's inequality that
\begin{align*} 
& \|\big(u(\cdot, t+h) -  u(\cdot, t)\big)\varphi\|_{L^2(\Omega, \beta)}^2 \\
& \leq  \int_t^{t+h}  \|w_t(x,\tau)\|_{W^{-1, 2}(\Omega)} \|\big(u(\cdot,t+h) -  u(\cdot, t)\big)\varphi^2\|_{W^{1,2}(\Omega)} d\tau \\ 
 & \leq h^{1/2}\|w_t\|_{L^2((t,t+h), W^{-1, 2}(\Omega))} \|\big(u(\cdot,t+h) -  u(\cdot, t)\big)\varphi^2\|_{W^{1,2}(\Omega)}\\ 
&\leq h^{1/2}\|w_t\|_{L^2((S,T), W^{-1, 2}(\Omega))} \Bigl[\|u(\cdot,t+h)\varphi^2\|_{W^{1,2}(\Omega)} + \|u(\cdot, t)\varphi^2\|_{W^{1,2}(\Omega)}\Bigr].
\end{align*}
Integrating in $t$ over $(S,T-h)$ yields
\begin{align*}
&\int_{S}^{T-h} \| \big(u(\cdot, t+h) -  u(\cdot, t)\big)\varphi\|_{L^2(\Omega, \beta)}^2 dt \\
& \leq 2 h^{1/2}\|w_t\|_{L^2((S,T), W^{-1, 2}(\Omega))}  \|u\varphi^2\|_{L^2((S, T), W^{1,2}(\Omega))}^2.
\end{align*}
Hence, \eqref{h-S-integrate-0402} is proved. Note also that the assertion on $\hat{\W}^{1,2}(B_r^+(x_0) \times (S,T), \beta)$ can be proved similarly. The proof of the lemma is completed.
\end{proof}

The following Aubin-Lions type compactness result is essential in the paper.  Here, we denote $2^* = \frac{2n}{n-2}$ for $n \geq 3$ and $2^* = \infty$ when $n=1,2$.

\begin{proposition}[Aubin-Lions compactness theorem] \label{compactness-lemma} For every $M_0 \geq 1$, there exists a constant $l_0 = l_0(n, M_0)\in (1,\infty)$ sufficiently close to $1$ such that the following assertions hold. Suppose that $\{\beta_k\}_k$ is a sequence of weights satisfying 
\begin{equation} \label{compact-beta-ass}
\beta^{-1}_k \in A_{1+\frac{2}{n_0}}, \quad [\beta^{-1}_k]_{A_{1+\frac{2}{n_0}}} \leq M_0, \quad (\beta_k^{\frac{n_0}{2}})_{B_r}^{\frac{2}{n_0}}=1, \quad \forall \ k \in \mathbb{N},
\end{equation}
where $n_0 =\max\{n,2\}$, and $r>0$ is some given number. Suppose also that 
\begin{equation}\label{osc-assumption}
\lim_{k \rightarrow \infty} \frac{1}{\beta_k(B_r)}\int_{B_{r}} |\beta_k(x) - (\beta_k)_{B_r}|^2 \beta_k^{-1}(x)\, dx =0,
\end{equation}
and suppose that  $\{u_k\}_{k}$ is a sequence of functions satisfying $ u_k \in \W^{1,2}(Q_r, \beta_k)$ for every $k \in \mathbb{N}$, and there is some constant $M>0$ such that
\begin{equation} \label{u-k-bound-0402}
\|u_k\|_{\W^{1,2}(Q_r, \beta_k)} \leq M, \quad \forall \ k \in \mathbb{N}.
\end{equation}
Then there exist a constant $\beta_0\in [\frac{1}{M_0},1]$, a function $u \in \W^{1,2}(Q_r)$, and subsequences of $\{\beta_k\}_k$ and $\{u_k\}_k$, which we still denote respectively by $\{\beta_k\}_k$ and $\{u_k\}_k$, such that  
\begin{equation} \label{l-0-convergence}
\left\{
\begin{array}{ll}
\beta_k \partial_t u_k \rightharpoonup \beta_0u_t &\quad \text{in}\quad L^2((-r^2, 0), W^{-1,2}(B_r)),\\[4pt]
u_k \rightarrow u &\quad \text{in} \quad L^2((-r^2, 0), L^{l_0}(B_r)),\ \text{and} \\[4pt]
\nabla u_k  \rightharpoonup \nabla u  &\quad \text{in} \quad  L^2(Q_r), \qquad \text{as}\quad k \rightarrow \infty.
\end{array} \right.
\end{equation}
Moreover, for each $l \in [1, 2^*)$, we also have
\begin{equation} \label{L-q-convergence}
u_k \rightarrow u \quad \text{in} \quad L^2((-r^2, 0), L^{l}(B_r)), \quad \text{as} \quad k \rightarrow \infty.
\end{equation}
The same assertions also hold when $B_r$, $Q_r$, and $\W^{1,2}(Q_r, \beta_k)$ are replaced respectively by $B_r^+(x_0)$, $Q_r^+(z_0)$, and $\hW^{1,2}(Q_r^+(z_0), \beta_k)$ with $z_0 = (x_0, t_0) \in \mathbb{R}^n \times \mathbb{R}$.
\end{proposition}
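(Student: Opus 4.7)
The plan is to mirror the classical Aubin--Lions--Simon compactness argument, adapted to the fact that the time derivative bound is \emph{weighted} by a varying sequence $\beta_k$. First I would extract weakly convergent subsequences: from the $A_{1+2/n_0}$ condition and the normalization $(\beta_k^{n_0/2})_{B_r}^{2/n_0}=1$, one has $(\beta_k^{-1})_{B_r}\le M_0$ and, by Jensen, $(\beta_k)_{B_r}\in[1/M_0,1]$, so a subsequence gives $(\beta_k)_{B_r}\to\beta_0\in[1/M_0,1]$. Writing $|\beta_k-(\beta_k)_{B_r}|=(|\beta_k-(\beta_k)_{B_r}|\beta_k^{-1/2})\cdot\beta_k^{1/2}$ and applying Cauchy--Schwarz with \eqref{osc-assumption} produces $\beta_k\to(\beta_k)_{B_r}$ in $L^1(B_r)$, hence $\beta_k\to\beta_0$ in $L^1(B_r)$. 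The reverse H\"older inequality of Lemma \ref{R-Holder} applied to $\beta_k^{n_0/2}\in A_{1+n_0/2}$ gives a uniform $L^q$ bound on $\beta_k$ with some $q>1$, and interpolation upgrades the $L^1$ convergence to $\beta_k\to\beta_0$ in $L^{p_0}(B_r)$ for some $p_0>1$. Banach--Alaoglu applied to \eqref{u-k-bound-0402} then yields a further subsequence along which $u_k\rightharpoonup u$ in $L^2((-r^2,0),W^{1,2}(B_r))$ and $\beta_k\partial_t u_k\rightharpoonup G$ in $L^2((-r^2,0),W^{-1,2}(B_r))$.

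The heart of the argument is upgrading the weak convergence of $u_k$ to a strong one. For any cutoff $\varphi\in C_0^\infty(B_r)$, Lemma \ref{conti-weak-space} delivers the weighted temporal equicontinuity
\[
\int_{-r^2}^{-h}\|(u_k(\cdot,t+h)-u_k(\cdot,t))\varphi\|_{L^2(B_r,\beta_k)}^2\,dt\le Ch^{1/2},
\]
with $C$ independent of $k$. To convert this to an unweighted estimate, I would apply H\"older in the form
\[
\int_{B_r}|g|^\sigma\,dx\le\Bigl(\int_{B_r}|g|^2\beta_k\,dx\Bigr)^{\sigma/2}\Bigl(\int_{B_r}\beta_k^{-\sigma/(2-\sigma)}\,dx\Bigr)^{(2-\sigma)/2},
\]
and use the uniform $L^1$ bound $(\beta_k^{-1})_{B_r}\le M_0$ from the $A_{1+2/n_0}$ condition (combined, if needed, with the reverse H\"older inequality of Lemma \ref{R-Holder}) to control the second factor uniformly in $k$ for any sufficiently small $\sigma>0$. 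This produces $L^\sigma$ temporal equicontinuity for $u_k\varphi$ uniformly in $k$. Combined with the uniform bound of $u_k\varphi$ in $L^2((-r^2,0),W^{1,2}_0(B_r))$, Simon's compactness theorem applied with the chain
\[
W^{1,2}_0(B_r)\;\hookrightarrow\hookrightarrow\;L^{\l_0}(B_r)\;\hookrightarrow\;L^\sigma(B_r),\qquad\l_0\in(\sigma,2^*),
\]
then yields $u_k\varphi\to u\varphi$ strongly in $L^2((-r^2,0),L^{\l_0}(B_r))$. Taking $\varphi\equiv 1$ on $B_{r-\delta}$ and bounding the remainder on the annulus $B_r\setminus B_{r-\delta}$ by H\"older with factor $|B_r\setminus B_{r-\delta}|^{1-\l_0/2^*}$ together with the Sobolev $L^{2^*}$ bound on $u_k-u$ removes the cutoff and gives strong convergence on the full ball $B_r$.

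With the strong convergence $u_k\to u$ in $L^2((-r^2,0),L^2(B_r))$ and $\beta_k\to\beta_0$ in $L^{p_0}(B_r)$, I would identify $G$ by testing $\int_{Q_r}\beta_k u_{k,t}\phi\,dx\,dt=-\int_{Q_r}\beta_k u_k\phi_t\,dx\,dt$ against $\phi\in C_0^\infty(Q_r)$: splitting $\beta_k=\beta_0+(\beta_k-\beta_0)$, the first piece passes by the strong/weak pairing, while the second vanishes since $u_k$ is uniformly bounded in $L^2((-r^2,0),L^{2p_0'}(B_r))$ via Sobolev (once $p_0$ is chosen so that $2p_0'\le 2^*$, which the reverse H\"older margin allows). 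This gives $G=\beta_0 u_t$, whence $u\in\W^{1,2}(Q_r)$ since $\beta_0>0$. The stronger conclusion \eqref{L-q-convergence} then follows for every $\l\in[1,2^*)$ by interpolating the strong $L^2(L^{\l_0})$ convergence against the uniform $L^2(L^{2^*})$ bound (with H\"older covering $\l<\l_0$). The upper half-ball case is identical after replacing $B_r$ by $B_r^+(x_0)$ and $\varphi$ by a cutoff in $C_0^\infty(B_r(x_0))$, since the vanishing trace on $T_r(x_0)$ is preserved by both weak and strong limits.

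The main obstacle, and the reason the hypotheses \eqref{beta-cond} and \eqref{osc-assumption} are tailored to precisely this situation, lies in the conversion from weighted to unweighted temporal equicontinuity. The lower bound $\beta_0\ge 1/M_0$ prevents the limiting weight from degenerating, while the $A_{1+2/n_0}$ control on $\beta_k^{-1}$ provides the uniform integrability needed to dominate $\int\beta_k^{-\sigma/(2-\sigma)}\,dx$ independently of $k$ and so pass from $L^2(\beta_k)$ to $L^\sigma$ estimates that feed into Simon's theorem.
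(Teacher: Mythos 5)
Your proposal is correct and follows essentially the same skeleton as the paper's proof: extract weak limits by Banach--Alaoglu, convert the weighted temporal equicontinuity of Lemma \ref{conti-weak-space} into an unweighted one via an $L^{\ell_0}(B_\rho) \hookrightarrow L^2(B_\rho,\beta_k)$ bound (as in Lemma \ref{L-q-2-wei}), invoke Simon's theorem on interior balls, and then propagate strong convergence to $B_r$. There are two genuine, though secondary, deviations worth noting. First, to identify the limit of $\beta_k\partial_t u_k$, the paper decomposes $\beta_k=(\beta_k-(\beta_k)_{B_r})+(\beta_k)_{B_r}$, applies Cauchy--Schwarz, and controls the cross term via \eqref{osc-assumption} combined with the weighted interpolation estimate of Lemma \ref{embedd-lemma}; you instead derive strong $L^{p_0}$ convergence of $\beta_k\to\beta_0$ (interpolating the $L^1$ convergence that \eqref{osc-assumption} gives against the reverse-H\"older $L^q$ bound on $\beta_k^{n_0/2}$) and pair against the Sobolev bound on $u_k$. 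Both are valid; the paper's route avoids needing any quantitative $L^{p_0}$ convergence of the weights, whereas yours is perhaps more transparent. Second, to pass from $B_\rho$, $\rho<r$, to the full ball, the paper runs a Cantor diagonal extraction over $\rho\uparrow r$, while you estimate the annular contribution directly by H\"older and Sobolev; again both work, though when $n\le2$ one should replace $2^*=\infty$ by any finite Sobolev exponent. Two small slips in your write-up: the Sobolev condition in the identification step should be $p_0'\le 2^*$ rather than $2p_0'\le 2^*$ (the latter is a sufficient but unnecessarily restrictive reading), and the annulus H\"older factor should carry exponent $1/\ell_0-1/2^*$ rather than $1-\ell_0/2^*$. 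Neither affects the validity of the argument.
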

\begin{proof} Using the dilation $x\mapsto r x$, without loss of generality, we assume that $r=1$. We present the proof in the case where the spatial domain is $B_1$, as the argument for the upper ball $B_1^+(x_0)$ is similar. To begin with, due to \eqref{compact-beta-ass}, it follows from Remark~\ref{A-p-remark} and H\"{o}lder's inequality that
\[
\frac{1}{M_0}\leq [(\beta_k^{-1})_{B_1}]^{-1}\leq(\beta_k)_{B_1}\leq (\beta_k^{\frac{n}{2}})_{B_1}^{\frac{2}{n}}=1 \quad \forall \ k\in\N,\quad {for}\quad n\geq 3.
\]
From this and as $(\beta_k)_{B_1}=1$ for all $k\in \N$ when $n=1,\, 2$, it follows that
\begin{equation}\label{bdd beta}
(\beta_k)_{B_1}\in \big[\tfrac{1}{M_0}, 1\big], \quad \forall \ k\in \N \quad  \text{and} \quad \forall \ n \in \mathbb{N}.
\end{equation}
Hence, there exist a number $\beta_0\in [\frac{1}{M_0},1]$ and a subsequence of $\{\beta_k\}_k$, still denoted by $\{\beta_k\}_k$, such that
\begin{equation}\label{beta-lim}
\lim_{k\rightarrow \infty}(\beta_k)_{B_1}=\beta_0.
\end{equation}

\smallskip
Now, let us denote 
\[ 
w_k (x,t)= \beta_k (x)u_k(x,t), \quad (x,t) \in Q_1.
\]
By the definition of the space $\W^{1,2}(Q_1, \beta_k)$ and the assumption \eqref{u-k-bound-0402}, we see that the sequence $\{u_k\}_k$ is bounded in $L^2((-1,0), W^{1,2}(B_1))$, and the sequence $\{\partial_t w_k\}_k$ is bounded in $L^2((-1,0), W^{-1,2}(B_1))$. Since these are Hilbert spaces, and by passing through subsequences, we can find 
\[ 
u \in   L^2((-1,0), W^{1,2}(B_1)) \quad \text{and} \quad  \quad w \in  L^2((-1,0), W^{-1,2}(B_1)),
\] 
such that
\begin{align} \label{u-k-weak}
u_k  \rightharpoonup  u &\quad \text{in} \quad L^2((-1,0), W^{1,2}(B_1)) \quad \text{as} \quad k\rightarrow \infty, \quad \text{and} \\ \label{mu-u-k-weak}
\partial_t w_k  \rightharpoonup  w &\quad \text{in} \quad L^2((-1,0), W^{-1,2}(B_1)) \quad \text{as} \quad k\rightarrow \infty.
\end{align}

\smallskip
We observe that the third assertion in \eqref{l-0-convergence} follows directly from \eqref{u-k-weak}.  Now, we prove the first assertion in \eqref{l-0-convergence}. Due to \eqref{mu-u-k-weak}, it suffices to show that
\begin{equation}\label{lim claim}
   w=\beta_0\partial_t u \quad \text{in}\quad L^2((-1,0), W^{-1,2}(B_1)).
\end{equation}   
To this end, let $\varphi \in C^\infty_0(Q_1)$. We note from Lemma \ref{embedd-lemma} that $w_k \in L^1(Q_1)$. Then, in the sense of distributions, 
\begin{align} \notag
& \int_{Q_1}  \partial_t w_k \varphi(x, t) dxdt   = - \int_{Q_1} w_k \partial_t \varphi(x, t) dx dt \\ \label{eqn-03-12-1}
& = - \int_{Q_1} \Big[ \beta_k (x) -(\beta_k)_{B_1}\Big] u_k \partial_t \varphi(x, t) dx dt   - \int_{Q_1} (\beta_k)_{B_1}u_k \partial_t\varphi(x, t) dx dt.
\end{align}
From \eqref{beta-lim} and \eqref{u-k-weak}, we obtain
\begin{equation} \label{eqn-03-12-2}
\lim_{k\rightarrow \infty} \int_{Q_1}  (\beta_k)_{B_1}u_k \partial_t\varphi(x, t) dx dt= \int_{Q_1}  \beta_0u\partial_t\varphi(x, t) dx dt.
\end{equation}
On the other hand, by using H\"{o}lder's inequality, Lemma \ref{embedd-lemma} and \eqref{bdd beta}, we have
\begin{align*}
& \left|  \int_{Q_1} \Big[ \beta_k (x) -(\beta_k)_{B_1}\Big] u_k \partial_t \varphi dx dt \right| \\
& \leq \left(\frac{1}{\beta_k(B_1)}\int_{B_1} |\beta_k (x) - (\beta_k)_{B_1}|^2 \beta_k(x)^{-1} dx \right)^{1/2}\left(\beta_k(B_1) \int_{Q_1}[u_k \partial_t \varphi]^2 \beta_k(x) dxdt\right)^{1/2} \\
& \leq N\left(\frac{1}{\beta_k(B_1)}\int_{B_1} |\beta_k (x) - (\beta_k)_{B_1}|^2 \beta_k(x)^{-1} dx \right)^{1/2} \|u_k\|_{L^2((-1, 0), W^{1,2}(B_1))},
\end{align*}
where $N=N(n, M_0, \|\partial_t \varphi\|_{L^{\infty}(Q_1)})$. From this, \eqref{osc-assumption}, and the boundedness of $\{u_k\}_k$ in $L^2((-1, 0), W^{1,2}(B_1))$ due to \eqref{u-k-bound-0402}, we deduce that
\begin{equation} \label{eqn-03-12-3}
\lim_{k\rightarrow \infty} \int_{Q_1} \Big[ \beta_k (x) -(\beta_k)_{B_1} \Big] u_k \partial_t \varphi(x, t) dx dt  =0.
\end{equation}
Finally, combining \eqref{eqn-03-12-1}, \eqref{eqn-03-12-2}, and \eqref{eqn-03-12-3}, we conclude that
\[ 
\int_{Q_1} w \varphi(x, t) dxdt =- \int_{Q_1}  \beta_0u \partial_t \varphi(x, t) dx dt \quad \text{for all} \quad \varphi \in  C^\infty_0(Q_1).
\]
Hence, $w=\beta_0\partial_t u$ in the sense of distributions. This implies that $u \in \W^{1,2}(Q_1,\beta_0)$, and thus \eqref{lim claim} is proved.

\smallskip
Next, let $l_0 = \frac{2}{2-\gamma} \in (1,2)$, where $\gamma = \gamma(n, M_0)\in (0,1)$ is the sufficiently small constant defined in Lemma \ref{L-q-2-wei} with $q=2$. Our aim is to prove the second convergence in \eqref{l-0-convergence}, that is, there exists a subsequence of $\{u_k\}_k$,  still denoted by $\{u_k\}_k$, such that
\begin{equation} \label{u-k-1-converge}
u_k \rightarrow u \in L^2((-1,0), L^{l_0}(B_1)) \quad \text{as} \quad k \rightarrow \infty.
\end{equation}
To prove \eqref{u-k-1-converge}, we claim that for any $\rho \in (0,1)$ sufficiently close to $1$, there exists a subsequence $\{u_{\rho, k}\}_k$ of $\{u_k\}_k$ such that
\begin{equation} \label{u-k-rho-converge}
u_{\rho, k} \rightarrow u \in L^{2}((-1, 0) \times L^{l_0}(B_\rho)) \quad \text{as} \quad k \rightarrow \infty.
\end{equation}
We observe that once the claim \eqref{u-k-rho-converge} is proved, the assertion \eqref{u-k-1-converge} follows by applying the Cantor diagonal process. It therefore remains to establish \eqref{u-k-rho-converge}.
 
\smallskip
To this end, we note that $\beta_k \in A_2$ with $[\beta_k]_{A_2} \leq M_0$ due to \eqref{compact-beta-ass} and Remark \ref{A-p-remark}-(ii). Then, as $(\beta_k)_{B_1} \in [M_0^{-1}, 1]$ for all $k\in \N$ from \eqref{bdd beta}, we can apply the doubling properties of the $A_2$-weight $\beta$ in Lemma \ref{property}  to show that 
\[ (\beta_k)_{B_\rho}  \in [\frac{1}{2M_0}, 2] \quad \text{for all } k \in \N \ \text{ and all }\ \rho \in (0,1) \ \text{ sufficiently close to }1.
\] 
Then, it follows from Lemma \ref{L-q-2-wei} and our choice of $l_0$ that there is $N = N(n, M_0, \rho)>0$ so that
\begin{equation} \label{eqn-03-12-4-b}
\|g\|_{L^{l_0}(B_\rho)} \leq N \|g\|_{L^2(B_\rho, \beta_k)}, \quad \forall \  g \in L^2(B_\rho, \beta_k).
\end{equation}
Now, our goal is to apply the Aubin-Lions theorem (see \cite[Theorem 1]{Simon} for example) to achieve \eqref{u-k-rho-converge}. To this end, we need to verify the following two conditions:
\begin{itemize}
\item[\textup{(i)}] For each $S < t_1 < t_2 < T$, the set
\[
\Big\{\int_{t_1}^{t_2} u_k(x, t) dt,\ k \in \mathbb{N} \Big\} \quad \text{is relatively compact in } L^{l_0}(B_\rho).
\]
\item[\textup{(ii)}] 
\[
\lim_{h\rightarrow 0^+}\int_{-1}^{-h}\|u_k(\cdot, t+h) - u_k(\cdot, t)\|_{L^{l_0}(B_\rho)}^2 dt = 0, \quad \text{uniformly in } k.
\]
\end{itemize}
To see $\textup{(i)}$, we note that
\begin{align*}
\Big\|\int_{t_1}^{t_2} u_k(x, t) dt \Big \|_{W^{1,2}(B_\rho)} & \leq \int_{t_1}^{t_2} \| u_k(\cdot, t)\|_{W^{1,2}(B_\rho)}dt \\
&  \leq (t_2-t_1)^{1/2} \|u_k\|_{L^{2}((S, T), W^{1,2}(B_1))} \leq N, \quad \forall \ k \in \mathbb{N}.
\end{align*}
Hence, (i) follows from the compact embedding $W^{1,2}(B_\rho) \hookrightarrow L^{l_0}(B_\rho)$. On the other hand, to check (ii), we apply \eqref{eqn-03-12-4-b} to obtain
\[
\|u_k(\cdot, t+h) - u_k(\cdot, t)\|_{L^{l_0}(B_\rho)} \leq N \|u_k(\cdot, t+h) - u_k(\cdot, t)\|_{L^2(B_\rho, \beta_k)}, \quad \forall\  k \in \mathbb{N},
\]
for a.e.~$t \in (-1, -h)$, where $N = N(n, M_0, \rho)>0$. Then, from this, \eqref{u-k-bound-0402},  and \eqref{h-S-integrate-0402} of Lemma \ref{conti-weak-space} with a suitable cut-off function $\varphi$, it follows that
\[
\int_{-1}^{-h}\|u_k(\cdot, t+h) - u_k(\cdot, t)\|_{L^{l_0}(B_\rho)}^2 dt \leq N h^{1/2}, \quad \forall \ k \in \mathbb{N},
\]
where $N = N(n,M_0,M,\rho) > 0$.  This proves \textup{(ii)}.

\smallskip
Next, we apply the Aubin-Lions theorem (see \cite[Theorem 1]{Simon}) to conclude that the sequence $\{u_k\}_k$ is relatively compact in $L^2((-1, 0), L^{l_0}(B_\rho))$. Due to \eqref{u-k-weak}, we infer that there exists a subsequence $\{u_{\rho, k}\}_k$ of the sequence $\{u_k\}_k$ such that
\[
u_{\rho, k} \rightarrow u \quad \text{in} \quad L^2((-1, 0), L^{l_0}(B_\rho)) \quad \text{as} \quad k \rightarrow \infty.
\]
This proves \eqref{u-k-rho-converge}, and therefore \eqref{u-k-1-converge} follows.


\smallskip
It now remains to prove the assertion \eqref{L-q-convergence}. It is sufficient to consider the case $n\geq 3$ as the proof can be done similarly when $n=1,\, 2$. Due to the second convergence in \eqref{l-0-convergence}, we only need to consider the case $l \in (l_0, 2^*)$. By an interpolation inequality and the Sobolev inequality, there exists $N = N(n)>0$ such that 
\begin{align*}
\|u_k(\cdot, t) - u(\cdot, t)\|_{L^{l}(B_1)} & \leq \|u_k(\cdot, t) - u(\cdot, t)\|_{L^{l_0}(B_1)}^{1-\theta} \|u_k(\cdot, t) - u(\cdot, t)\|_{L^{2^*}(B_1)}^{\theta} \\
& \leq N\|u_k(\cdot, t) - u(\cdot, t)\|_{L^{l_0}(B_1)}^{1-\theta} \|u_k(\cdot, t) - u(\cdot, t)\|_{W^{1,2}(B_1)}^{\theta}
\end{align*}
for a.e.~$t \in (-1,0)$. Here, $\theta \in (0,1)$ is the number such that
\[
\frac{1}{l} = \frac{\theta}{2^*} + \frac{1-\theta}{l_0} \quad \text{with} \quad 2^* = \frac{2n}{n-2}.
\]
From this, we can apply H\"{o}lder's inequality in the time variable to conclude that
\begin{align*}
& \|u_k(\cdot, t) - u(\cdot, t)\|_{L^2((-1, 0), L^{l}(B_1))} \\
& \leq N(n)\|u_k(\cdot, t) - u(\cdot, t)\|_{L^2((-1, 0), L^{l_0}(B_1))}^{1-\theta} \|u_k(\cdot, t) - u(\cdot, t)\|_{L^2((-1, 0), W^{1,2}(B_1))}^{\theta}\\
& \leq N(n,M)\|u_k(\cdot, t) - u(\cdot, t)\|_{L^2((-1, 0), L^{l_0}(B_1))}^{1-\theta}, \quad \forall \ k \in \mathbb{N}.
\end{align*}
Hence, \eqref{L-q-convergence} follows from \eqref{l-0-convergence}, and the proof of the proposition is completed.
\end{proof}
\subsection{Weak solutions and special dilation properties} \label{scaling-sec} Let $\beta$ be a weight satisfying \eqref{beta-cond}. We now introduce the following definition of weak solutions that is needed for the theory on interior estimates for \eqref{main-eqn}.
\begin{definition} \label{interior-weak-def} Let $p \in (1, \infty)$, $r>0$ and $z_0 = (x_0, t_0) \in \mathbb{R}^{n} \times \mathbb{R}$. We write $Q = Q_{r, \beta}(z_0)$. A function $u \in \W^{1,p}(Q, \beta)$ is said to be a weak solution of
\begin{equation} \label{scaling-eqn}
\beta(x) u_t - \textup{div}(\bA(x,t) \nabla u) = \textup{div}(F(x,t))\quad \text{in} \quad Q,
\end{equation}
if  for every test function $\varphi \in C_0^\infty(Q)$, 
\begin{align*}
& -\int_{Q} \beta(x) u(x,t) \partial_t\varphi(x,t) dx dt + \int_{Q} \wei{\bA(x,t) \nabla u(x,t), \nabla \varphi(x,t)} dx dt \\
& = -\int_{Q} \wei{F(x,t), \nabla \varphi(x,t)} dx dt.
\end{align*}
\end{definition}

In addition to studying local boundary estimates for \eqref{main-eqn}, we also introduce the following notion of weak solutions.
\begin{definition} \label{boundary-weak-def} Let $p \in (1, \infty)$, $r>0$ and $z_0 = (x_0, t_0) \in \mathbb{R}^{n} \times \mathbb{R}$. We write $Q^+ = Q_{r, \beta}^+(z_0)$. A function $u \in \hW^{1,p}(Q^+, \beta)$ is said to be a weak solution of
\begin{equation*} 
\left\{
\begin{aligned}
\beta(x) u_t - \textup{div}(\bA(x,t) \nabla u) & = \textup{div}(F(x,t)) \quad &&\text{in} \quad Q^+,\\[4pt]
u & =  0  \quad &&\text{on} \quad T_{r}(x_0) \times \Gamma_{\beta, z_0}(r),
\end{aligned} \right.
\end{equation*}
if for every test function $\varphi \in C_0^\infty(Q^+)$,
\begin{align*}
& -\int_{Q^+} \beta(x) u(x,t) \partial_t\varphi(x,t) dx dt + \int_{Q^+} \wei{\bA(x,t) \nabla u(x,t), \nabla \varphi(x,t)} dx dt \\
& = -\int_{Q^+} \wei{F(x,t), \nabla \varphi(x,t)} dx dt.
\end{align*}
\end{definition}
Besides the standard heat dilation $(x,t) \mapsto (rx, r^2t)$, let us introduce two special dilation properties for our considered class of equations \eqref{main-eqn} that will be used frequently throughout the paper. Let us consider the equation \eqref{scaling-eqn}. Taking a translation, we may assume without loss of generality that $z_0 = (0,0)$.
Recall that $\Psi_{\beta} (r) = \Psi_{\beta, 0}(r)$, as defined in \eqref{t-height}.  For $(x,t) \in Q_1=B_1\times (-1,0)$, let us now define the following:
\begin{equation} \label{scale-1}
\left\{
\begin{aligned}
\tilde{u}(x,t) &= u(rx, r^2\Psi_\beta(r) t),
\quad &&\tilde{\beta}(x) = [\Psi_{\beta}(r)]^{-1} \beta(rx),\\[4pt]
\tilde{\bA} (x,t)&= \bA(rx, r^2\Psi_{\beta}(r) t), \quad
&&\tilde{F}(x,t) =rF(rx, r^2\Psi_\beta(r) t).
\end{aligned} \right.
\end{equation}
In addition, let us also denote the following dilation only in the time variable, which is very special for the class of equations in \eqref{main-eqn}:
\begin{equation} \label{scale-2}
\left\{
\begin{aligned}
\hat{u}(x,t) &= u(x, \Psi_\beta(r) t),
\quad &&\hat{\beta}(x) = [\Psi_{\beta}(r)]^{-1} \beta(x),\\[4pt]
\hat{\bA} (x,t)&= \bA(x, \Psi_{\beta}(r) t), \quad
&&\hat{F}(x,t) =F(x, \Psi_\beta(r) t).
\end{aligned}  \right.
\end{equation}
It is straightforward to see that
\[
\Psi_{\tilde \beta}(1)=1 \quad \text{and} \quad \Psi_{\hat{\beta}}(r) =1 \quad \text{so that}\quad Q_{1,\tilde \beta} = Q_1 \quad \text{and}  \quad Q_{r,\hat{\beta}} = Q_r.
\]
The following simple lemma will be used frequently throughout the paper. Its proof is also straightforward, and thus omitted.
\begin{lemma} \label{scaling-lemma} Let $p \in (1, \infty)$, and suppose that $u \in \W^{1,p}(Q_{r,\beta}, \beta)$ is a weak solution of \eqref{scaling-eqn} in $Q_{r,\beta}$. Then $\tilde u \in \W^{1,p}(Q_{1}, \tilde{\beta})$ is a weak solution of
\begin{equation*} 
\tilde{\beta}(x) \tilde{u}_t - \textup{div}(\tilde{\bA}(x,t) \nabla \tilde{u}) = \textup{div}(\tilde{F}(x,t)) \quad \text{in} \quad Q_1,
\end{equation*}
where $\tilde{u}$, $\tilde{\beta}$, $\tilde{\bA}$, and $\tilde{F}$ are defined in \eqref{scale-1}. Moreover, $\hat{u} \in  \W^{1,p}(Q_{r}, \hat{\beta})$ is a weak solution of
\[
\hat{\beta}(x) \hat{u}_t - \textup{div}(\hat{\bA}(x,t) \nabla \hat{u}) = \textup{div}(\hat{F}(x,t)) \quad \text{in} \quad Q_{r},
\]
where $\hat u$, $\hat{\beta}$, $\hat{\bA}$, and $\hat{F}$ are defined in \eqref{scale-2}. Similar assertions also hold for weak solutions defined in Definition~\ref{boundary-weak-def}.
\end{lemma}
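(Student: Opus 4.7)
The plan is to verify the three pieces in turn: the normalization $Q_1=Q_{1,\tilde\beta}$, the membership $\tilde u\in\W^{1,p}(Q_1,\tilde\beta)$, and the weak formulation. For the first, a direct computation with the change of variables $y=rx$ gives
\[
\Psi_{\tilde\beta}(1)=\Big(\fint_{B_1}[\Psi_\beta(r)]^{-n_0/2}\beta(rx)^{n_0/2}\,dx\Big)^{2/n_0}=[\Psi_\beta(r)]^{-1}\Psi_\beta(r)=1,
\]
so $\Gamma_{\tilde\beta}(1)=(-1,0]$ and $Q_1=Q_{1,\tilde\beta}$. Observe also that $[\tilde\beta^{-1}]_{A_{1+2/n_0}}=[\beta^{-1}]_{A_{1+2/n_0}}$ by scaling invariance of the Muckenhoupt constant, so the conditions \eqref{beta-cond} are preserved.

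Next, I would check that $\tilde u$ lies in the right function space. Setting $y=rx$, $s=r^2\Psi_\beta(r)t$ in the definition of the norms, one has $\nabla_x\tilde u(x,t)=r(\nabla_y u)(y,s)$ and $dx\,dt=[r^{n+2}\Psi_\beta(r)]^{-1}dy\,ds$, which together give
\[
\|\tilde u\|_{L^p((-1,0),W^{1,p}(B_1))}\;\leq\; C(n,p,r,\Psi_\beta(r))\,\|u\|_{L^p(\Gamma_{\beta}(r),W^{1,p}(B_r))}<\infty,
\]
and a parallel identification for the $L^p((-1,0),W^{-1,p}(B_1))$ norm of $\tilde\beta\,\partial_t\tilde u$, using that $\tilde\beta\,\partial_t\tilde u(x,t)=r^2[\beta u_s](rx,r^2\Psi_\beta(r)t)$ and that the dual norm scales consistently with the $W^{1,p}_0$ scaling of test functions.

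For the weak formulation, given any $\tilde\varphi\in C_0^\infty(Q_1)$, I would define
\[
\varphi(y,s)\;=\;\tilde\varphi\!\left(\frac{y}{r},\frac{s}{r^2\Psi_\beta(r)}\right),\qquad(y,s)\in Q_{r,\beta},
\]
so that $\varphi\in C_0^\infty(Q_{r,\beta})$. Substituting $\varphi$ into the weak equation for $u$ and changing variables $(y,s)\mapsto(x,t)$, with the chain rule identities
\[
\partial_s\varphi(y,s)=\frac{1}{r^2\Psi_\beta(r)}\,\partial_t\tilde\varphi(x,t),\qquad\nabla_y\varphi(y,s)=\frac{1}{r}\nabla_x\tilde\varphi(x,t),
\]
I would compute each of the three integrals. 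The parabolic term becomes
\[
-\int_{Q_{r,\beta}}\beta(y)u\,\partial_s\varphi\,dy\,ds=-r^n\Psi_\beta(r)\int_{Q_1}\tilde\beta(x)\tilde u\,\partial_t\tilde\varphi\,dx\,dt,
\]
the elliptic term becomes $r^n\Psi_\beta(r)\int_{Q_1}\langle\tilde\bA\nabla\tilde u,\nabla\tilde\varphi\rangle\,dx\,dt$, and the forcing term, using $\tilde F=rF(rx,r^2\Psi_\beta(r)t)$, becomes $-r^n\Psi_\beta(r)\int_{Q_1}\langle\tilde F,\nabla\tilde\varphi\rangle\,dx\,dt$. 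Dividing through by the common factor $r^n\Psi_\beta(r)>0$ yields the rescaled weak formulation.

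There is no real obstacle here, since the proof is a bookkeeping exercise; the only point requiring care is to check that the various powers of $r$ and $\Psi_\beta(r)$ arising from the Jacobian $r^{n+2}\Psi_\beta(r)$, the chain rule factor $r^{-2}\Psi_\beta(r)^{-1}$ on $\partial_s\varphi$, the factor $r^{-2}$ from $\nabla_y u\cdot\nabla_y\varphi$, and the factor $r^{-1}$ from $F\cdot\nabla_y\varphi$ combine coherently. The boundary case in Definition~\ref{boundary-weak-def} is identical after observing that the dilation $x\mapsto rx$ preserves the flat boundary $T_r(x_0)\to T_1$ and commutes with the trace operation, so $\tilde u|_{T_1\times(-1,0]}=0$ whenever $u|_{T_r(x_0)\times\Gamma_{\beta,z_0}(r)}=0$.
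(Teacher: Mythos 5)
Your proof is correct and is exactly the bookkeeping change-of-variables argument the paper has in mind; indeed, the paper states "its proof is also straightforward, and thus omitted." All the Jacobian and chain-rule factors combine as you claim, the test-function map $\tilde\varphi\mapsto\varphi$ is a bijection of $C_0^\infty(Q_1)$ onto $C_0^\infty(Q_{r,\beta})$, and the remark about the boundary case is accurate since the dilation with $x_0=0$ carries $B_r^+$ to $B_1^+$ and $T_r$ to $T_1$.
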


\section{Lipschitz estimates for simple equations} \label{Lips-est-sec}
Let $\bar{\bf{A}}(t)$ be a measurable $n\times n$ matrix function defined on some open interval to be specified. We assume that $\bar{\bf{A}}(t)$ satisfies the ellipticity and boundedness conditions: there exists a constant $\nu \in (0,1)$ such that
\begin{equation} \label{ellip-constant}
\nu |\xi|^2 \leq \wei{\bar{\bA}(t) \xi,\, \xi} \quad \text{and} \quad |\bar{\bA}(t)| \leq \nu^{-1}, \qquad \forall \ \xi \in \mathbb{R}^n.
\end{equation}
In this section, we study the parabolic equation \eqref{main-eqn} with frozen coefficients. We begin with the following interior Lipschitz estimates.
\begin{lemma} \label{Lipschitz-constant} For every $\nu \in (0,1)$ and $M_0 \geq 1$, there exists a constant $N = N(n, \nu, M_0)>0$ such that the following assertion holds.  Let $\beta$ be a weight satisfying \eqref{beta-cond}, and let $\bar{\bA}(t)$ be a matrix satisfying \eqref{ellip-constant} on $\Gamma_{\beta }(2r)$. Suppose that $v \in \W^{1,2}(Q_{2r, \beta})$ is a weak solution of
\begin{equation*} 
(\beta)_{B_{2r}} v_t - \textup{div}(\bar{\bA}(t)\nabla v) = 0 \quad \text{in} \quad Q_{2r, \beta}.
\end{equation*}
Then 
\begin{equation} \label{Lip-inter-est-1013}
r (\beta)_{B_{2r}} \|v_t\|_{L^\infty(Q_{r, \beta})} + \|\nabla v\|_{L^\infty(Q_{r, \beta})} \leq  N\left(\fint_{Q_{2r, \beta}} |\nabla v|^2\, dxdt  \right)^{1/2}.
\end{equation}
\end{lemma}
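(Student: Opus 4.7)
The plan is to reduce, via the intrinsic scaling of Section~\ref{scaling-sec}, to a parabolic equation with $x$-independent coefficients on the standard unit cylinder $Q_1$, and then invoke classical interior $L^\infty$ estimates. Following Lemma~\ref{scaling-lemma}, I would set
\[
w(x,t) = v(2rx,\, (2r)^2\Psi_\beta(2r)t), \qquad \tilde{\bA}(t) = \bar{\bA}((2r)^2\Psi_\beta(2r)t),
\]
so that $w$ is defined on $Q_1$ and a direct computation yields
\[
\tilde c\, w_t - \textup{div}(\tilde{\bA}(t)\nabla w) = 0 \quad \text{in}\quad Q_1,\qquad \tilde c := \frac{(\beta)_{B_{2r}}}{\Psi_\beta(2r)},
\]
with $\tilde{\bA}$ still satisfying \eqref{ellip-constant}.

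The crucial observation is that $\tilde c \in [M_0^{-1},1]$ uniformly in $r$ and $\beta$. The upper bound is Jensen's inequality applied to the convex function $s\mapsto s^{n_0/2}$ (convex since $n_0/2\geq 1$). For the lower bound, $[\beta^{-1}]_{A_{1+2/n_0}}\leq M_0$ reads as $(\beta^{-1})_{B_{2r}}\,(\beta^{n_0/2})_{B_{2r}}^{2/n_0}\leq M_0$, combined with $(\beta^{-1})_{B_{2r}}\geq(\beta)_{B_{2r}}^{-1}$ (Jensen). Dividing the equation by $\tilde c$ therefore produces
\[
w_t - \textup{div}(\hat{\bA}(t)\nabla w) = 0 \quad \text{in}\quad Q_1,
\]
with $\hat{\bA}=\tilde c^{-1}\tilde{\bA}$ uniformly elliptic and bounded with constants depending only on $\nu$ and $M_0$.

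Since $\hat{\bA}$ is independent of $x$, spatial derivatives of $w$ of any order are again weak solutions of the same equation. Iterating the classical De Giorgi--Nash--Moser interior $L^\infty$--$L^2$ estimate on nested cylinders gives
\[
\|\nabla w\|_{L^\infty(Q')}+\|\nabla^2 w\|_{L^\infty(Q')} \leq N\,\|\nabla w\|_{L^2(Q_1)}
\]
on any $Q'\Subset Q_1$, and then $\|w_t\|_{L^\infty(Q')}\leq N\,\|\nabla w\|_{L^2(Q_1)}$ by reading $w_t$ off the equation. The image of $Q_{r,\beta}$ under the dilation is $B_{1/2}\times(-\Psi_\beta(r)/(4\Psi_\beta(2r)),0)$, which by the doubling property of $\beta^{n_0/2}$ in Lemma~\ref{property} is compactly contained in $Q_1$ uniformly in $r$ and $\beta$.

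Reversing the change of variables introduces a factor $(2r)^{-1}$ on $\nabla v$ and $((2r)^2\Psi_\beta(2r))^{-1}$ on $v_t$, while $\|\nabla w\|_{L^2(Q_1)}^2$ becomes $(2r)^{2-n}\Psi_\beta(2r)^{-1}\|\nabla v\|_{L^2(Q_{2r,\beta})}^2$; combining these with $\tilde c\leq 1$ from the second step produces \eqref{Lip-inter-est-1013}. The only substantive step is the $L^\infty$ iteration in the third paragraph, and this is classical for equations whose coefficients depend only on $t$; the degeneracy/singularity of $\beta$ is not expected to pose a genuine obstacle, since it is absorbed entirely into the intrinsic scaling together with the two-sided bound on $\tilde c$.
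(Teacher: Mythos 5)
Your proposal is correct and follows the paper's argument essentially verbatim: both rescale by the intrinsic dilation so that the frozen-coefficient equation becomes uniformly parabolic on a fixed cylinder (with ellipticity constants controlled by $\nu$, $M_0$ via the two-sided bound on $(\beta)_{B_{2r}}/\Psi_\beta(2r)$), then use the $x$-independence of the coefficients to bound $\|\nabla v\|_{L^\infty}$ and $\|\nabla^2 v\|_{L^\infty}$ by $\|\nabla v\|_{L^2}$ on nested cylinders (the paper via difference quotients and energy estimates, you via iterated De Giorgi--Nash--Moser plus Caccioppoli — interchangeable here), and finally read $v_t$ off the PDE. One small arithmetic slip: the Jacobian identity should read $\|\nabla w\|_{L^2(Q_1)}^2 = (2r)^{-n}\Psi_\beta(2r)^{-1}\|\nabla v\|_{L^2(Q_{2r,\beta})}^2$, not $(2r)^{2-n}\Psi_\beta(2r)^{-1}\|\nabla v\|_{L^2(Q_{2r,\beta})}^2$; you dropped the $(2r)^{-2}$ coming from $dt$, and with the corrected exponent one gets $\fint_{Q_1}|\nabla w|^2 = (2r)^2\,\fint_{Q_{2r,\beta}}|\nabla v|^2$, which is exactly what is needed to absorb the $(2r)^{-1}$ factor on $\nabla v$ when undoing the scaling.
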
 
\begin{proof}By the dilation $(x,t) \mapsto (rx, r^2 \Psi_{\beta}(2r)t)$ and Lemma \ref{scaling-lemma}, it is sufficient to prove the lemma when
\[
r =1 \quad \text{and} \quad \Psi_{\beta}(2) =1.
\]
In this setting, we note that $Q_{2,\beta} = Q_2$, and the lemma is reduced to the case where $v \in \W^{1,2}(Q_2)$ is a weak solution of 
\[
(\beta)_{B_2}v_t - \textup{div}(\bar{\bA} (t) \nabla v)  = 0 \quad \text{in} \quad Q_{2}.
\]
Recall the formulas of $r^2\Psi_{\beta}(r)$ in \eqref{heights} and as $\beta(x)$ satisfies \eqref{beta-cond}, it follows from Lemma \ref{property} that
\begin{equation}\label{cut-off time}
4\eta_0\Psi_{\beta}(2) \leq 4\Psi_{\beta}(2) - \Psi_{\beta}(1) \leq  4 \Psi_\beta(2),
\end{equation} 
for some positive constant $\eta_0\in (0,1)$ depending only on $n, M_0$.  Then, by applying the difference quotient with respect to the $x$ variable and using standard energy estimates, it follows that $v(\cdot, t) \in C^\infty(B_2)$ for all $t \in (-4, 0)$, and
\[
 \|\nabla v\|_{L^\infty(Q_{1, \beta})} \leq N \left(\fint_{Q_{2}} |\nabla v|^2dxdt  \right)^{1/2}
\]
for $N = N(n, \nu, M_0)>0$. This implies the desired estimate for $\nabla v$ in \eqref{Lip-inter-est-1013}. Similarly, we also have
\begin{align*}
\|\nabla^2 v\|_{L^\infty(Q_{1, \beta})} \leq N \Big(\fint_{Q_{3/2, \beta}} |\nabla^2 v|^2dxdt  \Big)^{1/2}\leq N \Big(\fint_{Q_{2}} |\nabla v|^2dxdt  \Big)^{1/2},
\end{align*}
where the last estimate follows from the standard energy estimate. From this and the PDE of $v$, we obtain
\[
\|(\beta)_{B_2}v_t\|_{L^\infty(Q_{1, \beta})} \leq N \|\nabla^2 v\|_{L^\infty(Q_{1, \beta})} \leq N \Big( \fint_{Q_{2}} |\nabla v|^2dxdt  \Big)^{1/2}.
\]
This proves the estimate of $v_t$ in \eqref{Lip-inter-est-1013}. The proof of the lemma is completed.
\end{proof}
Next, for the reader's convenience, we refer to the definitions of $T_{2r}(x_0)$ in \eqref{flat-bdr} and  $\Gamma_{\beta, z_0}(2r)$ in \eqref{cylinder-def}. We then state the following boundary Lipschitz regularity estimates for the class of equations with frozen coefficients. 
\begin{lemma} \label{Bdr-Lipschitz-constant} For every $\nu \in (0,1)$ and $M_0 \geq 1$, there is a constant $N = N(n, \nu, M_0)>0$ such that the following assertion holds. Let $r>0$, $z_0 = (x_0, t_0) \in \mathbb{R}^n \times \mathbb{R}$, $\beta(x)$ be a weight satisfying \eqref{beta-cond}, and $\bar{\bA}(t)$ be a matrix satisfying \eqref{ellip-constant} on $\Gamma_{\beta, z_0}(2r)$. Suppose that $v \in \hW^{1,2}(Q_{2r, \beta}^+(z_0))$ is a weak solution of
\begin{equation*}
\left\{
\begin{aligned}
(\beta)_{B_{2r}(x_0)} v_t - \textup{div}(\bar{\bA}(t)\nabla v) & =0  \quad &&\text{in} \quad Q_{2r, \beta}^+(z_0),\\[4pt]
v & = 0\quad &&\text{on} \quad T_{2r}(x_0) \times \Gamma_{\beta, z_0}(2r).
\end{aligned} \right.
\end{equation*}
Then
\[
r (\beta)_{B_{2r}(x_0)} \|v_t\|_{L^\infty(Q_{r, \beta}^+(z_0))} + \|\nabla v\|_{L^\infty(Q_{r, \beta}^+(z_0))} \leq  N\Big(\fint_{Q_{2r, \beta}^+(z_0)} |\nabla v|^2dxdt  \Big)^{1/2}.
\]
\end{lemma}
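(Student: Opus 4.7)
The plan follows the interior argument of Lemma \ref{Lipschitz-constant} with standard modifications for flat-boundary regularity. By translation assume $z_0 = (0,0)$. Applying the dilation $(x,t)\mapsto(rx, r^2\Psi_\beta(2r) t)$ and Lemma \ref{scaling-lemma} reduces matters to $r = 1$ and $\Psi_\beta(2) = 1$, so that $Q_{2,\beta}^+ = Q_2^+ = B_2^+ \times (-4,0]$ and $v$ solves
\begin{equation*}
(\beta)_{B_2} v_t - \textup{div}(\bar{\bA}(t) \nabla v) = 0 \text{ in } Q_2^+, \qquad v = 0 \text{ on } T_2 \times (-4,0].
\end{equation*}
Because $\Psi_\beta(2) = 1$ and $[\beta^{-1}]_{A_{1+2/n_0}} \leq M_0$, the averaged density $(\beta)_{B_2}$ lies in a compact subinterval of $(0,\infty)$ depending only on $n, M_0$ (by the same reasoning as in the proof of Proposition \ref{compactness-lemma}), so the rescaled equation is uniformly parabolic with constants controlled by $\nu$ and $M_0$. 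The doubling computation \eqref{cut-off time} also provides a definite time-gap between $\Gamma_\beta(1)$ and $\Gamma_\beta(2)$.

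Next I perform tangential finite-difference quotients $\delta_h^i v = h^{-1}\bigl(v(\cdot + h e_i, \cdot) - v\bigr)$ for $i = 1,\dots,n-1$ and small $h$. Since the boundary $T_2 \times (-4,0]$ is translation invariant in $x' = (x_1,\dots,x_{n-1})$, $v = 0$ there, and $\bar{\bA}(t)$ is $x$-independent, each $\delta_h^i v$ lies in $\hW^{1,2}$ of a slightly smaller half-cylinder and satisfies the same PDE with the same zero Dirichlet condition. A Caccioppoli-type energy estimate using a cut-off supported in $B_{2-\varepsilon}^+ \times (-4+\varepsilon,0]$ (and \emph{not} truncated at $T_2$) gives uniform $L^2$ bounds on $\partial_{x_i} v$ for $i\leq n-1$. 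Iterating on $\partial_{x'}^\alpha v$ and on time-difference quotients $\delta_h^t v$ yields $L^2$ bounds on all tangential spatial derivatives and time derivatives of $v$ on $Q_{3/2,\beta}^+$.

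The remaining purely normal second derivative is recovered algebraically from the PDE. Since $\bar{A}_{nn}(t)\geq\nu$, one may solve
\begin{equation*}
\bar{A}_{nn}(t)\partial_n^2 v = (\beta)_{B_2} v_t - \sum_{(i,j)\neq(n,n)} \bar{A}_{ij}(t)\partial_i\partial_j v,
\end{equation*}
whose right-hand side involves only quantities controlled in the previous step. Bootstrapping this identity (and applying tangential difference quotients to it) controls all higher mixed space-time derivatives in $L^2$, and Sobolev embedding upgrades these bounds to $L^\infty(Q_{1,\beta}^+)$. The estimate on $v_t$ then follows from the PDE exactly as in Lemma \ref{Lipschitz-constant}, and undoing the rescaling gives the stated bound with $N = N(n,\nu,M_0)$.

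The main obstacle is making the bootstrap uniform in $\beta$: the Caccioppoli constants and the solvability of the algebraic identity for $\partial_n^2 v$ depend on the ellipticity of the rescaled operator, which in turn depends on $(\beta)_{B_2}$. The normalization $\Psi_\beta(2) = 1$ together with the doubling properties of $A_q$ weights (Lemma \ref{property}) pin $(\beta)_{B_2}$ into a compact subset of $(0,\infty)$ depending only on $n, M_0$, so all constants are absorbed into $N(n,\nu,M_0)$. No trace or reflection argument is required because $\bar{\bA}$ is $x$-independent and tangential translations preserve both the flat boundary and the vanishing trace of $v$.
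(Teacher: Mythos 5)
The overall plan---tangential difference quotients, then algebraic recovery of the normal second derivative from the PDE---is the right structure and matches the paper's cryptic ``with some care on the boundary condition.'' However, there is a genuine gap in the step using \emph{time} difference quotients. The matrix $\bar{\bA}(t)$ is assumed only measurable in $t$ (the condition \eqref{ellip-constant} imposes just ellipticity and boundedness), so $\delta_h^t v$ is \emph{not} a solution of the same PDE. Instead, a direct computation gives
\[
(\beta)_{B_2}\partial_t(\delta_h^t v)-\textup{div}\bigl(\bar{\bA}(t)\nabla\delta_h^t v\bigr)=\frac{1}{h}\,\textup{div}\Bigl[\bigl(\bar{\bA}(t+h)-\bar{\bA}(t)\bigr)\nabla v(\cdot,\cdot+h)\Bigr],
\]
and since $\bar{\bA}$ need not be even H\"older in $t$, the right-hand side has $L^2$ norm of order $O(1/h)$. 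Consequently the associated energy estimate does not stay bounded as $h\to 0$, and you do not obtain $v_t\in L^2$ (let alone $L^\infty$) this way.

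This matters because your recovery of $\partial_n^2 v$ from the identity $\bar{A}_{nn}(t)\partial_n^2 v=(\beta)_{B_2}v_t-\sum_{(i,j)\neq(n,n)}\bar{A}_{ij}(t)\partial_i\partial_j v$ presupposes $v_t\in L^2$, which is exactly what the time-difference-quotient step was supposed to deliver. With only tangential difference quotients you control $D_{x'}^{\alpha}v$, $\partial_n D_{x'}^{\alpha}v$, hence all $\partial_i\partial_j v$ with $(i,j)\neq(n,n)$, but you are left with a circular dependency between $v_t$ and $\partial_n^2 v$. The required estimate on $v_t$ (equivalently on $\partial_n^2 v$) near the flat boundary must be obtained by a route that does not difference $\bar{\bA}(t)$ in time---for instance a change of time variable that normalises $\bar{A}_{nn}(t)$ to a constant before testing with $v_t\varphi^2$, or a barrier/rescaling argument for $\partial_n v$ combined with the interior Lemma \ref{Lipschitz-constant}. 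That is precisely the nontrivial boundary ingredient the paper elides. Your remark that ``no trace or reflection argument is required'' is too optimistic: the $x$-independence of $\bar{\bA}$ makes tangential differentiation painless, but it does nothing for the normal/time direction, which is where the work lies.
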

\begin{proof} The proof follows similarly to that of Lemma \ref{Lipschitz-constant} with some care on the boundary condition, and we skip the details.
\end{proof}
\section{Interior regularity estimates}\label{interior section}
Similar to the notation used in \eqref{point-oss-def}, for $x_0 \in \R^n$ and $r>0$, we denote
\begin{equation} \label{Theta-beta-def}
\Theta_{\bA, r, x_0}(x,t) = |\bA(x,t) - (\bA)_{B_r(x_0)}(t)|, \quad \Theta_{\beta, r, x_0}(x) = |\beta(x) - (\beta)_{B_r(x_0)}|\beta(x)^{-1/2}.
\end{equation}
When $x_0=0$, we use the following notations for abbreviation:
\begin{equation*} 
\Theta_{\bA, r} (x,t) = |\bA(x,t) - (\bA)_{B_{r}}(t)|, \quad\Theta_{\beta, r} (x)=  |\beta(x) - (\beta)_{B_r}|\beta(x)^{-1/2}.
\end{equation*}
Moreover, we recall that $\Lambda$ is the constant given in Lemma \ref{quasi-metric lemma}. The following result on interior regularity estimates for the class of equations \eqref{main-eqn} is the main result of this section.
\begin{theorem} \label{inter-theorem} For every $\nu \in (0,1)$, $M_0 \geq 1$, and $p \in [2,\infty)$, there exist a sufficiently small constant $\delta_0 = \delta_0(n, \nu, p, M_0)>0$ and a constant $N = N(n, \nu, p, M_0)>0$ such that the following assertions hold. Suppose that  $\beta$ is a weight satisfying \eqref{beta-cond}, $\bA$ is a matrix satisfying \eqref{ellip-cond} in $Q_{2\Lambda,\beta}$, and 
\[
\sup_{r \in (0,1)}\ \sup_{\substack{z=(x,t)\\ \in Q_{1,\beta}}}\left(\fint_{Q_{r,\beta}(z)}\Theta_{\bA, r, x}(y,s)^2\, dyds + \frac{1}{\beta(B_{r}(x))} \int_{B_{r}(x)} \Theta_{\beta, r, x}(y)^2\, dy\right)  \leq \delta_0^2.
\]
Then for every weak solution $u \in \W^{1,2}(Q_{2\Lambda, \beta}, \beta)$ of the equation
\begin{equation} \label{Q2-eqn}
\beta(x) u_t - \textup{div}(\bA (x,t) \nabla u) = \textup{div}(F(x,t))  \quad \text{in} \quad Q_{2\Lambda, \beta},
\end{equation}
with $F \in L^p(Q_{2\Lambda,\beta})^n$, it holds that $u \in \W^{1,p}(Q_{1,\beta}, \beta)$ and
\begin{align*}
&\| \nabla u\|_{L^p(Q_{1,\beta})}+\|\beta u_t\|_{L^p(\Gamma_{\beta}(1),W^{-1,p}(B_1))}\\
&\leq N\left(|Q_{1,\beta}|^{\frac{1}{p}-\frac{1}{2}}\|u\|_{L^2(Q_{2\Lambda,\beta})}+\|F\|_{L^p(Q_{2\Lambda,\beta})}\right).
\end{align*}
\end{theorem}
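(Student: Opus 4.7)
The plan is to follow the level-set method of Caffarelli--Peral combined with the freezing-coefficient perturbation technique that is standard for Calder\'on--Zygmund-type gradient estimates, but carried out on the non-homogeneous weighted cylinders $Q_{r,\beta}(z_0)$ introduced in Section \ref{cylinder-distance-sec}. The four steps are: (i) a local Caccioppoli-type energy bound for weak solutions of \eqref{Q2-eqn}; (ii) an $L^2$ comparison estimate on each cylinder between $u$ and a solution $v$ of the frozen-coefficient equation
\[
(\beta)_{B_r(x_0)} v_t - \textup{div}\bigl((\bA)_{B_r(x_0)}(t)\nabla v\bigr) = 0 \quad \text{in} \quad Q_{2r,\beta}(z_0);
\]
(iii) combining the resulting smallness of $\nabla(u-v)$ in $L^2$ with the uniform Lipschitz bound for $v$ supplied by Lemma \ref{Lipschitz-constant}; and (iv) a weighted good-$\lambda$ / Vitali argument on the quasi-metric space $(\mathbb{R}^{n+1},\rho_\beta)$ from Lemma \ref{quasi-metric lemma} to upgrade the $L^2$ information to an $L^p$ bound.

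For step (ii), the natural test function $(u-v)\eta^2$ produces, after integration by parts, two perturbation terms: a spatial one pairing $\bA-(\bA)_{B_r(x_0)}$ with $\nabla u$, and a delicate one of the form $\bigl(\beta-(\beta)_{B_r(x_0)}\bigr)\partial_t u$. Because $\partial_t u$ only lies in $L^2(\Gamma,W^{-1,2})$, this second term cannot be absorbed by a direct H\"older estimate against the oscillation smallness. I would therefore argue by contradiction: assume the comparison estimate fails along a sequence $(\bA_k,\beta_k,F_k,u_k)$; after the rescaling \eqref{scale-1}, I may normalize so that $\Psi_{\beta_k}(r)=1$ and use the Caccioppoli bound from step (i) together with Lemma \ref{embedd-lemma} to verify the a priori bound \eqref{u-k-bound-0402}. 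Proposition \ref{compactness-lemma} then extracts a limit $(\beta_0,u_*)$ with $\beta_0\in[M_0^{-1},1]$ constant and $u_*$ a weak solution of a frozen parabolic equation with constant $\beta_0$; standard interior regularity for the limit equation, together with the strong $L^2$ convergence of $u_k$, contradicts the failure hypothesis.

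Once step (ii) produces, for each $z_0\in Q_{1,\beta}$ and $r$ small, an estimate
\[
\fint_{Q_{r/2,\beta}(z_0)}|\nabla(u-v)|^2\,dxdt \;\le\; \varepsilon(\delta_0)\Bigl(\fint_{Q_{2r,\beta}(z_0)}|\nabla u|^2\,dxdt + \fint_{Q_{2r,\beta}(z_0)}|F|^2\,dxdt\Bigr)
\]
with $\varepsilon(\delta_0)\to 0$ as $\delta_0\to 0$, combining with $\|\nabla v\|_{L^\infty(Q_{r,\beta}(z_0))}^2\lesssim \fint_{Q_{2r,\beta}(z_0)}|\nabla u|^2$ from Lemma \ref{Lipschitz-constant} and the quasi-triangle inequality \eqref{quasi-tri} supplies the standard ingredient for the Caffarelli--Peral argument. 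I would introduce the weighted maximal function
\[
\mathcal{M}\phi(z_0) \;=\; \sup_{r>0}\fint_{Q_{r,\beta}(z_0)}|\phi|\,dxdt
\]
on the doubling quasi-metric space $(\mathbb{R}^{n+1},\rho_\beta, dx\,dt)$; doubling of Lebesgue measure with respect to $\rho_\beta$ follows from Lemma \ref{property} and the definition \eqref{t-height} of the temporal scale, so Vitali covering and the Hardy--Littlewood maximal bound on $L^{p/2}$ are at our disposal. The good-$\lambda$ inequality on the level sets of $\mathcal{M}(|\nabla u|^2)$ is then derived in the usual fashion and, after integration in $\lambda$, yields the $L^p$ bound on $\nabla u$. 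The estimate for $\beta u_t$ in $L^p(\Gamma_\beta(1),W^{-1,p}(B_1))$ is read off the equation by testing against elements of $W^{1,p'}_0(B_1)$: $\beta u_t=\textup{div}(\bA\nabla u + F)$.

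The main technical obstacle is unambiguously step (ii). The failure of a direct energy argument on $(\beta-(\beta)_{B_r(x_0)})\partial_t u$ forces the detour through Proposition \ref{compactness-lemma}, and the subtlety is that the limiting problem carries a constant weight $\beta_0$ rather than a function, so one must simultaneously pass to the limit in $\beta_k$ and $u_k$. The identification of the limit equation uses crucially both the Muckenhoupt bound \eqref{beta-cond} (for uniform doubling and for the embedding $W^{1,2}\hookrightarrow L^2(\beta)$ in Lemma \ref{wei-Les}) and the weighted oscillation hypothesis in the form \eqref{osc-assumption}. Once this comparison lemma is in hand, the remaining ingredients---Lipschitz bounds for the frozen equation, doubling in the $\rho_\beta$ metric, and the level-set iteration---are fairly routine adaptations of the classical Caffarelli--Peral framework to the non-homogeneous cylinders $Q_{r,\beta}$.
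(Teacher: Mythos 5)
Your overall architecture matches the paper's: energy estimates, a freezing-coefficient $L^2$ comparison proved by contradiction via the weighted Aubin--Lions compactness in Proposition \ref{compactness-lemma}, Lipschitz bounds for the frozen equation, and a Caffarelli--Peral good-$\lambda$ iteration on the quasi-metric space $(\mathbb{R}^{n+1},\rho_\beta)$. Two points in your sketch are, however, inaccurately described and would stall if pushed.

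First, the delicate perturbation term is $\bigl(\beta-(\beta)_{B_r(x_0)}\bigr)v_t$, not $\bigl(\beta-(\beta)_{B_r(x_0)}\bigr)\partial_t u$: subtracting the two equations and splitting $\beta u_t-(\beta)_{B_r}v_t=\beta(u-v)_t+(\beta-(\beta)_{B_r})v_t$ puts the smooth $v_t$ (controlled in $L^\infty$ by Lemma \ref{Lipschitz-constant}) against the oscillation factor, which is exactly what makes the Caccioppoli estimate in Lemma \ref{compare-lemma-1} close. Your version puts $\partial_t u$ there, which is strictly worse and is not what the energy inequality actually produces; the compactness argument is needed elsewhere, not to rescue this term.

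Second, your claim that the a priori bound \eqref{u-k-bound-0402} along the contradiction sequence follows from ``the Caccioppoli bound from step (i) together with Lemma \ref{embedd-lemma}'' does not go through under the normalization $\fint|\nabla u_k|^2+\fint|F_k|^2\le 1$. Caccioppoli controls $\|\nabla u_k\|_{L^2}$ by $\|u_k\|_{L^2}$ and $\|F_k\|_{L^2}$, i.e.\ the wrong direction, and Lemma \ref{embedd-lemma} only bounds the weighted norm $\|u\|_{L^2(\beta)}$ by the unweighted ones, so neither gives $\|u_k-(u_k)_Q\|_{L^2}\lesssim 1$. What fills this gap in the paper is the non-standard Poincar\'e-type estimate of Lemma \ref{u-L2-Cac},
\[
\int_{Q_{r,\beta}}|u-(u)_{Q_{r,\beta}}|^2\,dxdt \;\le\; Nr^2\int_{Q_{r,\beta}}\bigl(|\nabla u|^2+|F|^2\bigr)\,dxdt + N\Bigl(\tfrac{1}{\beta(B_r)}\int_{B_r}\Theta_{\beta,r}^2\Bigr)\int_{Q_{r,\beta}}|u-(u)_{Q_{r,\beta}}|^2\,dxdt,
\]
which is itself proved by a separate compactness argument with the opposite normalization $\|u_k-(u_k)_Q\|_{L^2}=1$; once the oscillation factor is small the last term is absorbed, producing the a priori $L^2$ bound needed for the second compactness step (Lemma \ref{L2-comparision}). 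You would discover this obstacle upon actually attempting the contradiction argument, and the remedy (Lemma \ref{u-L2-Cac}) is one of the paper's technical contributions; your sketch as written glosses over it.
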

The rest of this section is to prove Theorem \ref{inter-theorem}. In Subsection \ref{inter-energy}, local energy estimates are established. Local interior approximations of weak solutions to the class of equations \eqref{Q2-eqn} and those of the frozen coefficient equations are achieved in Subsection \ref{inter-appr-est}. From these two subsections, the complete proof of Theorem \ref{inter-theorem} will be presented in Subsection \ref{inter proof}.
\subsection{Interior energy estimates} \label{inter-energy}
In this subsection, we study the problem:
\begin{equation} \label{eqn-Q_R}
\beta(x) u_t - \textup{div}(\bA(x,t) \nabla u)  = \textup{div}(F(x,t)) 
\end{equation}
in some given parabolic cylinder. We begin with the following simple lemma on Caccioppoli type estimates for \eqref{eqn-Q_R}.
\begin{lemma} \label{Caccio-1} For every $\nu \in (0,1)$ and  $M_0 \geq 1$, there exists a constant $N = N(n, \nu, M_0)>0$ such that the following holds. Let $z_0 = (x_0,t_0) \in \R^n \times \R$ and $r > 0$. Suppose that $u \in \W^{1,2}(Q_{2r, \beta}(z_0), \beta)$ is a weak solution of \eqref{eqn-Q_R} in $Q_{2r, \beta}(z_0)$, where $\beta$ is a weight satisfying \eqref{beta-cond}, $\bA$ is a matrix satisfying \eqref{ellip-cond} in $Q_{2r, \beta}(z_0)$, and $F \in L^2(Q_{2r, \beta}(z_0))^n$. Then
\begin{align*}
& \sup_{t \in \Gamma_{\beta, z_0}(3r/2)} \int_{B_{3r/2}(x_0)} u(x,t)^2 \beta(x)\, dx + \int_{Q_{3r/2, \beta}(z_0)} |\nabla u(x,t)|^2\, dx dt\\
&\qquad \leq N \int_{Q_{2r, \beta}(z_0)} u(x,t)^2\Big [ 1+\frac{1}{r^2}+ \frac{\beta(x)}{r^2\Psi_{\beta, x_0}(2r)}\Big]\, dxdt+ N \int_{Q_{2r, \beta}(z_0)} |F|^2\, dxdt.
\end{align*}
\end{lemma}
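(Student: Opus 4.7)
The plan is to derive the Caccioppoli estimate by testing the weak formulation of \eqref{eqn-Q_R} with $u\varphi^2$ for a space-time cut-off $\varphi(x,t) = \eta(x)\psi(t)$ supported in $Q_{2r,\beta}(z_0)$, and then integrating in time up to an arbitrary slice $t_1 \in \Gamma_{\beta,z_0}(3r/2)$ before taking the supremum over $t_1$. This is the classical energy-method recipe, but two non-standard features must be handled: the cylinder $Q_{r,\beta}(z_0)$ has an anisotropic time scale $r^2 \Psi_{\beta,x_0}(r)$, and the time derivative of $u$ is only weakly controlled against the weight $\beta$, so the test function has to be legitimized carefully.

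First I would choose $\eta \in C_0^\infty(B_{2r}(x_0))$ with $\eta \equiv 1$ on $B_{3r/2}(x_0)$ and $|\nabla \eta| \leq C/r$, and $\psi \in C^\infty(\mathbb{R})$ with $\psi \equiv 0$ for $t \leq t_0 - (2r)^2\Psi_{\beta,x_0}(2r)$, $\psi \equiv 1$ on $\Gamma_{\beta,z_0}(3r/2)$, and $0 \leq \psi \leq 1$. By the doubling property of $\beta^{n_0/2}$ from Lemma \ref{property}, an argument identical to the one producing \eqref{cut-off time} shows that the length of the time transition $\Gamma_{\beta,z_0}(2r) \setminus \Gamma_{\beta,z_0}(3r/2)$ is comparable to $r^2\Psi_{\beta,x_0}(2r)$, so $\psi$ can be chosen with $|\psi_t| \leq C(n,M_0)/[r^2\Psi_{\beta,x_0}(2r)]$. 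This bound is precisely what will produce the $\beta(x)/[r^2\Psi_{\beta,x_0}(2r)]$ term in the final estimate.

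Next I would exploit the fact that $\beta$ is time-independent to write, in a distributional sense, $\beta u \partial_t u = \tfrac{1}{2}\partial_t(\beta u^2)$. To legitimize this I would apply Lemma \ref{conti-weak-space} to $u\eta$, which guarantees $u\eta \in C(\overline{\Gamma_{\beta,z_0}(2r)}; L^2(B_{2r}(x_0),\beta))$ up to modification on a null set, and then perform a Steklov average in $t$ to justify taking $u\varphi^2 \mathbf{1}_{\{t \leq t_1\}}$ as a test function. The resulting identity, for any $t_1 \in \Gamma_{\beta,z_0}(3r/2)$, is
\begin{align*}
& \tfrac{1}{2}\int_{B_{2r}(x_0)} \beta(x) u(x,t_1)^2 \eta(x)^2\,dx + \int_{\{t \leq t_1\}} \langle \bA \nabla u, \nabla u\rangle \varphi^2\,dxdt \\
&\quad = \int \beta u^2 \eta^2 \psi\,\psi_t\,dxdt - 2\int \langle \bA \nabla u, \nabla \eta\rangle u \eta \psi^2\,dxdt \\
&\qquad - \int \langle F, \nabla u\rangle \varphi^2\,dxdt - 2\int \langle F, \nabla \eta\rangle u \eta \psi^2\,dxdt,
\end{align*}
where the boundary contribution at $t \to t_0 - 4r^2\Psi_{\beta,x_0}(2r)$ vanishes because $\psi = 0$ there. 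Ellipticity \eqref{ellip-cond} together with Young's inequality with a small parameter absorbs all $\nabla u$ contributions on the right into $\nu \int |\nabla u|^2\varphi^2$ on the left, leaving terms of order $\int u^2/r^2$, $\int \beta u^2/[r^2\Psi_{\beta,x_0}(2r)]$, and $\int |F|^2$ on the right. Taking the supremum over $t_1 \in \Gamma_{\beta,z_0}(3r/2)$ and using $\varphi \equiv 1$ on $Q_{3r/2,\beta}(z_0)$ yields the desired bound; the harmless ``$1$'' inside the bracket can be added at no cost to cover any range of $r$.

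The main obstacle is the rigorous justification of the time integration by parts in the weighted setting. Because $\beta$ may vanish or blow up, one cannot simply plug $u\varphi^2$ into the distributional formulation; the Steklov-averaging argument must be carried out against the weight, and this is precisely where the $C([S,T];L^2(\cdot,\beta))$ continuity in Lemma \ref{conti-weak-space} is indispensable. Once the identity for $\beta u\partial_t u$ is secured, the remainder of the proof is a routine weighted energy estimate adapted to the non-homogeneous geometry of $Q_{r,\beta}(z_0)$.
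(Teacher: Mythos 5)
Your proposal follows essentially the same route as the paper: translate to $z_0=(0,0)$, pick a cut-off equal to $1$ on $Q_{3r/2,\beta}$ with $|\nabla\varphi|\lesssim 1/r$ and $|\partial_t\varphi|\lesssim 1/(r^2\Psi_{\beta,x_0}(2r))$ (justified by the same doubling argument that yields \eqref{cut-off time}), test with $u\varphi^2$ via Steklov averaging, and run the standard energy estimate with Young's inequality. Your explicit appeal to Lemma \ref{conti-weak-space} to legitimize the time integration-by-parts is a slightly more careful articulation of what the paper compresses into ``by using Steklov's average if needed,'' but it is the same argument.
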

\begin{proof} By using the translation $(x,t) \mapsto (x - x_0, t-t_0)$, we can assume without loss of generality that $z_0 = (0, 0) \in \R^n \times \R$. Let $\varphi \in C^\infty(Q_{2r, \beta})$ be a standard cut-off function: $\varphi =0$ near $\partial_p Q_{2r, \beta}$, $0 \leq \varphi \leq 1$ in $Q_{2r, \beta}$, 
\begin{equation} \label{test.function}
\varphi =1 \quad \text{in} \quad Q_{3r/2, \beta}, \quad |\nabla \varphi| \leq \frac{N_0}{r}, \quad \text{and} \quad |\partial_t \varphi| \leq \frac{N_0}{ r^2\Psi_{\beta}(2r)},
\end{equation}
where $N_0 = N_0(n, M_0)>0$. We note that the same calculation as in \eqref{cut-off time} is used to achieve the last assertion in \eqref{test.function}. Now, by using Steklov's average if needed (see \cite[p. 18]{DiB} for example), we can test the equation \eqref{eqn-Q_R} with $u\varphi^2$ to obtain
\begin{align*}
& \frac{1}{2}\frac{d}{dt}\int_{B_{2r}} u^2 \varphi^2 \beta(x) dx + \int_{B_{2r}} \wei{\bA \nabla u, \nabla u} \varphi^2 dx \\
& = -\int_{B_r}\Big[2 \wei{\bA \nabla u, \nabla \varphi} u \varphi  + \varphi^2 \wei{F, \nabla u} + 2 u\varphi \wei{F, \nabla \varphi} \Big]dx  + \int_{B_r} u^2 \varphi \partial_t \varphi \beta(x) dx.
\end{align*}
From this, the ellipticity and boundedness conditions on $\bA$ in \eqref{ellip-cond}, we can follow the standard energy estimates, using H\"{o}lder's inequality and Young's inequality, to obtain
\begin{align*}
& \frac{d}{dt}\int_{B_{2r}} u^2 \varphi^2 \beta(x) dx + \nu \int_{B_{2r}} |\nabla (u\varphi)|^2 dx \\
&\leq N(n,\nu)\left\{\int_{B_{2r}} u^2\Big [ \varphi^2+|\nabla \varphi|^2 + |\partial_t \varphi| \beta(x) \Big] dx +\int_{B_{2r}} |F|^2 \varphi^2 dx\right\}.
\end{align*}
Hence, by integrating the last estimate in the time variable, we obtain
\begin{align*}
&\sup_{t \in \Gamma_{\beta}(2r)}\int_{B_{2r}} u^2 \varphi^2 \beta(x) dx + \nu \int_{Q_{2r, \beta}} |\nabla (u\varphi)|^2 dx dt\\
&\leq N(n,\nu)\left\{\int_{Q_{2r, \beta}} u^2\Big [ \varphi^2+ |\nabla \varphi|^2 + |  \partial_t \varphi| \beta(x) \Big] dx dt+\int_{Q_{2r, \beta}} |F|^2 \varphi^2 dx dt\right\}.
\end{align*}
From this and \eqref{test.function}, the lemma is proved.
\end{proof}

Next, we combine Lemma \ref{Caccio-1} with Lemma \ref{embedd-lemma} to obtain an improved version of Lemma \ref{Caccio-1}, in which there is no $L^2(Q_{2r, \beta}, \beta)$-norm of $u$ in the right-hand side. 
\begin{lemma} \label{improved-Caccio-1}  Let $\nu \in (0,1)$ and $M_0 \geq 1$. Assume that $\beta$ is a weight satisfying \eqref{beta-cond}, and that  $\bA$ is a matrix satisfying \eqref{ellip-cond} in $Q_{2, \beta}$. Then, for every weak solution $u \in \W^{1,2}(Q_{2, \beta}, \beta)$ of \eqref{eqn-Q_R} in $Q_{2,\beta}$ with $F \in L^2(Q_{2, \beta})^n$, it holds that
\begin{align} \label{inter-energy-0317}
\sup_{t \in \Gamma_{\beta}(3/2)}\int_{B_{3/2}} u(x,t)^2 \beta(x) dx +  \int_{Q_{3/2, \beta}} |\nabla u|^2dxdt\leq N\int_{Q_{2,\beta}} \big( u^2 + |F|^2\big)dx dt,
\end{align}
where $N = N(n, \nu, M_0)>0$.
\end{lemma}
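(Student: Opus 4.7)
The plan is to combine the weighted Caccioppoli estimate of Lemma \ref{Caccio-1} with the interpolation inequality of Lemma \ref{embedd-lemma}, and to close the argument via a Giaquinta-type hole-filling iteration on radii $r \in [3/2, 2]$; the reverse doubling property of $\beta^{n_0/2}$ from Lemma \ref{property}-(iii) will play the decisive role in making the iteration close.

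The first step is to derive a two-radii version of Lemma \ref{Caccio-1}: for any $3/2 \leq r_1 < r_2 \leq 2$, use a product cut-off $\varphi(x,t) = \eta(x)\chi(t)$ with $\eta$ supported in $B_{r_2}$ and equal to $1$ on $B_{r_1}$, and $\chi$ supported in $(-r_2^2\Psi_\beta(r_2), 0]$ and equal to $1$ on $(-r_1^2\Psi_\beta(r_1),0]$. The standard Caccioppoli testing yields
\[
\phi(r_1) \leq \frac{N}{(r_2-r_1)^2}\int_{Q_{r_2,\beta}} u^2 \, dxdt + \frac{N}{r_2^2\Psi_\beta(r_2) - r_1^2\Psi_\beta(r_1)}\int_{Q_{r_2,\beta}} u^2\beta \, dxdt + N\int_{Q_{2,\beta}} |F|^2 \, dxdt,
\]
where $\phi(r) := \sup_{t \in \Gamma_\beta(r)} \int_{B_r} u^2\beta\, dx + \int_{Q_{r,\beta}} |\nabla u|^2\, dxdt$.

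The second step handles the $\beta$-weighted term. Applying Lemma \ref{embedd-lemma} on $Q = Q_{r_2,\beta}$, together with the elementary H\"older bound $(\beta)_{B_{r_2}} \leq \Psi_\beta(r_2)$ (valid since $n_0/2 \geq 1$), one obtains $\Psi_\beta(r_2)^{-1}\int_{Q_{r_2,\beta}} u^2\beta \leq N\int u^2 + N\bigl(\int u^2\bigr)^{1-\theta}\bigl(\int |\nabla u|^2\bigr)^\theta$ for some $\theta \in (0,1)$. To convert the Caccioppoli coefficient $\bigl(r_2^2\Psi_\beta(r_2) - r_1^2\Psi_\beta(r_1)\bigr)^{-1}$ into $C(r_2-r_1)^{-(1+n_0/2)}\Psi_\beta(r_2)^{-1}$, I would use the identity $r^2\Psi_\beta(r) = \sigma_n^{-2/n_0}\beta^{n_0/2}(B_r)^{2/n_0}$ from \eqref{heights} and apply Lemma \ref{property}-(iii) to $B_{r_1} \subset B_{r_2}$, producing the bound $r_2^2\Psi_\beta(r_2) - r_1^2\Psi_\beta(r_1) \geq c\,\Psi_\beta(r_2)(r_2-r_1)^{1+n_0/2}$. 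Tuning Young's parameter $\epsilon = (r_2-r_1)^{1+n_0/2}/(2N)$ in the inequality $a^{1-\theta}b^\theta \leq \epsilon b + C_\epsilon a$ gives an absorbable gradient term and yields
\[
\phi(r_1) \leq \tfrac{1}{2}\phi(r_2) + \frac{N}{(r_2-r_1)^\alpha}\int_{Q_{2,\beta}} u^2 \, dxdt + N\int_{Q_{2,\beta}} |F|^2 \, dxdt, \qquad 3/2 \leq r_1 < r_2 \leq 2,
\]
for a finite power $\alpha$ depending on $n, M_0, \theta$.

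The final step is to apply the standard Giaquinta iteration lemma on $[3/2, 2]$, which absorbs the $\tfrac{1}{2}\phi(r_2)$ term and delivers \eqref{inter-energy-0317}. The main obstacle is obtaining the sharp polynomial lower bound for $r_2^2\Psi_\beta(r_2) - r_1^2\Psi_\beta(r_1)$ as $r_2 - r_1 \to 0$: without it, the coefficient of $\phi(r_2)$ after Young's cannot be made uniformly less than $1$, and the hole-filling iteration collapses. This is precisely where the Muckenhoupt condition \eqref{beta-cond}, via the quantitative reverse doubling of Lemma \ref{property}-(iii), is indispensable.
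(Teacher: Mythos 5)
Your proposal is correct and takes essentially the same approach as the paper: the Caccioppoli estimate (Lemma \ref{Caccio-1}), the interpolation inequality (Lemma \ref{embedd-lemma}), Young's inequality, and a Giaquinta-type hole-filling iteration, which is what the paper invokes as a ``standard iteration argument'' via \cite[Lemma 4.3]{Lin}. Your explicit derivation of the polynomial lower bound $r_2^2\Psi_\beta(r_2) - r_1^2\Psi_\beta(r_1) \gtrsim \Psi_\beta(r_2)(r_2-r_1)^{1+n_0/2}$ from Lemma \ref{property}\textup{(iii)} is exactly the mechanism that makes the continuum-of-radii iteration close, a detail the paper leaves implicit.
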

\begin{proof} By Lemma \ref{Caccio-1}, we have
\begin{align*}
& \sup_{t \in \Gamma_{\beta}(3r/2)} \int_{B_{3r/2}} u(x,t)^2 \beta(x) dx + \int_{Q_{3r/2, \beta}} |\nabla u|^2 dx dt\\
&\qquad \leq N\int_{Q_{2r, \beta}} u^2\Big [ \frac{1}{r^2}  + \frac{\beta(x)}{r^2\Psi_{\beta}(2r)}  \Big] dx dt+ N\int_{Q_{2r, \beta}} |F|^2 dx dt,
\end{align*}
for every $r \in (0,1)$ and $N = N(n, \nu, M_0)>0$.  Note that $(\beta)_{B_{2r}}\leq \Psi_{\beta}(2r)$ by H\"{o}lder's inequality, we can apply Lemma \ref{embedd-lemma} together with Young's inequality to obtain
\[
\frac{1}{r^2\Psi_{\beta}(2r)}\int_{Q_{2r,\beta}}u^2\beta(x)dxdt
\leq \epsilon\int_{Q_{2r, \beta}} |\nabla u|^2 dx dt+\frac{N(n, M_0, \epsilon)}{r^2}\int_{Q_{2r,\beta}}u^2dxdt,\quad \forall\, \epsilon>0.
\]
Then, by choosing $\epsilon$ sufficiently small depending on $n, \nu$ and $M_0$, we infer that
\begin{align*}
& \sup_{t \in \Gamma_{\beta}(3r/2)} \int_{B_{3r/2}} u(x,t)^2 \beta(x) dx + \int_{Q_{3r/2, \beta}} |\nabla u|^2 dx dt\\
&\leq \frac{1}{2} \int_{Q_{2r, \beta}} |\nabla u|^2 dx dt + \frac{N(n,\nu, M_0)}{r^2}  \int_{Q_{2r, \beta}} u^2 dx dt+ N\int_{Q_{2r, \beta}} |F|^2 dx dt,
\end{align*}
for any $r \in (0,1)$. Then \eqref{inter-energy-0317} follows from the last estimate and a standard iteration argument (see \cite[Lemma 4.3]{Lin}, for example). The proof of the lemma is completed.
\end{proof}
Our next lemma is non-standard in the theory on energy estimates for partial differential equations. However, it provides key estimates to prove the control of $\W^{1,2}(Q_{r,\beta}, \beta)$-norm of weak solutions of \eqref{eqn-Q_R}. 
\begin{lemma}\label{u-L2-Cac} For every $\nu \in (0,1)$ and  $M_0 \geq 1$, there exists a constant $N = N(n, \nu, M_0)>0$ such that the following assertion holds. Suppose that $\beta$ is a weight satisfying \eqref{beta-cond},  and $\bA$ is a matrix satisfying \eqref{ellip-cond} in $Q_{r, \beta}$ with some $r>0$.  Suppose also that $u \in \W^{1,2}(Q_{r, \beta}, \beta)$ is a weak solution of \eqref{eqn-Q_R} in $Q_{r, \beta}$. Then
\begin{align*}
\int_{Q_{r, \beta}} |u - (u)_{Q_{r,\beta}}|^2 dxdt \leq &N r^2\int_{Q_{r, \beta}}(|\nabla u|^2 + |F|^2) dxdt\\
&+ N \Big(\frac{1}{\beta(B_{r})}\int_{B_{r}} \Theta_{\beta, r}(x)^2 dx\Big) \Big(\int_{Q_{r,\beta}} |u - (u)_{Q_{r,\beta}}|^{2} dxdt\Big),
\end{align*}
where $\Theta_{\beta,r}(x)$ is defined in \eqref{Theta-beta-def}.
\end{lemma}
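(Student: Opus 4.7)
The plan is to decompose $u-(u)_{Q_{r,\beta}}$ into three pieces measuring, respectively, spatial oscillation, the mismatch between the unweighted and $\beta$-weighted spatial means, and the time oscillation of the $\beta$-weighted spatial mean. Writing $\tilde u(t)=\fint_{B_r}u(x,t)\,dx$ and $\bar u(t)=\beta(B_r)^{-1}\int_{B_r}u\,\beta\,dx$, set
\[
u(x,t)-(u)_{Q_{r,\beta}} = \bigl(u-\tilde u(t)\bigr) + \bigl(\tilde u(t)-\bar u(t)\bigr) + \bigl(\bar u(t)-(u)_{Q_{r,\beta}}\bigr),
\]
and I would bound each $L^2(Q_{r,\beta})$-norm separately and combine via the triangle inequality.

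The first piece is handled by the standard spatial Poincar\'e inequality applied pointwise in $t$, giving a contribution $\le Nr^2\int_{Q_{r,\beta}}|\nabla u|^2\,dx\,dt$. For the second piece I would use the algebraic identity
\[
\tilde u(t)-\bar u(t) = \frac{1}{(\beta)_{B_r}}\fint_{B_r}\bigl(u-(u)_{Q_{r,\beta}}\bigr)\bigl((\beta)_{B_r}-\beta\bigr)\,dx,
\]
split $|(\beta)_{B_r}-\beta| = \beta^{1/2}\Theta_{\beta,r}$, apply Cauchy--Schwarz, integrate in $t$, and invoke Lemma~\ref{embedd-lemma} with Young's inequality to transfer the resulting weighted $L^2(Q_{r,\beta},\beta)$-norm of $u-(u)_{Q_{r,\beta}}$ to its unweighted counterpart plus $r^2\int|\nabla u|^2$. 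This yields precisely the $\Theta_{\beta,r}^2$-coefficient term in the claim.

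The main work lies in the third piece. Decomposing $\bar u(t)-(u)_{Q_{r,\beta}} = \bigl(\bar u(t)-(\bar u)_{\Gamma_\beta(r)}\bigr)+\bigl((\bar u)_{\Gamma_\beta(r)}-(u)_{Q_{r,\beta}}\bigr)$, the (constant) second term reduces to the integral of $|\tilde u-\bar u|$ already treated, so the heart of the matter is the time oscillation of $\bar u$, and this is where the equation enters. Fix a spatial cutoff $\eta\in C_0^\infty(B_r)$ with $\eta\equiv 1$ on $B_{r/2}$ and $|\nabla\eta|\le N/r$, and set $\bar u_\eta(t) = \bigl(\int\eta\beta\bigr)^{-1}\int u\,\eta\,\beta\,dx$. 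Steklov averaging together with testing the equation against $\eta(x)\chi_{[s,t]}(\tau)$ yields
\[
\int_{B_r}\beta\,\eta\,[u(\cdot,t)-u(\cdot,s)]\,dx = -\int_s^t\!\!\int_{B_r}\langle\bA\nabla u+F,\nabla\eta\rangle\,dx\,d\tau,
\]
and Cauchy--Schwarz combined with the doubling estimate $\int\eta\beta\asymp\beta(B_r)$ from Lemma~\ref{property} and the identification $|\Gamma_\beta(r)|=r^2\Psi_\beta(r)\asymp r^2(\beta)_{B_r}$ recorded in \eqref{heights} then yields $\int_{Q_{r,\beta}}|\bar u_\eta-(\bar u_\eta)_{\Gamma_\beta(r)}|^2\,dx\,dt \le Nr^2\int_{Q_{r,\beta}}(|\nabla u|^2+|F|^2)\,dx\,dt$.

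The remaining error $\bar u-\bar u_\eta$ is handled via the identity $\bar u-\bar u_\eta=\bigl(\int\eta\beta\bigr)^{-1}\int(1-\eta)\beta(u-\bar u)\,dx$ and Cauchy--Schwarz, which bounds it by a dimensional multiple of $(\beta(B_r))^{-1}\int_{Q_{r,\beta}}|u-\bar u|^2\beta\,dx\,dt$; Lemma~\ref{embedd-lemma} transfers this to $r^2\int|\nabla u|^2+\int|u-\bar u|^2$, and feeding back the bound on $|\tilde u-\bar u|$ from the second step closes the loop. The main technical obstacle I anticipate is precisely this final step: ensuring that the coefficient of $\int|u-(u)_{Q_{r,\beta}}|^2$ produced by the error bound for $\bar u-\bar u_\eta$ carries the $\Theta_{\beta,r}^2$-prefactor (and is not merely a dimensional constant), so that the distinctive structural form of the lemma is preserved rather than collapsing into a non-absorbable inequality.
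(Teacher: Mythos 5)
Your overall strategy is a genuinely different and more quantitative route than the paper's, which proves this lemma by a contradiction/compactness argument (normalize so that the claimed inequality fails along a sequence $u_k$, $\beta_k$, $\bA_k$, $F_k$, apply the weighted Aubin--Lions result Proposition~\ref{compactness-lemma} to extract a limit $u_0$ with $\nabla u_0=0$, $(u_0)_{Q_1}=0$, $\|u_0\|_{L^2}=1$, and then pass to the limit in the weak form of the equation to conclude $\partial_t u_0=0$, whence $u_0\equiv 0$, a contradiction). Your decomposition into spatial Poincar\'e, weighted/unweighted mean mismatch, and time oscillation of the weighted mean is the classical "parabolic Poincar\'e via testing a cutoff'' argument, and the first two pieces are correct: the algebraic identity for $\tilde u-\bar u$, the Cauchy--Schwarz split through $\beta^{1/2}\Theta_{\beta,r}$, and the transfer via Lemma~\ref{embedd-lemma} plus Young's inequality do produce exactly the $\Theta_{\beta,r}^2$-coefficient in the statement.

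The gap you correctly anticipate, however, is real, and as written the argument does not close. With $\eta$ equal to $1$ only on $B_{r/2}$, the error identity gives $|\bar u-\bar u_\eta|^2\le\bigl(\int\eta\beta\bigr)^{-2}\bigl(\int(1-\eta)^2\beta\bigr)\int_{B_r}|u-\bar u|^2\beta\,dx$, and the ratio $\int(1-\eta)^2\beta/\bigl(\int\eta\beta\bigr)^2$ is comparable to $\beta(B_r)^{-1}$ up to the \emph{doubling constant} $N_1(n,M_0)\ge 1$, not to anything small. After invoking Lemma~\ref{embedd-lemma} the contribution of $\int_Q|\bar u-\bar u_\eta|^2$ is $N N_1^2\bigl[\int_Q|u-(u)_{Q_{r,\beta}}|^2+r^2\int_Q|\nabla u|^2\bigr]$ with $NN_1^2$ a dimensional constant that can certainly exceed $1$, so the $\int|u-(u)_{Q_{r,\beta}}|^2$ term cannot be absorbed by the left-hand side, and the resulting inequality is vacuous. (You frame the concern as "not producing the $\Theta^2$-prefactor''; the sharper diagnosis is that the coefficient must be strictly less than $1$, not that it must carry a $\Theta^2$ factor.) The fix is to take the cutoff to equal $1$ on $B_{(1-\epsilon)r}$ with $\epsilon=\epsilon(n,M_0)$ to be chosen small, and then use Lemma~\ref{property}-(ii) (applied to $\beta$ directly, via the final remark of that lemma, since $\beta\in A_2$) to get $\beta(B_r\setminus B_{(1-\epsilon)r})\le N_2(1-(1-\epsilon)^n)^{\zeta_0}\beta(B_r)$, which makes the error coefficient $O(\epsilon^{\zeta_0})$ and hence absorbable; the price is that the PDE-tested term picks up a factor $\epsilon^{-1}$, but that is harmless since $\epsilon$ depends only on $n, M_0$. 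You also need the observation $\Psi_\beta(r)\le M_0(\beta)_{B_r}$ (immediate from \eqref{beta-cond} and Cauchy--Schwarz) to convert $|Q_{r,\beta}|^2/(r^2\beta(B_r)^2)$ into $M_0^2 r^2$. With these two additions your direct argument closes and yields the same estimate as the paper's compactness route; the trade-off is that your route is fully quantitative, whereas the paper's route leans on the already-prepared Proposition~\ref{compactness-lemma} and produces a non-explicit $N$.
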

\begin{proof} By applying the dilation \eqref{scale-1} and using Lemma \ref{scaling-lemma}, it suffices to prove the lemma under the assumption that
\[
r =1 \quad \text{and} \quad \Psi_{\beta}(1) =1.
\]
Note that in this setting, we have $Q_1=Q_{1,\beta}$. We prove the lemma by a contradiction argument. Suppose that the assertion does not hold. Then, there exist sequences $\{\beta_k\}_k$, $\{u_k\}_k$, $\{\bA_k\}_k$, and $\{F_k\}_k$ such that, for each $k\in \N$,\, $u_k \in \W^{1,2}(Q_1, \beta_k)$ is a weak solution of
\begin{equation} \label{eqn-u-k}
\beta_k(x) \partial_t u_k - \textup{div}(\bA_k \nabla u_k) = \textup{div}(F_k) \quad \text{in} \quad Q_1,
\end{equation}
where the weight $\beta_k$ satisfying $\Psi_{\beta_k}(1) =1$, and
\[
\beta_k^{-1} \in A_{1+\frac{2}{n_0}} \quad \text{and} \quad [\beta_k^{-1}]_{A_{1+\frac{2}{n_0}}} \leq M_0  \quad \text{for} \quad n_0 =\max\{n, 2\}.
\]
Moreover, $u_k$ satisfies
\begin{align*}
\int_{Q_{1}} |u_k - (u_k)_{Q_{1}}|^2 dxdt &\geq k \left[ \int_{Q_{1}}\Big[|\nabla u_k|^2 + |F_k|^2\Big] dxdt \right.\\
& \quad +  \left.  \left(\frac{1}{\beta_k(B_1)}\int_{B_{1}} \Theta_{\beta_k, 1}(x)^2 dx\right)  \left(\int_{Q_1} |u_k - (u_k)_{Q_1}|^2 dx dt\right) \right].
\end{align*}
Now, by using the scaling $u_k \mapsto u_k/\lambda$ and dividing the PDE by $\lambda = \|u_k-(u_k)_{Q_1}\|_{L^2(Q_1)}$, we can normalize and assume that
\begin{equation} \label{normalize-uk}
\int_{Q_{1}}|u_k - (u_k)_{Q_{1}}|^2 dxdt =1, \quad \forall \ k \in \mathbb{N}.
\end{equation}
Then, for each $k \in \mathbb{N}$,
\begin{equation} \label{uF-k-beta}
\|\nabla u_k\|_{L^2(Q_1)}^2 + \|F_k\|_{L^2(Q_1)}^2+ \frac{1}{\beta_k(B_1)}\|\Theta_{\beta_k, 1}\|_{L^2(B_1)}^2 
\leq \frac{1}{k}.
\end{equation}

\smallskip
Now, from \eqref{normalize-uk}, \eqref{uF-k-beta}, and the PDE \eqref{eqn-u-k}, we can find a constant $N = N(n, \nu)>0$ so that
\begin{align*}
 \|u_k - (u_k)_{Q_1}\|_{\W^{1,2}(Q_1, \beta_k)}  &\leq N \left(\|u_k - (u_k)_{Q_1}\|_{L^{2}(Q_1)}+ \|\nabla u_k\|_{L^{2}(Q_1)} + \|F_k\|_{L^{2}(Q_1)} \right)\\
& \leq 3N, \quad \forall \ k \in \mathbb{N}.
\end{align*}
Note that the condition \eqref{osc-assumption} in Proposition \ref{compactness-lemma} is also satisfied due to \eqref{uF-k-beta}. It then follows from Proposition \ref{compactness-lemma} that there exist a function $u_0 \in \W^{1,2}(Q_1)$ and a subsequence of $\{u_k - (u_k)_{Q_1}\}_k$, which we still denote by $\{u_k - (u_k)_{Q_1}\}_k$, such that
\begin{equation} \label{conv-lemma-ener}
   \left\{
\begin{aligned}
u_k - (u_k)_{Q_1} &\rightarrow u_0 \ &&\text{in} \quad L^2((-1,0),L^{l}(B_1)),\\ 
\nabla u_k  &\rightharpoonup \nabla u_0 \ &&\text{in} \quad L^2(Q_1), 
\end{aligned} \quad \quad \text{as}\quad k \rightarrow \infty,
   \right.
\end{equation}
for every $l \in [1, 2^*)$. Due to \eqref{normalize-uk}, \eqref{uF-k-beta} and \eqref{conv-lemma-ener}, it follows that
\begin{equation} \label{u-zero-proper}
(u_0)_{Q_1} =0, \quad \nabla u_0 =0, \quad \text{and} \quad \fint_{Q_1} |u_0|^2 dxdt =1.
\end{equation}

\smallskip
On the other hand, applying the same argument used to prove \eqref{beta-lim} in Proposition \ref{compactness-lemma}, there exist a constant $\beta_0\in [\frac{1}{M_0},1]$ and a subsequence of $\{\beta_k(x)\}_k$, which we also still denote $\{\beta_k(x)\}_k$, such that
\begin{equation}\label{beta-lim-2}
\lim_{k\rightarrow \infty}(\beta_k)_{B_1}=\beta_0,\quad \text{and}\quad
\beta_k(B_1)\leq |B_1| \leq \beta_k^{-1}(B_1). 
\end{equation}
Due to this, we apply Lemma \ref{L-q-2-wei} with $\mu =\beta_k^{-1} \in A_{1+ \frac{2}{n_0}}$, and \eqref{uF-k-beta} to get
\begin{align*}
 \left(\fint_{B_1} |\beta_k(x) - (\beta_k)_{B_1}|^{q_0} dx \right)^{\frac{1}{q_0}} 
 &\leq N(n, M_0) \left(\frac{1}{\beta_k^{-1}(B_1)}\int_{B_1} \Theta_{\beta_k, 1}(x)^2 dx \right)^{\frac{1}{2}}\\
 &\leq N(n, M_0) \left(\frac{1}{\beta_k(B_1)}\int_{B_1} \Theta_{\beta_k, 1}(x)^2 dx \right)^{\frac{1}{2}}\\
 &\rightarrow 0, \quad \text{as}\quad k\rightarrow \infty,
\end{align*}
where $q_0=q_0(n, M_0)\in [1,2)$ with
\begin{equation}\label{q_0 definition}
q_0 = \frac{2}{1+\tfrac{2}{n_0}-\gamma}=
\left\{
\begin{array}{cl}
\frac{2}{2-\gamma} \quad &\text{for}\quad n=1, 2, \\[4pt]
\frac{2n}{n(1-\gamma)+2}\quad &\text{for} \quad n\geq 3,
\end{array}\right. 
\end{equation}
for some small constant $\gamma = \gamma(n, M_0) \in (0,\frac{2}{n_0})$ defined in Lemma \ref{L-q-2-wei}. Moreover, using the triangle inequality and \eqref{beta-lim-2}, we infer that
\[
\|\beta_k(x)-\beta_0\|_{L^{q_0}(B_1)}\leq \|\beta_k(x)-(\beta_k)_{B_1}\|_{L^{q_0}(B_1)}+\|(\beta_k)_{B_1}-\beta_0\|_{L^{1_0}(B_1)}\rightarrow 0,
\]
as $k\rightarrow \infty$. This implies that
\begin{equation} \label{mu-k-converge-2}
\beta_k(x) \rightarrow \beta_0\in[M_0^{-1}, 1] \quad \text{in} \quad L^{q_0}(B_1) \quad \text{as} \quad k \rightarrow \infty.
\end{equation}

\smallskip
Next,  for each $\varphi \in C_0^\infty(Q_1)$, as $u_k - (u_k)_{Q_1}$ is a weak solution of \eqref{eqn-u-k}, we have
\[
-\int_{Q_{1}}\beta_k  (x) (u_k - (u_k)_{Q_1}) \partial_t \varphi dxdt + \int_{Q_2} \wei{\bA_k \nabla u_k ,\nabla \varphi} dxdt = -\int_{Q_1} \wei{F_k, \nabla \varphi} dxdt.
\]
We claim that, by passing to the limit as $k\rightarrow \infty$ in this equation, we obtain
\begin{equation} \label{u-zero-PDE}
\partial_t u_0 =0.
\end{equation}
If the claim holds, from this and the first two assertions in \eqref{u-zero-proper}, we conclude that $u_0 =0$.  However, this contradicts the last assertion in \eqref{u-zero-proper}, which completes the proof of the lemma.

\smallskip
It remains to prove \eqref{u-zero-PDE}. Indeed, from \eqref{uF-k-beta}, it follows that
\begin{equation} \label{F-k-nalba-k-energy}
\lim_{k\rightarrow \infty}\int_{Q_1} \wei{\bA_k \nabla u_k ,\nabla \varphi} dxdt = \lim_{k\rightarrow \infty} \int_{Q_1}  \wei{F_k, \nabla \varphi} dxdt =0.
\end{equation}
Moreover, we find
\begin{equation}\label{u-k-t-con}
\begin{aligned} 
& \left|\int_{Q_1} \beta_k (x) (u_k - (u_k)_{Q_1}) \partial_t\varphi  dxdt - \int_{Q_1} \beta_0u_0 \partial_t\varphi dx dt\right| \\
& \leq \|\partial_t\varphi\|_{L^\infty(Q_1)}\int_{Q_1} \beta_k (x) |u_k - (u_k)_{Q_1} -u_0|   dxdt  \\ 
& \quad +  \|\partial_t\varphi\|_{L^\infty(Q_1)} \int_{Q_1} |\beta_k (x) -\beta_0||u_0|   dx dt.
\end{aligned}
\end{equation}

Next, we estimate the two terms on the right-hand side of \eqref{u-k-t-con}. When $n\geq 3$, due to the assumption that
$(\beta_k^{\frac{n}{2}})_{B_1}^{\frac{2}{n}}=\Psi_{\beta_k}(1)=1$, we infer that
\begin{align*}
& \int_{B_1} \beta_k (x) |u_k(x,t) - (u_k)_{Q_1} -u_0(x,t)|   dx \\
& \leq  \|\beta_k\|_{L^{n/2}(B_1)} \|u_k(\cdot, t)- (u_k)_{Q_1} -u_0(\cdot, t)\|_{L^{\frac{n}{n-2}}(B_1)} \\
& \leq N(n) \|u_k(\cdot, t) - (u_k)_{Q_1} -u_0(\cdot, t)\|_{L^{\frac{n}{n-2}}(B_1)}.
\end{align*}
Then, by using H\"{o}lder's inequality in the time integration and  the first assertion in \eqref{conv-lemma-ener}, we obtain
\begin{align*}
& \int_{Q_1} \beta_k (x) |u_k(x,t) - (u_k)_{Q_1} -u_0(x,t)|   dx dt \\
& \leq N(n) \|u_k(\cdot, t) - (u_k)_{Q_1} -u_0(\cdot, t)\|_{L^2((-1, 0), L^{\frac{n}{n-2}}(B_1))} \rightarrow 0 \quad \text{as} \quad k \rightarrow \infty.
\end{align*}
On the other hand, when $n=1, 2$, as $\beta_k \in A_2$, it follows from Lemma \ref{R-Holder} that there exist a sufficiently small number $\gamma_0 = \gamma_0(n, M_0) >0$ and a number $N = N(n, M_0)>0$ such that
\[
\left(\fint_{B_1} \beta_k(x)^{1+\gamma_0} dx \right)^{\frac{1}{1+\gamma_0}} \leq N \fint_{B_1} \beta_k(x) dx =N, \quad \forall \ k \in \mathbb{N}.
\]
Then, doing the same thing as we just did, we have
\begin{align*}
& \int_{B_1} \beta_k (x) |u_k(x,t) - (u_k)_{Q_1} -u_0(x,t)|   dx \\
& \leq  \|\beta_k\|_{L^{1+\gamma_0}(B_1)} \|u_k(\cdot, t)- (u_k)_{Q_1} -u_0(\cdot, t)\|_{L^{\frac{1+\gamma_0}{\gamma_0}}(B_1)} \\
&\leq N(n, M_0)\|u_k(\cdot, t)- (u_k)_{Q_1} -u_0(\cdot, t)\|_{L^{\frac{1+\gamma_0}{\gamma_0}}(B_1)} \rightarrow 0 \quad \text{as} \quad k \rightarrow \infty,
\end{align*}
where we used the first assertion of \eqref{conv-lemma-ener} in our last step. 
In summary, for all $n \in \mathbb{N}$, we have
\begin{equation} \label{eqn-03015}
 \lim_{k\rightarrow \infty}\int_{Q_1} \beta_k (x) |u_k(x,t) - (u_k)_{Q_1} -u_0(x,t)|   dx dt =0.
\end{equation}

Next, we consider the second term on the right-hand side of \eqref{u-k-t-con}. Let $q_0'>1$ be the H\"{o}lder conjugate number of $q_0$ in \eqref{q_0 definition}, that is, 
\[
q_0' =
\left\{
   \begin{array}{cl}
   \frac{2}{\gamma}<\infty \quad &\text{for}\quad n=1, 2,\\[4pt]
    \frac{2n}{n(1+\gamma)-2}<2^* = \frac{2n}{n-2} \quad &\text{for}\quad n\geq 3.
   \end{array}\right.
\]
By H\"{o}lder's inequality and the Sobolev embedding theorem, it follows that
\begin{align*}
\int_{B_1} |\beta_k (x) -\beta_0||u_0(x, t) |   dx & \leq \|\beta_k -\beta_0\|_{L^{q_0}(B_1)} \|u_0(\cdot, t)\|_{L^{q_0'}(B_1)} \\
& \leq N(n)\|\beta_k -\beta_0\|_{L^{q_0}(B_1)} \|u_0(\cdot, t)\|_{W^{1,2}(B_1)}
\end{align*}
for a.e. $t \in (-1, 0)$ and all $k \in \mathbb{N}$. From this and \eqref{mu-k-converge-2}, we obtain
\begin{align} \notag
& \int_{Q_1} |\beta_k (x) -\beta_0||u_0|   dx dt  \\ \label{u-zero-term-2}
& \leq N(n) \|\beta_k(x) -\beta_0\|_{L^{q_0}(B_1)} \|u_0\|_{L^2((-1, 0), W^{1,2}(B_1))}\rightarrow 0 \quad \text{as} \quad k \rightarrow 0.
\end{align}
Therefore, we can conclude from \eqref{u-k-t-con}, \eqref{eqn-03015},  and \eqref{u-zero-term-2} that 
\begin{equation} \label{u-k-convergence-1}
\lim_{k\rightarrow \infty}\int_{Q_1} \beta_k (x) (u_k - (u_k)_{Q_1}) \partial_t\varphi  dxdt = \int_{Q_1} \beta_0u_0 \partial_t\varphi dx dt.
\end{equation}
From this and \eqref{F-k-nalba-k-energy}, we infer that $\beta_0\partial_t u_0=0$. Then, because of $\eqref{mu-k-converge-2}$, the claim \eqref{u-zero-PDE} is proved. The proof of the lemma is then completed.
\end{proof}
\begin{lemma} \label{boundedness-u-sol}  For every $\nu \in (0,1)$ and  $M_0 \geq 1$, there exist a sufficiently small positive constant $\tilde \delta= \tilde \delta(n, \nu, M_0)$ and a constant $N=N(n,\nu, M_0)>0$ such that the following assertion holds. Suppose that $\beta$ is a weight satisfying \eqref{beta-cond},  $\bA$ satisfies \eqref{ellip-cond} in $Q_{1, \beta}$, and
\begin{equation} \label{epsilon-1}
\frac{1}{\beta(B_1)} \int_{B_1} \Theta_{\beta, 1}(x)^2 dx \leq \tilde \delta^2,
\end{equation}
Then, for every weak solution $u \in \W^{1,2}(Q_{1, \beta}, \beta)$ of \eqref{eqn-Q_R} in $Q_{1, \beta}$, it holds that
\begin{align*}
\|u - (u)_{Q_{1,\beta}}\|_{\W^{1,2}(Q_{1, \beta}, \beta)} & \leq N \Big[\|\nabla u\|_{L^2(Q_{1, \beta})} + \|F\|_{L^2(Q_{1,\beta})}\Big].
\end{align*}
\end{lemma}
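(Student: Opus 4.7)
\smallskip
\noindent\textbf{Proof proposal.}
My plan is to control the two pieces of the $\W^{1,2}(Q_{1,\beta},\beta)$-norm of $u - (u)_{Q_{1,\beta}}$ separately, namely the spatial $W^{1,2}$-piece and the $\beta\partial_t$-piece. Since $\nabla(u - (u)_{Q_{1,\beta}}) = \nabla u$, the gradient part is immediately bounded by $\|\nabla u\|_{L^2(Q_{1,\beta})}$, and the function $u - (u)_{Q_{1,\beta}}$ still solves the same equation \eqref{eqn-Q_R} on $Q_{1,\beta}$ because a constant does not alter $\beta\partial_t$ or $\operatorname{div}(\bA\nabla\cdot)$. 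So everything reduces to two tasks: controlling $\|u - (u)_{Q_{1,\beta}}\|_{L^2(Q_{1,\beta})}$ and $\|\beta u_t\|_{L^2(\Gamma_\beta(1),W^{-1,2}(B_1))}$.

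For the first task, I would invoke Lemma \ref{u-L2-Cac} with $r=1$. That lemma gives
\[
\int_{Q_{1,\beta}}|u - (u)_{Q_{1,\beta}}|^2\,dxdt \leq N_0\!\int_{Q_{1,\beta}}\!(|\nabla u|^2 + |F|^2)\,dxdt + N_0\,\Big(\tfrac{1}{\beta(B_1)}\!\int_{B_1}\!\Theta_{\beta,1}(x)^2\,dx\Big)\!\int_{Q_{1,\beta}}\!|u - (u)_{Q_{1,\beta}}|^2\,dxdt,
\]
with $N_0 = N_0(n,\nu,M_0)$. I then choose $\tilde\delta = \tilde\delta(n,\nu,M_0)$ small enough so that $N_0 \tilde\delta^2 \leq 1/2$; the hypothesis \eqref{epsilon-1} then lets me absorb the last term on the right, yielding
\[
\|u - (u)_{Q_{1,\beta}}\|_{L^2(Q_{1,\beta})}^2 \leq 2 N_0\bigl(\|\nabla u\|_{L^2(Q_{1,\beta})}^2 + \|F\|_{L^2(Q_{1,\beta})}^2\bigr).
\]

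For the second task, I would read $\beta\partial_t u$ directly out of the equation: as distributions, $\beta u_t = \operatorname{div}(\bA\nabla u + F)$. For a.e.\ $t\in\Gamma_\beta(1)$ and any $\varphi\in W^{1,2}_0(B_1)$, testing gives
\[
\bigl|\langle \beta u_t(\cdot,t),\varphi\rangle\bigr| \leq \bigl(\|\bA\nabla u(\cdot,t)\|_{L^2(B_1)} + \|F(\cdot,t)\|_{L^2(B_1)}\bigr)\|\nabla\varphi\|_{L^2(B_1)},
\]
so the boundedness in \eqref{ellip-cond} yields $\|\beta u_t(\cdot,t)\|_{W^{-1,2}(B_1)} \leq \nu^{-1}\|\nabla u(\cdot,t)\|_{L^2(B_1)} + \|F(\cdot,t)\|_{L^2(B_1)}$. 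Squaring and integrating in $t\in\Gamma_\beta(1)$ provides the bound on the $\beta\partial_t$-piece.

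Combining the two tasks gives the claimed inequality. The only real subtlety is the absorption step in the first task, which is why the smallness assumption \eqref{epsilon-1} is used in a quantitative way and $\tilde\delta$ must be chosen based on the constant $N_0$ coming out of Lemma \ref{u-L2-Cac}; I do not expect any serious obstacle beyond that selection.
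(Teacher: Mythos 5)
Your proposal is correct and matches the paper's proof essentially step for step: Lemma \ref{u-L2-Cac} with $r=1$ followed by absorption under the choice $N_0\tilde\delta^2\le 1/2$ handles the $L^2$-piece, and reading $\beta u_t=\operatorname{div}(\bA\nabla u+F)$ from the equation handles the $W^{-1,2}$-piece. No substantive differences.
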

\begin{proof} 
By Lemma \ref{u-L2-Cac}, there exists a constant $N = N(n, \nu, M_0)>0$ such that
\begin{align*}
\int_{Q_{1, \beta}} |u - (u)_{Q_{1,\beta}}|^2 dxdt &\leq N  \int_{Q_{1, \beta}}\Big[|\nabla u|^2 + |F|^2\Big] dxdt\\
&\quad + N  \left(\frac{1}{\beta(B_{1})}\int_{B_{1}} \Theta_{\beta, 1}(x)^2 dx\right) \left(\int_{Q_{1, \beta}}  |u - (u)_{Q_{1,\beta}}|^{2} dxdt  \right).
\end{align*}
Then, we can take $\tilde\delta =\tilde \delta(n, \nu, M_0) >0$ sufficiently small so that $N \tilde \delta^2 \leq \tfrac{1}{2}$. Then, it follows from the assumption \eqref{epsilon-1} that
\[
N\left(\frac{1}{\beta(B_{1})}\int_{B_{1}} \Theta_{\beta, 1}(x)^2 dx\right) \leq \frac{1}{2}.
\]
From this, it follows that
\begin{align} \label{u-L2-Cac-0316}
\int_{Q_{1, \beta}} |u(x,t)- (u)_{Q_{1,\beta}}|^2 dxdt \leq N \int_{Q_{1, \beta}}\Big[|\nabla u(x,t)|^2 + |F(x,t)|^2\Big] dxdt.
\end{align}
On the other hand, by the definition of the space $\W^{1,2}(Q_{1, \beta}, \beta)$ and the PDE of $u$, we see that
\begin{align*}
\|u - (u)_{Q_{1,\beta}}\|_{\W^{1,2}(Q_{1, \beta}, \beta)}\leq N \left[ \|u - (u)_{Q_{1,\beta}}\|_{L^{2}(Q_{1, \beta})} + \|\nabla u\|_{L^{2}(Q_{1, \beta})} + \|F\|_{L^{2}(Q_{1, \beta})}\right].
\end{align*}
Therefore, the assertion of the lemma follows from the last estimate and \eqref{u-L2-Cac-0316}.
\end{proof}
\subsection{Interior approximation estimates} \label{inter-appr-est} Given $r \in (0,1)$ and $z_0 = (x_0, t_0) \in Q_{2\Lambda, \beta}$,  we plan to use freezing-coefficients technique to locally approximate solutions of \eqref{eqn-Q_R} in $Q_{r, \beta}(z_0)$ by solutions of the following frozen coefficient equation:
\begin{equation} \label{v-Qr-sol}
(\beta)_{B_r(x_0)} v_t - \textup{div} ((\bA)_{B_{r}(x_0)}(t) \nabla v)  =  0 \quad \text{in} \quad Q_{r, \beta}(z_0).
\end{equation}
Due to Lemma \ref{Lipschitz-constant}, we see that each weak solution of \eqref{v-Qr-sol} is sufficiently smooth. Our goal is to show that $\nabla u$ is sufficiently close to $\nabla v$ when $\beta(x)$ is sufficiently close to $(\beta)_{B_r(x_0)}$ and $\bA(x,t)$ is sufficiently close to $(\bA)_{B_r(x_0)}(t)$.  Here, we refer the reader to the definitions of $\Theta_{\bA, r, x_0} (x,t)$ and $\Theta_{\beta, r, x_0}(x)$ in \eqref{Theta-beta-def}.

\smallskip
We begin with the following important lemma on local energy estimates or Caccioppoli type estimates for the difference of two solutions of \eqref{eqn-Q_R} and \eqref{v-Qr-sol}.

\begin{lemma} \label{compare-lemma-1} Suppose that $u \in \W^{1,2}(Q_{r,\beta}(z_0), \beta)$ is a weak solution of \eqref{eqn-Q_R} in $Q_{r, \beta}(z_0)$, where the weight $\beta$ satisfies \eqref{beta-cond} and the matrix $\bA$ satisfies \eqref{ellip-cond} in $Q_{r, \beta}(z_0)$. Suppose also that $v \in \W^{1,2}(Q_{r,\beta}(z_0))$ is a weak solution of \eqref{v-Qr-sol} in $Q_{r, \beta}(z_0)$.  Then, for 
\[
w = u- (u)_{Q_{r,\beta}(z_0)}-v,
\] 
it holds that
\begin{align*}
&\fint_{Q_{r, \beta}(z_0)} |\nabla w|^2 \varphi^2 dx dt \leq   N \fint_{Q_{r, \beta}(z_0)} |F|^2 \varphi^2 dx dt \\
& \quad + N\fint_{Q_{r, \beta}(z_0)} w^2 \Big[\varphi^2+ |\nabla \varphi|^2 + \beta(x)\Big(\frac{\varphi^2}{r^2(\beta)_{B_r(x_0)}} + |\partial_t \varphi| \Big)\Big]dxdt \\
& \quad +   N \|\varphi \nabla v\|_{L^\infty(Q_{r, \beta}(z_0))}^2 \fint_{Q_{r, \beta}(z_0)} \Theta_{\bA, r, x_0}(x,t)^2 dx dt \\
& \quad +N\|r(\beta)_{B_{r}(x_0)} v_t \varphi\|_{L^\infty(Q_{r, \beta}(z_0))}^2  \left( \frac{1}{\beta(B_r(x_0))} \int_{B_{r}(x_0)} \Theta_{\beta, r, x_0}(x)^2 dx\right),
\end{align*}
for every $\varphi  \in C_0^\infty(Q_{r, \beta}(z_0))$, where $N = N(n, \nu)>0$.
\end{lemma}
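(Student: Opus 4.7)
My approach is to derive the PDE satisfied by $w = u - (u)_{Q_{r,\beta}(z_0)} - v$, test it with $w\varphi^2$, and run a weighted Caccioppoli-type argument, being especially careful with the mismatch between the variable weight $\beta$ and its average $(\beta)_{B_r(x_0)}$. Subtracting a constant does not affect derivatives, so $w$ satisfies in the distributional sense
\[
\beta(x) w_t - \textup{div}(\bA \nabla w) = \textup{div}\bigl((\bA - (\bA)_{B_r(x_0)}(t)) \nabla v\bigr) + \textup{div}(F) - \bigl(\beta(x) - (\beta)_{B_r(x_0)}\bigr) v_t
\]
in $Q_{r,\beta}(z_0)$, with the last term recording the $\beta$-perturbation. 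Lemma \ref{Lipschitz-constant} ensures $v$ is smooth on the support of $\varphi$, and Steklov averaging in time justifies testing against $w\varphi^2$ with $\varphi \in C_0^\infty(Q_{r,\beta}(z_0))$.

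Pairing the equation with $w\varphi^2$, the time term yields $\int \beta w_t w\varphi^2 = -\int \beta w^2 \varphi \partial_t\varphi$, which after moving to the right side is absorbed into the target $\fint \beta |\partial_t\varphi| w^2$ contribution. The ellipticity condition \eqref{ellip-cond} produces $\nu \int |\nabla w|^2 \varphi^2$ on the left, modulo a cross term $\int \langle \bA \nabla w, \nabla\varphi\rangle w\varphi$ that standard Young's inequality absorbs into $\tfrac{\nu}{4}\int|\nabla w|^2\varphi^2 + C\int w^2|\nabla\varphi|^2$. The $F$-term and the $\bA$-perturbation $\int \langle (\bA-(\bA)_{B_r(x_0)})\nabla v,\nabla(w\varphi^2)\rangle$ are handled the same way: pulling $\|\varphi\nabla v\|_{L^\infty}$ out of the latter reproduces exactly the $\|\varphi\nabla v\|_{L^\infty}^2 \fint \Theta_{\bA,r,x_0}^2$ term of the claim, plus further controllable $\int w^2|\nabla\varphi|^2$ and $\int |F|^2\varphi^2$ contributions.

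The principal obstacle, and the only non-routine step, is the final perturbation term $\int (\beta - (\beta)_{B_r(x_0)}) v_t w \varphi^2$, because a naive Young inequality would yield a bound involving $\|v_t\varphi\|_{L^\infty}^2$, which has the wrong $r$- and $(\beta)_{B_r(x_0)}$-scaling compared to the target $\|r(\beta)_{B_r(x_0)}v_t\varphi\|_{L^\infty}^2$. The key algebraic manoeuvre is to write $|\beta - (\beta)_{B_r(x_0)}| = \Theta_{\beta,r,x_0}\,\beta^{1/2}$ and to extract the factor $r(\beta)_{B_r(x_0)}$ from $v_t$, so that Cauchy--Schwarz gives
\[
\Bigl| \int (\beta - (\beta)_{B_r(x_0)}) v_t w \varphi^2 \Bigr| \leq \frac{\|r(\beta)_{B_r(x_0)} v_t \varphi\|_{L^\infty}}{r(\beta)_{B_r(x_0)}} \Bigl(\int_{Q_{r,\beta}(z_0)} \Theta_{\beta,r,x_0}^2\Bigr)^{1/2} \Bigl(\int_{Q_{r,\beta}(z_0)} w^2 \beta \varphi^2\Bigr)^{1/2}.
\]
Calibrating Young's inequality against the \emph{target} absorbing quantity $\int w^2\beta\varphi^2/(r^2(\beta)_{B_r(x_0)})$ (rather than $\int w^2\beta\varphi^2$), and invoking the identities $\int_{Q_{r,\beta}(z_0)}\Theta_{\beta,r,x_0}^2 = r^2\Psi_{\beta,x_0}(r)\int_{B_r(x_0)}\Theta_{\beta,r,x_0}^2$ together with $|Q_{r,\beta}(z_0)| = |B_r(x_0)|\,r^2\Psi_{\beta,x_0}(r)$ and $\beta(B_r(x_0)) = (\beta)_{B_r(x_0)}|B_r(x_0)|$, the algebra collapses neatly and produces, after dividing by $|Q_{r,\beta}(z_0)|$, exactly
\[
\frac{\|r(\beta)_{B_r(x_0)} v_t \varphi\|_{L^\infty}^2}{\beta(B_r(x_0))} \int_{B_r(x_0)} \Theta_{\beta,r,x_0}^2
\]
plus a controlled contribution to the $\tfrac{\beta \varphi^2}{r^2(\beta)_{B_r(x_0)}} w^2$ term. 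Finally, absorbing the $\tfrac{\nu}{2}\int|\nabla w|^2\varphi^2$ pieces back to the left and dividing through by $|Q_{r,\beta}(z_0)|$ delivers the claimed averaged inequality, with constant $N$ depending only on $n$ and $\nu$ since every algebraic rearrangement was independent of $M_0$, $r$, and the values of $\beta$.
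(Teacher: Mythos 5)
Your proposal is correct and follows essentially the same route as the paper's proof: derive the PDE for $w$, test with $w\varphi^2$, and treat the non-standard $v_t$-perturbation term by writing $|\beta-(\beta)_{B_r(x_0)}|=\Theta_{\beta,r,x_0}\beta^{1/2}$, extracting the factor $r(\beta)_{B_r(x_0)}$ from $v_t$, and calibrating Cauchy--Schwarz and Young against the absorbing quantity $\frac{\beta\varphi^2}{r^2(\beta)_{B_r(x_0)}}w^2$, which is exactly the manipulation the paper carries out (differing only in whether the time integration is performed before or after Young's inequality).
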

\begin{proof} Observe that from Lemma \ref{Lipschitz-constant}, we see that $v_t, \nabla v \in L^{\infty}_{\text{loc}}(Q_{r,\beta}(z_0))$. Consequently, if $u \in \W^{1,2}(Q_{r, \beta}(z_0), \beta)$ is a solution to \eqref{eqn-Q_R} in $Q_{r, \beta}(z_0)$, then 
\[   
w = u - (u)_{Q_{r,\beta}(z_0)}- v \in \W^{1,2}_{\text{loc}}(Q_{r, \beta}(z_0), \beta)
\]   
is a weak solution of
\begin{align*}
\beta(x) w_t  - \textup{div}( \bA \nabla w) & = \textup{div}(F)- (\beta(x) - (\beta)_{B_r(x_0)}) v_t \\
&\quad + \textup{div}[(\bA - (\bA)_{B_{r}(x_0)}(t))\nabla v] \quad \text{in} \quad Q_{r,\beta}(z_0).
\end{align*}
By using Steklov's average if needed (see \cite[p. 18]{DiB} for example), we can test the equation of $w$ with $w\varphi^2$ and use the ellipticity and boundedness condition in \eqref{ellip-cond} for $\bA$ to obtain
\begin{align} \notag
& \frac{1}{2}\frac{d}{dt} \int_{B_r(x_0)} \beta(x) w^2 \varphi^2 dx + \nu \int_{B_{r}(x_0)} |\nabla w|^2 \varphi^2 dx \\ \notag
& \leq N \int_{B_r(x_0)} \Big[ |w| |\nabla w| |\nabla \varphi | \varphi + |F| \big( |\nabla w| \varphi^2 + \varphi |\nabla \varphi| |w| \big)\Big] dx \\ \notag
& \quad + \int_{B_r(x_0)}  |\bA - (\bA)_{B_{r}(x_0)}(t)| |\nabla v| \big( |\nabla w| \varphi^2 + \varphi |\nabla \varphi| |w| \big)dx \\ \label{cacci-1}
& \quad + \int_{B_r(x_0)} \Big[ |\beta - (\beta)_{B_r(x_0)}| |v_t| |w| \varphi^2  + \beta(x) w^2 \varphi |\partial_t \varphi| \Big] dx,
\end{align}
where $N = N(n, \nu)>0$. For simplicity in writing, we denote 
\[
K=\|r (\beta)_{B_{r}(x_0)}v_t\|_{L^\infty(Q_{r, \beta}(z_0))}
\]
in the following proof of this lemma. We now apply Young's inequality to control terms on the right-hand side.  In particular, for the term involving $v_t$, we have 
\begin{align*}
 & \int_{B_r(x_0)} |\beta - (\beta)_{B_r(x_0)}| |v_t| |w| \varphi^2  dx \\
& \leq  K \left( \frac{1}{(\beta)_{B_r(x_0)}} \int_{B_r(x_0)}  \Theta_{\beta, r, x_0}(x)^2 dx \right)^{1/2} \left(\frac{1}{r^2 (\beta)_{B_r(x_0)}} \int_{B_r(x_0)} w^2 \beta(x) \varphi^2 dx \right)^{1/2}\\
& \leq \frac{K^2}{2(\beta)_{B_r(x_0)}} \int_{B_r(x_0)}  \Theta_{\beta, r, x_0}(x)^2  dx+ \frac{1}{2r^2 (\beta)_{B_r(x_0)}} \int_{B_r(x_0)} w^2 \beta(x) \varphi^2 dx.
\end{align*}
By treating all the other terms on the right-hand side of \eqref{cacci-1} in the standard way, we obtain
\begin{align*}
& \frac{d}{dt} \int_{B_r(x_0)} \beta(x) w^2 \varphi^2 dx + \nu \int_{B_{r}(x_0)} |\nabla w|^2 \varphi^2 dx \\
&\leq N\int_{B_r(x_0)} w^2 \Big[ \varphi^2+|\nabla \varphi|^2 +\beta(x) \Big( \frac{\varphi^2}{r^2(\beta)_{B_r(x_0)}} + |\partial_t \varphi| \Big)\Big]dx   \\
&\quad + N \int_{B_r(x_0)} |F|^2 \varphi^2 dx +  N \|\varphi \nabla v\|_{L^\infty(Q_{r, \beta}(z_0))}^2 \int_{B_r(x_0)} \Theta_{\bA, r, x_0}(x,t)^2 dx  \\
&\quad + N K^2 \left( \frac{|B_r(x_0)|}{\beta(B_r(x_0))} \int_{B_r(x_0)} \Theta_{\beta, r, x_0}(x)^2 dx \right).
\end{align*}
Integrating this last estimate with respect to $t$ on $\Gamma_{\beta,z_0}(r)=(t_0 -r^2\Psi_{\beta,x_0}(r), t_0)$, we obtain 
\begin{align*}
& \frac{1}{r^2\Psi_{\beta,x_0}(r)}\sup_{t \in \Gamma_{\beta,z_0}(r)} \fint_{B_r(x_0)} w^2\varphi(x,t)^2 dx + \fint_{Q_{r, \beta}(z_0)} |\nabla w|^2 \varphi^2 dx dt\\
& \leq N\fint_{Q_{r, \beta}(z_0)} w^2 \Big[ \varphi^2+|\nabla \varphi|^2  + \beta(x) \Big(\frac{\varphi^2}{r^2(\beta)_{B_r(x_0)}} + |\partial_t \varphi|\Big) \Big]dxdt \\
&\quad +  N \fint_{Q_{r, \beta}(z_0)} |F|^2 \varphi^2 dx dt +   N \|\varphi \nabla v\|_{L^\infty(Q_{r, \beta}(z_0))}^2 \fint_{Q_{r, \beta}(z_0)} \Theta_{\bA, r, x_0}(x,t)^2 dx dt \\
&\quad +N \|r(\beta)_{B_{r}(x_0)} v_t \varphi\|_{L^\infty(Q_{r, \beta}(z_0))}^2  \left( \frac{1}{\beta(B_r(x_0))} \int_{B_{r}(x_0)}\Theta_{\beta, r, x_0}(x)^2 dx\right).
\end{align*}
This is the desired estimate.
\end{proof}
The following lemma provides the closeness of the two PDEs \eqref{eqn-Q_R} and \eqref{v-Qr-sol}. The proof of the lemma is the most technical one in this subsection.
\begin{lemma} \label{L2-comparision} Let $\nu \in (0,1)$ and $M_0 \geq 1$ be fixed. Then, for every $\epsilon \in (0, 1)$, there exists a sufficiently small constant $\bar{\delta} =\bar{\delta}(n, \nu, M_0, \epsilon)>0$  such that the following assertions hold. Suppose that $\beta$ is a weight satisfying \eqref{beta-cond}, and that $\bA$ is a matrix satisfying \eqref{ellip-cond} in $Q_{4, \beta}$. Moreover, assume that
\begin{align*}
& \fint_{Q_{4,\beta}}\Theta_{\bA, 4}(x,t)^2 dxdt  + \frac{1}{\beta(B_4)} \int_{B_4} \Theta_{\beta, 4}(x)^2 dx  + \fint_{Q_{4,\beta}}  |F(x,t)|^2dx dt \leq \bar{\delta}^2.
\end{align*}
Then, for every weak solution $u \in \W^{1,2}(Q_{4,\beta},\beta)$ of \eqref{eqn-Q_R} in $Q_{4,\beta}$ satisfying
\[
\fint_{Q_{4,\beta}} |\nabla u(x,t)|^2dxdt \leq 1,
\]
there exists a weak solution $v \in \W^{1,2}(Q_{4,\beta})$ of \eqref{v-Qr-sol} in $Q_{4,\beta}$ such that
\begin{equation} \label{L-2-w-small}
\left(\fint_{Q_{4, \beta}} |u-(u)_{Q_{4,\beta}}-v|^2 dx dt\right)^{1/2}  \leq \epsilon.
\end{equation}
In addition, there exist constants $N = N(n, \nu, M_0)>0$ and $\theta = \theta(n, M_0) \in (0,1)$ such that
\begin{equation} \label{L-2-mu}
\begin{split}
& \fint_{Q_{7/2,\beta}} |\nabla v|^2 dx dt \leq N, \quad \text{and} \\
& \frac{1}{(\beta)_{B_{7/2}}} \fint_{Q_{7/2, \beta}} |u-(u)_{Q_{4, \beta}}-v|^2 \beta(x) dx dt \leq N \epsilon^{2(1-\theta)}.
\end{split}
\end{equation}
\end{lemma}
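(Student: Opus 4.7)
The plan is to establish \eqref{L-2-w-small} by a contradiction/compactness argument in the spirit of Lemma \ref{u-L2-Cac}, then deduce \eqref{L-2-mu} from classical energy estimates combined with the interpolation Lemma \ref{embedd-lemma}. First, using the time-only rescaling $\tilde{u}(x,t)=u(x,\Psi_\beta(4)t)$ and $\tilde{\beta}(x)=\Psi_\beta(4)^{-1}\beta(x)$ (Lemma \ref{scaling-lemma} with $r=1$), I reduce to the normalization $\Psi_\beta(4)=1$, under which $Q_{4,\beta}$ becomes the standard cylinder $Q_4=B_4\times(-16,0]$. Suppose \eqref{L-2-w-small} fails for some $\epsilon\in(0,1)$. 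Then there exist $\bar{\delta}_k\to 0$ and sequences $\beta_k,\bA_k,F_k,u_k$ satisfying the hypotheses with $\bar{\delta}_k$ in place of $\bar{\delta}$, such that for every weak solution $v$ of the $k$-th frozen equation on $Q_4$, $\fint_{Q_4}|u_k-(u_k)_{Q_4}-v|^2>\epsilon^2$. For each $k$, let $v_k\in\W^{1,2}(Q_4)$ be the weak solution of the Cauchy-Dirichlet problem for the $k$-th frozen equation $(\beta_k)_{B_4}\partial_t v_k-\textup{div}((\bA_k)_{B_4}(t)\nabla v_k)=0$ on $Q_4$ with lateral and initial trace equal to those of $u_k-(u_k)_{Q_4}$. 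This problem is uniformly parabolic because $(\beta_k)_{B_4}\in[M_0^{-1},1]$ (as in Proposition \ref{compactness-lemma}), so $v_k$ is well-defined with $\|v_k\|_{\W^{1,2}(Q_4)}\leq N$ via standard energy estimates.

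By Lemma \ref{boundedness-u-sol} (rescaled to radius $4$), the sequence $\{u_k-(u_k)_{Q_4}\}$ is bounded in $\W^{1,2}(Q_4,\beta_k)$. Applying Proposition \ref{compactness-lemma} to this sequence and classical Aubin-Lions to $\{v_k\}$, extract further subsequences so that $(\beta_k)_{B_4}\to\beta_0\in[M_0^{-1},1]$, $\beta_k\to\beta_0$ strongly in $L^{q_0}(B_4)$, both $u_k-(u_k)_{Q_4}\to u_0$ and $v_k\to v_0$ strongly in $L^2(Q_4)$, $\nabla u_k\rightharpoonup\nabla u_0$ and $\nabla v_k\rightharpoonup\nabla v_0$ weakly in $L^2(Q_4)$, $\beta_k\partial_t u_k\rightharpoonup\beta_0\partial_t u_0$ weakly in $L^2((-16,0),W^{-1,2}(B_4))$, and $(\bA_k)_{B_4}(t)\rightharpoonup^{\ast}\bA_0(t)$ in $L^\infty((-16,0))$ with $\bA_0$ inheriting \eqref{ellip-constant}.

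The key step is to show that $u_0$ and $v_0$ both weakly solve the same limit PDE $\beta_0\partial_t z-\textup{div}(\bA_0(t)\nabla z)=0$ in $Q_4$, by passing to the limit in the weak form of both equations against any $\varphi\in C_c^\infty(Q_4)$. The $\int\beta_k u_k\partial_t\varphi$ term is handled by the decomposition used in the proof of Lemma \ref{u-L2-Cac}. The perturbation terms $\int(\bA_k-(\bA_k)_{B_4})\nabla u_k\cdot\nabla\varphi$ and $\int F_k\cdot\nabla\varphi$ vanish by H\"older's inequality and $\bar{\delta}_k\to 0$. The delicate term is $\int(\bA_k)_{B_4}(t)\nabla u_k\cdot\nabla\varphi$, a product of two only weakly convergent sequences; the trick is to integrate by parts in $x$ to rewrite it as $-\int(\bA_k)_{B_4}(t)u_k\cdot(\text{second derivatives of }\varphi)$, after which the strong $L^2(Q_4)$ convergence of $u_k$ paired against the weak-$\ast$ $L^\infty_t$ convergence of $(\bA_k)_{B_4}$ gives the desired limit. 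Lateral trace convergence on $\partial B_4$ (continuity of the trace map) together with initial-trace convergence via Lemma \ref{conti-weak-space} (using the $L^{q_0}$ convergence of $\beta_k$) yield $v_0=u_0$ on $\partial_p Q_4$. Uniqueness for the uniformly parabolic Cauchy-Dirichlet problem forces $v_0\equiv u_0$, so $\|u_k-(u_k)_{Q_4}-v_k\|_{L^2(Q_4)}\to 0$, contradicting the assumption.

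For \eqref{L-2-mu}, the gradient bound $\fint_{Q_{7/2,\beta}}|\nabla v|^2\leq N$ follows by testing the frozen equation for $v$ against $v-(u-(u)_{Q_{4,\beta}})$ (which vanishes on $\partial_p Q_{4,\beta}$) and invoking ellipticity together with $\fint_{Q_{4,\beta}}|\nabla u|^2\leq 1$. For the weighted estimate, apply Lemma \ref{embedd-lemma} to $w=u-(u)_{Q_{4,\beta}}-v$ on $Q_{7/2,\beta}$: combining $\|w\|_{L^2(Q_{7/2,\beta})}^2\leq N\epsilon^2|Q_{7/2,\beta}|$ from \eqref{L-2-w-small} with $\|\nabla w\|_{L^2(Q_{7/2,\beta})}^2\leq N|Q_{7/2,\beta}|$, the interpolation yields the factor $\epsilon^{2(1-\theta)}$ for some $\theta=\theta(n,M_0)\in(0,1)$. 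The main obstacle throughout is the weak-weak passage to the limit in the $(\bA_k)_{B_4}(t)\nabla u_k$ term, resolved by the integration-by-parts trick combined with the strong $L^2$ convergence supplied by Proposition \ref{compactness-lemma}.
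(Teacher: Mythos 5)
Your overall strategy — contradiction, normalize $\Psi_\beta(4)=1$, apply Lemma \ref{boundedness-u-sol} and Proposition \ref{compactness-lemma} to extract a limit $u_0$, pass to the limit in the weak form, then exhibit comparison solutions $v_k$ of the frozen equations converging to $u_k-(u_k)_{Q_4}$ in $L^2$ — is the right template, and your integration-by-parts device for the product $\int (\bA_k)_{B_4}(t)\nabla u_k\cdot\nabla\varphi$ is a legitimate way to resolve the weak-weak pairing (the paper's decomposition of this term needs the same idea, just less explicitly stated). Your treatment of \eqref{L-2-mu} also agrees with the paper.

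However, there is a genuine gap in your construction of the comparison solutions $v_k$, and it is exactly the obstacle the paper's proof is engineered to avoid. You take $v_k$ to be the solution of the Cauchy–Dirichlet problem for the frozen equation $(\beta_k)_{B_4}\partial_t v_k-\textup{div}((\bA_k)_{B_4}(t)\nabla v_k)=0$ on $Q_4$ with lateral and initial data inherited from $u_k-(u_k)_{Q_4}$, and assert well-posedness by uniform parabolicity. But solvability in the energy class with inhomogeneous data from $f_k:=u_k-(u_k)_{Q_4}$ requires (after lifting) that the source $-(\beta_k)_{B_4}\partial_t f_k+\textup{div}[(\bA_k)_{B_4}\nabla f_k]$ lie in $L^2((-16,0),W^{-1,2}(B_4))$, which needs $\partial_t u_k\in L^2((-16,0),W^{-1,2}(B_4))$. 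This is \emph{not} available: the space $\W^{1,2}(Q_4,\beta_k)$ only gives $\beta_k\partial_t u_k\in L^2 W^{-1,2}$, and since $\beta_k$ may degenerate (vanish on a set of positive measure, say), one cannot divide by $\beta_k$ to recover $\partial_t u_k$ in the unweighted dual space. So your $v_k$ is not known to exist, let alone to satisfy a uniform $\W^{1,2}(Q_4)$ bound or admit an Aubin–Lions limit. (The subsequent trace-matching step is also delicate — one needs convergence of initial traces, which Lemma \ref{conti-weak-space} only yields after cutting off in space — but the well-posedness issue is the primary obstruction.)

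The paper circumvents this precisely by not trying to match the data of $u_k$. After showing $u_0\in\W^{1,2}(Q_4)$ solves the limit equation $\beta_0\partial_t u_0-\textup{div}(\bA_0(t)\nabla u_0)=0$ (so that $\partial_t u_0\in L^2 W^{-1,2}$ unambiguously, with a genuine constant $\beta_0\in[M_0^{-1},1]$), it sets $g_k=[(\beta_k)_{B_4}-\beta_0]\partial_t u_0-\textup{div}([(\bA_k)_{B_4}(t)-\bA_0(t)]\nabla u_0)$, solves $(\beta_k)_{B_4}\partial_t h_k-\textup{div}[(\bA_k)_{B_4}(t)\nabla h_k]=g_k$ with zero parabolic boundary data (this is a standard problem since $g_k\in L^2W^{-1,2}$), shows $h_k\to 0$ in $L^2(Q_4)$, and takes $v_k=u_0-h_k$. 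Then $v_k$ solves the $k$-th frozen equation by construction, and $\|u_k-(u_k)_{Q_4}-v_k\|_{L^2}\to 0$ without ever needing $\partial_t u_k$ in an unweighted dual. This indirection — defining $v_k$ as a perturbation of the already-regular limit $u_0$ rather than as a data-matching solution — is the essential idea your proposal is missing.
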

\begin{proof}
Observe that it suffices to prove the lemma with some $\bar{\delta} \in (0, \tilde \delta]$, where $\tilde \delta = \tilde \delta(n, \nu, M_0)>0$ is the constant defined in Lemma \ref{boundedness-u-sol}. In addition, by applying the time dilation as in \eqref{scale-2}, and replacing $u$, $\beta$, $\bA$, and $F$ with their rescaled ones, we may assume without loss of generality that
\begin{equation} \label{scaling-beta-k}
\Psi_{\beta}(4) =1 \quad \text{so that}\quad Q_{4,\beta} =Q_4.
\end{equation}
Here, it is worth noting that this procedure is admissible as the conditions in the lemma are invariant under the time dilation \eqref{scale-2}. 

\smallskip
We begin by proving \eqref{L-2-w-small} via a contradiction argument. Assume that the assertion \eqref{L-2-w-small} is not true, then there would be $\epsilon_0>0$, a sequence of weights $\{\beta_k\}_k$ satisfying \eqref{beta-cond} for each $k\in \N$, i.e.,
\begin{equation} \label{nabla-u-k}
\beta_k^{-1} \in A_{1+\frac{2}{n_0}} \quad \text{and} \quad [\beta_k^{-1}]_{A_{1+\frac{2}{n_0}}} \leq M_0  \quad \text{for} \quad n_0 = \max\{n, 2\}, 
\end{equation} 
a sequence of coefficient matrices $\{\bA_k\}_k$ satisfying \eqref{ellip-cond} in $Q_{4}$, and a sequence $\{F_k\}_k \subset L^2(Q_4)^n$ such that
\begin{align} \label{a-mu-k}
\fint_{Q_{4}} \Theta_{\bA_k, 4}^2 dxdt+\frac{1}{\beta_k(B_4)}\int_{B_4} \Theta_{\beta_k, 4}(x)^2 dx+\fint_{Q_{4}} |F_k|^2 dx dt \leq \min\big\{\frac{1}{k^2}, \tilde \delta^2\big\}.
\end{align}
Moreover, there exists a sequence $\{u_k\}_k \subset \W^{1,2}(Q_4, \beta_k)$ satisfying
\begin{align}\label{gradient small}
\fint_{Q_{4}} |\nabla u_k(x,t)|^2dxdt \leq 1, \quad \forall \ k \in \mathbb{N},
\end{align}
where each $u_k$ is a weak solution of 
\[
\beta_k\partial_tu_k-\textup{div}(\bA_k(x,t)\nabla u_k)=\textup{div}(F_k(x,t))\quad \text{in}\quad Q_4.
\]
However, for every $k \in \mathbb{N}$,
\begin{equation} \label{w-k-contra}
\fint_{Q_{4}} |u_k - (u_k)_{Q_{4}} -v|^2 dxdt \geq \epsilon_0
\end{equation}
for any weak solution $v \in \W^{1,2}(Q_{4})$ of
\[
(\beta_k)_{B_4} v_t - \textup{div} [(\bA_k)_{B_{4}}(t) \nabla v] =0 \quad \text{in} \quad Q_{4}.
\]
\smallskip
From \eqref{scaling-beta-k}, \eqref{nabla-u-k}, and \eqref{a-mu-k}, we apply the same argument used to prove \eqref{beta-lim-2} and \eqref{mu-k-converge-2} in Lemma \ref{u-L2-Cac} to find a constant $\beta_0\in [\frac{1}{M_0},1]$ and a subsequence of $\{\beta_k\}_k$, which we still denote $\{\beta_k\}_k$, so that
\begin{equation}\label{beta_0}
\lim_{k\rightarrow \infty}(\beta_k)_{B_4}=\beta_0,\quad \text{and}\quad
\beta_k(B_4)\leq |B_4| \leq \beta_k^{-1}(B_4).
\end{equation}

\smallskip
Now, due to the assumption \eqref{ellip-cond}, we see that the sequence of matrices $\{(\bA_k)_{B_4}(t)\}_k$ is bounded in $L^{\infty}((-16,0), \M^{n\times n})$. Hence, there exist a subsequence that we still denote by $\{(\bA_k)_{B_4}(t)\}_k$ and a matrix $\bA_0(t)\in L^{\infty}((-16,0), \M^{n\times n})$  satisfying \eqref{ellip-cond} such that $(\bA_k)_{B_4}(t)\rightharpoonup \bA_0(t)$ weak-$^*$ in $L^{\infty}((-16,0), \M^{n\times n})$, that is, 
\begin{equation}\label{weak star convergence}
\lim_{k\rightarrow \infty}\int_{-16}^{0}\textbf{tr}\Big([(\bA_k)_{B_4}(t)-\bA_0(t)]\bB(t)\Big) dt=0,
\end{equation}
for all $\bB(t)\in L^{1}((-16,0), \M^{n\times n})$. Also, from \eqref{a-mu-k}, \eqref{gradient small}, we can apply Lemma \ref{boundedness-u-sol} to find a number $N = N(n, \nu, M_0)>0$ such that
\[
\|u_k - (u_k)_{Q_{4}}\|_{\W^{1,2}(Q_4, \beta_k)} \leq N, \quad \forall \ k \in \mathbb{N}. 
\]
From this, \eqref{nabla-u-k}, Proposition \ref{compactness-lemma}, and by passing a subsequence, we can find $u_0 \in \W^{1,2}(Q_4)$ such that
\begin{equation} \label{uk-convergence}
\begin{aligned}
u_k - (u_k)_{Q_4} \rightarrow u_0 \quad &\text{in} \quad L^2((-16, 0), L^{l}(B_4)),\\
 \nabla u_k \rightharpoonup \nabla u_0  \quad &\text{in} \quad  L^2(Q_4), 
\end{aligned} \quad \quad \text{as} \quad k \rightarrow \infty,
\end{equation}
for any $l \in [1, 2^*)$. 

\smallskip
Next, for a fixed $\varphi \in C_0^\infty(Q_4)$, and for every $k \in \mathbb{N}$, we use $\varphi$ as a test function for the equation of $u_k$ to obtain
\begin{equation} \label{weak-form-u-k}
\begin{split}
\int_{Q_4} \beta_k (u_k - (u_k)_{Q_4}) \partial_t \varphi  dxdt-  \int_{Q_4} \wei{\bA_k \nabla u_k, \nabla \varphi} dxdt = \int_{Q_4} \wei{F_k, \nabla \varphi}  dxdt.
\end{split}
\end{equation}
Our goal now is to handle each term in \eqref{weak-form-u-k} and pass through the limit as $k\rightarrow \infty$ to derive the equation for $u_0$. For the first term on the left-hand side of \eqref{weak-form-u-k}, we have
\begin{equation}\label{u-k-convergence}
\lim_{k\rightarrow \infty}\int_{Q_4} \beta_k (x) (u_k - (u_k)_{Q_4}) \partial_t\varphi  dxdt = \int_{Q_4} \beta_0u_0 \partial_t\varphi dx dt.
\end{equation}
Since the proof is the same as that of \eqref{u-k-convergence-1} in Lemma \ref{u-L2-Cac}, we omit the details.

\smallskip
Now, for the second term in \eqref{weak-form-u-k}, we note that 
 \[
\begin{aligned}
\int_{Q_4} \wei{\bA_k \nabla u_k, &\nabla \varphi} - \wei{\bA_0 \nabla u_0, \nabla \varphi} dxdt =\int_{Q_4}\Big\{\wei{\bA_k(\nabla u_k-\nabla u_0), \nabla \varphi}\\
&+\wei{[\bA_k-(\bA_k)_{B_4}(t)]\nabla u_0, \nabla \varphi}+\wei{[(\bA_k)_{B_4}(t)-\bA_0(t)]\nabla u_0, \nabla \varphi}\Big\}dxdt.
\end{aligned}
\]
Using the boundedness of $\bA_k$ and $\nabla \varphi$ on $Q_4$, the second assertion of \eqref{uk-convergence}, \eqref{a-mu-k}, and \eqref{weak star convergence}, we infer that
\begin{equation} \label{A-k-weak-con-3-17}
\lim_{k\rightarrow \infty}\int_{Q_4} \wei{\bA_k \nabla u_k, \nabla \varphi} dxdt=\int_{Q_4} \wei{\bA_0(t) \nabla u_0, \nabla \varphi} dxdt.
\end{equation}

Lastly, the term on the right-hand side of \eqref{weak-form-u-k} can easily be seen to converge to zero as $k\rightarrow \infty$ due to \eqref{a-mu-k}. From this, \eqref{u-k-convergence}, \eqref{A-k-weak-con-3-17}, and by taking $k \rightarrow \infty$, it follows from \eqref{weak-form-u-k} that $u_0 \in \W^{1,2}(Q_4)$ is a weak solution of
\begin{equation} \label{u-0-eqn}
\beta_0\partial_t u_0 - \textup{div} (\bA_0(t) \nabla u_0) = 0 \quad \text{in} \quad Q_4.
\end{equation}
Note that due to \eqref{gradient small} and the convergences in \eqref{uk-convergence}, we have
\begin{equation} \label{L2-u_0}
\fint_{Q_4} u_0(x,t) dxdt =0 \quad \text{and} \quad \fint_{Q_4} |\nabla u_0|^2 dx dt \leq 1.
\end{equation}

\smallskip
Next, let us denote
\[
g_k = [(\beta_k)_{B_4}-\beta_0]\partial_t u_0 -\textup{div} ([(\bA_k)_{B_4} (t)-\bA_0(t)] \nabla u_0).
\]
From the triangle inequality, the PDE \eqref{u-0-eqn}, the second estimate in \eqref{L2-u_0}, the ellipticity and boundedness conditions of $\bA_k$ in \eqref{ellip-cond}, the definition of $\bA_0$ in \eqref{weak star convergence}, and the boundedness of $\beta_0$ in \eqref{beta_0}, we infer that there is a constant $N = N(n, \nu, M_0)>0$ such that
\begin{equation}  \label{g-k-convergence-09-13}
\begin{split} 
& \|g_k \|_{L^2((-16, 0), W^{-1, 2}(B_4))} \leq N, \quad \forall \ k \in \mathbb{N},\\ 
& g_k  \rightharpoonup  0 \quad \text{in} \quad L^2((-16, 0), W^{-1, 2}(B_4)) \quad \text{as} \quad  k \rightarrow \infty.
\end{split}
\end{equation}
Now, let $h_k \in \W^{1,2}_*(Q_4)$ be the weak solution of the equation
\begin{equation} \label{h-k-sol}
\left\{
\begin{aligned}
(\beta_k)_{B_4}\partial_t h_k -\textup{div}[(\bA_k)_{B_4}(t) \nabla h_k] & = g_k \quad &&\text{in} \quad Q_4, \\
h_k &=0  \quad &&\text{on}  \quad \partial_p Q_4.
\end{aligned}\right.
\end{equation}
Note that the estimate of $g_k$ in \eqref{g-k-convergence-09-13}, the ellipticity and boundedness conditions of $\bA_k$ \eqref{ellip-cond}, and the definitions of $(\bA_k)_{B_4}$ and $(\beta_k)_{B_4}$ allow us to obtain the existence of $h_k$ by the Galerkin method. In addition, from \eqref{beta_0}, and the estimate of $g_k$ in \eqref{g-k-convergence-09-13}, and the standard energy estimate for $h_k$, we infer that there is $N = N(n, \nu, M_0)>0$ such that
\[
\|h_k\|_{\W^{1,2}_*(Q_4)} \leq N, \quad \forall \ k \in \mathbb{N}.
\]
From this, we apply the classical Aubin-Lions theorem (see \cite[Theorem 1]{Simon}), and by passing through a subsequence if needed, we can find $h \in \W^{1,2}_*(Q_4)$ such that
\begin{equation} \label{hk-convergence-h}
   \left\{
\begin{aligned}
h_k \rightarrow h \quad &\text{in} \quad L^2((-16, 0), L^{2}(B_4)),\\
\nabla h_k  \rightharpoonup \nabla h  \quad & \text{in} \quad L^2((-16, 0), L^{2}(B_4)), \\
\partial_t h_k \rightharpoonup \partial_t h  \quad &\text{in} \quad    L^2((-16, 0), W^{-1,2}(B_4)),
\end{aligned} \quad \quad \text{as} \quad k \rightarrow \infty,
   \right.
\end{equation}
From this, \eqref{beta_0}, \eqref{weak star convergence}, and the convergences of $g_k$ in \eqref{g-k-convergence-09-13}, we can use the weak form of \eqref{h-k-sol}  to pass the limit as $k\rightarrow \infty$ in the same way as the proof of \eqref{u-0-eqn} to conclude that $h \in \W^{1,2}_*(Q_4)$ is a weak solution of 
\begin{equation*} 
\left\{
\begin{aligned}
\beta_0 \partial_t h  -\textup{div}(\bA_0(t) \nabla h)&= 0 \quad &&\text{in} \quad Q_4, \\
h &=0 \quad &&\text{on}  \quad \partial_p Q_4.
\end{aligned} \right.
\end{equation*}
This implies $h =0$ and consequently
\begin{equation}\label{h-k-unweight}
\begin{aligned}
&\|h_k\|_{L^2(Q_4)} \rightarrow 0 \quad \text{as} \quad k \rightarrow \infty.
\end{aligned}
\end{equation}
Then, set $v_k = u_0 - h_k$, we see that $v_k \in \W^{1,2}(Q_4)$ is a weak solution of
\[
(\beta_k)_{B_4}\partial_t v_k -\textup{div}((\bA_k)_{B_4}(t) \nabla v_k)  = 0 \quad \text{in}\quad Q_4.
\]
However, with $w_k = u_k -(u_k)_{Q_4}-v_k=u_k -(u_k)_{Q_4}-u_0+h_k$, it follows from \eqref{uk-convergence} and \eqref{h-k-unweight} that
\begin{align*}
\| w_k\|_{L^2(Q_4)} \leq   \|u_k - (u_k)_{Q_4} -u_0\|_{L^2(Q_4)} + \|h_k\|_{L^2(Q_4)} \rightarrow 0 \quad \text{as} \quad k \rightarrow \infty.
\end{align*}
This contradicts \eqref{w-k-contra} when $v_k$ is in place of $v$ with sufficiently large $k$. The assertion \eqref{L-2-w-small} is then proved.

\smallskip
The remaining part of the proof is to prove \eqref{L-2-mu}.  First, recall that $\bar\delta \leq \tilde \delta$, where $\tilde \delta$ is the constant defined in Lemma \ref{boundedness-u-sol}. Due to this and the assumption \eqref{beta-cond}, we can apply Lemma \ref{boundedness-u-sol} to infer that
\begin{align*}
\fint_{Q_{4}} |u - (u)_{Q_{4}}|^2 dx dt \leq N \fint_{Q_{4}} \Big[ |\nabla u|^2 + |F|^2 \Big] dxdt\leq N\big(1 + \bar{\delta}^2) \leq 2N,
\end{align*}
where $N = N(n, \nu, M_0)>0$. From this, \eqref{L-2-w-small}, and the triangle inequality, we see that
\begin{align*}
 \fint_{Q_{4}}|v|^2 dx dt 
 & \leq 2\fint_{Q_{4}} |v- [u - (u)_{Q_{4}}]|^2 dxdt  + 2\fint_{Q_{4}} |u - (u)_{Q_{4}}|^2 dx dt\\
& \leq 2\epsilon^2 + 4N \leq  2+ 4N,
\end{align*}
where we also used the fact that $\epsilon \in (0,1)$. From this, and by the standard energy estimate for the equation of $v$ and the doubling properties of $\beta^{\frac{n}{2}}$ (when $n\geq 3$) or of $\beta$ (when $n=1,\, 2$), we obtain
\[
\fint_{Q_{7/2, \beta}} |\nabla v|^2 dx dt \leq N \fint_{Q_{4}} |v|^2 dx dt \leq N.
\]
This proves the first assertion in \eqref{L-2-mu}. It now remains to prove the second assertion in \eqref{L-2-mu}. Due to the assumptions in the statement of the lemma and the doubling properties, we can apply Lemma \ref{embedd-lemma} to $w=u-(u)_{Q_{4}}-v$ to infer that 
\[
\begin{split}
&\frac{1}{(\beta)_{B_{7/2}}}\fint_{Q_{7/2,\beta}} |w(x,t)|^2 \beta(x)dxdt\\
& \leq N  \Big(\fint_{Q_{7/2,\beta}} |w|^2 dxdt \Big)^{1-\theta}  \left[ \Big(\fint_{Q_{7/2, \beta}} |w|^{2} dxdt \Big)^{\theta} +  \Big(\fint_{Q_{7/2,\beta}} |\nabla w|^{2} dxdt \Big)^{\theta} \right]\\
& \leq N \epsilon^{2(1-\theta)} \left[ \epsilon^{2\theta} +  \Big(\fint_{Q_{4}} |\nabla u|^{2} dxdt \Big)^{\theta}  + \Big(\fint_{Q_{7/2,\beta}} |\nabla v|^{2} dxdt \Big)^{\theta}\right] \\
& \leq N \epsilon^{2(1-\theta)},
\end{split}
\]
where $N = N(n, \nu, M_0)>0$. Hence, the second assertion of \eqref{L-2-mu} is proved, and the proof of the lemma is completed.
\end{proof}
The following proposition is the main result of this subsection.
\begin{proposition} \label{L2-gradient-comparision} Let $\nu\in (0,1)$ and $M_0 \geq 1$ be fixed. Then, for every $\epsilon \in (0,1)$, there exists a sufficiently small constant $\delta = \delta(n, \nu, M_0, \epsilon) \in (0,1)$  such that the following assertions hold. Suppose that $\beta$ is a weight satisfying \eqref{beta-cond}, and that $\bA$ is a matrix satisfying \eqref{ellip-cond} in $Q_{4, \beta}$. Moreover, assume that
\begin{equation} \label{small-data-interior}
 \fint_{Q_{4,\beta}}\Theta_{\bA, 4}(x,t)^2 dxdt + \frac{1}{\beta(B_4)} \int_{B_4} \Theta_{\beta, 4}(x)^2 dx +\fint_{Q_{4,\beta}} |F|^2 dx dt \leq \delta^2.
\end{equation}
Then, for every weak solution $u \in \W^{1,2}(Q_{4,\beta}, \beta)$ of \eqref{eqn-Q_R} in $Q_{4,\beta}$ satisfying
\[
\fint_{Q_{4,\beta}} |\nabla u(x,t)|^2dxdt \leq 1,
\]
there exists a weak solution $v \in \W^{1,2}(Q_{4,\beta})$ of \eqref{v-Qr-sol} in $Q_{4,\beta}$ such that
\begin{equation} \label{L-2-gradient-w-small}
\left(\fint_{Q_{2, \beta}} |\nabla u-\nabla v|^2 dx dt\right)^{1/2}  \leq \epsilon \quad \text{and} \quad \|\nabla v\|_{L^\infty(Q_{3, \beta})}  \leq N,
\end{equation}
where $N = N(n, \nu, M_0)>0$.
\end{proposition}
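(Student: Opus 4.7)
The plan is to combine the $L^2$ closeness estimate of Lemma \ref{L2-comparision} with the Caccioppoli-type comparison estimate of Lemma \ref{compare-lemma-1} to upgrade the approximation from the $L^2$-level for $w := u - (u)_{Q_{4,\beta}} - v$ to the $L^2$-level for $\nabla w = \nabla u - \nabla v$. Given $\epsilon \in (0,1)$, I will first fix an auxiliary parameter $\epsilon_1 = \epsilon_1(\epsilon, n, \nu, M_0) > 0$ to be chosen small later, and then take $\delta \leq \bar{\delta}(n, \nu, M_0, \epsilon_1)$, where $\bar{\delta}$ is the constant provided by Lemma \ref{L2-comparision}.

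First, I would invoke Lemma \ref{L2-comparision} to produce a weak solution $v \in \W^{1,2}(Q_{4,\beta})$ of the frozen coefficient equation \eqref{v-Qr-sol} in $Q_{4,\beta}$ such that
\begin{equation*}
\fint_{Q_{4, \beta}} |w|^2 \,dxdt \leq \epsilon_1^2,
\qquad
\fint_{Q_{7/2,\beta}} |\nabla v|^2 \,dxdt \leq N,
\end{equation*}
and $\frac{1}{(\beta)_{B_{7/2}}} \fint_{Q_{7/2, \beta}} w^2 \beta \,dxdt \leq N \epsilon_1^{2(1-\theta)}$. Next, Lemma \ref{Lipschitz-constant} applied on the appropriate inner/outer radii contained in $Q_{7/2,\beta}$ yields the pointwise bounds $\|\nabla v\|_{L^\infty} \leq N$ and $r_0(\beta)_{B_{r_0}} \|v_t\|_{L^\infty} \leq N$ on a subcylinder containing $Q_{2,\beta}$, which in particular gives the second conclusion in \eqref{L-2-gradient-w-small}.

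Then I would apply Lemma \ref{compare-lemma-1} on $Q_{4,\beta}$ with a cutoff $\varphi \in C_0^\infty(Q_{3,\beta})$ satisfying $\varphi \equiv 1$ on $Q_{2,\beta}$, $|\nabla \varphi| \leq C$, and $|\partial_t \varphi| \leq C / (16\, \Psi_\beta(4))$. Since $\varphi \equiv 1$ on $Q_{2,\beta}$ and $|Q_{4,\beta}|/|Q_{2,\beta}|$ is controlled by doubling, this reduces the proof to showing that the right-hand side in Lemma \ref{compare-lemma-1} is at most $N\epsilon^2$. Each piece is handled separately: the $F$-term is bounded by $\delta^2$ by the hypothesis \eqref{small-data-interior}; the $w^2$-terms split into an unweighted piece of order $\epsilon_1^2$ and a $\beta$-weighted piece of order $\epsilon_1^{2(1-\theta)}$ from the improved estimate \eqref{L-2-mu}; the $\|\nabla v\|_{L^\infty}^2$ term is multiplied by $\fint_{Q_{4,\beta}} \Theta_{\bA,4}^2 \leq \delta^2$; and the $\|r_0 (\beta)_{B_{r_0}} v_t\|_{L^\infty}^2$ term is multiplied by the weighted mean oscillation $\frac{1}{\beta(B_4)} \int_{B_4} \Theta_{\beta,4}^2 \leq \delta^2$. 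Choosing first $\epsilon_1$ small depending on $\epsilon$, then $\delta$ small depending on $\epsilon_1$, closes the argument.

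The main obstacle will be the $\beta$-weighted term $\fint w^2 \beta(x)/(r^2(\beta)_{B_4}) \,dxdt$ appearing on the right-hand side of Lemma \ref{compare-lemma-1}: since $\beta$ may be singular or degenerate, the trivial bound $\fint |w|^2 \,dxdt \leq \epsilon_1^2$ is not sufficient to control this term, and the crucial refinement is the weighted estimate \eqref{L-2-mu}, whose proof relies on the interpolation inequality in Lemma \ref{embedd-lemma} and the Muckenhoupt condition \eqref{beta-cond}. A related technical point is coordinating the Lipschitz bound for $v_t$ from Lemma \ref{Lipschitz-constant} with the $(\beta)_{B_{4}}^{-1}$ factor, which is precisely where the non-homogeneous scaling built into the weighted cylinder $Q_{r,\beta}$ pays off: the product $r_0 (\beta)_{B_{r_0}}\|v_t\|_{L^\infty}$ is uniformly bounded independently of how singular or degenerate $\beta$ is, allowing the final estimate to close.
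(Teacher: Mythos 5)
Your proposal is correct and follows essentially the same route as the paper: take $\delta$ small enough to invoke Lemma \ref{L2-comparision} with an auxiliary small parameter (your $\epsilon_1$, the paper's $\bar\epsilon$), extract the Lipschitz bounds on $\nabla v$ and $(\beta)_{B_4} v_t$ via Lemma \ref{Lipschitz-constant} and doubling, plug everything into Lemma \ref{compare-lemma-1} with a cutoff $\varphi$ that is $1$ on $Q_{2,\beta}$, and calibrate $\epsilon_1$ then $\delta$. You also correctly single out the $\beta$-weighted quadratic term and the uniform bound on $(\beta)_{B_4}\|v_t\|_{L^\infty}$ as the two places where the weighted cylinders and estimate \eqref{L-2-mu} are indispensable, matching the paper exactly (the only cosmetic slip is writing $(\beta)_{B_{r_0}}$ where the coefficient appearing in both \eqref{v-Qr-sol} on $Q_{4,\beta}$ and Lemma \ref{compare-lemma-1} is $(\beta)_{B_4}$).
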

\begin{proof} For the given $\epsilon>0$, let $\bar{\epsilon},\,  \delta'\in (0,1)$ be small numbers so that 
\[
\bar{\epsilon}^{2(1-\theta)} \leq \frac{\epsilon^2}{2N_0}\quad \text{and}\quad (\delta')^2 < \frac{\epsilon^2}{2N_0},
\] 
where $\theta = \theta (n, M_0) \in (0,1)$ is defined in Lemma \ref{L2-comparision}, and $N_0 = N_0(n, \nu, M_0)$ is the positive number given in \eqref{last-interior-prop} below.  Also, let us denote 
\[
\delta = \min \big \{\bar{\delta}(n, \nu, M_0, \bar{\epsilon}),\, \tilde \delta (n, \nu, M_0),\,  \delta' \big \},
\]
where $\bar{\delta}(n, \nu, M_0, \bar{\epsilon})$ is  the number defined in Lemma \ref{L2-comparision}, and $\tilde \delta (n, \nu, M_0)$ is defined in Lemma \ref{boundedness-u-sol}. We prove the proposition with this choice of $\delta$. 

\smallskip
To this end, we note that as \eqref{small-data-interior} holds, it follows from Lemma \ref{L2-comparision} and the choice of $\delta$ that there is a weak solution $v \in \W^{1,2}(Q_{4,\beta})$ of \eqref{v-Qr-sol} in $Q_{4,\beta}$ such that
\begin{equation} \label{L-2-w-small-bar}
\left(\fint_{Q_{4, \beta}} |w(x,t)|^2 dx dt\right)^{1/2}  \leq \bar{\epsilon}, \quad \fint_{Q_{7/2,\beta}} |\nabla v|^2 dx dt \leq N, 
\end{equation}
and
\begin{equation} \label{L-2-mu-bar}
 \frac{1}{(\beta)_{B_{7/2}}} \fint_{Q_{7/2, \beta}} |w(x,t)|^2 \beta(x) dx dt \leq N \bar{\epsilon}^{2(1-\theta)},
\end{equation}
where $w= u-(u)_{Q_{4,\beta}}-v$, $N = N(n, \nu, M_0)>0$ and $\theta = \theta(n, M_0) \in (0,1)$.  From this, and by applying Lemma \ref{Lipschitz-constant} and the doubling properties stated in Lemma \ref{property}, we obtain
\begin{equation} \label{L-infty-bar}
\begin{split}
& \|\nabla v\|_{L^\infty(Q_{3,\beta})} \leq N\left(\fint_{Q_{7/2,\beta}} |\nabla v|^2 dx dt\right)^{\frac{1}{2}} \leq N \quad \text{and} \\
& \|(\beta)_{B_{4}} v_t\|_{L^\infty(Q_{3,\beta})} \leq N\left(\fint_{Q_{7/2,\beta}} |\nabla v|^2 dx dt\right)^{\frac{1}{2}} \leq N
\end{split}
\end{equation}
for $N = N(n, \nu, M_0)>0$.  

\smallskip
Note that the second assertion in \eqref{L-2-gradient-w-small} follows from the first estimate in \eqref{L-infty-bar}. It remains to prove the first assertion in \eqref{L-2-gradient-w-small}. By applying Lemma \ref{compare-lemma-1} to $w$ with a suitable cut-off function $\varphi$, and the doubling properties, we have
\begin{align*}
 & \fint_{Q_{2,\beta}} |\nabla w(x,t)|^2 dx dt   \leq N  \fint_{Q_{4,\beta}} \Big[  |F(x,t)|^2+ |w(x,t)|^2 \Big] dxdt   \\
& \qquad + \frac{N}{(\beta)_{B_{7/2}}}  \fint_{Q_{7/2,\beta}} w(x,t)^2 \beta(x)dx dt +  N \|\nabla v\|_{L^\infty(Q_{3, \beta})}^2 \fint_{Q_{4,\beta}} \Theta_{\bA, 4}(x,t)^2 dx dt \\
& \qquad + N \| (\beta)_{B_4} v_t\|_{L^\infty(Q_{3, \beta})}^2 \left(\frac{1}{\beta(B_4)}\int_{Q_{4, \beta}}\Theta_{\beta, 4}(x)^2 dx\right).
\end{align*}
From this, the assumption \eqref{small-data-interior},  the estimates \eqref{L-2-w-small-bar}, \eqref{L-2-mu-bar}, \eqref{L-infty-bar}, and the fact that $\bar{\epsilon} \in (0,1)$, we infer that
\begin{equation} \label{last-interior-prop}
\fint_{Q_{2,\beta}} |\nabla w(x,t)|^2 dx dt   \leq   N_0 \Big[ \delta^2  + \bar{\epsilon}^{2(1-\theta)} \Big] \quad\text{with}\quad N_0 = N_0(n, \nu, M_0)>0.
\end{equation} 
By the choices of $\delta$ and $\bar{\epsilon}$, it follows that
\[
\fint_{Q_{2,\beta}} |\nabla w(x,t)|^2 dx dt \leq \epsilon^2.
\]
Hence, the first assertion in \eqref{L-2-gradient-w-small} follows, and the proposition is proved.
\end{proof}
\subsection{Level set estimates and proof of Theorem \ref{inter-theorem}}\label{inter proof} For $r>0$ and $z_0=(x_0, t_0)\in \R^n\times \R$, we recall the definition of $Q_{r,\beta}(z_0)$ in \eqref{cylinder-def}:
\begin{equation*} 
Q_{r, \beta}(z_0) =B_r(x_0)  \times \Gamma_{\beta, z_0}(r), \quad \text{where}   \quad \Gamma_{\beta, z_0} (r) = (t_0 -r^2 \Psi_{\beta,x_0}(r),\, t_0],
\end{equation*}
in which
\begin{equation}\label{Lambda} 
\Psi_{\beta, x_0}(r)= (\beta^{\frac{n_0}{2}})_{B_r(x_0)}^{\frac{2}{n_0}} = \Big( \fint_{B_{r}(x_0)} \beta(x)^{\frac{n_0}{2}} dx \Big)^{\frac{2}{n_0}}
\quad \text{with}\quad n_0=\max\{n, 2\}.
\end{equation}
Let $C_{r,\beta}(z_0)$ be the cylinder defined as
\begin{equation}\label{centered cylinder}
C_{r,\beta}(z_0)=B_r(x_0)\times \big(t_0-\frac{1}{2}r^2 \Psi_{\beta,x_0}(r),\, t_0+\frac{1}{2}r^2 \Psi_{\beta,x_0}(r)\big).
\end{equation}
We now state some simple properties related to this class of cylinders. Firstly, note that the cylinder $C_{r,\beta}(z_0)$ can be derived from $Q_{r,\beta}(x_0, \hat{t}_0)$ as
\[
C_{r,\beta}(z_0) = Q_{r,\beta}(x_0, \hat{t}_0) \setminus (B_r(x_0) \times \{\hat{t}_0\}) , \quad \text{where}  \quad \hat{t}_0 = t_0+ \frac{1}{2}r^2 \Psi_{\beta,x_0}(r).
\]
Secondly, we can see that
\begin{equation}\label{inclusions}
Q_{r,\beta}(z_0)\subset\{z\in \R^{n}\times \R:\ \rho_{\beta}(z,z_0)\leq r\}\subset C_{2r,\beta}(z_0).
\end{equation}
Lastly, for any $\bar{z}=(\bar{x} ,\bar{t})\in C_{r,\beta}(z_0)$ with $\bar{t} >t_0$, it is not hard to check that $z_0\in Q_{2r,\beta}(\bar{z})$. Then it follows from  \eqref{cylinder-quasi} that $\rho_{\beta}(\bar{z}, z_0)\leq 2r$. Consequently, we can prove that
\begin{equation} \label{quasi-dist-bdry1}
\rho_{\beta}(z,z_0)\leq 2r \quad \text{for all}\quad z\in C_{r,\beta}(z_0).
\end{equation}
\begin{definition}
\textup{(i)} For a given $g\in L^1_{\textit{loc}}(\R^{n}\times \R)$, the Hardy-Littlewood maximal function of $g$ is defined as
\[
(\M g)(z)=\sup_{\rho>0}\fint_{C_{\rho,\beta}(z)}|g(y,s)|dyds,\quad \forall\, z=(x,t)\in \R^{n}\times \R.
\]
\textup{(ii)} If $g$ is defined in a set $U\subset \R^{n}\times \R$, we denote
\[
(\M_{U}g)=\M (g\chi_{U}),
\]
where $\chi_U$ is the characteristic function on $U$.
\end{definition}

Now, we state and prove the following lemma on the density estimate for the upper level sets of $\M_{Q_{2\Lambda,\beta}}(|\nabla u|^2)$.
\begin{lemma}\label{measure density}
Let $\nu\in (0,1)$ and $M_0\geq 1$ be fixed. For any $q_0\in(0,1)$, there exist $K=K(n,\nu, M_0)>1$ and $\hat \delta=
\hat \delta(n,\nu,M_0, q_0)\in (0,1)$ sufficiently small such that the following assertion holds. Suppose that $\beta$ satisfies \eqref{beta-cond}, and $\bA$ is a matrix satisfying \eqref{ellip-cond} in $Q_{2\Lambda, \beta}$ with $\Lambda$  defined in Lemma \ref{quasi-metric lemma}. Moreover, assume that
\begin{equation} \label{smallness-10-21}
\fint_{Q_{r,\beta}(z)}\Theta_{\bA, r, x}(y,s)^2 dyds + \frac{1}{\beta(B_{r}(x))} \int_{B_{r}(x)} \Theta_{\beta, r, x}(y)^2 dy  \leq \hat \delta^2,
\end{equation}
for all $r \in (0,1)$ and for all $z = (z,t) \in Q_{1,\beta}$. Assume also that $u\in \W^{1,2}(Q_{2\Lambda,\beta},\beta)$ is a weak solution  of \eqref{eqn-Q_R} in $Q_{2\Lambda,\beta}$. Then, for any cylinder $C_{\rho,\beta}(z_0)$ with $z_0=(x_0,t_0)\in Q_{1,\beta}$ and $\rho\in(0,\frac{1}{4})$, if
\begin{equation}\label{nonempty}
C_{\rho,\beta}(z_0)\cap\left\{Q_{1,\beta}: \M_{Q_{2\Lambda,\beta}}(|\nabla u|^2)\leq 1\right\}\cap \left\{\M_{Q_{2\Lambda,\beta}}(|F|^2)\leq \hat \delta^2\right\}\neq \emptyset,
\end{equation}
we have
\begin{equation} \label{assertion-1021}
|C_{\rho,\beta}(z_0)\cap\left\{Q_{1,\beta}: \M_{Q_{2\Lambda,\beta}}(|\nabla u|^2)> K\right\}|\leq q_0 |C_{\rho,\beta}(z_0)|.
\end{equation}
\end{lemma}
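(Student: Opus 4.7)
The plan is to combine the approximation result of Proposition~\ref{L2-gradient-comparision} with a splitting argument for the Hardy--Littlewood maximal function, and close via the weak-$(1,1)$ inequality for $\M$; the latter is available in the quasi-metric space $(\R^n\times\R,\rho_\beta)$ since the cylinders $C_{r,\beta}(z)$ are doubling by Lemma~\ref{property} and a Vitali-type covering applies through~\eqref{quasi-tri}.

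Using~\eqref{nonempty}, fix a reference point $z_1\in C_{\rho,\beta}(z_0)\cap Q_{1,\beta}$ with $\M_{Q_{2\Lambda,\beta}}(|\nabla u|^2)(z_1)\leq 1$ and $\M_{Q_{2\Lambda,\beta}}(|F|^2)(z_1)\leq\hat\delta^2$. Since $C_{\rho,\beta}(z_0)$ lies in the past of the time-shifted point $z_*=(x_0,\,t_0+\rho^2\Psi_{\beta,x_0}(\rho))$, the doubling of $h_{x_0}$ from Lemma~\ref{property} gives a constant $c_0=c_0(n,M_0)\geq 1$ with $C_{\rho,\beta}(z_0)\subset Q_{c_0\rho,\beta}(z_*)$. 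Together with $\rho_\beta(z_*,z_0)=\rho$, $\rho_\beta(z_1,z_0)\leq 2\rho$, \eqref{quasi-tri}, and \eqref{inclusions}, this produces some $c_1=c_1(n,M_0)$ with
\[
C_{\rho,\beta}(z_0)\subset Q_{c_0\rho,\beta}(z_*)\subset C_{c_1\rho,\beta}(z_1)\subset Q_{2\Lambda,\beta},
\]
the last inclusion using $z_1\in Q_{1,\beta}$ and $\rho\leq 1/4$. Combined with doubling and the maximal bound at $z_1$,
\[
\fint_{Q_{c_0\rho,\beta}(z_*)}|\nabla u|^2\,dxdt\leq N_0,\qquad \fint_{Q_{c_0\rho,\beta}(z_*)}|F|^2\,dxdt\leq N_0\hat\delta^2,\qquad N_0=N_0(n,M_0).
\]

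After the dilation of Lemma~\ref{scaling-lemma} normalizing $Q_{c_0\rho,\beta}(z_*)$ to a standard $Q_{4,\tilde\beta}$, invoke Proposition~\ref{L2-gradient-comparision} on $N_0^{-1/2}u$ and $N_0^{-1/2}F$ with parameter $\epsilon\in(0,1)$ to be fixed; the smallness condition \eqref{small-data-interior} holds by \eqref{smallness-10-21} and the above as long as $\hat\delta$ is small in terms of $\epsilon$. Undoing the dilation produces a weak solution $v$ of the frozen-coefficient equation on $Q_{c_0\rho/2,\beta}(z_*)$ with
\[
\Bigl(\fint_{Q_{c_0\rho/2,\beta}(z_*)}|\nabla u-\nabla v|^2\Bigr)^{\!1/2}\!\leq N_0^{1/2}\epsilon,\qquad \|\nabla v\|_{L^\infty(Q_{3c_0\rho/4,\beta}(z_*))}\leq N_1,
\]
where $N_1=N_1(n,\nu,M_0)$. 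Set $K=\max\{4N_1^2,\,c_dN_0\}$, with $c_d$ the doubling constant.

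Finally, for any $z\in C_{\rho,\beta}(z_0)$ with $\M_{Q_{2\Lambda,\beta}}(|\nabla u|^2)(z)>K$, split the supremum defining $\M$ at $z$ by cylinder radius. If $C_{r,\beta}(z)\subset Q_{3c_0\rho/4,\beta}(z_*)$, the pointwise bound $|\nabla u|^2\leq 2|\nabla u-\nabla v|^2+2N_1^2$ gives $\fint_{C_{r,\beta}(z)}|\nabla u|^2\leq 2\M(\chi_{Q_{c_0\rho/2,\beta}(z_*)}|\nabla u-\nabla v|^2)(z)+K/2$; otherwise \eqref{quasi-tri} embeds $C_{r,\beta}(z)$ into a cylinder of comparable size centered at $z_1$ whose average is $\leq c_dN_0\leq K$. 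Either way, $\M_{Q_{2\Lambda,\beta}}(|\nabla u|^2)(z)>K$ forces $\M(\chi_{Q_{c_0\rho/2,\beta}(z_*)}|\nabla u-\nabla v|^2)(z)>K/4$. The weak-$(1,1)$ inequality in the doubling quasi-metric space then yields
\[
\bigl|C_{\rho,\beta}(z_0)\cap\{\M_{Q_{2\Lambda,\beta}}(|\nabla u|^2)>K\}\bigr|\leq \frac{N_2\epsilon^2}{K}\,|C_{\rho,\beta}(z_0)|,
\]
after absorbing $|Q_{c_0\rho/2,\beta}(z_*)|$ into $|C_{\rho,\beta}(z_0)|$ by doubling. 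Choosing $\epsilon$ (hence $\hat\delta$) so that $N_2\epsilon^2/K\leq q_0$ completes~\eqref{assertion-1021}. The main difficulty is bookkeeping the geometric inclusions among $C_{\rho,\beta}(z_0)$, $Q_{c_0\rho,\beta}(z_*)$, $C_{c_1\rho,\beta}(z_1)$, and $Q_{2\Lambda,\beta}$ cleanly from \eqref{quasi-tri} and Lemma~\ref{property} so that all constants depend only on $n$, $\nu$, and $M_0$.
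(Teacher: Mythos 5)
Your overall strategy matches the paper's: shift the center upward in time so that $C_{\rho,\beta}(z_0)$ sits inside a $Q$-cylinder, use the reference point from \eqref{nonempty} to bound the averages of $|\nabla u|^2$ and $|F|^2$ on that cylinder, invoke Proposition~\ref{L2-gradient-comparision}, split the maximal function by radius, and close with the weak-$(1,1)$ inequality. There is, however, a genuine gap in the geometric set-up.

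You define $z_*=(x_0,\,t_0+\rho^2\Psi_{\beta,x_0}(\rho))$ without truncating at $t=0$, and then assert $Q_{c_0\rho,\beta}(z_*)\subset Q_{2\Lambda,\beta}$ (via the intermediate $C_{c_1\rho,\beta}(z_1)\subset Q_{2\Lambda,\beta}$). When $z_0\in Q_{1,\beta}$ has $t_0$ close to $0$, the time coordinate $t_0+\rho^2\Psi_{\beta,x_0}(\rho)$ of $z_*$ is strictly positive, so the top portion of $Q_{c_0\rho,\beta}(z_*)$ lies above $t=0$ and is \emph{not} contained in $Q_{2\Lambda,\beta}$; the same objection applies to $C_{c_1\rho,\beta}(z_1)\subset Q_{2\Lambda,\beta}$, since a $C$-cylinder extends forward in time from $z_1$. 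In that regime $u$ is not a weak solution on $Q_{c_0\rho,\beta}(z_*)$, so the normalization and application of Proposition~\ref{L2-gradient-comparision} are not licensed. The paper's proof avoids this by taking $\hat t_0=\min\{0,\ t_0+\tfrac12(2\rho)^2\Psi_{\beta,x_0}(2\rho)\}$, which pins the shifted center at time $\leq 0$ so that $Q_{4\rho,\beta}(\hat z_0)\subset Q_{2\Lambda,\beta}$ always holds, and by only requiring $C_{\rho,\beta}(z_0)\cap Q_{1,\beta}\subset Q_{2\rho,\beta}(\hat z_0)$ (the intersection with $Q_{1,\beta}$ is exactly the set appearing in \eqref{assertion-1021}). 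The fix is simple—replace $z_*$ by its time-truncation at $0$ and only claim the inclusion of $C_{\rho,\beta}(z_0)\cap Q_{1,\beta}$—but as written your chain of inclusions is false in a nontrivial open set of cases, and the subsequent application of the comparison proposition does not go through without it. A secondary, related imprecision: the inclusion $C_{c_1\rho,\beta}(z_1)\subset Q_{2\Lambda,\beta}$ is not actually needed (and is generally false); what you need is that the maximal function at $z_1$ already carries the restriction $\chi_{Q_{2\Lambda,\beta}}$, which handles the overhang automatically, as in \eqref{inclu-1} of the paper.
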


\begin{proof} Let $\epsilon_0=\epsilon_0(n, M_0, q_0)>0$ be the sufficiently small number to be defined in \eqref{choice of epsilon}. For this $\epsilon_0$, let $\delta=\delta(n, \nu, M_0, \epsilon_0)$ be the number given in Proposition \ref{L2-gradient-comparision}.  Then, let us define
\begin{equation}\label{choice delta}
  \hat \delta=\frac{\delta}{\sqrt{2}}, \quad \text{and} \quad K=\max\big\{2(1+N_2^2),\, N_3\big\},
\end{equation}
where $N_2 = N_2(n, \nu, M_0)$ and $N_3 = N_3(n, M_0)$ are the positive numbers defined in \eqref{smallness of diff} and \eqref{N3-def-1021} below.  We prove the assertion of the lemma with this choice of $\hat{\delta}$ and $K$. 

\smallskip
Firstly, for the given $z_0 = (x_0, t_0)$, let $\hat{z}_0$ be the center point of the top of $\overline{C}_{2\rho, \beta}(z_0) \cap Q_{1,\beta}$, that is,
\[
\hat z_0=(x_0, \hat t_0) \quad \text{with}\quad \hat t_0=\min\big\{0,\ t_0+\tfrac{1}{2}(2\rho)^2 \Psi_{\beta,x_0}(2\rho)\big\},
\]
where $\Psi_{\beta,x_0}(\cdot)$ is given in \eqref{Lambda}. We claim that there exists a constant $N_1=N_1(n, M_0)\geq 1$ such that
\begin{equation}\label{average N-1}
\fint_{Q_{4\rho,\beta}(\hat z_0)}|\nabla u|^2dxdt\leq N_1\quad \text{and}\quad
\fint_{Q_{4\rho,\beta}(\hat z_0)}|F|^2dxdt \leq N_1\hat \delta^2.
\end{equation}
Indeed, from the assumption \eqref{nonempty}, there is $\bar{z} \in C_{\rho,\beta}(z_0) \cap Q_{1,\beta}$ such that
\begin{equation}\label{X-0}
\M_{Q_{2\Lambda,\beta}}(|\nabla u|^2)(\bar z)\leq 1 	\quad \text{and}\quad \M_{Q_{2\Lambda,\beta}}(|F|^2)(\bar z)\leq \hat \delta^2.
\end{equation}
Note that 
\begin{equation}\label{inside property}
\left(C_{\rho,\beta}(z_0)\cap Q_{1,\beta}\right)\subset \left(C_{2\rho,\beta}(z_0)\cap Q_{2\Lambda,\beta}\right)\subset Q_{2\rho,\beta}(\hat z_0).
\end{equation}
Therefore, $\bar z \in Q_{2\rho,\beta}(\hat z_0)$. As $\hat z_0\in Q_{1,\beta}$ and $\bar z \in Q_{2\rho,\beta}(\hat z_0)$, it follows from \eqref{cylinder-quasi} and Lemma \ref{quasi-metric lemma} that 
\[
\left\{\begin{aligned}
\rho_{\beta}(z,0)&\leq \Lambda [\rho_{\beta}(z, \hat z_0)+\rho_{\beta}(\hat z_0, 0)]\leq \Lambda(4\rho+1),\\[4pt]
\rho_{\beta}(z, \bar z)&\leq \Lambda [\rho_{\beta}(z,\hat z_0)+\rho_{\beta}(\hat z_0, \bar z)]\leq 6 \Lambda \rho,
\end{aligned}\right. \quad \text{for all}\quad z\in Q_{4\rho, \beta}(\hat z_0).
\]
By using this, \eqref{inclusions}, and the fact that $\rho \in (0, 1/4)$, we infer that
\begin{equation}\label{inclu-1}
Q_{4\rho,\beta}(\hat z_0)\subset Q_{2\Lambda,\beta} \quad\text{and}\quad Q_{4\rho,\beta}(\hat z_0)\subset  C_{12 \Lambda \rho,\beta}(\bar z).
\end{equation}
Then, by combining \eqref{inclu-1}, \eqref{X-0}, and the doubling properties in Lemma \ref{property},  we can find some constant $N_1=N_1(n, M_0)\geq 1$ so that
\begin{align*}
\fint_{Q_{4\rho,\beta}(\hat z_0)}|\nabla u|^2dxdt 
&\leq\frac{|C_{12\Lambda\rho,\beta}(\bar z)|}{|Q_{4\rho,\beta}(\hat z_0)|} \left(\frac{1}{|C_{12\Lambda\rho,\beta}(\bar z)|}\int_{C_{12\Lambda\rho,\beta}(\bar z)\cap Q_{2\Lambda,\beta}}|\nabla u|^2dxdt\right)\\
&\leq N_1\M_{Q_{2\Lambda,\beta}}(|\nabla u|^2)(\bar z)\leq N_1.
\end{align*}
This proved the first assertion in \eqref{average N-1}. The second assertion also follows similarly. Hence, \eqref{average N-1} is verified.

\smallskip
Secondly, we restrict equation \eqref{eqn-Q_R} to $Q_{4\rho,\beta}(\hat z_0)$. Due to \eqref{average N-1}, the smallness assumption \eqref{smallness-10-21}, and the choice of $\hat{\delta}$ in \eqref{choice delta}, after taking a linear translation and a dilation as in \eqref{scale-1}, we can apply Proposition \ref{L2-gradient-comparision} for $u/\sqrt{2N_1}$ with $F$ replaced by $F/\sqrt{2N_1}$. Then, after scaling back, we can find a weak solution $v \in \W^{1,2}(Q_{4\rho,\beta}(\hat z_0))$ of equation \eqref{v-Qr-sol} in $Q_{4\rho,\beta}(\hat z_0)$ such that
\begin{equation}\label{smallness of diff}
\fint_{Q_{2\rho, \beta}(\hat z_0)} |\nabla u-\nabla v|^2 dx dt \leq 2N_1\epsilon_0^2 \quad \text{and} \quad \|\nabla v\|_{L^\infty(Q_{3\rho, \beta}(\hat z_0))}  \leq N_2,
\end{equation}
where $N_2= N_2(n, \nu, M_0)>0$. 

\smallskip
Finally, we claim that
\begin{equation}\label{claim}
\begin{split}
& \left\{z \in C_{\rho,\beta}(z_0): \M_{Q_{2\rho,\beta}(\hat z_0)}(|\nabla u-\nabla v|^2)(z)\leq 1 \right\} \\
& \subset \left\{z \in C_{\rho,\beta}(z_0): \M_{Q_{2\Lambda,\beta}}(|\nabla u|^2)(z)\leq K\right\}.
\end{split}
\end{equation}
We note that if \eqref{claim} holds, then the assertion \eqref{assertion-1021} of the lemma follows. In fact, \eqref{claim} implies that 
\[
\left\{C_{\rho,\beta}(z_0): \M_{Q_{2\Lambda,\beta}}(|\nabla u|^2)> K\right\}
\subset \left\{C_{\rho,\beta}(z_0): \M_{Q_{2\rho,\beta}(\hat z_0)}(|\nabla u-\nabla v|^2)> 1\right\}.
\]
Then, it follows from the ($1-1$)-weak estimate of the Hardy-Littlewood maximal function and \eqref{smallness of diff} that
\begin{equation*}\label{K-density}
\begin{aligned}
& |\left\{C_{\rho,\beta}(z_0): \M_{Q_{2\Lambda,\beta}}(|\nabla u|^2)> K\right\}| \leq |\left\{C_{\rho,\beta}(z_0): \M_{Q_{2\rho,\beta}(\hat z_0)}(|\nabla u-\nabla v|^2)> 1\right\}| \\
& \leq N(n ,M_0) \int_{Q_{2\rho,\beta}(\hat z_0)} |\nabla u - \nabla v|^2 dx dt \leq 2 N(n, M_0) N_1 \epsilon_0^2 |Q_{2\rho,\beta}(\hat z_0)| \\
&\leq \tilde N(n, M_0) \epsilon_0^2 |C_{\rho,\beta}(z_0)|,
\end{aligned}
\end{equation*}
where the last inequality is from the doubling properties stated in Lemma \ref{property}. Then, by taking $\epsilon_0=\epsilon_0(n, M_0, q_0)\in (0,1)$ sufficiently small so that
\begin{equation}\label{choice of epsilon}
\tilde N(n,M_0)\epsilon_0^2 \leq q_0
\end{equation}
we obtain the conclusion \eqref{assertion-1021}.

\smallskip
It remains to prove \eqref{claim}. To this end, we note that from the definition of the cylinders in  \eqref{centered cylinder}, and Lemma \ref{property}, there is a sufficiently small number $\alpha_0=\alpha_0(n, M_0)\in (0,\frac{1}{2})$ such that
\[
C_{r,\beta}(\tilde z)\subset C_{2\rho,\beta}(z_0)\quad \text{for all}\quad r\in (0, \alpha_0\rho),\ \tilde z\in C_{\rho,\beta}(z_0).
\]
Now, let $z\in C_{\rho,\beta}(z_0)$ be any point in the set on the left-hand side of \eqref{claim}. We have
\begin{equation}\label{Max small}
\M_{Q_{2\rho,\beta}(\hat z_0)}(|\nabla u-\nabla v|^2)(z)\leq 1.
\end{equation}
We only need to prove that 
\begin{equation} \label{different tau}
\frac{1}{|C_{\tau,\beta}(z)|}\int_{C_{\tau,\beta}(z)\cap Q_{2\Lambda,\beta}}|\nabla u|^2dxdt \leq K, \quad \forall \ \tau >0.
\end{equation}
We split the proof of \eqref{different tau} into two cases depending on the size of $\tau$.

\smallskip
\noindent
\textbf{Case 1: $\tau< \alpha_0\rho$.}
In this case, it follows from the definition of $\alpha_0$ that $C_{\tau,\beta}(z)\subset C_{2\rho,\beta}(z_0)$.
Moreover, by \eqref{inside property}, we infer that
\[
C_{\tau,\beta}(z)\cap Q_{2\Lambda,\beta}\subset C_{2\rho,\beta}(z_0)\cap Q_{2\Lambda,\beta}\subset Q_{2\rho,\beta}(\hat z_0).
\]
This, together with \eqref{Max small} and the second assertion in \eqref{smallness of diff}, implies
\begin{align} \notag
\frac{1}{|C_{\tau,\beta}(z)|}\int_{C_{\tau,\beta}(z)\cap Q_{2\Lambda,\beta}}|\nabla u|^2dxdt 
&\leq  \frac{2}{|C_{\tau,\beta}(z)|}\int_{C_{\tau,\beta}(z)\cap Q_{2\rho,\beta}(\hat z_0)}\big(|\nabla u-\nabla v|^2+|\nabla v|^2\big)dxdt\\ \notag
&\leq 2\M_{Q_{2\rho,\beta}(\hat z_0)}(|\nabla u-\nabla v|^2)(z)+2\|\nabla v\|^2_{L^\infty(Q_{2\rho, \beta}(\hat z_0))}\\  \label{case1-1021}
&\leq 2(1+N_2^2).
\end{align}

\noindent
\textbf{Case 2: $\tau\geq \alpha_0\rho$.} Recall that $z,\, \bar{z} \in C_{\rho,\beta}(z_0)$, it then follows from Lemma \ref{quasi-metric lemma} and \eqref{quasi-dist-bdry1} that for all $z' \in C_{\tau,\beta}(z)$,
\begin{align*}
\rho_{\beta}(z', \bar z)
&\leq \Lambda[\rho_{\beta}(z',z)+\rho_{\beta}(z,\bar z)]\\
&\leq \Lambda\left(\rho_{\beta}(z', z)+\Lambda[\rho_{\beta}(z,z_0)+\rho_{\beta}(z_0, \bar z)]\right)\\
&\leq 2\tau \Lambda+4\Lambda^2\rho\leq 4(\Lambda+\alpha_0^{-1}\Lambda^2)\tau,
\end{align*}
where $\Lambda=\Lambda(n, M_0)\geq 2$ is given in Lemma \ref{quasi-metric lemma}.
By \eqref{inclusions}, we infer that
\[
C_{\tau,\beta}(z)\subset C_{\alpha_1\tau,\beta}(\bar z)\quad \text{with}\quad \alpha_1=8(\Lambda+\alpha_0^{-1}\Lambda^2).
\]
Then,  from \eqref{X-0}, and the doubling properties stated in Lemma \ref{property}, we can find some constant $N_3=N_3(n, M_0)>0$ such  that
\begin{equation} \label{N3-def-1021}
\frac{1}{|C_{\tau,\beta}(z)|}\int_{C_{\tau,\beta}(z)\cap Q_{2\Lambda,\beta}}|\nabla u|^2dxdt
\leq \frac{|C_{\alpha_1\tau,\beta}(\bar z)|}{|C_{\tau,\beta}(z)|}\M_{Q_{2\Lambda,\beta}}(|\nabla u|^2)(\bar z)\leq N_3.
\end{equation}

Therefore, \eqref{different tau} follows from \eqref{case1-1021}, \eqref{N3-def-1021}, and the choice of $K$ in \eqref{choice delta}. The proof of the lemma is completed.
\end{proof}

As a consequence of Lemma \ref{measure density}, we obtain the following corollary.
\begin{corollary}\label{corollary-1} Assume that the assumptions of Lemma \ref{measure density} hold.  If 
\[
|C_{\rho,\beta}(z_0)\cap\left\{\M_{Q_{2\Lambda,\beta}}(|\nabla u|^2)> K\right\}|> q_0 |C_{\rho,\beta}(z_0)|,
\]
then
\[
C_{\rho,\beta}(z_0)\cap Q_{1,\beta}\subset\left\{\M_{Q_{2\Lambda,\beta}}(|\nabla u|^2)> 1\right\}\cup\left\{\M_{Q_{2\Lambda,\beta}}(|F|^2)> \hat \delta^2\right\}.
\]
\end{corollary}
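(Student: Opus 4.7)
The plan is to obtain this corollary as an immediate contrapositive of Lemma \ref{measure density}, with no new analysis required. I would argue by contradiction: assume the desired inclusion fails, so that there exists a point
\[
z^{*}\in C_{\rho,\beta}(z_0)\cap Q_{1,\beta}
\]
which lies in neither $\{\M_{Q_{2\Lambda,\beta}}(|\nabla u|^2)>1\}$ nor $\{\M_{Q_{2\Lambda,\beta}}(|F|^2)>\hat\delta^2\}$. Equivalently,
\[
\M_{Q_{2\Lambda,\beta}}(|\nabla u|^2)(z^{*})\leq 1 \quad\text{and}\quad \M_{Q_{2\Lambda,\beta}}(|F|^2)(z^{*})\leq \hat\delta^2,
\]
so $z^{*}$ witnesses that the intersection appearing in \eqref{nonempty} is nonempty at the cylinder $C_{\rho,\beta}(z_0)$.

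Since all remaining hypotheses of Lemma \ref{measure density} are shared between the two statements (the smallness assumption \eqref{smallness-10-21}, the weight condition \eqref{beta-cond}, ellipticity \eqref{ellip-cond}, and the restriction $\rho\in(0,1/4)$), I can invoke the lemma and use its conclusion \eqref{assertion-1021} to obtain
\[
|C_{\rho,\beta}(z_0)\cap\{Q_{1,\beta}: \M_{Q_{2\Lambda,\beta}}(|\nabla u|^2)>K\}|\leq q_0\,|C_{\rho,\beta}(z_0)|,
\]
which directly contradicts the standing hypothesis of the corollary. Hence no such $z^{*}$ can exist, and every $z\in C_{\rho,\beta}(z_0)\cap Q_{1,\beta}$ must lie in $\{\M_{Q_{2\Lambda,\beta}}(|\nabla u|^2)>1\}\cup\{\M_{Q_{2\Lambda,\beta}}(|F|^2)>\hat\delta^2\}$, which is the claimed inclusion.

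There is no genuine obstacle here, since the corollary is simply the pointwise repackaging of Lemma \ref{measure density}'s measure-theoretic conclusion via its contrapositive; the constants $K$ and $\hat\delta$ are inherited unchanged from the lemma, and no additional estimate, covering argument, or scaling is needed.
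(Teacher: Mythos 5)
Your proof is correct: the corollary is exactly the contrapositive of Lemma \ref{measure density}, and the paper presents it without proof precisely for this reason. Your argument matches the implicit reasoning in the paper.
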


Next, to control the level sets of $\M_{Q_{2\Lambda,\beta}}(|\nabla u|^2)$, we need the following covering lemma due to Krylov and Safonov \cite{Krylov-Safonov}. The proof of the lemma is given in Appendix \ref{Appendix B}.
\begin{lemma}[Krylov-Safonov covering lemma]\label{covering} Let $M_0 \geq 1$ and $\beta$ satisfy  \eqref{beta-cond}. Then, for every $q_0\in (0,1)$, there is $\gamma_1 = \gamma_1(n, M_0)>0$ such that the following assertions hold. For every open set $S\subset Q_{1,\beta}$ satisfying $|S|\leq q_0|Q_{1,\beta}|$, let
\[
\A=\big\{C_{\rho,\beta}(z):\  \ z \in Q_{1,\beta},\ \rho>0, \ |C_{\rho,\beta}(z)\cap S|> q_0|C_{\rho,\beta}(z)|\big\},
\]
and
\begin{equation}\label{def-E}
E = \bigcup_{C_{\rho,\beta}(z) \in \mathcal{A}} C_{\rho,\beta}(z) \cap Q_{1,\beta}.
\end{equation}
Then,
\begin{equation*}\label{cover conclu}
|S\setminus E|=0 \quad \text{and}\quad |S|\leq \gamma_1 q_0|E|.
\end{equation*}
\end{lemma}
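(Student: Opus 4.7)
The plan is to prove Lemma~\ref{covering} by establishing its two conclusions separately: first $|S\setminus E|=0$ via a density/Lebesgue differentiation argument, and then $|S|\leq\gamma_1 q_0|E|$ via a Calder\'on--Zygmund stopping-time / Vitali covering construction adapted to the quasi-metric $\rho_\beta$ of Lemma~\ref{quasi-metric lemma}. The doubling of Lebesgue measure on the centered cylinders $C_{r,\beta}(z)$, provided by Lemma~\ref{property}, together with the comparability of these cylinders with $\rho_\beta$-balls via \eqref{inclusions}, makes $(\R^{n+1},\rho_\beta,dx\,dt)$ a doubling quasi-metric measure space in which the standard harmonic-analysis covering tools apply.

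For the first conclusion I would invoke the Lebesgue differentiation theorem in this doubling quasi-metric setting to obtain, for a.e.\ $z\in S$,
\[
\lim_{r\to 0^+}\frac{|C_{r,\beta}(z)\cap S|}{|C_{r,\beta}(z)|}=1.
\]
Hence for every sufficiently small $r>0$ the cylinder $C_{r,\beta}(z)$ belongs to $\mathcal{A}$, so $z\in C_{r,\beta}(z)\cap Q_{1,\beta}\subset E$, giving $|S\setminus E|=0$.

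For the measure estimate, for each $z\in S$ I would set the stopping-time radius
\[
\rho_z=\sup\{\rho>0:|C_{\rho,\beta}(z)\cap S|>q_0|C_{\rho,\beta}(z)|\},
\]
which is positive a.e.\ on $S$ by the previous step and universally bounded $\rho_z\leq R(n,M_0)$ thanks to the smallness hypothesis $|S|\leq q_0|Q_{1,\beta}|$: once $\rho$ is large enough that $C_{\rho,\beta}(z)\supset Q_{1,\beta}$, which occurs uniformly in $z\in Q_{1,\beta}$ by doubling, the density cannot strictly exceed $q_0$. Let $M=M(n,M_0)\geq 1$ be the Vitali enlargement factor for $\rho_\beta$, determined by the quasi-triangle constant $\Lambda$ of Lemma~\ref{quasi-metric lemma} and the doubling constants of Lemma~\ref{property}. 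For each $z\in S$ pick $r_z\in(\rho_z/(2M),\rho_z)$ with $C_{r_z,\beta}(z)\in\mathcal{A}$; since $2Mr_z>\rho_z$, the enlargement satisfies
\[
|C_{2Mr_z,\beta}(z)\cap S|\leq q_0|C_{2Mr_z,\beta}(z)|.
\]
A Vitali extraction then produces a countable disjoint subfamily $\{C_{r_{z_i},\beta}(z_i)\}_i\subset\mathcal{A}$ with $S\subset\bigcup_iC_{2Mr_{z_i},\beta}(z_i)$, and combining the enlargement density bound, doubling, and disjointness yields
\[
|S|\leq\sum_i|C_{2Mr_{z_i},\beta}(z_i)\cap S|\leq q_0 N_D\sum_i|C_{r_{z_i},\beta}(z_i)|\leq \gamma_1 q_0|E|,
\]
for some $\gamma_1=\gamma_1(n,M_0)$.

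The main obstacle I anticipate is the last inequality: a priori disjointness only yields $\sum_i|C_{r_{z_i},\beta}(z_i)|\leq|\bigsqcup_i C_{r_{z_i},\beta}(z_i)|$, but the centered cylinders may spill outside $Q_{1,\beta}$ whereas $E\subset Q_{1,\beta}$. I plan to handle this by proving a uniform boundary comparison $|C_{r_{z_i},\beta}(z_i)|\leq K|C_{r_{z_i},\beta}(z_i)\cap Q_{1,\beta}|$ with $K=K(n,M_0)$, exploiting that $z_i\in Q_{1,\beta}$, the radius bound $r_{z_i}\leq R$, and Lemma~\ref{property}(iii) applied to a suitably inward-translated subcylinder guaranteed to lie in $Q_{1,\beta}$. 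A secondary point is the verification of Vitali's covering lemma for the family $\{C_{r,\beta}(z)\}$ under $\rho_\beta$; this reduces to the quasi-triangle inequality \eqref{quasi-tri} and the doubling of Lemma~\ref{property}, via the standard greedy selection of a maximal disjoint subcollection ordered by radius.
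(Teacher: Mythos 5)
Your proposal is correct and follows essentially the same strategy as the paper's proof in Appendix B: Lebesgue differentiation for the density-one step, a stopping-time radius uniformly bounded (the paper uses $4\Lambda$) thanks to $|S|\le q_0|Q_{1,\beta}|$, a Vitali extraction, and---the key technical point you correctly isolate---a boundary comparison $|C_{r,\beta}(z)|\le K(n,M_0)\,|C_{r,\beta}(z)\cap Q_{1,\beta}|$ for cylinders centered in $Q_{1,\beta}$ with uniformly bounded radius, which is exactly the paper's Step~1 (a case analysis on the cylinder height versus $\Psi_{\beta,0}(1)$). Your choice of $r_z<\rho_z$ so that the density strictly exceeds $q_0$, guaranteeing $C_{r_z,\beta}(z)\in\mathcal{A}$, is a slight cleanup of the paper's use of the exact stopping radius (where the density equals $q_0$), but otherwise the two arguments coincide.
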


\smallskip
From this covering lemma and Corollary \ref{corollary-1}, we derive the following lemma on the decay estimates of the measure of the level-sets of  $\M_{Q_{2\Lambda,\beta}}(|\nabla u|^2)$.

\begin{lemma}\label{measure decay lemma}  Suppose that the assumptions in  Lemma~\ref{measure density} hold. Let $K$ be the constant defined in Lemma~\ref{measure density}, and 
\[
S=\left\{\, Q_{1,\beta} : \ \M_{Q_{2\Lambda,\beta}}(|\nabla u|^2)> K \,\right\}.
\]
If $|S|\leq q_0|Q_{1,\beta}|$, then for all $m\in \N$,  
\begin{equation}\label{measure decay}
\begin{aligned}
\bigl|\{Q_{1,\beta} : \M_{Q_{2\Lambda,\beta}}(|\nabla u|^2) > K^m\}\bigr| 
&\leq l_0^{m}\,\bigl|\{Q_{1,\beta} : \M_{Q_{2\Lambda,\beta}}(|\nabla u|^2)> 1\}\bigr| \\
&\quad + \sum_{i=1}^m l_0^{i}\,\bigl|\{Q_{1,\beta} : \M_{Q_{2\Lambda,\beta}}(|F|^2)> K^{m-i}\hat \delta^2\}\bigr|,
\end{aligned}
\end{equation}
where $l_0 = \gamma_1 q_0$ with $\gamma_1$ defined in Lemma \ref{covering}.
\end{lemma}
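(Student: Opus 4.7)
The plan is to proceed by induction on $m$, using Corollary~\ref{corollary-1} together with the covering Lemma~\ref{covering} for the base case, and a rescaling of the solution for the inductive step.

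For brevity, write $A(s) = \{z \in Q_{1,\beta} : \mathcal{M}_{Q_{2\Lambda,\beta}}(|\nabla u|^2)(z) > s\}$ and $B(s) = \{z \in Q_{1,\beta} : \mathcal{M}_{Q_{2\Lambda,\beta}}(|F|^2)(z) > s\}$. The target estimate reads $|A(K^m)| \leq l_0^m |A(1)| + \sum_{i=1}^m l_0^i |B(K^{m-i}\hat\delta^2)|$. For the base case $m=1$, take $S = A(K)$, which by hypothesis satisfies $|S| \leq q_0|Q_{1,\beta}|$, so Lemma~\ref{covering} produces the set $E$ defined in \eqref{def-E} with $|S| \leq \gamma_1 q_0 |E| = l_0 |E|$. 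Corollary~\ref{corollary-1} says that every cylinder $C_{\rho,\beta}(z) \in \mathcal{A}$ used in the definition of $E$ satisfies $C_{\rho,\beta}(z) \cap Q_{1,\beta} \subset A(1) \cup B(\hat\delta^2)$, so $E \subset A(1) \cup B(\hat\delta^2)$, and combining yields $|A(K)| \leq l_0(|A(1)| + |B(\hat\delta^2)|)$, as required.

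For the inductive step, I would rescale: set $\tilde u = K^{-m/2} u$ and $\tilde F = K^{-m/2} F$. Then $\tilde u \in \W^{1,2}(Q_{2\Lambda,\beta},\beta)$ is a weak solution of the same equation \eqref{eqn-Q_R} with $F$ replaced by $\tilde F$, and the structural assumptions \eqref{ellip-cond}, \eqref{beta-cond}, and the smallness \eqref{smallness-10-21} in Lemma~\ref{measure density} depend only on $\bA$ and $\beta$, so they are preserved. Moreover, since $\mathcal{M}_{Q_{2\Lambda,\beta}}(|\nabla \tilde u|^2) = K^{-m}\mathcal{M}_{Q_{2\Lambda,\beta}}(|\nabla u|^2)$, the level set $\{\mathcal{M}_{Q_{2\Lambda,\beta}}(|\nabla \tilde u|^2) > K\}$ equals $A(K^{m+1}) \subset A(K) = S$, so its measure is $\leq q_0|Q_{1,\beta}|$. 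Hence the base case applied to $\tilde u$ and $\tilde F$ gives
\[
|A(K^{m+1})| \leq l_0\bigl(|A(K^m)| + |B(K^m\hat\delta^2)|\bigr).
\]

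Plugging the inductive hypothesis for $|A(K^m)|$ into this estimate and re-indexing the sum yields
\[
|A(K^{m+1})| \leq l_0^{m+1}|A(1)| + \sum_{i=1}^{m} l_0^{i+1}|B(K^{m-i}\hat\delta^2)| + l_0|B(K^m\hat\delta^2)| = l_0^{m+1}|A(1)| + \sum_{j=1}^{m+1} l_0^{j}|B(K^{m+1-j}\hat\delta^2)|,
\]
which is \eqref{measure decay} at step $m+1$. The routine bookkeeping in the induction is the only real computation; the conceptual core is the rescaling argument, which is available here precisely because the smallness conditions in Lemma~\ref{measure density} are imposed on $\bA$ and $\beta$ and not on $u$ or $F$. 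The mildly delicate point to verify is that $A(K^{m+1}) \subset A(K)$ forces the rescaled base-case hypothesis $|\{\mathcal{M}_{Q_{2\Lambda,\beta}}(|\nabla \tilde u|^2) > K\}| \leq q_0|Q_{1,\beta}|$ to hold automatically from the single assumption $|S| \leq q_0|Q_{1,\beta}|$ at the initial scale, so no extra smallness hypothesis needs to be imposed along the iteration.
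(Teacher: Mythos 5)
Your proof is correct and takes essentially the same approach as the paper: the base case via Corollary~\ref{corollary-1} and the covering Lemma~\ref{covering}, then induction via rescaling. The paper's sketch writes the rescaling informally as ``$u\mapsto u/K$,\ $F\mapsto F/K$''; your choice $\tilde u = K^{-m/2}u$, $\tilde F = K^{-m/2}F$ (so that $\mathcal{M}(|\nabla\tilde u|^2)=K^{-m}\mathcal{M}(|\nabla u|^2)$ and the superlevel set $\{\mathcal{M}(|\nabla\tilde u|^2)>K\}=A(K^{m+1})\subset A(K)=S$ inherits the smallness hypothesis) is the precise version of that same idea, and your verification that the structural and smallness hypotheses of Lemma~\ref{measure density} are scale-invariant is exactly what makes the iteration legitimate.
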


\smallskip
\begin{proof} For $m=1$, as $|S|\leq q_0|Q_{1,\beta}|$, we define sets $\mathcal{A}$ and $E$ in the way as in the statement of Lemma \ref{covering}. By the construction of $\mathcal{A}$ and $E$ in Lemma \ref{covering}, it follows from Corollary \ref{corollary-1} that  
\[
E\subset \left\{Q_{1,\beta}:\ \M_{Q_{2\Lambda,\beta}}(|\nabla u|^2)> 1\right\}\cup\left\{Q_{1,\beta}:\ \M_{Q_{2\Lambda,\beta}}(|F|^2)> \hat \delta^2\right\}.
\]
Hence, \eqref{measure decay} follows from Lemma \ref{covering}. 

\smallskip
For $m \in \N$, the assertion \eqref{measure decay} can be proved by an induction argument using Corollary \ref{corollary-1} with the scaling of $u \mapsto u/K$ and $F \mapsto F/K$.
\end{proof}
\begin{proof}[Proof of Theorem \ref{inter-theorem}]Observe that when $p =2$, Theorem \ref{inter-theorem} follows from Lemma \ref{improved-Caccio-1} and the PDE \eqref{eqn-Q_R}. Hence, we only need to consider the case $p \in (2, \infty)$.  We begin the proof by taking $q_0 \in (0,1)$ be sufficiently small so that
\begin{equation}\label{choice of q_0}
q_0= \frac{1}{2K^{\frac{p}{2}}\gamma_1} <1,
\end{equation} 
where $K=K(n,\nu,M_0)>1$ is the number defined in Lemma \ref{measure density}, and $\gamma_1 = \gamma_1 (n, M_0)>0$ is the number  defined in Lemma \ref{covering}. Also, let 
\[
\delta_0=\hat \delta(n,\nu, M_0, q_0)\in (0,1),
\]
where $\hat \delta$ is defined in Lemma \ref{measure density}. We prove the theorem with the choice of $\delta_0$.

\smallskip
Let $S$ be the set defined in Lemma \ref{measure decay lemma}, that is,
\[
S=\left\{\, Q_{1,\beta} : \ \M_{Q_{2\Lambda,\beta}}(|\nabla u|^2)> K \,\right\},
\]
and we assume for the moment that
\begin{equation} \label{S-q-0930}
|S|\leq q_0|Q_{1,\beta}|.
\end{equation}
We denote
\[
h(s)=|\left\{z \in Q_{1,\beta}: \M_{Q_{2\Lambda,\beta}}(|\nabla u|^2)(z)> s\right\}| \quad \text{for}\quad s\in [0,\infty).
\]
Then, for $q = \frac{p}{2}>1$, and we note that
\[
\int_{Q_{1,\beta}}\big[|\M_{Q_{2\Lambda,\beta}}(|\nabla u|^2)\big]^{q}dxdt=q\int_{0}^{\infty}s^{q-1}h(s)ds.
\]
In addition, as $h(s)$ is a non-negative and non-increasing function of $s$, it follows that
\begin{align*}
\int_{Q_{1,\beta}}\big[|\M_{Q_{2\Lambda,\beta}}(|\nabla u|^2)\big]^{q}dxdt
&=q\int_{0}^{K}s^{q-1}h(s)ds+q\sum_{j=1}^{\infty}\int_{K^j}^{K^{j+1}}s^{q-1}h(s)ds\\
&\leq qh(0)\int_0^Ks^{q-1}ds+q\sum_{j=1}^{\infty}h(K^{j})\int_{K^j}^{K^{j+1}}s^{q-1}ds\\
&= K^{q}|Q_{1,\beta}|+(K^{q}-1)\sum_{j=1}^{\infty}K^{qj}h(K^j).
\end{align*}
On the other hand, as \eqref{S-q-0930} holds,  it follows from Lemma \ref{measure decay lemma} that
\begin{align*}
h(K^j) & \leq l_0^{j}h(1)+\sum_{i=1}^{j}l_0^{i}|\{Q_{1,\beta}: \M_{Q_{2\Lambda,\beta}}(|F|^2)> K^{j-i}\hat \delta^2\}|.
\end{align*}
Then, by combining the last two estimates, we obtain
\begin{align} \notag
\int_{Q_{1,\beta}}\big[|\M_{Q_{2\Lambda,\beta}}(|\nabla u|^2)\big]^{q}dxdt&\leq 
 K^{q}|Q_{1,\beta}|+(K^{q}-1)h(1)\sum_{j=1}^{\infty}K^{qj}l_0^{j}\\ \label{M-L-p}
&\quad+(K^{q}-1)\sum_{j=1}^{\infty}\sum_{i=1}^{j}K^{qj}l_0^{i}|\{Q_{1,\beta}: \M_{Q_{2\Lambda,\beta}}(|F|^2)> K^{j-i}\hat\delta^2\}|.
\end{align}
Note that from \eqref{choice of q_0} and as $l_0 =\gamma_1 q_0$, we have $\sum_{j=1}^{\infty}K^{qj}l_0^{j} \leq 1$.  Then, it follows that
\begin{equation}\label{M-L-p-10-1}
\begin{aligned}
&\int_{Q_{1,\beta}}\big[|\M_{Q_{2\Lambda,\beta}}(|\nabla u|^2)\big]^{q}dxdt\\
&\leq  N |Q_{1,\beta}| +(K^{q}-1)\sum_{j=1}^{\infty}\sum_{i=1}^{j}K^{qj}l_0^{i}|\{Q_{1,\beta}: \M_{Q_{2\Lambda,\beta}}(|F|^2)> K^{j-i}\hat\delta^2\}|,
\end{aligned}
\end{equation}
for $N = N(n, \nu, M_0, p)>0$.  We now control the last term in the right-hand side of \eqref{M-L-p-10-1}. To this end, we observe that
\begin{align} \notag
 \int_{Q_{2\Lambda, \beta}}\M_{Q_{2\Lambda,\beta}}(|F|^2)(x,t)^q  dxdt &= q\int_0^\infty \lambda^{q -1} |\{Q_{2\Lambda,\beta}: \M_{Q_{2\Lambda,\beta}}(|F|^2) > \lambda \}| d\lambda \\ \notag
& \geq q \sum_{j=0}^\infty \int_{\hat{\delta}^{2}K^{j-1}}^{\hat{\delta}^{2}K^{j}}\lambda^{q -1} |\{Q_{2\Lambda,\beta}: \M_{Q_{2\Lambda,\beta}}(|F|^2) > \lambda \}| d\lambda \\ \label{integ-10-1}
& \geq \sum_{j=0}^\infty (\hat{\delta}^2 K^{j-1})^{q} \big(K^{q} -1 \big)|\{Q_{2\Lambda,\beta}: \M_{Q_{2\Lambda,\beta}}(|F|^2) > \hat{\delta}^{2}K^{j} \}|.
\end{align}
Then, by using Fubini's theorem, we see that
\begin{align*}
&\big(K^{q} -1 \big) \sum_{j=1}^{\infty}\sum_{i=1}^{j}K^{qj}l_0^{i}|\{Q_{1,\beta}: \M_{Q_{2\Lambda,\beta}}(|F|^2)> K^{j-i}\hat \delta^2\}|\\
& = (K \hat{\delta}^{-2})^q\sum_{i=1}^{\infty} (K^ql_0)^{i}\Big[\sum_{k=i}^{\infty}\big(K^{q} -1 \big) \hat{\delta}^{2q} K^{q(k-i-1)}  |\{Q_{1,\beta}: \M_{Q_{2\Lambda,\beta}}(|F|^2)> K^{k-i}\hat \delta^2\}| \Big]\\
& = (K \hat{\delta}^{-2})^q\sum_{i=1}^{\infty} (K^ql_0)^{i}\Big[\sum_{j=0}^{\infty}\big(K^{q} -1 \big) (\hat{\delta}^{2} K^{(j-1)})^q |\{Q_{1,\beta}: \M_{Q_{2\Lambda,\beta}}(|F|^2)> K^{j}\hat \delta^2\}| \Big]\\
& \leq (K \hat{\delta}^{-2})^q  \Big( \int_{Q_{2\Lambda, \beta}}\M_{Q_{2\Lambda,\beta}}(|F|^2)(x,t)^q dxdt \Big) \sum_{i=1}^{\infty} (K^ql_0)^{i} \\
& \leq (K \hat{\delta}^{-2})^q  \Big( \int_{Q_{2\Lambda, \beta}}\M_{Q_{2\Lambda,\beta}}(|F|^2)(x,t)^q dxdt \Big),
\end{align*}
where we used \eqref{integ-10-1} and \eqref{choice of q_0} in the last two steps. From the last estimate, \eqref{M-L-p-10-1}, and the ($q-q$)-strong estimate of the Hardy-Littlewood maximal function, we infer that
\begin{equation*}\label{max L-p}
\begin{aligned}
\int_{Q_{1,\beta}}\big[|\M_{Q_{2\Lambda,\beta}}(|\nabla u|^2)\big]^{q}dxdt
&\leq  N\left(|Q_{1,\beta}|+\|F\|_{L^p(Q_{2\Lambda,\beta})}^p\right),
\end{aligned}
\end{equation*}
where $N=N(n,\nu, p, M_0)>0$.
On the other hand, by the Lebesgue differentiation theorem, for a.e. $(x,t)\in Q_{1,\beta}$,
\[
|\nabla u(x,t)|^2=\lim_{\rho\rightarrow 0^+}\fint_{C_{\rho,\beta}(x,t)}|\nabla u(y,s)|^2dyds\leq \M_{Q_{2\Lambda,\beta}}(|\nabla u|^2)(x,t).
\]
Hence, it follows that
\begin{equation} \label{est-1-0930}
\int_{Q_{1,\beta}}|\nabla u(x,t)|^pdxdt\leq N\left(|Q_{1,\beta}|+\|F\|_{L^p(Q_{2\Lambda,\beta})}^p\right).
\end{equation}

\smallskip
Note that so far, we proved \eqref{est-1-0930} under the assumption  \eqref{S-q-0930}. We now remove  \eqref{S-q-0930}. Let us denote
\begin{equation}\label{scaling-N_0}
N_0 =\left(\frac{\tilde N \| \nabla u\|_{L^2(Q_{1,\beta})}^2}{Kq_0|Q_{1,\beta}|}\right)^{\frac{1}{2}}=\left(\frac{\tilde N \fint_{Q_{1,\beta}}|\nabla u|^2dxdt}{Kq_0}\right)^{\frac{1}{2}},
\end{equation}
where $\tilde N = \tilde N(n, M_0)>0$ is the constant in the ($1-1$)-weak estimate of the Hardy-Littlewood maximal function. Then, from the weak type estimates for the maximal operator and \eqref{scaling-N_0}, it follows that
\[
|\left\{Q_{1,\beta}:\ \M_{Q_{2\Lambda,\beta}}(|\nabla u|^2)> N_0^2K\right\}|\leq \frac{\tilde N \| \nabla u\|_{L^2(Q_{1,\beta})}^2}{ N_0^2K} = q_0|Q_{1,\beta}|.
\]
Hence, by dividing the equation \eqref{eqn-Q_R} by $N_0$, we apply \eqref{est-1-0930} for $u/N_0$ with $F$ replaced by $F/N_0$, and then scale back to obtain
\[
\int_{Q_{1,\beta}}|\nabla u|^pdxdt\leq N\left(|Q_{1,\beta}|^{1-q}\| \nabla u\|_{L^2(Q_{1,\beta})}^p+\|F\|_{L^p(Q_{2\Lambda,\beta})}^p\right).
\]
Moreover, using \eqref{inter-energy-0317} in Lemma \ref{improved-Caccio-1}, we have
\begin{equation}\label{gradient-conclu}
\| \nabla u\|_{L^p({Q_{1,\beta}})}\leq N\left(|Q_{1,\beta}|^{\frac{1}{p}-\frac{1}{2}}\|u\|_{L^2(Q_{2\Lambda,\beta})}+\|F\|_{L^p(Q_{2\Lambda,\beta})}\right).
\end{equation}
From this, and the PDE \eqref{eqn-Q_R}, we have
\begin{align*}
\|\beta u_t\|_{L^p(\Gamma_{\beta}(1),W^{-1,p}(B_1))}
&\leq N \left(\|\nabla u\|_{L^p(Q_{1,\beta})}+\|F\|_{L^p(Q_{1,\beta})}\right)\\
&\leq N\left(|Q_{1,\beta}|^{\frac{1}{p}-\frac{1}{2}}\|u\|_{L^2(Q_{2\Lambda,\beta})}+\|F\|_{L^p(Q_{2\Lambda,\beta})}\right).
\end{align*}
The assertion of the theorem then follows from this last estimate and \eqref{gradient-conclu}. The proof of the theorem is completed. 
\end{proof}
\section{Boundary regularity estimates on flat domains} \label{bdr-section}
In this section, we study the following class of equations with homogeneous boundary conditions on the flat boundary part:
\begin{equation} \label{eqn-bdr-10-4}
\left\{
\begin{aligned}
\beta(x) u_t - \textup{div}(\bA (x,t) \nabla u) &= \textup{div}(F(x,t)) \quad &&\text{in} \quad Q_{2\Lambda, \beta}^+, \\[4pt]
u &=  0 \quad &&\text{on}\quad  T_{2\Lambda}\times \Gamma_{\beta}(2\Lambda).
\end{aligned} \right.
\end{equation}
Here, we refer the reader to the definitions of $Q_{2\Lambda, \beta}^+$ in \eqref{half cylinder} and $T_{2\Lambda}$ in \eqref{flat-bdr}, with $\Lambda$ the number defined in Lemma \ref{quasi-metric lemma}. We denote the mean of the matrix $\bA$ with respect to the spatial variable in the upper-half ball $B_r^+(x_0)$ by
\[
(\bA)_{B^+_{r}(x_0)}(t)= \fint_{B^+_{r}(x_0)} \bA(x, t) dx.
\]
As in \eqref{point-oss-def}, we also denote
\begin{equation} \label{Theta-A-def+}
\Theta_{\bA, r, x_0} (x,t) = |\bA(x,t) - (\bA)_{B_{r}^+(x_0)}(t)| \quad \text{and}\quad 
\Theta_{\bA, r} (x,t)= |\bA(x,t) - (\bA)_{B_{r}^+}(t)|.
\end{equation}
Moreover,
\begin{equation} \label{Theta-beta-def+}
\Theta_{\beta, r, x_0}(x) = |\beta(x) - (\beta)_{B_r(x_0)}|\beta(x)^{-1/2} \quad \text{and} \quad
\Theta_{\beta, r}(x) = |\beta(x) - (\beta)_{B_r}|\beta(x)^{-1/2}.
\end{equation}
We note that the space $\hW^{1,p}(Q_{1,\beta}^+, \beta)$ is defined in Definition \ref{hat-W-space}, and the weak solutions to the class of equations \eqref{eqn-bdr-10-4} are defined in  Definition \ref{boundary-weak-def}. The following result on boundary regularity estimate is the main result of this section.
\begin{theorem} \label{inter-theorem-bdry} Let $\nu \in (0,1)$, $M_0 \geq 1$, and $p \in [2,\infty)$. There exist a sufficiently small constant $\kappa_0= \kappa_0(n, \nu, p, M_0)>0$ and a constant $N=N(n, \nu, p, M_0)>0$ such that the following assertion holds. Suppose that $\beta$ is a weight satisfying \eqref{beta-cond}, $\bA$ is a matrix satisfying \eqref{ellip-cond} in $Q_{2\Lambda,\beta}^+$, and
\[
\fint_{Q_{r, \beta}^+(z_0)} \Theta_{\bA, r, x_0} (x,t)^2 dxdt+ \frac{1}{\beta(B_r(x_0))}\int_{B_r(x_0)} \Theta_{\beta, r, x_0}(x)^2  dx \leq \kappa_0^2
\]
for all $z_0 = (x_0, t_0) \in Q_{1, \beta}^+$ and all $r \in (0, 1)$. Then, for every weak solution $u \in \hW^{1,2}(Q_{2\Lambda, \beta}^+, \beta)$ of  \eqref{eqn-bdr-10-4} 
 with $F \in L^p(Q_{2\Lambda,\beta}^+)^n$, it holds that $u \in \hW^{1,p}(Q_{1,\beta}^+, \beta)$ and
\begin{align*}
&\| \nabla u\|_{L^p(Q_{1,\beta}^+)}+\|\beta u_t\|_{L^p(\Gamma_{\beta}(1),W^{-1,p}(B_1^+))}\\
&\leq N\left(|Q_{1,\beta}^+|^{\frac{1}{p}-\frac{1}{2}}\|u\|_{L^2(Q_{2\Lambda,\beta}^+)}+\|F\|_{L^p(Q_{2\Lambda,\beta}^+)}\right).
\end{align*}
\end{theorem}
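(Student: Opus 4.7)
The plan is to run the same three-step Caffarelli-Peral scheme as in the interior case (Theorem \ref{inter-theorem}), with energy/approximation lemmas adapted to the flat boundary. Throughout we exploit that $u=0$ on $T_{2\Lambda}\times\Gamma_{\beta}(2\Lambda)$ gives a Poincar\'{e} inequality on half-cylinders, so some of the manipulations become cleaner than in the interior.

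\textbf{Step 1: Boundary energy estimates.} First, I would prove boundary Caccioppoli estimates on $Q_{r,\beta}^+(z_0)$ by testing the equation with $u\varphi^2$ where $\varphi$ is a standard cut-off supported in $B_r(x_0)\times\Gamma_{\beta,z_0}(r)$ that does \emph{not} need to vanish on $T_r(x_0)$. Mimicking Lemma \ref{Caccio-1}, this yields
$
\sup_{t}\int_{B_{3r/2}^+(x_0)} u^2\beta\,dx + \int_{Q^+_{3r/2,\beta}(z_0)}|\nabla u|^2\,dxdt
\le N\int_{Q^+_{2r,\beta}(z_0)} u^2\bigl[1+r^{-2}+\beta(x)/(r^2\Psi_{\beta,x_0}(2r))\bigr] + N\int_{Q^+_{2r,\beta}(z_0)}|F|^2.
$
Then, using the weighted embedding (Lemma \ref{embedd-lemma}) in the upper-half form, I can absorb the $u^2$ terms to obtain the analog of Lemma \ref{improved-Caccio-1}. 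Because $u$ vanishes on $T_r$, the boundary Poincar\'{e} inequality $\|u\|_{L^2(B_r^+)}\le Nr\|\nabla u\|_{L^2(B_r^+)}$ is available, so the contradiction argument of Lemma \ref{u-L2-Cac} can be replaced by a direct Poincar\'{e}-type bound (the boundary version Lemma \ref{bdry-Poincare} that the introduction mentions).

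\textbf{Step 2: Boundary approximation by frozen-coefficient solutions.} For $z_0\in\overline{Q_{1,\beta}^+}$ and $r$ small, compare $u$ with the solution $v\in\hat\W^{1,2}(Q^+_{r,\beta}(z_0))$ of
\begin{equation*}
(\beta)_{B_r(x_0)}v_t-\mathrm{div}((\bA)_{B_r^+(x_0)}(t)\nabla v)=0 \ \text{in } Q^+_{r,\beta}(z_0),\quad v=0 \text{ on } T_r(x_0)\times\Gamma_{\beta,z_0}(r).
\end{equation*}
By Lemma \ref{Bdr-Lipschitz-constant}, $\nabla v$ and $(\beta)_{B_r(x_0)} v_t$ are $L^\infty$-bounded by the $L^2$-average of $\nabla v$ in a slightly larger half-cylinder. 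The $L^2$-closeness $\|u-(u)_{Q^+}-v\|_{L^2}\le\epsilon$ is obtained by a contradiction argument: assuming failure, extract weak limits via Proposition \ref{compactness-lemma} applied in the upper-half setting (which it supports). The limit function lies in $\hat\W^{1,2}(Q^+)$ (the zero boundary condition on $T_r$ passes to the weak limit by trace continuity), and arguing as in Lemma \ref{L2-comparision} yields a nontrivial time-independent solution with zero trace, forcing it to be zero---a contradiction. Combining this with the boundary analog of Lemma \ref{compare-lemma-1} (testing the difference equation with $w\varphi^2$ for $\varphi\in C_0^\infty(B_r(x_0)\times\Gamma_{\beta,z_0}(r))$, so $w\varphi^2$ still vanishes on $T_r$) promotes the $L^2$-closeness of $w=u-(u)-v$ to $L^2$-closeness of $\nabla w$. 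This gives the boundary analog of Proposition \ref{L2-gradient-comparision}.

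\textbf{Step 3: Level-set covering and $L^p$ estimate.} With the gradient approximation in hand, I follow the argument of Subsection \ref{inter proof} verbatim, replacing $Q_{r,\beta}$ and $C_{r,\beta}$ by their upper-half analogs. The maximal function is now $\mathcal{M}_{Q^+_{2\Lambda,\beta}}$, and the density lemma (analog of Lemma \ref{measure density}) is proved by the same dichotomy: if a cylinder centered at $z_0\in Q_{1,\beta}^+$ intersects the "good" set, then on a slightly larger half-cylinder $u$ is close to a Lipschitz-bounded $v$, and the weak-$(1,1)$ estimate controls the level set of $|\nabla u|^2$ above $K$ by $q_0$ times the cylinder's measure. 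The quasi-metric $\rho_\beta$ and the covering Lemma \ref{covering} still apply because they only use doubling of $\beta^{n_0/2}$ on whole balls, which holds regardless of whether we truncate the spatial domain to a half-space. Iterating and summing as in \eqref{M-L-p}-\eqref{gradient-conclu} produces the desired $L^p$ bound on $\nabla u$, and the PDE then gives the bound on $\beta u_t$ in $L^p(\Gamma_\beta(1),W^{-1,p}(B_1^+))$.

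\textbf{Main obstacle.} The delicate point is Step 2, specifically the compactness/contradiction argument. One must check that the boundary trace is preserved under the weak convergence produced by Proposition \ref{compactness-lemma}---this requires the strong $L^2((-r^2,0),L^\l)$ convergence in that proposition together with the continuity of the trace operator for the half-ball, and it also requires that the limit weight $\beta_0$ is a positive constant (which is ensured by \eqref{beta-cond} giving $\beta_0\in[1/M_0,1]$) so the limit equation is genuinely parabolic with zero boundary data, forcing the limit to vanish. A secondary technical issue is that cut-off functions in the boundary version of Lemma \ref{compare-lemma-1} must be allowed to be nonzero on $T_r(x_0)$ (since $w$ already vanishes there), and one has to track carefully that $w\varphi^2$ remains an admissible test function for the homogeneous Dirichlet problem on $T_r$; this is routine but must be spelled out.
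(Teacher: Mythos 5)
Your overall scheme mirrors the paper's (boundary Caccioppoli, boundary Poincar\'{e}, comparison with frozen-coefficient solutions on half-cylinders, level-set iteration), but Step~2 contains a genuine error that would break the argument. You set $w=u-(u)_{Q^+}-v$ with $v\in\hW^{1,2}(Q^+_{r,\beta}(z_0))$. Both $u$ and $v$ vanish on $T_r(x_0)$, so $w$ takes the nonzero boundary value $-(u)_{Q^+}$ there. Consequently $w\varphi^2$ is not an admissible test function for the homogeneous Dirichlet problem on $T_r$ unless $\varphi$ itself vanishes on $T_r$---but then the boundary Caccioppoli estimate does not reach the flat boundary and the uniform gradient estimate up to $T_r$ fails. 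This is also internally inconsistent with your own closing remark that ``$w$ already vanishes there.'' The correct boundary comparison, as in Lemma~\ref{bdry-Caccioppoli} and Lemma~\ref{L2-comparision-bdry}, takes $w=u-v$ with \emph{no} mean subtraction: precisely because $u=0$ on $T_r$, the boundary Poincar\'{e} inequality controls $\|u\|_{L^2(Q^+_{r,\beta})}$ directly by $r\|\nabla u\|_{L^2(Q^+_{r,\beta})}$ without centering, and then $w=u-v$ genuinely has zero trace on $T_r$, making $w\varphi^2$ admissible for $\varphi\in C_0^\infty(Q_{r,\beta}(z_0))$.

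A smaller inaccuracy: your description of the contradiction argument in Step~2 conflates two distinct mechanisms. In the comparison lemma (Lemma~\ref{L2-comparision-bdry}) the weak limit $u_0$ is a genuine, nontrivial, time-dependent solution of a frozen-coefficient problem; the contradiction there comes from building correctors $h_k\to 0$ (via the Galerkin method and the classical Aubin--Lions theorem), setting $v_k=u_0-h_k$, and observing that $u_k-v_k\to 0$ in $L^2(Q_4^+)$, which contradicts the assumed lower bound. The argument ``a time-independent limit with zero trace must vanish'' belongs to the boundary Poincar\'{e}-type Lemma~\ref{bdry-Poincare}, not to the comparison lemma. Your observation in Step~1 that the direct boundary Poincar\'{e} inequality can replace the compactness argument there is correct and even yields a cleaner estimate (no oscillation or $F$ terms), though the paper runs the compactness proof for parallelism with the interior Lemma~\ref{u-L2-Cac}; that shortcut is, however, not available for Lemma~\ref{L2-comparision-bdry}, where the corrector construction is essential.
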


\subsection{Boundary energy estimates} We study the following parabolic problem
\begin{equation} \label{eqn-bdry}
\left\{
\begin{aligned}
\beta(x) u_t - \textup{div}(\bA(x,t) \nabla u)  &= \textup{div}(F(x,t))\quad &&\text{in}\quad Q^+_{r,\beta}(z_0),\\
u&=0 \quad &&\text{on} \quad T_{r}(x_0) \times \Gamma_{\beta, z_0}(r),
\end{aligned}\right.
\end{equation}
with $z_0 = (x_0, t_0) \in \mathbb{R}^n \times \mathbb{R}$ and $r>0$ so that $T_r(x_0) \not= \emptyset$. For the definition of weak solutions $u \in \hat{\W}^{1,2}(Q^+_{r, \beta}(z_0), \beta)$ to \eqref{eqn-bdry}, see Definition \ref{boundary-weak-def}. By applying the same argument as in the proof of Lemma \ref{Caccio-1}, we obtain the following lemma on Caccioppoli type estimates for weak solutions to \eqref{eqn-bdry}.

\begin{lemma} \label{Caccio-2}  
For every $\nu \in (0,1)$ and $M_0 \geq 1$, there exists a constant $N = N(n, \nu, M_0)>0$ such that the following holds. Let $z_0 = (x_0,t_0) \in \R^{n}\times \R$ and $r>0$. Suppose that $u \in \hat{\W}^{1,2}(Q^+_{r, \beta}(z_0), \beta)$ is a weak solution of \eqref{eqn-bdry} in $Q^+_{r, \beta}(z_0)$, where $\beta$ is weight satisfying \eqref{beta-cond}, $\bA$ is a matrix satisfying \eqref{ellip-cond} in $Q^+_{r, \beta}(z_0)$, and $F \in L^2(Q^+_{r, \beta}(z_0))^n$. Then
\begin{align*}
&\sup_{t \in \Gamma_{\beta, z_0}(3r/4)} \int_{B^+_{3r/4}(x_0)} u(x,t)^2 \beta(x) dx + \int_{Q^+_{3r/4, \beta}(z_0)} |\nabla u(x,t)|^2 dx dt\\
&\quad \leq N\left\{\int_{Q^+_{r, \beta}(z_0)} u(x,t)^2\Big [1+ \frac{1}{r^2}  + \frac{\beta(x)}{r^2\Psi_{\beta,x_0}(r)}  \Big] dx dt+ \int_{Q^+_{r, \beta}(z_0)}|F|^2  dx dt\right\}.
\end{align*}
\end{lemma}

\smallskip
From Lemma \ref{Caccio-2}, and the argument as in the proof of Lemma \ref{improved-Caccio-1}, we also have the following result.
\begin{lemma} \label{bdr-improved-Caccio-1}Let $\nu \in (0,1)$ and $M_0 \geq 1$. Assume that $\beta$ is a weight satisfying \eqref{beta-cond},  and that $\bA$ is a matrix satisfying \eqref{ellip-cond} in $Q_{r, \beta}^+(z_0)$ with $r \in (0,1]$ and $z_0 = (x_0, t_0) \in \R^n \times \R$. Then, for every weak solution $u \in \hat{\W}^{1,2}(Q_{r, \beta}^+(z_0), \beta)$ of \eqref{eqn-bdry} with $F \in L^2(Q_{r, \beta}^+(z_0))^n$, it holds that
\begin{align*} \notag
& \sup_{t \in \Gamma_{\beta, z_0}(3r/4)}\frac{1}{r^2\Psi_{\beta, x_0}(r)}\fint_{B_{3r/4}^+(x_0)} u(x,t)^2 \beta(x) dx +  \fint_{Q_{3r/4, \beta}^+(z_0)} |\nabla u|^2dxdt\\ \label{inter-energy-0317-bdry}
& \leq N\fint_{Q_{r,\beta}^+(z_0)} \big( u^2/r^2 + |F|^2\big)dx dt,
\end{align*}
where $N = N(n, \nu, M_0)>0$.
\end{lemma}
\smallskip
Next, we introduce the following result, which is the boundary version of Lemma \ref{u-L2-Cac}.
\begin{lemma} \label{bdry-Poincare} 
Let $\nu\in (0,1)$ and $M_0 \geq 1$. There exists a constant $N = N(n, \nu, M_0)>0$ such that the following assertion holds. Suppose that $\beta$ is a weight satisfying \eqref{beta-cond}, and that $\bA$ satisfies \eqref{ellip-cond} in $Q^+_{r, \beta}(z_0)$ with some $r>0$. Suppose also that $u \in \hat{\W}^{1,2}(Q^+_{r, \beta}, \beta)$ is a weak solution of \eqref{eqn-bdry} in $Q^+_{r, \beta}$. Then
\begin{align*}
 \int_{Q^+_{r, \beta}} |u(x,t) |^2 dxdt  \leq N &\left[r^2\int_{Q^+_{r, \beta}}\Big[|\nabla u(x,t)|^2 + |F(x,t)|^2\Big] dxdt \right. \\
& +\left.  \left(\frac{1}{\beta(B_{r})}\int_{B_{r}} \Theta_{\beta, r}(x)^2 dx\right) \left(\int_{Q^+_{r, \beta}}|u(x,t) |^2 dxdt\right) \right],
\end{align*}
where $\Theta_{\beta, r}(x)$ is defined in \eqref{Theta-beta-def+}.
\end{lemma}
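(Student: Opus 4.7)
The proof follows the contradiction scheme of Lemma \ref{u-L2-Cac}, but is simpler because the homogeneous Dirichlet condition on $T_r$ replaces the need to subtract the spatial mean. The plan is to first reduce, via the dilation in Lemma \ref{scaling-lemma}, to $r = 1$ and $\Psi_\beta(1) = 1$ so that $Q_{1,\beta}^+ = Q_1^+$. If the conclusion fails, I extract sequences $\{\beta_k\}_k$, $\{\bA_k\}_k$, $\{F_k\}_k$, and $\{u_k\}_k \subset \hat{\W}^{1,2}(Q_1^+, \beta_k)$ satisfying the associated PDE with trace zero on $T_1 \times (-1,0)$, and normalize so that
\[
\|u_k\|_{L^2(Q_1^+)}^2 = 1, \qquad \|\nabla u_k\|_{L^2(Q_1^+)}^2 + \|F_k\|_{L^2(Q_1^+)}^2 + \frac{1}{\beta_k(B_1)}\|\Theta_{\beta_k, 1}\|_{L^2(B_1)}^2 \leq \frac{1}{k}.
\]

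Next, using the PDE together with \eqref{ellip-cond} and the uniform $L^2$-control above, I derive $\|u_k\|_{\hat{\W}^{1,2}(Q_1^+, \beta_k)} \leq N$ uniformly in $k$, so that the upper-half version of Proposition \ref{compactness-lemma} applies and produces, along a subsequence, a limit $u_0 \in \hat{\W}^{1,2}(Q_1^+)$ with $u_k \to u_0$ strongly in $L^2((-1,0), L^\l(B_1^+))$ for every $\l \in [1, 2^*)$ and $\nabla u_k \rightharpoonup \nabla u_0$ weakly in $L^2(Q_1^+)$. Specializing to $\l = 2$ gives $\|u_0\|_{L^2(Q_1^+)}^2 = 1$, while the strong convergence $\nabla u_k \to 0$ in $L^2$ forces $\nabla u_0 = 0$. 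Hence $u_0(\cdot, t)$ is constant in $x$ for a.e.\ $t$, and since $u_0 \in \hat{\W}^{1,2}(Q_1^+)$ encodes vanishing trace on $T_1$, the only compatible function is $u_0 \equiv 0$, contradicting $\|u_0\|_{L^2(Q_1^+)} = 1$.

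The main technical point is to confirm that the compactness procedure preserves the homogeneous Dirichlet trace on $T_1$. This is guaranteed by the half-space variant of Proposition \ref{compactness-lemma}, where both the approximating sequence and the limit lie in the space $\hat{\W}^{1,2}$ which enforces the zero trace. A notable simplification over Lemma \ref{u-L2-Cac} is that one does not need to derive a limiting PDE for $u_0$ or exploit $\partial_t u_0 = 0$: the conclusion $u_0 \equiv 0$ follows immediately from the vanishing of both the gradient and the trace, so there is no need for the delicate analysis of $\int_{Q_1^+} \beta_k(x)\, u_k\, \partial_t \varphi\, dxdt$ carried out in the interior case.
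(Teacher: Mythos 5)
Your proof is correct and takes essentially the same route as the paper: a dilation to the unit scale, normalization to $\|u_k\|_{L^2(Q_1^+)}=1$, the uniform $\hat{\W}^{1,2}$ bound from the PDE, the half-ball version of Proposition \ref{compactness-lemma}, and the conclusion that the limit $u_0$ has vanishing gradient and vanishing trace on $T_1$, hence $u_0\equiv 0$, contradicting $\|u_0\|_{L^2(Q_1^+)}=1$. Your observation that the homogeneous trace condition makes the limiting-PDE step of Lemma \ref{u-L2-Cac} unnecessary is exactly the simplification the paper exploits.
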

\begin{proof} The proof is similar to that of Lemma \ref{u-L2-Cac}, and we just outline some important steps. By applying the dilation \eqref{scale-1} and using Lemma \ref{scaling-lemma}, it suffices to prove the lemma under the assumption that
\[
r =1 \quad \text{and} \quad \Psi_{\beta}(1) =1.
\]
Note that in this setting, we have $Q_1^+=Q_{1,\beta}^+$. We prove the lemma by a contradiction argument. Suppose that the assertion does not hold. Then, there exist sequences $\{\beta_k\}_k$, $\{u_k\}_k$, $\{\bA_k\}_k$ and $\{F_k\}_k$ such that for each $k\in\N$,\, $u_k \in \hat\W^{1,2}(Q^+_1, \beta_k)$ is a weak solution of
\begin{equation} \label{eqn-u-k-bdry}
\beta_k(x) \partial_t u_k - \textup{div}(\bA_k \nabla u_k) = \textup{div}(F_k) \quad \text{in} \quad Q^+_1,
\end{equation}
where the weight $\beta_k$ satisfying $\Psi_{\beta_k}(1)=1$, and
\[
\beta_k^{-1} \in A_{1+\frac{2}{n_0}} \quad \text{and} \quad [\beta_k^{-1}]_{A_{1+\frac{2}{n_0}}} \leq M_0  \quad \text{for} \quad n_0 =\max\{n, 2\}.
\]
Moreover, $u_k$ satisfies
\begin{equation*}\label{bdry-neg}
\begin{aligned}
\int_{Q^+_{1}} |u_k|^2 dxdt 
\geq k &\left[ \int_{Q^+_{1}}\big[|\nabla u_k|^2 + |F_k|^2\big] dxdt \right. \\
&\ + \left. \Big(\frac{1}{\beta_k(B_1)}\int_{B_{1}} \Theta_{\beta_k, 1}(x)^2 dx\Big) \Big(\int_{Q^+_{1}} |u_k|^2 dxdt \Big) \right].
\end{aligned}
\end{equation*}
Now, by using the scaling $u_k \mapsto u_k/\lambda$ and dividing the PDE by $\lambda = \|u_k\|_{L^2(Q_{1}^+)}$, we can normalize and assume that
\begin{equation} \label{normalize-uk-bdry}
\int_{Q^+_{1}}|u_k |^2 dxdt =1, \quad \forall \ k \in \mathbb{N}.
\end{equation}
Then, for each $k \in \mathbb{N}$,
\begin{equation} \label{uF-k-beta-bdry}
\|\nabla u_k\|_{L^2(Q_1^+)}^2 + \|F_k\|_{L^2(Q_1^+)}^2+ \frac{1}{\beta_k(B_1)}\|\Theta_{\beta_k, 1}\|_{L^2(B_1)}^2 
\leq \frac{1}{k}.
\end{equation}

\smallskip
Now, from \eqref{normalize-uk-bdry}, \eqref{uF-k-beta-bdry}, and the PDE \eqref{eqn-u-k-bdry}, we can find a constant $N = N(n, \nu)>0$ so that for each $k \in \mathbb{N}$,
\begin{align*}
\|u_k\|_{\hat\W^{1,2}(Q^+_1, \beta_k)}
\leq N \big[\|u_k\|_{L^{2}(Q^+_1)}+ \|\nabla u_k\|_{L^{2}(Q^+_1)} + \|F_k\|_{L^{2}(Q^+_1)} \big]\leq 3N.
\end{align*}
Note that the condition \eqref{osc-assumption} in Proposition \ref{compactness-lemma} is also satisfied due to \eqref{uF-k-beta-bdry}, it then follows from Proposition \ref{compactness-lemma} that there exist a function $u_0 \in \W^{1,2}(Q_1^+)$ and a subsequence of $\{u_k \}_k$, which we still denote by $\{u_k\}_k$, so that
\begin{equation}\label{conv-lemma-ener-bdry}
\begin{aligned}
u_k  &\rightarrow u_0 \ &&\text{in} \quad L^2((-1,0),L^{l}(B_1^+)),\\ 
\nabla u_k  &\rightharpoonup \nabla u_0 \ &&\text{in} \quad L^2(Q_1^+), 
\end{aligned} \quad \quad \text{as}\quad k \rightarrow \infty,
\end{equation}
for every $l \in [1, 2^*)$. Due to \eqref{normalize-uk-bdry}, \eqref{uF-k-beta-bdry} and \eqref{conv-lemma-ener-bdry}, it follows that
\begin{equation} \label{u-zero-proper-bdry}
\nabla u_0 =0 \quad \text{and} \quad \int_{Q_1} |u_0|^2 dxdt =1.
\end{equation}
From this and as $u_0=0$ on $T_{1}\times (-1,0)$, it follows from the first equality of \eqref{u-zero-proper-bdry} that  $u_0=0$ in $Q^+_1$. However, this contradicts the second equality of \eqref{u-zero-proper-bdry}. Therefore, the lemma is proved.
\end{proof}

\smallskip
Now, following the proof of Lemma \ref{boundedness-u-sol}, we can apply Lemma \ref{bdry-Poincare} to derive the following result for weak solutions of \eqref{eqn-bdry} in $Q^+_{1, \beta}$.
\begin{lemma}\label{bdry-corollary} 
For every $\nu \in (0,1)$ and $M_0 \geq 1$, there exist a sufficiently small constant $\tilde \kappa = \tilde \kappa(n, \nu, M_0)>0$ and a constant $N = N(n, \nu, M_0)>0$ such that the following assertion holds. Suppose that $\beta$ is a weight satisfying \eqref{beta-cond}, $\bA$ satisfies \eqref{ellip-cond} in $Q^+_{1, \beta}$, and
\[
\frac{1}{\beta(B_1)} \int_{B_1} \Theta_{\beta, 1}(x)^2dx  \leq  {\tilde \kappa}^2.
\]
Then, for every weak solution $u \in \hat{\W}^{1,2}(Q^+_{1, \beta})$ of \eqref{eqn-bdry} in $Q^+_{1, \beta}$, it holds that
\begin{align*}
\|u \|_{\hat\W^{1,2}(Q^+_{1, \beta}, \beta)} & \leq N \Big[\|\nabla u\|_{L^2(Q^+_{1, \beta})} + \|F\|_{L^2(Q^+_{1,\beta})}\Big].
\end{align*}
\end{lemma}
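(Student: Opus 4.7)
The plan is to mirror the interior argument used in Lemma \ref{boundedness-u-sol}, but with Lemma \ref{bdry-Poincare} (which requires no subtraction of the mean thanks to the vanishing trace $u=0$ on $T_1\times\Gamma_\beta(1)$) playing the role of Lemma \ref{u-L2-Cac}. The overall structure is: bound the $L^2$-norm of $u$ via the weighted Poincaré-type inequality, absorb the weight-oscillation term by a smallness choice of $\tilde\kappa$, and then use the equation itself to control $\|\beta u_t\|_{L^2(\Gamma_\beta(1),\,W^{-1,2}(B_1^+))}$.

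First, I would apply Lemma \ref{bdry-Poincare} directly (with $r=1$ and $z_0 = 0$) to the weak solution $u$. This yields a constant $N_1 = N_1(n,\nu,M_0)>0$ such that
\[
\int_{Q^+_{1,\beta}} |u|^2\,dxdt \leq N_1\!\left[\int_{Q^+_{1,\beta}}\bigl(|\nabla u|^2 + |F|^2\bigr)dxdt + \left(\frac{1}{\beta(B_1)}\int_{B_1}\Theta_{\beta,1}(x)^2\,dx\right)\!\int_{Q^+_{1,\beta}} |u|^2\,dxdt\right].
\]
I would then choose $\tilde\kappa = \tilde\kappa(n,\nu,M_0) > 0$ so small that $N_1\tilde\kappa^2 \leq \tfrac{1}{2}$. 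Under the standing smallness hypothesis, this coefficient is $\leq 1/2$, so the $u$-term on the right is absorbed into the left, giving
\[
\int_{Q^+_{1,\beta}} |u|^2\,dxdt \leq 2N_1\!\int_{Q^+_{1,\beta}}\!\bigl(|\nabla u|^2 + |F|^2\bigr)dxdt.
\]

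Next, I would estimate $\|\beta u_t\|_{L^2(\Gamma_\beta(1),W^{-1,2}(B_1^+))}$ directly from the equation \eqref{eqn-bdry}. For any test function $\varphi \in W^{1,2}_0(B_1^+)$ and a.e.\ $t \in \Gamma_\beta(1)$, the weak formulation in Definition \ref{boundary-weak-def} implies
\[
\bigl|\langle \beta(\cdot)\,u_t(\cdot,t),\,\varphi\rangle\bigr| \leq \int_{B_1^+}\!\bigl(|\bA||\nabla u| + |F|\bigr)|\nabla \varphi|\,dx \leq \bigl(\nu^{-1}\|\nabla u(\cdot,t)\|_{L^2(B_1^+)} + \|F(\cdot,t)\|_{L^2(B_1^+)}\bigr)\|\nabla\varphi\|_{L^2(B_1^+)}.
\]
Taking the supremum over $\varphi$ with $\|\varphi\|_{W^{1,2}_0(B_1^+)}\leq 1$ and squaring, integrating in $t$ yields
\[
\|\beta u_t\|_{L^2(\Gamma_\beta(1),W^{-1,2}(B_1^+))}^2 \leq N_2\bigl(\|\nabla u\|_{L^2(Q^+_{1,\beta})}^2 + \|F\|_{L^2(Q^+_{1,\beta})}^2\bigr),
\]
with $N_2 = N_2(\nu)$. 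Combining this with the $L^2$-bound on $u$ above and the definition of $\|\cdot\|_{\hat\W^{1,2}(Q^+_{1,\beta},\beta)}$ in Definition \ref{hat-W-space} gives the claimed estimate.

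I do not anticipate a genuine obstacle here; the argument is entirely parallel to Lemma \ref{boundedness-u-sol}, with the key simplification that the boundary trace $u=0$ on $T_1\times\Gamma_\beta(1)$ already makes the mean subtraction unnecessary, so Lemma \ref{bdry-Poincare} supplies exactly the Poincaré-type inequality required for the absorption step. The only thing to be careful about is that the quantitative choice of $\tilde\kappa$ depends only on $(n,\nu,M_0)$, which is transparent from the bound on $N_1$ inherited from Lemma \ref{bdry-Poincare}.
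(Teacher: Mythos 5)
Your proof is correct and follows essentially the same approach the paper takes: the paper derives Lemma~\ref{bdry-corollary} by ``following the proof of Lemma~\ref{boundedness-u-sol}'' with Lemma~\ref{bdry-Poincare} substituted for Lemma~\ref{u-L2-Cac}, which is precisely your absorption argument followed by the PDE-based bound on $\|\beta u_t\|_{L^2(\Gamma_\beta(1),W^{-1,2}(B_1^+))}$.
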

\smallskip
\subsection{Boundary approximation estimates} To prove Theorem \ref{inter-theorem-bdry}, we use a perturbation technique by freezing the coefficients.  Hence, as in \eqref{v-Qr-sol}, we introduce the following equations
\begin{equation} \label{bdry-approx}
\left\{
\begin{aligned}
(\beta)_{B_r(x_0)} v_t - \textup{div} ((\bA)_{B^+_{r}(x_0)}(t) \nabla v)  &= 0 \quad &&\text{in}\quad Q^+_{r,\beta}(z_0),\\[4pt]
v&=0 \quad &&\text{on} \quad T_{r}(x_0) \times \Gamma_{\beta, z_0}(r)
\end{aligned}\right.
\end{equation}
with $r>0$ and $z_0 = (x_0, t_0)\in \mathbb{R}^{n}\times \mathbb{R}$ satisfying $T_{r}(x_0) \not=\emptyset$.  Similar to Lemma \ref{compare-lemma-1}, we have the following important lemma on boundary Caccioppoli type estimates for the difference of two solutions of \eqref{eqn-bdry} and \eqref{bdry-approx}.
\begin{lemma} \label{bdry-Caccioppoli} 
Suppose that $u \in \hat\W^{1,2}(Q^+_{r,\beta}(z_0), \beta)$ is a weak solution of \eqref{eqn-bdry} in $Q^+_{r, \beta}(z_0)$, where the weight $\beta$ satisfies \eqref{beta-cond} and the matrix $\bA$ satisfies \eqref{ellip-cond} in  $Q^+_{r,\beta}(z_0)$. Suppose also that  $v \in \hat\W^{1,2}(Q^+_{r,\beta}(z_0))$ is a weak solution of \eqref{bdry-approx} in $Q^+_{r, \beta}(z_0)$. Then, for 
\[
w = u -v,
\] 
it holds that
\begin{align*}
& \fint_{Q^+_{r, \beta}(x_0)} |\nabla w|^2 \varphi^2 dx dt \leq   N \fint_{Q^+_{r, \beta}(x_0)} |F|^2 \varphi^2 dx dt \\
& \quad + N\fint_{Q^+_{r, \beta}(z_0)} w^2 \Big[\varphi^2 +|\nabla \varphi|^2 + \beta(x)\Big(\frac{\varphi^2}{r^2(\beta)_{B_r(x_0)}} + |\partial_t \varphi| \Big)\Big]dxdt \\
& \quad +   N \|\varphi \nabla v\|_{L^\infty(Q^+_{r, \beta}(z_0))}^2 \fint_{Q^+_{r, \beta}(z_0)} \Theta_{\bA, r, x_0}(x,t)^2 dx dt \\
& \quad +N\|r(\beta)_{B_{r}(x_0)} v_t \varphi\|_{L^\infty(Q^+_{r, \beta}(z_0))}^2  \Big( \frac{1}{\beta(B_r(x_0))} \int_{B_{r}(x_0)} \Theta_{\beta, r, x_0}(x)^2 dx\Big),
\end{align*}
for every $\varphi  \in C^\infty_0(Q_{r, \beta}(z_0))$, where $N = N(n, \nu)>0$.
\end{lemma}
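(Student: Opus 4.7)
\medskip
\noindent
\textbf{Proof proposal for Lemma \ref{bdry-Caccioppoli}.} The plan is to mirror the interior argument of Lemma \ref{compare-lemma-1}, with the simplifying observation that the shared homogeneous Dirichlet condition on $T_r(x_0) \times \Gamma_{\beta, z_0}(r)$ means that the comparison function can be taken directly as $w = u - v$, with no need to subtract the average $(u)_{Q_{r,\beta}(z_0)}$. First I would note that, by Lemma \ref{Bdr-Lipschitz-constant} applied on a slightly smaller weighted half-cylinder, the solution $v$ of \eqref{bdry-approx} satisfies $v_t, \nabla v \in L^\infty_{\mathrm{loc}}(Q_{r,\beta}^+(z_0))$, so that subtracting the equations for $u$ and $v$ yields that $w \in \hat{\W}^{1,2}_{\mathrm{loc}}(Q_{r,\beta}^+(z_0), \beta)$ is a weak solution of
\[
\beta(x) w_t - \mathrm{div}(\bA \nabla w) = \mathrm{div}(F) - \bigl(\beta(x) - (\beta)_{B_r(x_0)}\bigr) v_t + \mathrm{div}\!\bigl[(\bA - (\bA)_{B^+_r(x_0)}(t))\nabla v\bigr]
\]
in $Q_{r,\beta}^+(z_0)$, together with $w = 0$ on $T_r(x_0) \times \Gamma_{\beta, z_0}(r)$.

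Next, for $\varphi \in C_0^\infty(Q_{r,\beta}(z_0))$, I would test the equation for $w$ against $w\varphi^2$, which is an admissible test function because $w$ vanishes on the flat boundary $T_r(x_0) \times \Gamma_{\beta, z_0}(r)$, so $w\varphi^2$ vanishes on $\partial_p Q_{r,\beta}^+(z_0)$ even though $\varphi$ itself need not. To handle the time derivative rigorously, I would use Steklov averages as in Lemma \ref{Caccio-1} and Lemma \ref{compare-lemma-1}. Using \eqref{ellip-cond} on the left and Young's inequality on the right then produces exactly the same collection of terms encountered in the interior case: a gradient term, the $F$-term, a perturbation of $\bA$ term involving $\Theta_{\bA, r, x_0}$, and a perturbation of $\beta$ term involving $\Theta_{\beta, r, x_0}$ and $v_t$.

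The critical term, handled identically to the interior case, is $\int_{B_r^+(x_0)} |\beta - (\beta)_{B_r(x_0)}|\,|v_t|\,|w|\,\varphi^2\,dx$. Setting $K := \|r(\beta)_{B_r(x_0)} v_t\|_{L^\infty(Q_{r,\beta}^+(z_0))}$ and factoring $|\beta - (\beta)_{B_r(x_0)}| = \Theta_{\beta, r, x_0}(x)\,\beta(x)^{1/2}$ permits the Cauchy--Schwarz splitting
\[
\int_{B_r^+(x_0)} |\beta - (\beta)_{B_r(x_0)}|\,|v_t|\,|w|\,\varphi^2\,dx \leq \tfrac{K^2}{2(\beta)_{B_r(x_0)}}\int_{B_r(x_0)} \Theta_{\beta, r, x_0}(x)^2\,dx + \tfrac{1}{2r^2 (\beta)_{B_r(x_0)}}\int_{B_r^+(x_0)} w^2 \beta(x)\varphi^2\,dx,
\]
where the first factor uses the full ball $B_r(x_0)$ (as in the statement) since the integrand is nonnegative. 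The remaining terms are absorbed in the standard manner, after which integration in time over $\Gamma_{\beta, z_0}(r)$ and dividing by $|Q_{r,\beta}^+(z_0)|$ delivers the claimed estimate.

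The main potential obstacle is purely bookkeeping: one must check that the boundary integrals arising in the integration by parts genuinely vanish. This is ensured by $w = 0$ on $T_r(x_0) \times \Gamma_{\beta, z_0}(r)$ (giving $\nabla w$ tangent at the flat boundary, with $w\varphi^2 = 0$ there), and by $\varphi$ having compact support in $B_r(x_0) \times \Gamma_{\beta, z_0}(r)$ taking care of the remaining portions of $\partial_p Q_{r,\beta}^+(z_0)$. Modulo this verification and the Steklov regularization in time, the computation is parallel to Lemma \ref{compare-lemma-1} and I would simply refer to that proof for the detailed manipulations.
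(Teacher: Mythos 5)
Your proposal matches the paper's proof: both derive the equation for $w = u - v$ (noting that the shared homogeneous Dirichlet condition on the flat boundary removes the need to subtract the average, unlike the interior Lemma \ref{compare-lemma-1}), invoke Lemma \ref{Bdr-Lipschitz-constant} for the local boundedness of $v_t$ and $\nabla v$, test with $w\varphi^2$ via Steklov averaging, and defer the remaining manipulations to the interior argument. Your extra care in checking that the boundary integrals vanish and in writing out the Young-inequality splitting of the $v_t$ term is a useful expansion of the details the paper omits, but the argument is the same.
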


\smallskip
\begin{proof} To begin with, it follows from Lemma \ref{Bdr-Lipschitz-constant} that $v_t, \nabla v  \in L^{\infty}_{\text{loc}}(Q^+_{r,\beta}(z_0))$. Consequently, if $u \in \hat\W^{1,2}(Q^+_{r, \beta}(z_0), \beta)$ is a solution to \eqref{eqn-bdry} in $Q^+_{r, \beta}(z_0)$, then 
\[
w = u - v \in  \hat\W^{1,2}_{\text{loc}}(Q^+_{r, \beta}(z_0), \beta)
\]
is a weak solution of
\begin{align*}
\beta(x) w_t  - \textup{div}( \bA \nabla w) & = \textup{div}(F)- (\beta(x) - (\beta)_{B_r(x_0)}) v_t \\
& \quad + \textup{div}[(\bA - (\bA)_{B^+_{r}(x_0)}(t))\nabla v] \quad \text{in} \quad Q^+_{r,\beta}(z_0)
\end{align*}
with the boundary condition $w=0$ on $T_r(x_0)\times \Gamma_{\beta,z_0}(r)$. Then, by using Steklov's average if needed (see \cite[p. 18]{DiB} for example), we can test the equation of $w$ with $w\varphi^2$ for every $\varphi  \in C^\infty_0(Q_{r, \beta}(z_0))$. As the proof is similar to that of Lemma \ref{compare-lemma-1}, we skip the details.
\end{proof}

\smallskip
The following lemma provides the boundary version of  Lemma \ref{L2-comparision}  on the closeness of solutions of the two PDEs \eqref{eqn-bdry} and \eqref{bdry-approx}. 
\begin{lemma} \label{L2-comparision-bdry} Let $\nu \in (0,1)$ and $M_0 \geq 1$ be fixed. Then, for every $\epsilon \in (0, 1)$, there exists a sufficiently small constant $\bar\kappa = \bar \kappa(n, \nu, M_0, \epsilon)$  such that the following assertions hold. Suppose that $\beta$ is a weight satisfying \eqref{beta-cond}, $\bA$ is a matrix satisfying \eqref{ellip-cond} in $Q^+_{4, \beta}$, and
\begin{align*}
& \fint_{Q^+_{4,\beta}}\Theta_{\bA, 4}(x,t)^2 dxdt  + \frac{1}{\beta(B_4)} \int_{B_4} \Theta_{\beta, 4}(x)^2 dx  + \fint_{Q^+_{4,\beta}}  |F(x,t)|^2 dx dt \leq \bar \kappa^2.
\end{align*}
Then, for every weak solution $u \in \hat\W^{1,2}(Q^+_{4,\beta},\beta)$ of \eqref{eqn-bdry} in $Q^+_{4,\beta}$ satisfying
\[
\fint_{Q^+_{4,\beta}} |\nabla u(x,t)|^2dxdt \leq 1,
\]
there exists a weak solution $v \in \hat\W^{1,2}(Q^+_{4,\beta})$ of \eqref{bdry-approx} in $Q^+_{4,\beta}$ such that
\begin{equation} \label{L-2-w-small-bdry}
\left(\fint_{Q^+_{4, \beta}} |u-v|^2 dx dt\right)^{1/2}  \leq \epsilon.
\end{equation}
Moreover, there exist constants $N = N(n, \nu, M_0)>0$ and $\theta = \theta(n, M_0) \in (0,1)$ such that
\begin{equation} \label{L-2-mu-bdry}
\begin{split}
& \fint_{Q^+_{7/2,\beta}} |\nabla v|^2 dx dt \leq N, \quad \text{and} \\
& \frac{1}{(\beta)_{B_{7/2}}} \fint_{Q^+_{7/2, \beta}} |u-v|^2 \beta(x) dx dt \leq N \epsilon^{2(1-\theta)}.
\end{split} 
\end{equation}
\end{lemma}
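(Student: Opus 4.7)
The plan is to mirror the structure of the interior proof (Lemma \ref{L2-comparision}), replacing every interior tool by its boundary counterpart. In particular, we use Lemma \ref{bdry-corollary} in place of Lemma \ref{boundedness-u-sol}, Lemma \ref{bdry-Poincare} in place of Lemma \ref{u-L2-Cac}, the upper-half-ball version of Proposition \ref{compactness-lemma}, and the upper-half statement in Lemma \ref{embedd-lemma}. It suffices to take $\bar\kappa\le\tilde\kappa$, where $\tilde\kappa$ is from Lemma \ref{bdry-corollary}. Using the dilation in \eqref{scale-1} and Lemma \ref{scaling-lemma}, we may normalize so that $\Psi_\beta(4)=1$, and consequently $Q_{4,\beta}^+ = Q_4^+$. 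Note that since $u$ vanishes on $T_4\times\Gamma_{\beta}(4)$, there is no need to subtract a mean as in the interior case: we work with $u$ itself.

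I would establish \eqref{L-2-w-small-bdry} by contradiction. Suppose the claim fails: there exist $\epsilon_0>0$, weights $\{\beta_k\}_k$ satisfying \eqref{beta-cond} with $\Psi_{\beta_k}(4)=1$, matrices $\{\bA_k\}_k$ satisfying \eqref{ellip-cond}, forcing terms $\{F_k\}_k$ with partial BMO and $L^2$-norms bounded by $1/k$, and weak solutions $u_k\in\hat{\W}^{1,2}(Q_4^+,\beta_k)$ of \eqref{eqn-bdry} with $\fint_{Q_4^+}|\nabla u_k|^2\,dxdt\le 1$, such that $\fint_{Q_4^+}|u_k-v|^2\,dxdt\ge\epsilon_0^2$ for every weak solution $v$ of \eqref{bdry-approx}. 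By Lemma \ref{bdry-corollary}, $\|u_k\|_{\hat{\W}^{1,2}(Q_4^+,\beta_k)}$ is bounded. The small-oscillation assumption \eqref{osc-assumption} is satisfied, so the half-ball version of Proposition \ref{compactness-lemma} yields (along a subsequence) $(\beta_k)_{B_4}\to\beta_0\in[M_0^{-1},1]$, $u_k\to u_0$ in $L^2((-16,0),L^\ell(B_4^+))$ for all $\ell\in[1,2^*)$, $\nabla u_k\rightharpoonup \nabla u_0$ weakly in $L^2(Q_4^+)$, and $\beta_k\partial_t u_k\rightharpoonup \beta_0\partial_t u_0$ in $L^2((-16,0),W^{-1,2}(B_4^+))$. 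Since trace is continuous under the strong $L^2$ convergence of the first-order terms combined with weak $W^{1,2}$ convergence, $u_0=0$ on $T_4\times(-16,0)$, i.e.~$u_0\in\hat\W^{1,2}(Q_4^+)$. Extracting a weak-$*$ limit $\bA_0(t)$ of $(\bA_k)_{B_4^+}(t)$ in $L^\infty((-16,0),\mathcal{M}^{n\times n})$ and passing to the limit in the weak formulation exactly as in \eqref{u-k-convergence}--\eqref{A-k-weak-con-3-17} (the only term requiring care is $\int\beta_k u_k \partial_t\varphi$, handled via $L^{q_0}$-convergence of $\beta_k$ to $\beta_0$ as in Lemma \ref{u-L2-Cac}), one obtains $\beta_0\partial_t u_0 - \mathrm{div}(\bA_0(t)\nabla u_0)=0$ in $Q_4^+$ with $u_0|_{T_4\times(-16,0)}=0$.

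To reach a contradiction I would correct the frozen coefficient data. Define the distributional source
\[
g_k = [(\beta_k)_{B_4} - \beta_0]\partial_t u_0 - \mathrm{div}\bigl([(\bA_k)_{B_4^+}(t) - \bA_0(t)]\nabla u_0\bigr),
\]
so that $g_k\rightharpoonup 0$ in $L^2((-16,0),W^{-1,2}(B_4^+))$ with uniform bound. Let $h_k\in\hat\W^{1,2}_*(Q_4^+)$ solve
\[
\begin{aligned}
(\beta_k)_{B_4}\partial_t h_k - \mathrm{div}\bigl((\bA_k)_{B_4^+}(t)\nabla h_k\bigr) &= g_k &&\text{in }Q_4^+,\\
h_k&=0 &&\text{on }\partial_p Q_4^+\cup(T_4\times\Gamma_\beta(4)),
\end{aligned}
\]
existing by Galerkin. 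The classical Aubin-Lions theorem and passage to the limit yield $h_k\to 0$ in $L^2(Q_4^+)$. Setting $v_k=u_0-h_k$, each $v_k$ solves \eqref{bdry-approx} with $(\beta,\bA)$ replaced by $((\beta_k)_{B_4},(\bA_k)_{B_4^+}(t))$ and vanishes on $T_4\times\Gamma_{\beta}(4)$, and $u_k-v_k\to 0$ in $L^2(Q_4^+)$, contradicting the assumption. Hence \eqref{L-2-w-small-bdry} holds.

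The estimates \eqref{L-2-mu-bdry} then follow routinely. The first: from $\fint_{Q_4^+}|u|^2 \le N(\fint_{Q_4^+}|\nabla u|^2+|F|^2)\le N$ via Lemma \ref{bdry-corollary}, the triangle inequality and \eqref{L-2-w-small-bdry} give $\fint_{Q_4^+}|v|^2\le N$; standard energy estimates for $v$ together with the doubling properties (Lemma \ref{property}) yield $\fint_{Q^+_{7/2,\beta}}|\nabla v|^2\le N$. The second: apply the upper-half-cylinder version of Lemma \ref{embedd-lemma} to $w=u-v$ to get
\[
\frac{1}{(\beta)_{B_{7/2}}}\fint_{Q^+_{7/2,\beta}}|w|^2\beta(x)\,dxdt \le N\Bigl(\fint_{Q^+_{7/2,\beta}}|w|^2\Bigr)^{1-\theta}\Bigl[\Bigl(\fint|w|^2\Bigr)^\theta + \Bigl(\fint|\nabla w|^2\Bigr)^\theta\Bigr] \le N\epsilon^{2(1-\theta)}.
\]
The main obstacle is verifying that the boundary trace is preserved when passing to the limit for $u_0$ and that the correction $h_k$ can be constructed with zero Dirichlet data on $T_4$; both are handled by working within the right closed subspace $\hat\W^{1,2}_*(Q_4^+)$ of test functions and using the strong $L^2$ convergence of $u_k$ at the boundary level.
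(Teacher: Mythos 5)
Your proposal follows the paper's proof essentially verbatim: the same contradiction argument with Lemma \ref{bdry-corollary}, the half-ball version of Proposition \ref{compactness-lemma}, the weak-$*$ limit matrix $\bA_0(t)$, the corrector $g_k$ and the auxiliary solution $h_k \to 0$, the construction $v_k = u_0 - h_k$, and the same use of Lemma \ref{embedd-lemma} and energy estimates for \eqref{L-2-mu-bdry}. Your extra remarks on preservation of the zero trace on $T_4$ in the limit are sound but are already built into the function space $\hat\W^{1,2}$ the paper works in, so the argument is the same.
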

\begin{proof}  It is sufficient to prove the lemma for $\bar \kappa \in (0, \tilde \kappa]$, where $\tilde \kappa = \tilde \kappa(n, \nu, M_0)>0$ is the small constant defined in Lemma \ref{bdry-corollary}. By applying the dilation in the time variable as in \eqref{scale-2}, we may assume without loss of generality that
 $\Psi_{\beta}(4) =1$ so that $Q^+_{4,\beta}=Q^+_4$. To prove the assertion \eqref{L-2-w-small-bdry}, we use the same contradiction argument as in the proof of Lemma \ref{L2-comparision}.  However, instead of applying Lemma \ref{boundedness-u-sol}, we use Lemma \ref{bdry-corollary} to find a constant $N = N(n, \nu, M_0) >0$ such that
\[
\|u_k \|_{\hat\W^{1,2}(Q_4^+, \beta_k)} \leq N, \quad \text{for all $k \in \mathbb{N}$}. 
\]
From this, \eqref{beta-cond}, Proposition \ref{compactness-lemma}, and by passing a subsequence, we can find $u_0 \in \hat\W^{1,2}(Q^+_4)$ such that
\begin{equation} \label{uk-convergence-bdry}
\begin{split}
& u_k  \rightarrow u_0 \quad \text{in} \quad L^2((-16, 0), L^{l}(B^+_4)), \quad \text{and}\\
& \nabla u_k \rightharpoonup \nabla u_0  \quad \text{in} \quad  L^2(Q^+_4) \quad \text{as} \quad k \rightarrow \infty,
\end{split}
\end{equation}
for any $l \in [1, 2^*)$. 

\smallskip
Now, for a fixed $\varphi \in C_0^\infty(Q_4^+)$, and for every $k \in \mathbb{N}$, we use $\varphi$ as a test function for the equation of $u_k$ to obtain
\begin{equation*} 
 -\int_{Q^+_4} \beta_k u_k  \partial_t \varphi  dxdt +  \int_{Q^+_4} \wei{\bA_k \nabla u_k, \nabla \varphi} dxdt  = -\int_{Q^+_4} \wei{F_k, \nabla \varphi} dxdt.
\end{equation*}
By following the same procedure as in the proof of Lemma \ref{L2-comparision}, we can show that $u_0 \in \hat\W^{1,2}(Q^+_4)$ is a weak solution of 
\begin{equation} \label{u-0-eqn-bdry}
\left\{
\begin{aligned}
\beta_0\partial_t u_0 - \textup{div} (\bA_0(t) \nabla u_0) &= 0  \quad &&\text{in} \quad Q^+_4,\\
u_0 &=  0  \quad &&\text{on} \quad T_4 \times \Gamma_{\beta}(4),
\end{aligned} \right.
\end{equation}
where $\beta_0\in [\frac{1}{M_0},1]$, and the matrix $\bA_0(t)$ is defined as in \eqref{weak star convergence}.
Due to the convergences in \eqref{uk-convergence-bdry}, we have
\begin{equation} \label{L2-u_0-bdry}
\fint_{Q^+_4} |\nabla u_0|^2 dx dt \leq 1.
\end{equation}

\smallskip
Now, let us denote
\[
g_k = [(\beta_k)_{B_4}-\beta_0]\partial_t u_0 -\textup{div} ([(\bA_k)_{B_4^+} (t)-\bA_0(t)] \nabla u_0).
\]
From the triangle inequality, the PDE \eqref{u-0-eqn-bdry}, \eqref{L2-u_0-bdry}, the ellipticity and boundedness conditions of $\bA_k$ in \eqref{ellip-cond}, the definition of $\bA_0$ as in \eqref{weak star convergence}, and the boundedness of $\beta_0$, we infer that there is a constant $N = N(n, \nu, M_0)>0$ such that
\begin{equation}  \label{g-k-convergence-bdry}
\begin{split} 
& \|g_k \|_{L^2((-16, 0), W^{-1, 2}(B_4^+))} \leq N, \quad \forall \ k \in \mathbb{N}, \\ 
& g_k  \rightharpoonup  0 \quad \text{in} \quad L^2((-16, 0), W^{-1, 2}(B_4^+)) \quad \text{as} \quad  k \rightarrow \infty.
\end{split}
\end{equation}
Moreover, let $h_k \in \W^{1,2}_*(Q_4^+)$ be the weak solution of the equation
\begin{equation} \label{h-k-sol-bdry}
\left\{
\begin{aligned}
(\beta_k)_{B_4}\partial_t h_k -\textup{div}[(\bA_k)_{B_4^+}(t) \nabla h_k] & =g_k \quad &&\text{in} \quad Q_4^+, \\
h_k &= 0 \quad &&\text{on}  \quad \partial_p Q_4^+.
\end{aligned} \right.
\end{equation}
Note that the estimate of $g_k$ in \eqref{g-k-convergence-bdry}, the ellipticity and boundedness conditions \eqref{ellip-cond}, and the definitions of $(\bA_k)_{B_4^+}(t)$ and $(\beta_k)_{B_4}$ allow us to obtain the existence of $h_k$ by the Galerkin method. In addition, from the standard energy estimate for $h_k$, we infer that there is $N = N(n, \nu, M_0)>0$ such that
\[
\|h_k\|_{\W^{1,2}_*(Q_4^+)} \leq N, \quad \forall \ k \in \mathbb{N}.
\]
From this, we apply the classical Aubin-Lions theorem (see \cite[Theorem 1]{Simon}), and by passing through a subsequence if needed, we can find $h \in \W^{1,2}_*(Q_4)$ such that
\begin{equation*} 
   \left\{
\begin{aligned}
h_k \rightarrow h \quad &\text{in} \quad L^2((-16, 0), L^{2}(B_4^+)),\\
\nabla h_k  \rightharpoonup \nabla h  \quad & \text{in} \quad L^2((-16, 0), L^{2}(B_4^+)), \\
\partial_t h_k \rightharpoonup \partial_t h  \quad &\text{in} \quad    L^2((-16, 0), W^{-1,2}(B_4^+)),
\end{aligned} \quad \quad \text{as} \quad k \rightarrow \infty,
   \right.
\end{equation*}
From this, \eqref{beta_0}, \eqref{weak star convergence}, and the convergences of $g_k$ in \eqref{g-k-convergence-bdry}, we can use the weak form of \eqref{h-k-sol-bdry}  to pass the limit as $k\rightarrow \infty$ in the same way as the proof of \eqref{u-0-eqn-bdry} to conclude that $h \in \W^{1,2}_*(Q_4^+)$ is a weak solution of 
\begin{equation*} 
\left\{
\begin{aligned}
\beta_0 \partial_t h  -\textup{div}(\bA_0(t) \nabla h) & = 0 \quad &&\text{in} \quad Q_4^+, \\
h &= 0 \quad &&\text{on}  \quad \partial_p Q_4^+.
\end{aligned} \right.
\end{equation*}
This implies that $h =0$ and consequently
\begin{equation}\label{h-k-unweight-bdry}
\begin{aligned}
&\|h_k\|_{L^2(Q_4^+)} \rightarrow 0 \quad \text{as} \quad k \rightarrow \infty.
\end{aligned}
\end{equation}
Then, set $v_k = u_0 - h_k$, we see that $v_k \in \hat\W^{1,2}(Q_4^+)$ is a weak solution of 
\[
\left\{
\begin{array}{rcll}
(\beta_k)_{B_4}\partial_t v_k -\textup{div}((\bA_k)_{B_4^+}(t) \nabla v_k)  & = & 0 & \quad \text{in}\quad Q_4^+\\
v_k & = & 0 & \quad \text{on} \quad T_4 \times \Gamma_{\beta}(4).
\end{array} \right.
\]
However, with $w_k = u_k -v_k=u_k -u_0+h_k$, it follows from \eqref{uk-convergence-bdry} and \eqref{h-k-unweight-bdry} that
\begin{align*}
\| w_k\|_{L^2(Q_4^+)} \leq   \|u_k -u_0\|_{L^2(Q_4^+)} + \|h_k\|_{L^2(Q_4^+)}  \rightarrow 0 \quad \text{as} \quad k \rightarrow \infty.
\end{align*}
This contradicts the conclusion of the contrapositive statement as in \eqref{w-k-contra}, when $v_k$ is in place of $v$ with sufficiently large $k$. The assertion \eqref{L-2-w-small-bdry} is then proved.

\smallskip
Lastly, we prove \eqref{L-2-mu-bdry}. Note that as $\bar\kappa \leq \tilde \kappa$ with $\tilde \kappa$  the constant in Lemma \ref{bdry-corollary},  it follows from Lemma \ref{bdry-corollary} and the assumptions in the statement of the lemma that
\begin{align*}
\fint_{Q^+_{4}} |u(x,t)|^2 dx dt & \leq N \fint_{Q^+_{4}} \Big[ |\nabla u(x,t)|^2 + |F(x,t)|^2\Big] dxdt \\
& \leq N\big(1 + \bar \kappa^2) \leq 2N,
\end{align*}
where $N = N(n, \nu, M_0)>0$. From this, \eqref{L-2-w-small-bdry}, and the triangle inequality, we see that
\begin{align*}
\fint_{Q^+_{4}}|v(x,t)|^2 dx dt 
&\leq 2\fint_{Q^+_{4}} |w(x,t)|^2 dxdt + 2\fint_{Q^+_{4}} |u(x,t)|^2 dx dt\\
& \leq 2\epsilon^2 + 4N \leq  2+ 4N,
\end{align*}
for $w = u(x,t) -v(x,t)$. Here, we also used the fact that $\epsilon \in (0,1)$. From this, and by the standard energy estimate for the equation of $v$ and the doubling properties stated in Lemma \ref{property}, we obtain
\[
\fint_{Q^+_{7/2, \beta}} |\nabla v|^2 dx dt \leq N \fint_{Q^+_{4}} |v(x,t)|^2 dx dt \leq N.
\]
This proves the first assertion in \eqref{L-2-mu-bdry}. It now remains to prove the second assertion in  \eqref{L-2-mu-bdry}. Due to the assumption in the statement of the lemma and the doubling properties, we can apply Lemma \ref{embedd-lemma} to infer that 
\[
\begin{split}
& \frac{1}{(\beta)_{B_{7/2}}}\fint_{Q^+_{7/2,\beta}} |w(x,t)|^2 \beta(x)dx dt  \\
& \leq N  \Big(\fint_{Q^+_{7/2,\beta}} |w|^2 dxdt \Big)^{1-\theta} \left[ \Big(\fint_{Q^+_{7/2, \beta}} |w|^{2} dxdt \Big)^{\theta} +  \Big(\fint_{Q^+_{7/2,\beta}} |\nabla w|^{2} dxdt \Big)^{\theta} \right]\\
& \leq N \epsilon^{2(1-\theta)} \left[ \epsilon^{2\theta} +  \Big(\fint_{Q^+_{4}} |\nabla u(x,t)|^{2} dxdt \Big)^{\theta}  + \Big(\fint_{Q^+_{7/2,\beta}} |\nabla v(x,t)|^{2} dxdt \Big)^{\theta}\right] \\
& \leq N \epsilon^{2(1-\theta)},
\end{split}
\]
where $N = N(n, \nu, M_0)>0$. Hence, the second assertion of \eqref{L-2-mu-bdry} is proved, and the proof of the lemma is completed.
\end{proof}

Following the proof of Proposition \ref{L2-gradient-comparision}, and by applying Lemma \ref{L2-comparision-bdry} and Lemma \ref{bdry-Caccioppoli} in place of Lemma \ref{L2-comparision}
and Lemma \ref{compare-lemma-1}, respectively,  we obtain the following proposition for the boundary case.

\smallskip
\begin{proposition} \label{L2-gradient-comparision-bdry} Let $\nu\in (0,1)$ and $M_0 \geq 1$ be fixed. Then, for every $\epsilon \in (0,1)$, there exists a sufficiently small constant $\kappa = \kappa(n, \nu, M_0, \epsilon) \in (0,1)$  such that the following assertions hold. Suppose that $\beta$ is a weight satisfying \eqref{beta-cond}, and that $\bA$ is a matrix satisfying \eqref{ellip-cond} in $Q^+_{4, \beta}$. Moreover, assume that
\begin{align} \label{small-data-interior-bdry}
 \fint_{Q^+_{4,\beta}}\Theta_{\bA, 4}^2 dxdt  + \frac{1}{\beta(B_4)} \int_{B_4} \Theta_{\beta, 4}(x)^2 dx  + \fint_{Q^+_{4,\beta}}|F|^2 dx dt \leq \kappa^2.
\end{align}
Then, for every weak solution $u \in \hat\W^{1,2}(Q^+_{4,\beta}, \beta)$ of \eqref{eqn-bdry} in $Q^+_{4,\beta}$ satisfying
\[
\fint_{Q^+_{4,\beta}} |\nabla u(x,t)|^2dxdt \leq 1,
\]
there exists a weak solution $v \in \hat\W^{1,2}(Q^+_{4,\beta})$ of \eqref{bdry-approx} in $Q^+_{4,\beta}$ such that
\begin{equation} \label{L-2-gradient-w-small-bdry}
\left(\fint_{Q^+_{2, \beta}} |\nabla u-\nabla v|^2 dx dt\right)^{1/2}  \leq \epsilon \quad \text{and} \quad \|\nabla v\|_{L^\infty(Q^+_{3, \beta})}  \leq N,
\end{equation}
where $N = N(n, \nu, M_0)>0$.
\end{proposition}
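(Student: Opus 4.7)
The plan is to mirror the proof of Proposition \ref{L2-gradient-comparision}, replacing the interior energy/approximation machinery with its boundary counterparts: Lemma \ref{L2-comparision-bdry}, Lemma \ref{bdry-Caccioppoli}, and the boundary Lipschitz estimate Lemma \ref{Bdr-Lipschitz-constant}. Note that because $u$ and $v$ both vanish on $T_{4}\times \Gamma_{\beta}(4)$, the difference $w=u-v$ also vanishes there, so there is no need to subtract a constant average as in the interior case; this even simplifies the bookkeeping slightly.

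First, given $\epsilon\in(0,1)$, I choose $\bar\epsilon \in (0,1)$ small so that $\bar\epsilon^{2(1-\theta)} \leq \epsilon^2/(2N_0)$, where $\theta=\theta(n,M_0)\in(0,1)$ is from Lemma \ref{L2-comparision-bdry} and $N_0=N_0(n,\nu,M_0)$ is the constant that will emerge from the Caccioppoli estimate in Lemma \ref{bdry-Caccioppoli} applied with a standard cut-off supported in $Q_{3,\beta}^+$ and equal to $1$ on $Q_{2,\beta}^+$. I also pick $\kappa'\in(0,1)$ with $(\kappa')^2 < \epsilon^2/(2N_0)$, and then set
\[
\kappa = \min\bigl\{\bar\kappa(n,\nu,M_0,\bar\epsilon),\ \tilde\kappa(n,\nu,M_0),\ \kappa'\bigr\},
\]
where $\bar\kappa$ is the constant from Lemma \ref{L2-comparision-bdry} and $\tilde\kappa$ is from Lemma \ref{bdry-corollary}.

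Under the smallness assumption \eqref{small-data-interior-bdry} with this $\kappa$, Lemma \ref{L2-comparision-bdry} produces a weak solution $v\in \hat\W^{1,2}(Q_{4,\beta}^+)$ of \eqref{bdry-approx} satisfying
\[
\Bigl(\fint_{Q_{4,\beta}^+}|u-v|^2\,dxdt\Bigr)^{1/2}\leq \bar\epsilon, \qquad \fint_{Q_{7/2,\beta}^+}|\nabla v|^2\,dxdt \leq N,
\]
and the weighted smallness
\[
\frac{1}{(\beta)_{B_{7/2}}}\fint_{Q_{7/2,\beta}^+}|u-v|^2\beta(x)\,dxdt \leq N\bar\epsilon^{2(1-\theta)}.
\]
Applying Lemma \ref{Bdr-Lipschitz-constant} on $Q_{3,\beta}^+ \subset Q_{7/2,\beta}^+$ together with the doubling properties from Lemma \ref{property} yields
\[
\|\nabla v\|_{L^\infty(Q_{3,\beta}^+)} + \|(\beta)_{B_4} v_t\|_{L^\infty(Q_{3,\beta}^+)} \leq N(n,\nu,M_0),
\]
which already furnishes the second assertion of \eqref{L-2-gradient-w-small-bdry}.

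To obtain the gradient-closeness, I pick a cut-off $\varphi\in C_0^\infty(Q_{3,\beta}^+)$ with $\varphi\equiv 1$ on $Q_{2,\beta}^+$ and standard bounds $|\nabla\varphi|\leq N$, $|\partial_t\varphi|\leq N/\Psi_\beta(4)$, and apply Lemma \ref{bdry-Caccioppoli} to $w=u-v$ on $Q_{3,\beta}^+$. Inserting the $L^\infty$ bounds on $\nabla v$ and $(\beta)_{B_4}v_t$, the weighted $L^2$ bound on $w$, the unweighted $L^2$ bound $\bar\epsilon$ on $w$, the smallness hypothesis \eqref{small-data-interior-bdry} for $\Theta_{\bA,4}$ and $\Theta_{\beta,4}$, and the assumption $\|F\|_{L^2(Q_{4,\beta}^+)}^2 \leq \kappa^2|Q_{4,\beta}^+|$, I obtain
\[
\fint_{Q_{2,\beta}^+}|\nabla w|^2\,dxdt \leq N_0\bigl[\kappa^2 + \bar\epsilon^{2(1-\theta)}\bigr] \leq \epsilon^2
\]
by the choices of $\kappa$ and $\bar\epsilon$, which is the first assertion of \eqref{L-2-gradient-w-small-bdry}.

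The one place requiring genuine care — and the main obstacle relative to a purely mechanical translation of the interior proof — is ensuring that the cut-off argument in Lemma \ref{bdry-Caccioppoli} admits a cut-off that is allowed to be nonzero on the flat boundary $T_4$ (it must only vanish near the curved and temporal parts of $\partial_p Q_{4,\beta}^+$). This is consistent with the statement of Lemma \ref{bdry-Caccioppoli}, which tests the equation of $w$ with $w\varphi^2$: since $w=0$ on $T_r(x_0)\times \Gamma_{\beta,z_0}(r)$ in the trace sense, the boundary integrals generated by integration by parts vanish, so $\varphi$ need not be supported away from $T_4$. With that verified, the proof concludes exactly as above.
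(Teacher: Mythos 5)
Your proposal is correct and follows essentially the same route as the paper: choose $\bar\epsilon,\kappa'$ small relative to the Caccioppoli constant, set $\kappa=\min\{\tilde\kappa,\bar\kappa(\bar\epsilon),\kappa'\}$, apply Lemma \ref{L2-comparision-bdry} to produce $v$, get the $L^\infty$ bounds from Lemma \ref{Bdr-Lipschitz-constant} with doubling, and close via Lemma \ref{bdry-Caccioppoli}. Your observation that $w=u-v$ already vanishes on the flat boundary (so no average needs to be subtracted and the cut-off need only vanish near the curved and temporal boundary) matches the paper's treatment exactly.
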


\smallskip
\begin{proof} 
For the given $\epsilon$, let $\bar{\epsilon},\, \kappa'\in (0,1)$ be  sufficiently small numbers such that 
\[
\bar{\epsilon}^{2(1-\theta)} \leq \frac{\epsilon^2}{2N_0} \quad \text{and}\quad (\kappa')^2<\frac{\epsilon^2}{2N_0},
\]
where $\theta = \theta (n, M_0) \in (0,1)$ is defined in Lemma \ref{L2-comparision-bdry}, and $N_0 = N_0(n, \nu, M_0)$ is the positive constant defined in \eqref{last-interior-prop-bdry} below.  Then, let 
\[
\kappa = \min\{\tilde \kappa (n, \nu, M_0),\ \bar \kappa(n, \nu, M_0, \bar{\epsilon}),\ \kappa'\},
\]
where $\tilde \kappa (n, \nu, M_0)$ is the number defined in Lemma \ref{bdry-corollary}, $\bar \kappa(n, \nu, M_0, \bar{\epsilon})$ is the number defined in Lemma \ref{L2-comparision-bdry}. We prove the proposition with this choice of $\kappa$.

\smallskip

First, by \eqref{small-data-interior-bdry} and the choice of $\kappa$, it follows from Lemma \ref{L2-comparision-bdry} that
there is a weak solution $v \in \hat\W^{1,2}(Q^+_{4,\beta})$ of \eqref{bdry-approx} in $Q^+_{4,\beta}$ such that
\begin{equation} \label{L-2-w-small-bar-bdry}
\left(\fint_{Q^+_{4, \beta}} |u-v|^2 dx dt\right)^{1/2}  \leq \bar{\epsilon},
\end{equation}
and
\begin{equation} \label{L-2-mu-bar-bdry}
\begin{split}
& \fint_{Q^+_{7/2,\beta}} |\nabla v|^2 dx dt \leq N, \quad \text{and}\\
& \frac{1}{(\beta)_{B_{7/2}}} \fint_{Q^+_{7/2, \beta}} |u-v|^2 \beta(x) dx dt \leq N \bar{\epsilon}^{2(1-\theta)},
\end{split} 
\end{equation}
where $N = N(n, \nu, M_0)>0$ and $\theta = \theta(n, M_0) \in (0,1)$.  In addition, by applying Lemma \ref{Bdr-Lipschitz-constant} and the doubling properties of $\beta^{\frac{n}{2}}$ (when $n\geq 3$) or of $\beta$ (when $n=1,\, 2$), we obtain
\begin{equation} \label{L-infty-bar-bdry}
\begin{split}
& \|\nabla v\|_{L^\infty(Q^+_{3,\beta})} \leq N \left(\fint_{Q^+_{7/2,\beta}} |\nabla v|^2 dx dt \right)^{1/2} \leq N \quad \text{and} \\
& \|(\beta)_{B_{3}} v_t\|_{L^\infty(Q_{3,\beta})} \leq N \left(\fint_{Q^+_{7/2,\beta}} |\nabla v|^2 dx dt  \right)^{1/2}\leq N.
\end{split}
\end{equation}
On the other hand, let us denote $w = u-v$. By applying Lemma \ref{bdry-Caccioppoli} to $w$ with a suitable cut-off function $\varphi$, and the doubling properties, we have
\begin{align*}
 & \fint_{Q^+_{2,\beta}} |\nabla w(x,t)|^2 dx dt   \leq N  \fint_{Q^+_{4,\beta}} \Big[  |F(x,t)|^2 + |w(x,t)|^2 \Big] dxdt   \\
& \qquad + \frac{N}{(\beta)_{B_{7/2}}}  \fint_{Q^+_{7/2,\beta}} w(x,t)^2 \beta(x)dx dt +  N \|\nabla v\|_{L^\infty(Q^+_{3, \beta})}^2 \fint_{Q^+_{4,\beta}} \Theta_{\bA, 4}(x,t)^2 dx dt \\
& \qquad + N \| (\beta)_{B_3} v_t\|_{L^\infty(Q^+_{3, \beta})}^2 \fint_{Q^+_{4, \beta}}\Theta_{\beta, 4}(x)^2 dx.
\end{align*}
From this, \eqref{small-data-interior-bdry}, \eqref{L-2-w-small-bar-bdry}, \eqref{L-2-mu-bar-bdry},\eqref{L-infty-bar-bdry}, and the fact that $\bar{\epsilon} \in (0,1)$, we infer that
\begin{equation} \label{last-interior-prop-bdry}
\fint_{Q^+_{2,\beta}} |\nabla w(x,t)|^2 dx dt   \leq   N_0 \Big[ \kappa^2  + \bar{\epsilon}^{2(1-\theta)} \Big], \quad \text{with}\quad N_0 = N_0(n, \nu, M_0)>0.
\end{equation}
By the choices of $\kappa$ and $\bar{\epsilon}$, it follows that
\[
\fint_{Q^+_{2,\beta}} |\nabla w(x,t)|^2 dx dt \leq \epsilon^2.
\]
Then, \eqref{L-2-gradient-w-small-bdry} follows from this estimate and \eqref{L-infty-bar-bdry}. The proposition is proved.
\end{proof}
\subsection{Level set estimates} 
Since cylinders $Q^+_{r,\beta}(z_0)$ defined in \eqref{half cylinder} have the same heights as $Q_{r,\beta}(z_0)$, by a modification of the proof for Lemma \ref{measure density}, we have the following density estimates for the boundary cases.

\begin{lemma}\label{measure density-bdry}
Let $\nu\in (0,1)$ and $M_0\geq 1$. For any $q_0\in(0,1)$, there exist $K=K(n,\nu, M_0)>1$ and a sufficiently small number $\hat \kappa=\hat \kappa(n,\nu,M_0, q_0)\in (0,1)$ such that the following assertion holds. Suppose that $\beta$ satisfies \eqref{beta-cond}, $\bA$ is a matrix satisfying \eqref{ellip-cond} in $Q^+_{2\Lambda, \beta}$ with $\Lambda$  defined in Lemma \ref{quasi-metric lemma}. Moreover, assume that
\begin{equation*}
\begin{aligned} 
 \sup_{r\in (0,1)}\ \sup_{\substack{z=(x,t)\\ \in Q^+_{1,\beta}}}\left(\fint_{Q^+_{r,\beta}(z)}\Theta_{\bA, r, x}^2(y,s) dyds + \frac{1}{\beta(B_{r}(x))} \int_{B_{r}(x)} \Theta_{\beta, r, x}(y)^2 dy\right) \leq \hat \kappa^2,
\end{aligned}
\end{equation*}
and $u\in \hat\W^{1,2}(Q^+_{2\Lambda,\beta},\beta)$ is a weak solution  of \eqref{eqn-bdr-10-4} in $Q^+_{2\Lambda,\beta}$. Then, for any $C_{\rho,\beta}(z_0)$ with $\rho\in(0,\frac{1}{4})$ and $z_0\in Q^+_{1,\beta}$, if 
\begin{equation}\label{nonempty-bdry}
C_{\rho,\beta}(z_0)\cap\left\{Q^+_{1,\beta}: \M_{Q^+_{2\Lambda,\beta}}(|\nabla u|^2)\leq 1\right\}\cap \left\{\M_{Q^+_{2\Lambda,\beta}}(|F|^2)\leq \hat \kappa^2\right\}\neq \emptyset,
\end{equation}
we have
\begin{equation}\label{density-2}
|C_{\rho,\beta}(z_0)\cap\left\{Q^+_{1,\beta}: \M_{Q^+_{2\Lambda,\beta}}(|\nabla u|^2)> K\right\}|\leq q_0 |C_{\rho,\beta}(z_0)|.
\end{equation}
\end{lemma}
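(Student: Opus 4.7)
The plan is to mirror the proof of Lemma \ref{measure density}, replacing the interior approximation Proposition \ref{L2-gradient-comparision} with its boundary analog Proposition \ref{L2-gradient-comparision-bdry}, and being attentive to the fact that the cylinder $C_{\rho,\beta}(z_0)$ centered at $z_0 \in Q^+_{1,\beta}$ may or may not touch the flat boundary $T_{2\Lambda} \times \Gamma_\beta(2\Lambda)$. Because the approximating equation \eqref{bdry-approx} carries the homogeneous Dirichlet condition on $T_r(x_0)\times \Gamma_{\beta,z_0}(r)$ and Lemma \ref{Bdr-Lipschitz-constant} gives the required Lipschitz bound on $\nabla v$ up to the boundary, the same density scheme as in the interior case will close.

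I first fix the parameters. Given $q_0 \in (0,1)$, pick $\epsilon_0 = \epsilon_0(n, M_0, q_0) \in (0,1)$ small enough that the boundary analog of \eqref{choice of epsilon} holds (with a new constant $\tilde N$ coming from the weak-type $(1,1)$ bound and the doubling constants from Lemma \ref{property} applied to upper half balls). For this $\epsilon_0$, let $\delta = \delta(n,\nu,M_0,\epsilon_0)$ and $\kappa = \kappa(n,\nu,M_0,\epsilon_0)$ be the constants of Propositions \ref{L2-gradient-comparision} and \ref{L2-gradient-comparision-bdry}, respectively, and set $\hat\kappa = \tfrac{1}{\sqrt{2}}\min\{\delta,\kappa\}$. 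The constant $K = K(n,\nu,M_0) > 1$ will be taken as in \eqref{choice delta}, using the Lipschitz bound produced by Proposition \ref{L2-gradient-comparision-bdry} in place of \ref{L2-gradient-comparision} when appropriate.

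Next, I set up the dichotomy that distinguishes whether $z_0=(x_0,t_0)\in Q^+_{1,\beta}$ is far from or close to the flat boundary. Write $\hat z_0 = (\hat x_0, \hat t_0)$ where $\hat t_0 = \min\{0,\, t_0 + \tfrac12 (2\rho)^2 \Psi_{\beta,x_0}(2\rho)\}$ as before, but choose $\hat x_0 = x_0$ when $(x_0)_n \geq 4\rho$ (the \emph{interior case}) and $\hat x_0 = (x'_0, 0)$ when $(x_0)_n < 4\rho$ (the \emph{boundary case}). Under the assumption \eqref{nonempty-bdry}, pick $\bar z \in C_{\rho,\beta}(z_0)$ with $\M_{Q^+_{2\Lambda,\beta}}(|\nabla u|^2)(\bar z) \leq 1$ and $\M_{Q^+_{2\Lambda,\beta}}(|F|^2)(\bar z)\leq \hat\kappa^2$. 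Using the quasi-metric triangle inequality \eqref{quasi-tri} and the inclusions \eqref{inclusions}, exactly as in \eqref{inclu-1}, I show that $Q^+_{4\rho,\beta}(\hat z_0) \subset Q^+_{2\Lambda,\beta}$ and $Q^+_{4\rho,\beta}(\hat z_0) \subset C_{c_0\Lambda\rho,\beta}(\bar z)$ for a constant $c_0 = c_0(n,M_0)$ (in the boundary case this uses $|\hat x_0 - x_0| = (x_0)_n \leq 4\rho$ to absorb the horizontal shift). The doubling property from Lemma \ref{property} then yields the analog of \eqref{average N-1}: averages of $|\nabla u|^2$ and $|F|^2$ over $Q^+_{4\rho,\beta}(\hat z_0)$ are bounded by $N_1$ and $N_1\hat\kappa^2$. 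In the boundary case, Proposition \ref{L2-gradient-comparision-bdry} applied to a suitable rescaling of $u$ produces $v \in \hat{\W}^{1,2}(Q^+_{4\rho,\beta}(\hat z_0))$ solving \eqref{bdry-approx} with $\|\nabla v\|_{L^\infty(Q^+_{3\rho,\beta}(\hat z_0))} \leq N_2$ and the $L^2$-closeness $\|\nabla u - \nabla v\|^2_{L^2(Q^+_{2\rho,\beta}(\hat z_0))} \leq 2N_1\epsilon_0^2 |Q^+_{2\rho,\beta}(\hat z_0)|$; in the interior case Proposition \ref{L2-gradient-comparision} does the same work without having to see the boundary.

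With $v$ in hand, the main claim \eqref{claim} extends verbatim: the small-scale case $\tau < \alpha_0\rho$ uses the triangle inequality $|\nabla u|^2 \leq 2|\nabla u - \nabla v|^2 + 2|\nabla v|^2$ together with the $L^\infty$ bound on $\nabla v$, while the large-scale case $\tau \geq \alpha_0 \rho$ reduces to $\M_{Q^+_{2\Lambda,\beta}}(|\nabla u|^2)(\bar z) \leq 1$ via the doubling estimate. The weak-type $(1,1)$ inequality for the Hardy--Littlewood maximal function then gives the density bound \eqref{density-2} for $\epsilon_0$ chosen as above. The principal obstacle I anticipate is keeping the geometric bookkeeping honest through the dichotomy: one must verify that in the boundary case the shifted reference point $\hat z_0$ still produces chains of inclusions of the form $C_{\rho,\beta}(z_0)\cap Q^+_{1,\beta} \subset Q^+_{2\rho,\beta}(\hat z_0)$ and $Q^+_{4\rho,\beta}(\hat z_0) \subset C_{c_0\Lambda\rho,\beta}(\bar z)$ with constants depending only on $n$ and $M_0$, and that the doubling quantifiers for $\beta^{n_0/2}$ from Lemma \ref{property} hold on upper half balls; apart from this the structure of the proof is identical to Lemma \ref{measure density}.
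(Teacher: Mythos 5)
Your outline has the right ingredients---Proposition \ref{L2-gradient-comparision-bdry}, Lemma \ref{Bdr-Lipschitz-constant}, and the quasi-metric geometry of Lemma \ref{quasi-metric lemma}---and the overall density scheme is the correct one, but the route you take is genuinely different from the paper's.

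You propose to re-run the entire argument of Lemma \ref{measure density} directly, replacing the reference center by a shifted point $\hat z_0$ that sits on the flat boundary whenever $z_0$ is within distance $4\rho$ of it, and then re-proving the analog of the claim \eqref{claim} with the off-boundary cylinder $C_{\rho,\beta}(z_0)$ on the left while the approximating solution $v$ lives on the boundary-centered upper cylinder $Q^+_{4\rho,\beta}(\hat z_0)$. You flag the geometric bookkeeping as the principal obstacle and you are right to: as written, the radii in your sketch are off. If $(x_0)_n < 4\rho$ and $\hat x_0 = (x_0',0)$ then one only has $B_\rho(x_0)\subset B_{5\rho}(\hat x_0)$, so the inclusion $C_{\rho,\beta}(z_0)\cap Q^+_{1,\beta}\subset Q^+_{2\rho,\beta}(\hat z_0)$ you write down is false, and the whole chain of inclusions (analogs of \eqref{inside property} and \eqref{inclu-1}) has to be inflated to something like $Q^+_{12\rho,\beta}(\hat z_0)$, after which the $L^\infty$-bound on $\nabla v$ from Lemma \ref{Bdr-Lipschitz-constant} has to be arranged on a correspondingly larger set. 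This can all be made to work, but it means re-verifying each inclusion in the two-case argument for \eqref{different tau} from scratch.

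The paper sidesteps all of this by a reduction trick you did not consider. It first establishes the density estimate for cylinders $C_{\rho,\beta}(\bar z_0)$ centered \emph{on} the flat boundary, but with a deliberately \emph{stronger} right-hand constant, $3^{-2n-2}M_0^{-1}q_0$ instead of $q_0$ (this is \eqref{special claim}). A general center $z_0$ with $x_0^n < 2\rho$ is then handled by projecting to $\bar z_0 = ((x_0',0),t_0)$, using $B_\rho(x_0)\subset B_{3\rho}(\bar x_0)$ so that $C_{\rho,\beta}(z_0)\subset C_{3\rho,\beta}(\bar z_0)$, and applying the measure comparison $|C_{3\rho,\beta}(\bar z_0)|\leq 3^{2n+2}M_0\,|C_{\rho,\beta}(z_0)|$ supplied by Lemma \ref{property}-(iii); the strengthened constant in the special case is exactly what absorbs this inflation factor. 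For $x_0^n\geq 2\rho$, the full cylinder $Q_{2\rho,\beta}(z_0)$ sits in the interior and the result is Lemma \ref{measure density} applied after a translation and dilation, with no new geometry needed. Your approach arrives at the same conclusion, but at the cost of re-proving the covering-and-inclusion argument in the boundary setting; the paper's strategy of proving a sharpened boundary-centered estimate once and then absorbing the measure ratio is lighter and avoids the very bookkeeping you flag as the source of difficulty.
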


\smallskip
\begin{proof}

\smallskip
To begin with, we consider a special case of $C_{\rho,\beta}(\bar z_0)$ with $\bar z_0\in T_1\times \Gamma_{\beta}(1)$. Following a similar way as the proof of Lemma \ref{measure density}, we can show that there exist constants $K=K(n,\nu, M_0)>1$ and $\hat \kappa=
\hat\kappa(n,\nu,M_0, q_0)\in (0,1)$ small such that if \eqref{nonempty-bdry} holds for $C_{\rho,\beta}(\bar z_0)$ in place of $C_{\rho,\beta}(z_0)$, then
\begin{equation}\label{special claim}
|C_{\rho,\beta}(\bar z_0)\cap\left\{Q^+_{1,\beta}: \M_{Q^+_{2\Lambda,\beta}}(|\nabla u|^2)> K\right\}|\leq 3^{-2n-2}M_0^{-1}q_0 |C_{\rho,\beta}(\bar z_0)|.
\end{equation}
\smallskip
Based on \eqref{special claim}, we now prove \eqref{density-2} for a general $C_{\rho,\beta}(z_0)$ satisfying \eqref{nonempty-bdry} and with $z_0\in Q_{1,\beta}^+$. Let us denote 
\[
z_0=(x_0, t_0)\in \R^{n}\times \R \quad \text{with}\quad x_0=(x_0',x_0^n)\in \R^{n-1}\times (0,\infty).
\]
We consider two cases when $x_0^n\geq 2\rho$ and when $x_0^n < 2\rho$.

\smallskip \noindent
{\bf Case 1}: $x_0^n\geq 2\rho$. Since $\rho\in (0,\frac{1}{4})$ and by the triangle inequality, we have
\[
B_{2\rho}(x_0)\subset B_{2\Lambda}^+\quad \text{and}\quad Q_{2\rho,\beta}(z_0)\subset Q_{2\Lambda}^+.
\]
Then, we restrict the equation to $Q_{2\rho,\beta}(z_0)$. After taking a linear translation and scaling, we verify that the conditions of Lemma \ref{measure density} are satisfied. Thus, \eqref{density-2} follows from Lemma \ref{measure density} and its proof.

\smallskip \noindent
{\bf Case 2}: $x_0^n\leq 2\rho$. We can find a point $\bar z_0=(\bar x_0, t_0)\in T_1\times \Gamma_{\beta}(1)$ with $\bar x_0=(x_0',0)$ so that $B_{\rho}(x_0)\subset B_{3\rho}(\bar x_0)$. Due to this, the condition \eqref{nonempty-bdry} is also satisfied for $C_{3\rho,\beta}(\bar z_0)$ in place of $C_{\rho,\beta}(z_0)$. 
Moreover, by $\textup{(iii)}$ of Lemma \ref{property}, we note that  
\begin{equation}\label{cylinders  measure control}
|C_{3\rho,\beta}(\bar z_0)|=3^n(1-A)^{-\frac{2}{n}}|C_{\rho,\beta}(z_0)|
\leq 3^{2n+2}M_0|C_{\rho,\beta}(z_0)|,
\end{equation}
where
\[
A=\left\{
   \begin{aligned}
\frac{\beta^{\frac{n}{2}}\big(B_{3\rho}(\bar x_0)\setminus B_{\rho}(x_0)\big)}{\beta^{\frac{n}{2}}\big(B_{3\rho}(\bar x_0)\big)}&\leq 1-3^{-n-\frac{n^2}{2}}M_0^{-\frac{n}{2}}\quad &&\text{for}\quad n\geq 3,\\
\frac{\beta\big(B_{3\rho}(\bar x_0)\setminus B_{\rho}(x_0)\big)}{\beta\big(B_{3\rho}(\bar x_0)\big)}&\leq 1-3^{-n-\frac{n^2}{2}}M_0^{-\frac{n}{2}}\quad &&\text{for}\quad n=1,\, 2.
   \end{aligned}\right.
\]
Thus, \eqref{density-2} follows from \eqref{special claim} and \eqref{cylinders  measure control}, as
\begin{align*}
|C_{\rho,\beta}(z_0)\cap\big\{Q^+_{1,\beta}: \M_{Q^+_{2,\beta}}(|\nabla u|^2)> K\big\}|
&\leq |C_{3\rho,\beta}(\bar z_0)\cap\big\{Q^+_{1,\beta}: \M_{Q^+_{2,\beta}}(|\nabla u|^2)> K\big\}| \\
&\leq 3^{-2n-2}M_0^{-1}q_0 |C_{3\rho,\beta}(\bar z_0)|\\
&\leq q_0 |C_{\rho,\beta}(z_0)|.
\end{align*}
\smallskip
Therefore, \eqref{density-2} holds. The proof of the lemma is completed.
\end{proof}

\smallskip
As a consequence of Lemma \ref{measure density-bdry}, we obtain the following corollary.
\begin{corollary}\label{corollary-1-bdry}
Assume that the assumptions of Lemma \ref{measure density-bdry} hold. If 
\[
|C_{\rho,\beta}(z_0)\cap\left\{Q^+_{1,\beta}:\ \M_{Q^+_{2\Lambda,\beta}}(|\nabla u|^2)> K\right\}|> q_0 |C_{\rho,\beta}(z_0)|,
\]
then
\[
C_{\rho,\beta}(z_0)\cap Q^+_{1,\beta}\subset\left\{Q^+_{1,\beta}: \M_{Q^+_{2\Lambda,\beta}}(|\nabla u|^2)> 1\right\}\cup\left\{\M_{Q^+_{2\Lambda,\beta}}(|F|^2)> \hat \kappa^2\right\}.
\]
\end{corollary}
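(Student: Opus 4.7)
The plan is to prove Corollary \ref{corollary-1-bdry} as the direct contrapositive of Lemma \ref{measure density-bdry}, exactly mirroring the argument that derives Corollary \ref{corollary-1} from Lemma \ref{measure density} in the interior case. No new analytic ingredient is needed; the corollary is a logical reformulation.

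First I would argue by contradiction. Suppose the set inclusion in the conclusion fails. Then there exists a point
\[
z_* \in C_{\rho,\beta}(z_0)\cap Q^+_{1,\beta}
\]
which lies in neither $\{\M_{Q^+_{2\Lambda,\beta}}(|\nabla u|^2)>1\}$ nor $\{\M_{Q^+_{2\Lambda,\beta}}(|F|^2)>\hat\kappa^2\}$. Equivalently,
\[
\M_{Q^+_{2\Lambda,\beta}}(|\nabla u|^2)(z_*)\le 1 \quad \text{and}\quad \M_{Q^+_{2\Lambda,\beta}}(|F|^2)(z_*)\le \hat\kappa^2.
\]
In particular, $z_*$ witnesses that the intersection in hypothesis \eqref{nonempty-bdry} of Lemma \ref{measure density-bdry} is nonempty for the cylinder $C_{\rho,\beta}(z_0)$.

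Next I would invoke Lemma \ref{measure density-bdry} directly. Since $\rho\in(0,\tfrac14)$, $z_0\in Q^+_{1,\beta}$, and all standing assumptions on $\beta$, $\bA$, and $u$ are those of the corollary (and hence of the lemma), the conclusion \eqref{density-2} applies and yields
\[
|C_{\rho,\beta}(z_0)\cap\{Q^+_{1,\beta}:\ \M_{Q^+_{2\Lambda,\beta}}(|\nabla u|^2)>K\}|\le q_0\,|C_{\rho,\beta}(z_0)|.
\]
This directly contradicts the hypothesis of the corollary, and the contradiction completes the argument.

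Since the statement is a pure contrapositive restatement, there is no genuine obstacle to overcome; the only thing to verify carefully is that the hypotheses of Lemma \ref{measure density-bdry} are inherited verbatim by the corollary (same $K$, same $\hat\kappa$, same $q_0$, same weight and coefficient conditions), so that the application is legitimate. Once this bookkeeping is checked, the proof reduces to the two-line argument above.
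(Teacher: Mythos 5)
Your proof is correct and is exactly the intended argument: the corollary is the contrapositive of Lemma \ref{measure density-bdry}, and the paper presents it (as in the interior case, Corollary \ref{corollary-1}) without proof precisely because it is this two-line logical reformulation. The bookkeeping check you flag — that $K$, $\hat\kappa$, $q_0$, and the standing hypotheses are the same — is the only content, and you handle it properly.
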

By adapting the proof in Lemma \ref{covering}, we have the covering lemma on the boundary.
\begin{lemma}[Krylov-Safonov covering lemma]\label{covering-bdry} Let $M_0 \geq 1$ and $\beta$ satisfy  \eqref{beta-cond}. Then, for every $q_0\in (0,1)$, there is $\gamma_2 = \gamma_2(n, M_0)>0$ such that the following assertions hold. Suppose that $S\subset Q^+_{1,\beta}$ is an open measurable set with $|S|\leq q_0|Q^+_{1,\beta}|$. In addition, we denote by the family of weighted cylinders
\[
\A=\big\{C_{\rho,\beta}(z):\ z\in Q_{1,\beta}^+,\ \rho>0,\ |C_{\rho,\beta}(z)\cap S|> q_0|C_{\rho,\beta}(z)|\ \big\}.
\]
Then, the open set 
\begin{equation}\label{def-E-bdry}
E^+ = \bigcup_{C_{\rho,\beta}(z) \in \mathcal{A}} C_{\rho,\beta}(z) \cap Q^+_{1,\beta}
\end{equation}
satisfies
\begin{equation*}
|S\setminus E^+|=0 \quad \text{and}\quad |S|\leq \gamma_2 q_0|E^+|.
\end{equation*}
\end{lemma}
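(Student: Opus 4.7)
The plan is to adapt the Vitali--Calder\'on--Zygmund (Krylov--Safonov growth) argument used in Appendix~\ref{Appendix B} to prove the interior version (Lemma~\ref{covering}), modifying only what is needed near the flat boundary $T_1 \times \Gamma_\beta(1)$. The essential structural inputs are: (i) $(\R^n \times \R,\, \rho_\beta,\, dx\,dt)$ is a space of homogeneous type with respect to the Vitali basis $\{C_{\rho,\beta}(z)\}_{\rho > 0}$, by Lemma~\ref{quasi-metric lemma} and Lemma~\ref{property}; and (ii) for $z \in Q_{1,\beta}^+$, each cylinder $C_{\rho,\beta}(z)$ still contains a uniformly controlled fraction of its mass in $Q_{1,\beta}^+$.

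\smallskip
I would first verify $|S \setminus E^+| = 0$ by Lebesgue differentiation on the basis $\{C_{\rho,\beta}(z)\}$: since $S$ is open and $q_0 < 1$, at a.e.\ $\bar z \in S$ the density $|C_{\rho,\beta}(\bar z) \cap S|/|C_{\rho,\beta}(\bar z)| \to 1$ as $\rho \to 0^+$, placing $\bar z$ in some element of $\A$ and hence in $E^+$. For the measure estimate, I would follow the Krylov--Safonov growth argument in its Vitali form. For each $\bar z \in S$, set
\[
\rho^*(\bar z) = \sup\bigl\{\rho > 0 : |C_{\rho,\beta}(\bar z) \cap S| > q_0 |C_{\rho,\beta}(\bar z)|\bigr\}.
\]
The hypothesis $|S| \leq q_0 |Q_{1,\beta}^+|$ forces $|C_{\rho^*(\bar z),\beta}(\bar z)| \leq q_0^{-1}|S| \leq |Q_{1,\beta}^+|$, which combined with doubling bounds $\rho^*(\bar z) \leq R_*$ uniformly for some $R_* = R_*(n, M_0)$. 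Select $\rho(\bar z) \in (\rho^*(\bar z)/2, \rho^*(\bar z)]$ with $C_{\rho(\bar z),\beta}(\bar z) \in \A$ (possible by the definition of the supremum), and extract a disjoint subfamily $\{C_i := C_{\rho_i,\beta}(z_i)\}$ from $\{C_{\rho(\bar z),\beta}(\bar z)\}_{\bar z \in S}$ via a Vitali-type argument (only the quasi-triangle inequality of Lemma~\ref{quasi-metric lemma} and doubling are used) such that $S \subset \bigcup_i C_{\kappa \rho_i,\beta}(z_i)$ for some $\kappa = \kappa(n, M_0) > 2$. Since $\kappa \rho_i > \rho^*(z_i)$, the supremum property gives $|C_{\kappa \rho_i,\beta}(z_i) \cap S| \leq q_0 |C_{\kappa \rho_i,\beta}(z_i)|$, and one more application of doubling yields
\[
|S| \leq \sum_i |C_{\kappa \rho_i,\beta}(z_i) \cap S| \leq q_0 \sum_i |C_{\kappa \rho_i,\beta}(z_i)| \leq N_1 q_0 \sum_i |C_i|,
\]
with $N_1 = N_1(n, M_0)$.

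\smallskip
The \emph{main obstacle}, and the only substantive departure from the interior case, is the final comparison $\sum_i |C_i| \leq c^{-1}|E^+|$. Since $\{C_i\}$ is disjoint and each $C_i \in \A$, we have $C_i \cap Q_{1,\beta}^+ \subset E^+$, so it suffices to prove a uniform lower bound $|C_i \cap Q_{1,\beta}^+| \geq c\,|C_i|$ for some $c = c(n, M_0) > 0$. Writing $z_i = (x_i, t_i)$ with $x_i = (x_i', x_i^n) \in B_1^+$, the strict inequality $x_i^n > 0$ immediately gives $|B_{\rho_i}(x_i) \cap \{y_n > 0\}| \geq \tfrac{1}{2}|B_{\rho_i}(x_i)|$ for every $\rho_i > 0$. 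Since $\rho_i \leq R_*$, Lemma~\ref{property}(iii), applied spatially to control $|B_{\rho_i}(x_i) \setminus B_1|$ and in the time direction (through the weight $\beta^{n_0/2}$ in the definition of $\Psi_{\beta, x_i}$) to control $\Gamma_{\beta,z_i}(\rho_i) \setminus \Gamma_\beta(1)$, supplies the required constant $c$. Taking $\gamma_2 = N_1/c$ completes the proof.
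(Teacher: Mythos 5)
Your overall framework — Vitali selection of disjoint cylinders with density $q_0$, Lebesgue differentiation to get $|S\setminus E^+|=0$, and the comparison $|C_i\cap Q_{1,\beta}^+|\geq c|C_i|$ followed by the doubling inequalities — mirrors exactly what the paper does (and indicates it does) by "adapting the proof of Lemma~\ref{covering}," so the architecture is correct and you have located the genuinely new ingredient (the comparison with $Q^+_{1,\beta}$ instead of $Q_{1,\beta}$).

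However, your argument for that new ingredient has a real gap. First, Lemma~\ref{property}(iii) is a statement about the $\overline\beta$-measure of nested \emph{measurable} sets and has nothing to do with the Euclidean ratio $|B_{\rho_i}(x_i)\cap B_1|/|B_{\rho_i}(x_i)|$. In the interior proof, the spatial estimate \eqref{spatial} is obtained by a purely geometric containment (move $x_0$ a distance $\sim\min(r,1)$ toward the origin to find a ball $B_{cr}(y)\subset B_r(x_0)\cap B_1$), while the weight enters only in the time direction (Case~2); your proposal misattributes the spatial bound to doubling. Second, and more seriously, the subtraction you sketch cannot work: if one only knows $|B_{\rho_i}(x_i)\cap\{y_n>0\}|\geq\tfrac12|B_{\rho_i}(x_i)|$ and $|B_{\rho_i}(x_i)\setminus B_1|\leq(1-\alpha)|B_{\rho_i}(x_i)|$, then $|B_{\rho_i}(x_i)\cap B_1^+|\geq(\alpha-\tfrac12)|B_{\rho_i}(x_i)|$, which is nonpositive whenever $\alpha\leq\tfrac12$. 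The constant $\alpha$ that the interior argument actually provides is of size $(8\Lambda)^{-n}\ll\tfrac12$ once $\rho_i$ is of order $1$ (and recall $\rho_i$ may range up to $R_*=4\Lambda$), so the "half the ball above the hyperplane, then subtract the exterior of $B_1$" route gives nothing. The fix is to prove the corkscrew estimate $|B_r(x_0)\cap B_1^+|\geq c(n,\Lambda)\,|B_r(x_0)|$ directly, exactly in the spirit of \eqref{spatial}: for $x_0\in B_1^+$ and $r\in(0,4\Lambda]$, move $x_0$ along the segment toward a fixed interior point of $B_1^+$ (e.g.\ $\tfrac12 e_n$) a distance $\sim\min(r,1)$; one then finds a ball $B_{c\min(r,1)}(y)\subset B_r(x_0)\cap B_1^+$, which yields the desired lower bound with $c=c(n,\Lambda)$. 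The time-direction comparison then carries over verbatim from Cases~1 and~2 of the interior proof, since $C_{\rho,\beta}(z)$ and $Q_{1,\beta}^+$ share the same time intervals as in the interior case.
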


Based on this covering lemma and Corollary \ref{corollary-1-bdry}, we derived the following lemma, which provides decay estimates of the measure of the level-sets of $\M_{Q^+_{2,\beta}}(|\nabla u|^2)$.

\begin{lemma}\label{measure decay lemma-bdry} Suppose that the assumptions in Lemma \ref{measure density-bdry} hold. 
Let $K$ be the constant from Lemma \ref{measure density-bdry}, and define
\[
S=\left\{\, Q_{1,\beta}^+ : \ \M_{Q_{2\Lambda,\beta}^+}(|\nabla u|^2)> K \,\right\}.
\]
If $|S|\leq q_0|Q_{1,\beta}^+|$, then for all $m\in \N$,  
\begin{equation}
\begin{aligned}\label{measure decay-bdry}
|\big\{Q^+_{1,\beta}: \M_{Q^+_{2,\beta}}(|\nabla u|^2)  > K^m\big\}| \leq  &l_0^{m}|\big\{Q^+_{1,\beta}: \M_{Q^+_{2,\beta}}(|\nabla u|^2)> 1\big\}|\\
&+\sum_{i=1}^ml_0^{i}|\big\{Q^+_{1,\beta}: \M_{Q^+_{2,\beta}}(|F|^2)> K^{m-i}\hat \kappa^2\big\}|,
\end{aligned}
\end{equation}
where $l_0=\gamma_2q_0<1$ with $\gamma_2 = \gamma_2(n, M_0)$ defined in Lemma \ref{covering-bdry}.
\end{lemma}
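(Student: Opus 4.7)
The plan is to mirror the proof of Lemma~\ref{measure decay lemma}, substituting its interior ingredients with the boundary analogs Corollary~\ref{corollary-1-bdry} and Lemma~\ref{covering-bdry}. The argument proceeds by induction on $m$, with the $m=1$ case handled directly and the inductive step obtained via a linear scaling of the PDE.

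For the base case $m=1$: since $|S|\leq q_0|Q^+_{1,\beta}|$, apply Lemma~\ref{covering-bdry} to $S$ to produce the family $\mathcal{A}$ and the set $E^+$ of \eqref{def-E-bdry}, with $|S|\leq \gamma_2 q_0 |E^+| = l_0 |E^+|$. For every $C_{\rho,\beta}(z)\in\mathcal{A}$, the defining property $|C_{\rho,\beta}(z)\cap S|>q_0|C_{\rho,\beta}(z)|$ activates Corollary~\ref{corollary-1-bdry}, giving
\[
C_{\rho,\beta}(z) \cap Q^+_{1,\beta} \subset \{\M_{Q^+_{2\Lambda,\beta}}(|\nabla u|^2) > 1\} \cup \{\M_{Q^+_{2\Lambda,\beta}}(|F|^2) > \hat{\kappa}^2\}.
\]
Taking the union over $\mathcal{A}$ and using the measure bound on $|E^+|$ yields \eqref{measure decay-bdry} for $m=1$.

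For the inductive step, fix $m\geq 1$ and set $\tilde u = u/K^{m/2}$ and $\tilde F = F/K^{m/2}$. By linearity of \eqref{eqn-bdr-10-4}, $\tilde u$ is a weak solution with forcing $\tilde F$, and since the smallness, ellipticity, and weight hypotheses of Lemma~\ref{measure density-bdry} depend only on $\bA$ and $\beta$, they remain in force for $\tilde u$. Because the level sets of $\M(|\nabla u|^2)$ are nested,
\[
|\{\M_{Q^+_{2\Lambda,\beta}}(|\nabla\tilde u|^2) > K\}| = |\{\M_{Q^+_{2\Lambda,\beta}}(|\nabla u|^2) > K^{m+1}\}| \leq |S| \leq q_0|Q^+_{1,\beta}|,
\]
so the base case applies to $\tilde u$ with forcing $\tilde F$. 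Translating back to $u$ and $F$ gives the one-step recursion
\[
h_u(K^{m+1}) \leq l_0\, h_u(K^m) + l_0\,|\{\M_{Q^+_{2\Lambda,\beta}}(|F|^2)>K^m\hat{\kappa}^2\}|,
\]
where $h_u(s) = |\{Q^+_{1,\beta}:\M_{Q^+_{2\Lambda,\beta}}(|\nabla u|^2)>s\}|$. Iterating this recursion from $h_u(K)$ and reindexing the forcing terms by $i \mapsto m-i$ yields the telescoping sum \eqref{measure decay-bdry}.

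The main technical point is not a genuine obstacle but rather a bookkeeping check: the scaling invariance of the density estimate Lemma~\ref{measure density-bdry} must carry through at every level $K^m$. Because $\bA$, $\beta$, and the oscillation smallness constants are independent of $u$ and $F$, and the nesting of level sets automatically propagates the smallness hypothesis on $|S|$, no additional structural work is required beyond what has already been absorbed into the boundary density estimate and the boundary covering lemma.
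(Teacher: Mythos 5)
Your proposal is correct and follows the same strategy as the paper: the base case is obtained by combining the boundary covering lemma (Lemma~\ref{covering-bdry}) with Corollary~\ref{corollary-1-bdry}, and the general case follows by induction after rescaling $u$ and $F$. Your scaling $u\mapsto u/K^{m/2}$, $F\mapsto F/K^{m/2}$ is in fact the precise factor needed so that $\M(|\nabla u|^2)$ gets divided by $K^m$ (the paper's abbreviated ``$u\mapsto u/K$'' should be read as dividing $|\nabla u|^2$ by $K$ at each step), and your nesting observation $\{\M_{Q^+_{2\Lambda,\beta}}(|\nabla\tilde u|^2)>K\}\subset S$ correctly propagates the smallness hypothesis $|S|\leq q_0|Q^+_{1,\beta}|$ to the rescaled solution.
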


\begin{proof}   
For $m=1$, let
$E^+$ be defined by \eqref{def-E-bdry}.
As the condition $|S|\leq q_0|Q_{1,\beta}^+|$ in the statement of Lemma \ref{covering-bdry} is satisfied, it then follows from Corollary \ref{corollary-1-bdry} that  
\[
E^+\subset \left\{Q^+_{1,\beta}:\ \M_{Q^+_{2\Lambda,\beta}}(|\nabla u|^2)> 1\right\}\cup\left\{Q^+_{1,\beta}:\ \M_{Q^+_{2\Lambda,\beta}}(|F|^2)> \hat \kappa^2\right\},
\]
and thus, \eqref{measure decay-bdry} follows from Lemma \ref{covering-bdry}. 

\smallskip
For general $m\in \N$, the assertion \eqref{measure decay-bdry} is proved by an induction argument with the scaling $u \mapsto u/K$ and $F \mapsto F/K$.
\end{proof}

\subsection{Proof of Theorem \ref{inter-theorem-bdry}.} Theorem \ref{inter-theorem-bdry} follows from a similar argument as in the proof of Theorem \ref{inter-theorem}, by using Lemma \ref{bdr-improved-Caccio-1}, the PDE \eqref{eqn-bdry}, Lemma \ref{measure density-bdry}, Lemma \ref{covering-bdry}, and Lemma \ref{measure decay lemma-bdry} in place of the corresponding ones in Section \ref{interior section}. Thus, we omit the proof.

\section{Global regularity estimates and proof of Theorem \ref{main-theorem}} \label{global-section} For $p \in (1, \infty)$, we denote by $\W^{1,p}_0(\Omega_T, \beta)$ the weighted Sobolev space consisting of functions $u \in L^p((0, T), W^{1,p}_0(\Omega))$ such that
\[
\beta(x) u_t \in L^p((0, T), W^{-1, p}(\Omega)).
\]
The space $\W^{1,p}_{0} (\Omega_T, \beta)$ is endowed with the norm
\begin{equation}\label{norm-W1p}
\|u\|_{\W^{1,p}_{0} (\Omega_T, \beta)} = \|u\|_{L^p((0, T), W^{1,p}_0(\Omega))} + \| \beta u_t\|_{L^p((0, T), W^{-1, p}(\Omega))}, 
\end{equation}
for $u \in \W^{1,p}_{0} (\Omega_T, \beta)$. Note that, unlike $\W^{1,p}_{*} (\Omega_T, \beta)$, the space $\W^{1,p}_{0} (\Omega_T, \beta)$ does not require its functions to be $0$ when $t=0$. A function $u \in \W^{1,p}_{0} (\Omega_T, \beta)$ is said to be a weak solution to the equation
\begin{equation} \label{main-eqn-2}
\left\{
\begin{aligned}
\beta(x) u_t - \textup{div}(\bA(x,t) \nabla u) &= \textup{div}(F) \quad  &&\text{in} \quad \Omega_T,\\
u &= 0  \quad &&\text{on} \quad \partial\Omega\times (0, T],
\end{aligned} \right.
\end{equation}
if
\begin{align*}
& -\int_{\Omega_T} \beta(x) u(x,t) \partial_t \varphi(x,t) dx dt + \int_{\Omega_T} \wei{\bA(x,t) \nabla u(x,t), \nabla \varphi (x,t)} dx dt \\
& = -\int_{\Omega_T} \wei{F(x,t), \nabla \varphi(x,t)} dx dt, \quad \forall \varphi \in C^\infty_0(\Omega_T).
\end{align*}
\smallskip
We note that there is no initial condition imposed on \eqref{main-eqn-2} at $t=0$. 

\smallskip
In this section, we first prove Theorem \ref{main-thm-2} below, which constitutes the main step in proving Theorem \ref{main-theorem}. This theorem provides spatial global regularity estimates for weak solutions to \eqref{main-eqn-2} without imposing any condition on the initial data.

\begin{theorem}\label{main-thm-2}Let $\nu \in (0,1)$, $M_0 \geq 1$, and $p \in [2, \infty)$. Then, there exists a constant $\delta = \delta (n, \nu, p, M_0) \in (0,1)$ sufficiently small such that the following assertions hold. Suppose that  \eqref{ellip-cond}, \eqref{beta-cond} hold, $\partial \Omega \in C^1$, and
\begin{equation}\label{small delta}
\Theta_{\bA}(R_0) + \Theta_{\beta} (R_0)< \delta
\end{equation}
for some $R_0 \in (0,1)$. Suppose also that $u \in \W^{1,2}_{0} (\Omega_T, \beta)$ is a weak solution to \eqref{main-eqn-2} with $F \in L^p(\Omega_T)^n$. Then, for any $T_0 \in (0, T)$, we have $u \in \W^{1,p}_0(\Omega\times (T_0, T),\beta)$,  and there exists a constant $N = N(n, \nu, p, M_0, R_0, \Omega, T_0, T) >0$ such that
\begin{equation} \label{global-time-cut-est}
\|u\|_{\W^{1,p}_0(\Omega\times (T_0, T),\beta)}\leq N\Big[ \|u\|_{L^2(\Omega_T)} + \|F\|_{L^p(\Omega_T)} \Big].
\end{equation}
\end{theorem}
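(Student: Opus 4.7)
The plan is to reduce Theorem \ref{main-thm-2} to the interior estimate (Theorem \ref{inter-theorem}) and the flat-boundary estimate (Theorem \ref{inter-theorem-bdry}) by means of three standard ingredients: a smooth time cutoff that kills the value at $t=0$, a finite covering of $\overline{\Omega}$ with a subordinate partition of unity, and a $C^1$ boundary straightening in each boundary patch. First I would pick $\eta \in C^\infty(\mathbb{R})$ with $\eta \equiv 1$ on $[T_0,T]$ and $\eta \equiv 0$ on $(-\infty, T_0/2]$, and study $\tilde u = \eta u$, which belongs to $\W^{1,2}_0(\Omega_T,\beta)$, vanishes near $t=0$, and satisfies
\[
\beta\, \tilde u_t - \operatorname{div}(\bA \nabla \tilde u) = \operatorname{div}(\eta F) + \beta\,\eta'(t)\,u \quad \text{in } \Omega_T.
\]
The extra non-divergence term $\beta\eta' u$ can be rewritten as $\operatorname{div}(G)$ through the Bogovskii operator with $\|G\|_{L^p}\leq N\|\eta' u\|_{L^p}$ (possibly after subtracting its spatial mean in each time slice), and the $L^p$-norm of $\eta' u$ is controlled by $\|u\|_{L^2(\Omega_T)}$ via the weighted embedding in Lemma \ref{embedd-lemma} together with the basic energy estimate for $u$.

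Next I would fix a finite cover $\overline{\Omega} \subset \bigcup_{i=1}^{N} B_{r_i}(x_i)$ with $r_i$ small enough that either $B_{2\Lambda r_i}(x_i) \Subset \Omega$ (interior patch) or there is a $C^1$ diffeomorphism $\Phi_i$ flattening $\partial\Omega \cap B_{r_i}(x_i)$ onto a piece of $\{x_n=0\}$ (boundary patch), together with a subordinate partition of unity $\{\zeta_i\}$. For each $i$, $w_i = \zeta_i \tilde u$ satisfies an equation of the same form with forcing enlarged by commutator terms $\bA \nabla \zeta_i \cdot \nabla \tilde u$ and $\operatorname{div}(\tilde u\, \bA\nabla \zeta_i)$; these are handled as divergence-form contributions of $L^p$ quantities by the Bogovskii construction. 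Interior patches are then controlled directly by Theorem \ref{inter-theorem}, after translating and using the intrinsic weighted scaling of Lemma \ref{scaling-lemma}. For a boundary patch, the pull-back of \eqref{main-eqn-2} by $\Phi_i$ produces an equation of the same form with $\tilde\beta = (\beta\circ\Phi_i)|\det D\Phi_i|$ and a transformed matrix $\tilde\bA$ still satisfying \eqref{ellip-cond}; since $\Phi_i \in C^1$ is uniformly close to its affine tangent at small scales, $\tilde\beta^{-1}$ continues to lie in $A_{1+2/n_0}$ with a comparable Muckenhoupt constant, and the partial mean oscillation of $\tilde\bA$ together with the weighted oscillation of $\tilde\beta$ is bounded by a multiple of $\delta$ provided $\delta$ is chosen small enough depending on $\|\Phi_i\|_{C^1}$. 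Theorem \ref{inter-theorem-bdry} then yields the local $\hW^{1,p}$-estimate on the half-cylinder, which pushes forward via $\Phi_i^{-1}$ to a local $\W^{1,p}$-estimate on the boundary patch. Summing over $i$ gives \eqref{global-time-cut-est}.

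The main obstacle is checking that the smallness conditions \eqref{small delta}, which are intrinsically tied to the non-homogeneous weighted cylinders $Q_{r,\beta}$ of Section \ref{cylinder-distance-sec} with heights $r^2 \Psi_{\beta,x_0}(r)$, survive the pull-back by the boundary chart $\Phi_i$. Concretely, one must compare $\Psi_{\tilde\beta,y_0}(r)$ with $\Psi_{\beta,\Phi_i(y_0)}(r)$ and verify that the quasi-distance $\rho_{\tilde\beta}$ is equivalent to the pull-back of $\rho_\beta$ with constants depending only on $\|\Phi_i\|_{C^1}$, so that the oscillation functionals $\Theta_{\tilde\bA}$ and $\Theta_{\tilde\beta}$ remain small. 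The doubling properties in Lemma \ref{property} and the quasi-triangle inequality of Lemma \ref{quasi-metric lemma} reduce this to a calculation, but it must be executed carefully because $\beta$ is only assumed to be Muckenhoupt and may be very singular or degenerate at individual points, so no pointwise bounds on $\beta$ are available. A secondary, more routine difficulty is bootstrapping from $\W^{1,2}$ to $\W^{1,p}$ through the commutator terms, which is achieved by iterating the local estimates over a finite sequence of exponents between $2$ and $p$.
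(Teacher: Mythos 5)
Your overall strategy — boundary flattening to reduce to Theorem \ref{inter-theorem-bdry} plus interior covering with Theorem \ref{inter-theorem} — agrees with the paper's. But you have inserted two extra devices, a time cutoff $\eta$ and a partition of unity $\{\zeta_i\}$, both of which the paper avoids and at least one of which opens a genuine gap. The time cutoff introduces the source term $\beta(x)\eta'(t)u(x,t)$, and you propose to convert it to $\operatorname{div}(G)$ with $\|G\|_{L^p}$ under control via Bogovskii. This does not work here. Any such decomposition requires $\beta\eta'u$ to lie in $L^p$ or at least to be controlled in $L^p((0,T),W^{-1,p}(\Omega))$. Since $p\geq 2$ implies $p'\leq 2$, and $\beta$ is only a Muckenhoupt weight (so $\beta$ need not belong to $L^p$ for $p>1$, and $W^{1,p'}\hookrightarrow L^2(\Omega,\beta)$ fails when $p'<2$), one cannot bound $\langle\beta u,\varphi\rangle$ by $\|\varphi\|_{W^{1,p'}}$ via the weighted Cauchy--Schwarz argument you invoke from Lemma \ref{embedd-lemma}. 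That lemma controls $\|u\|_{L^2(\beta)}$ by $W^{1,2}$ data of $u$ on a ball, which is the reverse direction of what is needed here for the \emph{test function} in $W^{1,p'}_0$. Subtracting the spatial mean and putting the remainder into the equation further deforms it away from the form \eqref{main-eqn-2}, and the source $\bar f(t)$ is not of divergence type.

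The partition-of-unity route creates a secondary but also nontrivial difficulty: the commutator $\bA\nabla\zeta_i\cdot\nabla\tilde u$ is not in divergence form and depends on $\nabla u$ itself, so absorbing it via Bogovskii into $\operatorname{div}(G)$ with $G\in L^p$ presupposes exactly the $L^p$ gradient bound one is after. Your proposed finite bootstrap through intermediate exponents is in principle possible, but it is heavy machinery that the problem does not require. The paper circumvents both obstacles at once by never cutting off in time nor localizing with a partition of unity: it extends the data trivially to $\Omega\times(-1,0)$, covers $\overline{\Omega}\times[T_0,T]$ by finitely many weighted cylinders $Q_{\rho_i,\beta}(z_i)$ chosen so small that either $Q_{2\Lambda\rho_i,\beta}(z_i)\subset\Omega\times(0,T]$ or $\Gamma_{\beta,z_i}(2\Lambda\rho_i)\subset(0,T]$ with $x_i\in\partial\Omega$, applies the interior or flat-boundary theorem on each, and simply adds the resulting estimates. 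No commutator or cutoff source ever appears. The part of your plan that is the true content — Lemma \ref{matrix flattening}, i.e.\ that the pulled-back $\tilde\beta$ remains $A_{1+2/n_0}$ with comparable constant and that the weighted oscillations of $\tilde\beta$, $\tilde\bA$ stay of size $O(\delta)$ — is correctly identified by you as the crux, though note the paper's shear map $\Phi(x',x^n)=(x',x^n-\phi(x'))$ has $\det D\Phi\equiv 1$, so $\tilde\beta=\beta\circ\Phi^{-1}$ without the Jacobian factor you wrote.
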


\begin{proof} For the given $\nu \in (0,1)$, $M_0 \geq 1$, and $p \in [2, \infty)$, let 
\begin{equation}\label{choice of delta}
\delta=\min\left\{\delta_0 (n, \nu, p, M_0),\, \frac{\kappa_0(n, \nu, p, M_0)}{N_1} \right\}\in (0,1),
\end{equation}
where $\delta_0$ and $\kappa_0$ are the constants defined in Theorem \ref{inter-theorem} and Theorem \ref{inter-theorem-bdry}, respectively,  and $N_1=N_1(n, \nu, M_0)>0$ is the constant given in Lemma \ref{matrix flattening} below. We prove the theorem under this choice of $\delta$ in \eqref{choice of delta}.

\smallskip
As $\partial\Omega\in C^1$, there exists $R=R(\delta) \in (0, R_0)$ sufficiently small
such that the following assertion holds: for each $x_0=(x'_0, x_0^n)\in \partial \Omega$ with $ x_0' \in \R^{n-1}$, there exists a Lipschitz continuous function $\phi: \R^{n-1} \rightarrow \R$ such that, upon relabeling and reorienting the coordinate axes,
\[
\Omega\cap B_{R}(x_0) = \{(x', x^n) \in B_R(x_0):\ x^n \geq \phi(x')\}, \quad \phi(x_0') =0, \quad \nabla \phi(x_0') =0,
\]
and 
\begin{equation}\label{h-lip}
\|\nabla\phi\|_{L^\infty(\R^{n-1})}\leq \delta.
\end{equation}

\smallskip
Taking a linear translation, we can assume without loss of generality that $x_0=0$. Next, we define $\Phi: \R^{n}\rightarrow \R^n$ as 
\[
\Phi(x',x^n)=(x', x^n-\phi(x')), \quad (x', x^n) \in \R^{n-1} \times \R.
\]
It is straightforward that $\Phi$ is invertible and its inverse map $\Phi^{-1}$ is given by
\[
\Phi^{-1}(y',y^n)=(y', y^n+\phi(y')), \quad (y', y^n) \in \R^{n-1} \times \R.
\] 
Note that
\begin{equation*}
\nabla \Phi(x)= \begin{bmatrix}
I_{n-1}& 0 \\
-(\nabla \phi)^* &1
\end{bmatrix}(x),
\quad \text{and}\quad 
\nabla \Phi^{-1}(y)= \begin{bmatrix}
I_{n-1}& 0 \\
(\nabla \phi)^* &1
\end{bmatrix}(y),
\end{equation*}
where $(\nabla \phi)^*$ denotes the transpose matrix of the matrix $\nabla \phi$. As a consequence, due to $\delta\in (0,1)$, 
for each $y=(y', y^n)\in B_{r}(y_0)$ with any $y_0\in \R^n$ and any $r>0$, we have
\begin{align*}
|\Phi^{-1}(y)-\Phi^{-1}(y_0)|
&=\sqrt{|y'-y'_0|^2+|y^n+\phi(y')-y^n_0-\phi(y'_0)|^2}\\
&\leq \sqrt{|y'-y'_0|^2+(|y^n-y^n_0|+\delta r)^2}\\
&\leq \sqrt{r^2+2\delta r^2+\delta^2r^2}=(1+\delta)r<2r.
\end{align*}
Similarly, for each $x=(x', x^n)\in B_{r/2}(\Phi^{-1}(y_0))$, we have
\begin{align*}
|\Phi(x)-y_0|
&=|(x', x^n-\phi(x'))-(y'_0, y^n_0)|\\
&\leq \sqrt{|x'-y'_0|^2+|x^n-\phi(x')-(y^n_0+\phi(y'_0))+\phi(y'_0)|^2}\\
&\leq \sqrt{|x'-y'_0|^2+\Big(|x^n-(y^n_0+\phi(y'_0))|+\frac{\delta r}{2}\Big)^2}\leq r.
\end{align*}
Thus, from the last two estimates, we have the following inclusion property: for any $r>0$ and $y_0\in \R^n$, 
\begin{equation}\label{phi-lip-1}
 B_{r/2}(\Phi^{-1}(y_0))\subset  \Phi^{-1}(B_{r}(y_0))\subset B_{2r}(\Phi^{-1}(y_0)).
\end{equation}

\smallskip
Next, we set
\begin{equation} \label{flattening variables}
\begin{aligned}
&\tilde u(y,t)=u(\Phi^{-1}(y), t),\quad  \tilde \beta(y)=\beta(\Phi^{-1}(y)),\\
&\tilde \bA(y,t)=\nabla\Phi(\Phi^{-1}(y))\bA(\Phi^{-1}(y),t)\nabla \Phi(\Phi^{-1}(y))^*,\\ 
& \tilde F(y,t)=\nabla\Phi(\Phi^{-1}(y))F(\Phi^{-1}(y),t).
\end{aligned}
\end{equation}
Let $t_0\in (T_0, T]$, we write $z_0 = (x_0, t_0) \in \R^n \times \R$. Now, we choose $\rho=\rho(T_0, M_0)\in (0,\frac{R}{4})$ sufficiently small so that 
\begin{equation} \label{rho-def-global}
B_{2\Lambda\rho}^+\subset\Phi(\Omega\cap B_{R}), \quad Q^+_{2\Lambda\rho,\tilde \beta}(z_0)\subset B_{2\Lambda\rho}^+\times (0,t_0],
\end{equation}
and
\begin{equation}\label{map subset}
\Phi^{-1}(Q^+_{2\Lambda\rho,\tilde \beta}(z_0))\subset (\Omega\cap B_{R})\times (0,t_0].
\end{equation}
Then, $\tilde u \in \hat{\W}^{1,2}(Q^+_{2\Lambda\rho,\tilde \beta}(z_0))$ is a weak solution to the equation
\begin{equation*}
\left\{
\begin{aligned}
\tilde\beta(y) \tilde u_t - \textup{div}(\tilde\bA(y,t) \nabla \tilde u)  &= \textup{div}(\tilde F) \quad &&\text{in}\quad Q^+_{2\Lambda\rho,\tilde \beta}(z_0),\\
\tilde u&=0 \quad &&\text{on} \quad T_{\rho}\times \Gamma_{2\Lambda\tilde \beta, z_0}(2\Lambda\rho).
\end{aligned}\right.
\end{equation*}

Now, by  Lemma \ref{matrix flattening} below and due to the choice of $\delta$ in \eqref{choice of delta}, and with a linear translation and a dilation, we can apply Theorem \ref{inter-theorem-bdry} to yield
\begin{align*}
&\|\nabla \tilde u\|_{L^p(Q_{\rho, \tilde \beta}^+(z_0))}+\|\tilde \beta \tilde u_t\|_{L^p(\Gamma_{\tilde\beta, z_0}(\rho),\, W^{-1,p}(B_{\rho}^+))}\\
&\leq N\left(|Q_{2\Lambda\rho,\tilde\beta}^+(z_0)|^{\frac{1}{p}-\frac{1}{2}}\|\tilde u\|_{L^2(Q_{2\Lambda\rho,\tilde\beta}^+(z_0))}+\|\tilde F\|_{L^p(Q_{2\Lambda\rho,\tilde\beta}^+(z_0))}\right),
\end{align*}
where $N=N(n, \nu, p, M_0)$. Moreover, it follows from \eqref{phi-lip-1} that 
\[
\Omega\cap B_{\rho/2}\subset \Phi^{-1}(B^+_{\rho}),\quad \text{and}\quad \Omega_T\cap Q_{\rho/2,\beta}(z_0)\subset \Phi^{-1}(B^+_{\rho})\times \Gamma_{\tilde \beta, z_0}(\rho).
\]
By changing the variables from $y$ back to $x$, and noting that $\delta\in (0,1)$, we have
\[
\nabla u(x,t)=\nabla \Phi(x)^*\nabla \tilde u(y,t)\quad  \text{so that}\quad |\nabla u(x,t)|\leq (1+\delta)|\nabla \tilde u(y,t)|< 2|\nabla \tilde u(y,t)|. 
\]
Thus, combining all these  and \eqref{map subset}, we obtain
\begin{equation}\label{bdry esti}
\begin{aligned}
&\|\nabla u\|_{L^{p}(\Omega_T\cap [Q_{\rho/2, \beta}(z_0)])}+\|\beta u_t\|_{L^p(\Gamma_{\beta, z_0}(\rho/2),\, W^{-1,p}(\Omega\cap B_{\rho/2}))}\\
&\leq N\left(|\{\Omega\cap B_R\}\times (0,T)|^{\frac{1}{p}-\frac{1}{2}}\|u\|_{L^2(\{\Omega\cap B_{R}\}\times (0,T))}+\|F\|_{L^p(\{\Omega\cap B_{R}\}\times (0,T))}\right)
\end{aligned}
\end{equation}
for all $z_0=(x_0,t_0)$ with $x_0\in \partial \Omega$ and $t_0\in (T_0, T)$, where $N=N(n, \nu, p, M_0)$.

\smallskip
Lastly, due to the compactness of $\overline{\Omega} \times [T_0, T]$, we cover it by finitely many cylinders $\{Q_{\rho_i/2, \beta}(z_i)\}_{i=1}^m$ with some $m \in \N$ and $\rho_i \in (0, \rho)$. For each $i = 1, 2,\ldots, m$, we have either
\[
Q_{2\Lambda \rho_i,\beta}(z_i) \subset \Omega \times (0, T]
\]
or
\[
z_i = (x_i,t_i)\in \partial \Omega\times (T_0, T] \quad \text{with} \quad \Gamma_{\beta, z_i}(2\Lambda \rho_i) \subset (0, T].
\]
For the former, after taking a linear translation and a dilation, we apply Theorem \ref{inter-theorem}. For the latter, we can use \eqref{bdry esti}. Then, by adding the resulting estimates, we obtain \eqref{global-time-cut-est}. The proof of the theorem is completed.
\end{proof}
\smallskip
Now, we state the following lemma used in the proof of Theorem \ref{main-thm-2}. Although it is standard, it is not easy to locate suitable references for our setting here; we provide the proofs of the lemma in Appendix \ref{proof-beta flattening} for completeness.
\begin{lemma}\label{matrix flattening} For $\nu \in (0,1)$ and $M_0 \geq 1$, there exists a constant $N_1=N_1(n, \nu, M_0)>0$ such that the following assertions hold. Suppose that $\bA(x,t)$ is a measurable matrix-valued function satisfying \eqref{ellip-cond} and \eqref{small-ness},  and  $\beta(x)$ satisfies \eqref{beta-cond} and \eqref{small-ness} with some $\delta, R_0 \in (0,1)$. Let $\tilde \beta(y)$ be defined as in \eqref{flattening variables}, 
$\tilde \bA(y,t)$ be defined as in \eqref{flattening variables}.
Then, $\tilde \beta(y)$ satisfies
\begin{equation}\label{newbeta-cond}
\tilde\beta^{-1} \in A_{1+\frac{2}{n_0}} \quad \text{and} \quad [\tilde\beta^{-1}]_{A_{1+\frac{2}{n_0}}} \leq 2^{n+2}M_0,  \quad \text{for} \quad n_0 = \max\{n, 2\},
\end{equation}
and
\begin{equation}\label{condition-1}
\frac{1}{\tilde \beta(B_{r}(y_0))}\int_{B_{r}(y_0)} \Theta_{\tilde \beta, r}(y)^2  dy \leq N_1^2\delta^{2}\quad \text{for all}\quad y_0\in \R^{n},\ r\in (0, R_0/2),
\end{equation}
In addition, $\tilde \bA(y,t)$ satisfies \eqref{ellip-cond}, and
\begin{equation}\label{condition-2}
 \fint_{Q_{r, \tilde\beta}^+(\tilde z_0)} \Theta_{\tilde \bA, r, y_0} (y,t)^2 dydt \leq N_1^2 \delta^{2},
\end{equation}
for all $r\in (0, R_0/2)$ and all $\tilde z_0=(y_0, t_0) \in \overline{Q}_{\rho,\, \tilde\beta}^+$, where $\rho>0$ is the number such that \eqref{rho-def-global} and \eqref{map subset} hold.
\end{lemma}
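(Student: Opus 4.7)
\textbf{Plan for the proof of Lemma \ref{matrix flattening}.} The key observation is that the flattening map $\Phi(x',x^n) = (x', x^n - \phi(x'))$ has $\det\nabla\Phi \equiv 1$, so it is volume-preserving. Combined with the inclusions \eqref{phi-lip-1}, this yields the identity $\int_{\Phi^{-1}(E)} f(x)\,dx = \int_E f(\Phi^{-1}(y))\,dy$ and comparison of averages: for any nonnegative $f$,
\begin{equation*}
2^{-n}\fint_{B_{r/2}(\Phi^{-1}(y_0))} f\,dx \le \fint_{B_r(y_0)} f(\Phi^{-1}(y))\,dy \le 2^n \fint_{B_{2r}(\Phi^{-1}(y_0))} f\,dx.
\end{equation*}
Applying this once to $f=\beta^{-1}$ and once to $f=\beta^{n_0/2}$, then multiplying with exponents $1$ and $2/n_0$, produces \eqref{newbeta-cond} with constant $2^{n(1+2/n_0)}\le 2^{n+2}$. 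For the ellipticity part of $\tilde\bA$, write $\tilde\bA = P\bA P^*$ with $P(y) = \nabla\Phi(\Phi^{-1}(y))$. Since $|P - I| \le \|\nabla\phi\|_\infty \le \delta < 1$, the eigenvalues of $P$ lie in $(1-\delta, 1+\delta)$, so $\tilde\bA$ satisfies \eqref{ellip-cond} with a slightly worse but dimension-only-dependent constant (and we may absorb the loss into $\nu$ by taking $\delta$ small).

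For the weighted oscillation bound \eqref{condition-1} on $\tilde\beta$, fix $y_0$, let $x_0=\Phi^{-1}(y_0)$, and set $c = (\beta)_{B_{2r}(x_0)}$. Using the elementary inequality $|\tilde\beta(y)-(\tilde\beta)_{B_r(y_0)}|^2 \le 2|\tilde\beta(y)-c|^2 + 2|c-(\tilde\beta)_{B_r(y_0)}|^2$ and integrating against $\tilde\beta^{-1}$, the first piece transforms under $\Phi^{-1}$ to $\int_{\Phi^{-1}(B_r(y_0))} |\beta - c|^2 \beta^{-1}\,dx \le \int_{B_{2r}(x_0)} |\beta - c|^2 \beta^{-1}\,dx$, which by \eqref{small-ness} is bounded by $\delta^2\beta(B_{2r}(x_0))$. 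For the constant-difference term, Jensen applied to $y\mapsto|\tilde\beta(y)-c|$ and Cauchy--Schwarz against $\tilde\beta^{-1}\tilde\beta$ give $|c - (\tilde\beta)_{B_r(y_0)}|^2|B_r| \le \frac{1}{|B_r|}(\int_{\Phi^{-1}(B_r(y_0))}|\beta-c|\,dx)^2 \le \beta(B_{2r}(x_0)) \int_{B_{2r}(x_0)}|\beta-c|^2\beta^{-1}\,dx\cdot |B_r|^{-1}$, again $\lesssim \delta^2\beta(B_{2r}(x_0))$. Finally, the doubling property of $\beta$ (Lemma \ref{property}, applied after the measure-preserving transfer) yields $\beta(B_{2r}(x_0)) \le N\beta(B_{r/2}(x_0)) \le N\tilde\beta(B_r(y_0))$, producing \eqref{condition-1} with $N_1=N_1(n,M_0)$.

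For the oscillation of $\tilde\bA$ in \eqref{condition-2}, first observe that $\Psi_{\tilde\beta,y_0}(r)$ and $\Psi_{\beta,x_0}(2r)$ are comparable via the averages transfer and doubling, so the cylinder $Q_{r,\tilde\beta}^+(\tilde z_0)$ sits inside a cylinder of the form $\Phi^{-1}(\cdot)\times\Gamma_{\beta,z_0}(Cr)$. Decompose
\begin{equation*}
\tilde\bA(y,t)-(\tilde\bA)_{B_r^+(y_0)}(t) = P(y)\bigl[\bA(\Phi^{-1}(y),t) - (\bA)_{B_{2r}(x_0)}(t)\bigr]P(y)^* + \mathcal{R}(y,t),
\end{equation*}
where $\mathcal{R}$ collects terms of the form $P(y)(\bA)_{B_{2r}(x_0)}(t)P(y)^* - \fint_{B_r^+(y_0)}P M P^*$ with $M=(\bA)_{B_{2r}(x_0)}(t)$. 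The first piece is bounded in $L^2$ average, after change of variables, by $4\|P\|_\infty^2$ times the oscillation of $\bA$ over $Q_{2r,\beta}(z_0)$, which by \eqref{small-ness} is at most $C\delta$. For $\mathcal{R}$, since $|P(y)-I|\le \delta$ uniformly and $|M|\le 1/\nu$, a direct expansion gives $\|\mathcal{R}\|_{L^\infty} \le C(n,\nu)\delta$. Summing yields \eqref{condition-2}.

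\textbf{Main obstacle.} The technically subtle step is the weighted oscillation bound \eqref{condition-1}: the weighted $L^2$ BMO seminorm is not invariant under non-linear changes of variables, even measure-preserving ones, because $\Phi^{-1}(B_r(y_0))$ is not itself a ball. The two-constant split together with doubling is the mechanism that rescues the bound; tracking the constants and confirming that the doubling constant depends only on $(n,M_0)$ (not on $\delta$) is the delicate point.
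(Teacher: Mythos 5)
Your proof follows essentially the same route as the paper's. For \eqref{newbeta-cond} both arguments exploit $\det\nabla\Phi^{-1}=1$ and the ball inclusions \eqref{phi-lip-1}; for \eqref{condition-1} both use a ``two-constant'' split that costs only a factor depending on $[\tilde\beta]_{A_2}$ (the paper packages this as an auxiliary claim with constant $N_0(n,M_0)$; you carry it out inline via Jensen and Cauchy--Schwarz), then change variables, take $c=(\beta)_{B_{2r}(\Phi^{-1}(y_0))}$, and invoke doubling. For \eqref{condition-2} your decomposition is cosmetically different: the paper writes $\tilde\bA = \bA\circ\Phi^{-1} + \mathbf{B}\circ\Phi^{-1}$ with $\mathbf{B}$ an explicitly computed matrix satisfying $\|\mathbf{B}\|_\infty\le N(n,\nu)\delta$ and then compares against the constant $(\bA)_{B_{2r}(\Phi^{-1}(y_0))\cap\Omega}(t)$, whereas you keep the conjugation $P[\bA-M]P^*+\mathcal{R}$ and bound $\mathcal{R}$ directly. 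Both give the same estimate with comparable constants.

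One small logical slip: $P=\nabla\Phi(\Phi^{-1}(y))$ is unipotent lower triangular, so its eigenvalues are all exactly $1$; the statement ``eigenvalues in $(1-\delta,1+\delta)$'' is therefore true but irrelevant. What controls the ellipticity of $P\bA P^*$ is the spectrum of $P^*P$, i.e.\ the singular values. The correct inference is that $\|P-I\|\le\delta$ forces the singular values of $P$ into $[1-\delta,1+\delta]$, hence $\nu(1-\delta)^2|\xi|^2\le\wei{\tilde\bA\xi,\xi}$ and $|\tilde\bA|\le(1+\delta)^2/\nu$. This is a one-line fix and the rest of the argument stands. Also note that, to match the paper's boundary setup, the reference constant in \eqref{condition-2} should be $(\bA)_{B_{2r}(\Phi^{-1}(y_0))\cap\Omega}(t)$ rather than the average over the full ball, since the smallness hypothesis \eqref{small-ness} is phrased with $\cap\,\Omega$; this does not change the structure of your proof.
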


\smallskip
Finally, we provide the proof of Theorem \ref{main-theorem}.
\begin{proof}[Proof of Theorem \ref{main-theorem}] We first consider the case $p =2$. By Lemma \ref{embedd-lemma}, we see that $u \in L^2(\Omega_T, \beta)$ if $u \in \W^{1,2}_*(\Omega_T, \beta)$. Hence, if $u \in \W^{1,2}_*(\Omega_T, \beta)$ is a weak solution to \eqref{main-eqn}, by using Steklov's average as in \cite[p. 18]{DiB}, we can formally take $u$ as the test function to equation \eqref{main-eqn} to obtain
\[
\frac{1}{2}\frac{d}{dt}\int_{\Omega} u(x,t)^2 \beta(x) dx + \int_{\Omega}\wei{\bA(x,t) \nabla u, \nabla u} dx = -\int_{\Omega} \wei{F, \nabla u} dx.
\]
From this, \eqref{ellip-cond}, and the standard energy estimate using Young's inequality, we obtain
\[
\sup_{t\in (0,T)}\int_{\Omega} u(x,t)^2 \beta(x) dx + \int_{\Omega_T} |\nabla u(x,t)|^2 dxdt \leq N \int_{\Omega_T} |F(x,t)|^2 dxdt
\]
for $N = N(n, \nu)>0$. From this, and due to the homogeneous boundary condition, we can apply the Poincar\'e inequality to get
\[
\|u\|_{L^2((0, T), W^{1,2}(\Omega))} \leq N \|F\|_{L^2(\Omega_T)}.
\]
Then, using the PDE in \eqref{main-eqn}, we infer that
\begin{equation} \label{main-p=2}
\|u\|_{\W^{1,2}_{*}(\Omega_T, \beta)} \leq N \|F\|_{L^2(\Omega_T)}.
\end{equation}
Hence, \eqref{main-thm-est} is proved. Note also that the existence of a solution $u \in \W^{1,2}_{*}(\Omega_T, \beta)$ can be derived from the Galerkin method.  The uniqueness of the solution also follows directly from \eqref{main-thm-est}.  Theorem \ref{main-theorem} is then proved when $p=2$.

\smallskip
Next, we consider the case $p >2$. In this case, note that as $F \in L^p(\Omega_T)^n$, we see that $F \in L^2(\Omega_T)^n$. Then, it follows from the previous case that there is a unique weak solution $u \in \W^{1,2}_{*}(\Omega_T, \beta)$ to \eqref{main-eqn}, and \eqref{main-p=2} holds. It then remains to prove that $u \in \W^{1,p}_{*}(\Omega_T, \beta)$ and \eqref{main-thm-est} holds. To this end,  we trivially extend $u$, $F$ on $\Omega\times (-1,0)$, and extend $\bA(x,t)$ to be an identity matrix function on $\Omega\times (-1,0)$. We note that $u \in \W^{1,2}_{*}(\Omega \times (-1, T), \beta)$, and it is a weak solution to the equation
\begin{equation} \label{main-eqn-extend}
\left\{
\begin{array}{cccl}
\beta(x) u_t - \textup{div}(\bA(x,t) \nabla u) & =& \textup{div}(F) & \quad  \text{in} \quad \Omega \times (-1, T), \\[4pt]
u & = & 0 & \quad \text{on} \quad \partial_p ( \Omega \times (-1, T)).
\end{array} \right.
\end{equation}
Now, let $\delta = \delta(n, \nu, p, M_0) \in (0,1)$ be the number defined in Theorem \ref{main-thm-2}. Note also that the conditions \eqref{ellip-cond}, \eqref{beta-cond}, and \eqref{small-ness} hold for the coefficients in \eqref{main-eqn-extend}. We then apply Theorem \ref{main-thm-2} to infer that $u \in \W^{1,p}_{*}(\Omega_T)$  
\[
\|u\|_{\W^{1,p}_{*}(\Omega_T)} \leq N \Big[ \| u\|_{L^2(\Omega_T)} + \|F\|_{L^p(\Omega_T)} \Big].
\]
Then, \eqref{main-thm-est} follows from this last estimate and \eqref{main-p=2}. The proof of the theorem when $p > 2$ is completed.

\smallskip
Lastly, we consider the case $p \in (1, 2)$. In this case, the existence, uniqueness, and the estimate \eqref{main-thm-est} can be obtained by the duality argument using the case $p \in (2, \infty)$. As the argument is standard, we skip the details. The proof of the theorem is completed.
\end{proof}
\appendix 
\section{Proof of Lemma \ref{quasi-metric lemma}.}\label{Appendix A}
\begin{proof}
It follows directly from the definition of $\rho_{\beta}$  that $\rho_{\beta}$ satisfies the non-negative, identity, and symmetry properties. 
Thus, we only need to show \eqref{quasi-tri}, that is, there exists $\Lambda \geq 1$ such that 
\begin{equation*}
\rho_{\beta}(z_0, z)\leq \Lambda \big[\rho_{\beta}(z_0, \bar{z})+\rho_{\beta}(\bar{z} ,z)\big], \quad \forall\, z_0,\, z ,\, \bar{z} \in \R^{n}\times \R.
\end{equation*}
To this end, let $z_0 = (x_0, t_0),\, z=(x,t)$, and $\bar{z} = (\bar{x}, \bar{t})$ be three distinct points in $\R^n \times \R$, and we denote 
\[
r= \rho_{\beta}(z_0, \bar{z}) + \rho_{\beta}(\bar{z},  z).
\]
We assume without loss of generality that
\begin{equation}\label{distance assumption}
\rho_{\beta}(z_0, z)> r \quad \text{and} \quad t_0\geq t.
\end{equation}
It then follows from \eqref{rho def} that
\begin{equation} \label{rho-zz-0-app}
\rho_\beta(z_0, z) = \max\{|x_0 - x|, h_{x_0}^{-1}(t_0 -t)\}.
\end{equation}
We split the proof into the following two cases depending on the relationship between $\bar{t}$ and $t_0$.

\smallskip
\noindent
\textbf{Case I}. We consider the case that $\bar{t} \geq t_0$. In this case, we have $\bar{t} >t_0\geq  t$ and therefore
\[
\rho_{\beta}(z_0, \bar{z}) = \max\{|x_0 - \bar{x}|, h^{-1}_{\bar{x}} (\bar{t} - t_0) \} \quad \text{and} \quad \rho_{\beta}(\bar{z}, z) =\max\{|\bar{x} - x|, h_{\bar{x}}^{-1}(\bar{t} -t) \}.
\]
By this and the definition of $r$, it follows that
\begin{equation}\label{space-bdd}
 |x_0-x| \leq |x_0 -\bar{x}| + |\bar{x} -x|\leq r \quad \text{and}  \quad \bar{t} -t \leq h_{\bar{x}} (\rho_{\beta}(\bar{z}, z)) \leq h_{\bar{x}}(r).
\end{equation}
As $|x_0-\bar x|\leq r$ from the first assertion of \eqref{space-bdd}, we note that $B_{r} (\bar{x}) \subset B_{2r} (x_0)$. Due to this and the nonnegativity of  $\beta$, we infer that
\begin{equation*}
\left\{
   \begin{aligned}
\frac{1}{2} r \beta(B_{r}(\bar{x}))
&\leq \frac{1}{2}(2r)\beta(B_{2r}(x_0))\quad &&\text{if}\quad n=1,\\
\sigma_n^{-\frac{2}{n}}[\beta^{\frac{n}{2}}(B_{r}(\bar{x}))]^{\frac{2}{n}}
&\leq \sigma_n^{-\frac{2}{n}}[\beta^{\frac{n}{2}}(B_{2 r}(x_0))]^{\frac{2}{n}}\quad &&\text{if}\quad n\geq 2.
\end{aligned}\right.
\end{equation*}
Consequently, $h_{\bar{x}} (r) \leq h_{x_0}(2r)$. From this and as $t_0 - t < \bar{t} - t$, we infer from the second assertion in \eqref{space-bdd} that $t_0 - t \leq  h_{x_0}(2r)$ and therefore
\begin{equation}\label{time-bdd}
h_{x_0}^{-1}(t_0- t)\leq  2r.
\end{equation}
Combing \eqref{rho-zz-0-app}, the first assertion in \eqref{space-bdd}, and \eqref{time-bdd}, we infer that 
\[
\rho_{\beta}(z_0, z)\leq  2r  = 2 \Big[ \rho_{\beta}(z_0, \bar{z})+\rho_{\beta}(z, \bar{z})\Big].
\]
Therefore, the assertion \eqref{quasi-tri} holds with $\Lambda =2$.

\smallskip
\noindent
\textbf{Case II}. We consider the case that $\bar{t} < t_0$. In this case, we have
\begin{equation} \label{d-1-0924}
\rho_{\beta}(z_0, \bar{z}) = \max\{|x_0 - \bar{x}|, h^{-1}_{x_0} (t_0 -\bar{t} ) \}.
\end{equation}
Note that if $|x_0- x|=\rho_{\beta}(z_0, z)$, then by using $|x_0-\bar{x}|\leq \rho_{\beta}(z_0, \bar{z})$, and the triangle inequality, we see that
\[
\rho_{\beta}(\bar{z}, z)\geq |x-\bar{x}| \geq |x- x_0| - |x_0 -\bar{x}| \geq \rho_{\beta}(z_0, z) - \rho_{\beta}(z_0, \bar{z}),
\]
which contradicts \eqref{distance assumption}. Therefore, $|x_0-x| < \rho_{\beta}(z_0, z)$. This means that
\begin{equation} \label{d-0-0924}
\rho_{\beta}(z_0, z) = h_{x_0}^{-1}(t_0 - t) \quad \text{or} \quad t_0 -t = h_{x_0}(\rho_{\beta}(z_0, z)).
\end{equation}
Geometrically, this means that the point $z$ is on the bottom of $\overline{Q}_{\rho_{\beta}(z_0, z)}(z_0)$. Since $h_{x_0}$ is strictly increasing, it follows from \eqref{d-1-0924} and \eqref{distance assumption} that
\[
t_0 -\bar{t} \leq h_{x_0} (\rho_{\beta}(z_0, \bar{z}) )< h_{x_0} (r) < h_{x_0}(\rho_{\beta}(z_0, z)) = t_0 -t.
\]
Hence, $t < \bar{t} \leq t_0$ and
\begin{equation} \label{d-2-0924}
\rho_{\beta}(\bar{z}, z) =\max\{|\bar{x} - x|, h_{\bar{x}}^{-1}(\bar{t} -t) \}.
\end{equation}

\smallskip
Now, let us denote $\bar{\Lambda}= 2^{\frac{1}{2\zeta_0}}N_2^{\frac{1}{n\zeta_0}}$, where $N_2=N_2(n,M_0)\geq 1$ and $\zeta_0=\zeta_0(n,M_0)\in (0,1)$ are given in $\textup{(ii)}$ of Lemma \ref{property}. We claim that 
\begin{equation}\label{quasi-claim}
\rho_{\beta}(z_0, z) \leq \bar{\Lambda} \max\big\{\rho_{\beta}(z_0, \bar{z}),\ \rho_{\beta}(z,\bar{z})\big\}.
\end{equation}
Indeed, suppose that \eqref{quasi-claim} does not hold, then 
\begin{equation} \label{Lambda-bar}
\frac{\rho_{\beta}(z_0,\bar{z})}{\rho_{\beta}(z_0, z)} <  \frac{1}{\bar{\Lambda}} \quad \text{and} \quad  \frac{\rho_{\beta}(z,\bar{z})}{ \rho_{\beta}(z_0, z) }< \frac{1}{\bar{\Lambda}}.
\end{equation}
Due to \eqref{distance assumption}, we observe that
\[
B_{\rho_{\beta}(z_0,\bar{z})}(x_0) \subset B_{\rho_{\beta}(z_0, z)}(x_0) \quad \text{and}\quad B_{\rho_{\beta}(\bar{z}, z)}(\bar{x})\subset B_{\rho_{\beta}(z_0, z)}(x_0).
\]
Thus, by $\textup{(ii)}$ of Lemma \ref{property} and the choice of $\bar{\Lambda}$ in \eqref{quasi-claim}, and \eqref{Lambda-bar}, we see that 
\[
\big[\beta^{\frac{n}{2}}\big(B_{\rho_{\beta}(z_0,\bar{z})}(x_0)\big)\big]^{\frac{2}{n}}+\big[\beta^{\frac{n}{2}}\big(B_{\rho_{\beta}(\bar{z}, z)}(\bar{x})\big)\big]^{\frac{2}{n}}< \big[\beta^{\frac{n}{2}}\big(B_{\rho_{\beta}(z_0,z)}(x_0)\big)\big]^{\frac{2}{n}} \quad \text{for}\quad n\geq 2.
\]
For $n=1$, by a similar argument and with \eqref{distance assumption}, we also obtain  
\[
\rho_{\beta}(z_0, \bar{z})\beta\big(B_{\rho_{\beta}(z_0, \bar{z})}(x_0)\big)+\rho_{\beta}(\bar{z}, z)\beta\big(B_{\rho_{\beta}(\bar{z}, z)}(\bar{x})\big)<\rho_{\beta}(z_0,z)\beta\big(B_{\rho_{\beta}(z_0,z)}(x_0)\big).
\]
Then, it follows from \eqref{heights} that  
\[
h_{x_0}(\rho_{\beta}(z_0, \bar{z}))+ h_{\bar{x}}(\rho_{\beta}(z,\bar{z})) <h_{x_0}(\rho_{\beta}(z_0,z)).
\]
This estimate, \eqref{d-1-0924}, \eqref{d-0-0924}, and \eqref{d-2-0924} imply that
\[
(t_0 -\bar{t}) + (\bar{t} - t) < t_0 -t,
\]
which is impossible. Hence,
\[
\rho_{\beta}(z_0, z)\leq \bar{\Lambda} \big[\rho_{\beta}(z_0,\bar{z})+\rho_{\beta}(\bar{z}, z)\big].
\]

\smallskip
In conclusion, by combining {\bf Case I} and {\bf Case II} and taking 
\[
\Lambda=\max\{2,\,\bar{\Lambda}\}=\max\{2,\ 2^{\frac{1}{\zeta_0}}N_2^{\frac{1}{n\zeta_0}}\},
\] 
we obtain \eqref{quasi-tri}. The proof of the lemma is completed.
\end{proof}
\section{Proof of Lemma \ref{covering}.} \label{Appendix B}
\begin{proof} We adapt the proof in \cite{Krylov-Safonov} and modify the argument in \cite[Lemma 3.8]{CJP}. For each $z_0\in S\subset Q_{1,\beta}$, it follows from Lemma \ref{quasi-metric lemma} and \eqref{cylinder-quasi} that
\[
\rho_{\beta}(z_0, z)\leq \Lambda [\rho_{\beta}(z_0, 0)+\rho_{\beta}(z, 0)]\leq 2\Lambda \qquad\text{for all}\quad z\in Q_{1,\beta},
\]
where $\Lambda=\Lambda(n, M_0)\geq 1$ defined in Lemma \ref{quasi-metric lemma}. This, together with \eqref{inclusions}, implies that
\[
Q_{1,\beta}\subset C_{4\Lambda, \beta}(z_0) \qquad \text{for all}\quad z_0\in Q_{1,\beta}.
\]
As $|S| \leq q_0 |Q_{1,\beta}|$, we infer that
\begin{equation}\label{large radius}
|C_{\rho,\beta}(z_0)\cap S|\leq |S|\leq q_0|Q_{1,\beta}|\leq q_0|C_{\rho, \beta}(z_0)| \qquad \text{for all}\quad \rho\geq 4\Lambda.
\end{equation}
Therefore, for almost every $z_0 = (x_0, t_0) \in S$, the Lebesgue point theorem together with \eqref{large radius} yields the existence of a radius
\begin{equation}\label{radius}
0 < r(z_0) < 4\Lambda
\end{equation}
such that
\[
|C_{r(z_0),\beta}(z_0) \cap S| = q_0 |C_{r(z_0),\beta}(z_0)|
\]
and
\begin{equation}\label{density-1}
|C_{\rho,\beta}(z_0) \cap S| < q_0 |C_{\rho,\beta}(z_0)| \quad \text{for all } \rho > r(z_0).
\end{equation}
\smallskip
\noindent
\textbf{Step 1}. We claim that there exists a constant $N = N(n, M_0) > 0$ such that
\begin{equation}\label{covering-claim}
|C_{r(z_0),\beta}(z_0)| \leq N \, |C_{r(z_0),\beta}(z_0) \cap Q_{1,\beta}| \qquad \text{for all}\quad z_0 = (x_0, t_0) \in S.
\end{equation}
\smallskip
To prove the claim, we note that as $x_0 \in B_1$, we infer from \eqref{radius} that
\begin{equation}\label{spatial}
|B_{r(z_0)}(x_0)| \leq 8^n\Lambda^n \, |B_{r(z_0)}(x_0) \cap B_1|.
\end{equation}
Now, let us denote
\[
h = \frac{1}{2} r(z_0)^2 \Psi_{\beta, x_0}(r(z_0)),
\]
which is the half height of the cylinder $C_{r(z_0), \beta}(z_0)$. We then consider the following two cases.

\smallskip
\noindent
\textbf{Case 1: $h \leq \Psi_{\beta, 0}(1)$.}  
Since $t_0 \in (-\Psi_{\beta, 0}(1), 0)$, it follows that
\begin{equation}\label{time-1}
\big|\,(t_0 - h,\, t_0 + h) \cap (-\Psi_{\beta, 0}(1),\, 0)\,\big| 
\geq \frac{1}{2} |(t_0 - h,\, t_0 + h)| = h.
\end{equation}

\smallskip
\noindent
\textbf{Case 2: $h > \Psi_{\beta, 0}(1)$.}  
Since $t_0 \in (-\Psi_{\beta, 0}(1), 0)$, we have
\[
(-\Psi_{\beta, 0}(1), 0) \subset (t_0 - h, t_0 + h),
\]
so that
\begin{equation*}
\big|\,(t_0 - h, t_0 + h) \cap (-\Psi_{\beta, 0}(1), 0)\,\big| = \Psi_{\beta, 0}(1).
\end{equation*}
Moreover, since $B_{r(z_0)}(x_0) \subset B_3$, by the definition of $h$ and the doubling properties due to Lemma \ref{property}, we obtain
\[
2h = r(z_0)^2 \Psi_{\beta, x_0}(r(z_0)) \leq 9 \Psi_{\beta, 0}(3) \leq N(n, M_0)^{-1} \Psi_{\beta, 0}(1)
\]
for some $N(n, M_0) > 0$. Combining the last two estimates gives
\begin{equation}\label{time-2}
\big|\,(t_0 - h, t_0 + h) \cap (-\Psi_{\beta, 0}(1), 0)\,\big| \geq N(n, M_0) \, 2h.
\end{equation}
\smallskip
The claim \eqref{covering-claim} now follows from \eqref{spatial}, \eqref{time-1}, and \eqref{time-2}.

\smallskip
\noindent
\textbf{Step 2}. We  define
\[
\mathcal{B} = \{ C_{r(z),\beta}(z) : z \in S \}.
\]
Then $\mathcal{B}$ is an open cover of $S$. By the Vitali covering lemma, there exists a finite or countable, pairwise disjoint subcollection $\{C_{r(z_i),\beta}(z_i)\}_{i \in I} \subset \mathcal{B}$ such that
\[
S \subset \bigcup_{i \in I} C_{5 r(z_i),\beta}(z_i).
\]
\smallskip
\noindent
\textbf{Step 3.}  
Using \eqref{density-1}, the doubling property of $\beta$, the disjointness of $\{C_{r(z_i),\beta}(z_i)\}_{i\in I}$, and \eqref{covering-claim}, we obtain
\begin{equation*} \begin{aligned} |S| &\leq |\bigcup_{i\in I}C_{5r(z_i),\beta}(z_i)\cap S|\leq \sum_{i\in I}|C_{5r(z_i),\beta}(z_i)\cap S|\\ &\leq 5^nN(n,M_0)q_0\sum_{i\in I}|C_{r(z_i),\beta}(z_i)| &&\text{by \eqref{density-1} and doubling}\\ &\leq 5^nN(n,M_0)q_0\sum_{i\in I}|C_{r(z_i),\beta}(z_i)\cap Q_{1,\beta}| &&\text{by \eqref{covering-claim}}\\ &=5^nN(n,M_0)q_0 |\bigcup_{i\in I}C_{r(z_i),\beta}(z_i)\cap Q_{1,\beta}| &&\text{by disjoint property}\\ &\leq N(n, M_0)q_0|E|. 
\end{aligned} 
\end{equation*}
The lemma is proved.
\end{proof}
\section{Proof of Lemma \ref{matrix flattening}} \label{proof-beta flattening}
\begin{proof}
We first prove that $\tilde \beta(y)$ satisfies \eqref{newbeta-cond}, which is equivalent to show
\begin{equation}\label{locally A-p}
\left\{
\begin{aligned}
&(\tilde \beta^{-1})_{B_{r}(y_0)}(\tilde \beta)_{B_r(y_0)}\leq 4M_0\quad &&\text{for}\quad n=1,\\
&(\tilde \beta^{-1})_{B_{r}(y_0)}(\tilde \beta^{\frac{n}{2}})_{B_{r}(y_0)}^{\frac{2}{n}}\leq 2^{n+2}M_0\quad &&\text{for} \quad n\geq 2,
\end{aligned}\right.
\end{equation}
for all $B_{r}(y_0)\subset \R^n$. For any fixed $B_{r}(y_0)$, by taking a change of variables and using the fact that $\textup{det}(\nabla \Phi^{-1})=1$, and \eqref{phi-lip-1}, we get
\[
\tilde\beta^{-1}\left(B_r(y_0)\right)=\beta^{-1}\left(\Phi^{-1}(B_{r}(y_0))\right)\leq \beta^{-1}\left(B_{2r}(\Phi^{-1}(y_0))\right),
\]
which implies
\begin{align*}
(\tilde \beta^{-1})_{B_{r}(y_0)}
\leq\frac{|B_{2r}|}{|B_{r}|}(\beta^{-1})_{B_{2r}(\Phi^{-1}(y_0))}=2^{n}(\beta^{-1})_{B_{2r}(\Phi^{-1}(y_0))}.
\end{align*}
Similarly, 
\[
\left\{
\begin{aligned}
&(\tilde \beta)_{B_r(y_0)}\leq 2(\beta)_{B_{2r}(\Phi^{-1}(y_0))}\quad &&\text{for}\quad n=1,\\
&(\tilde \beta^{\frac{n}{2}})_{B_{r}(y_0)}^{\frac{2}{n}}\leq 2^{2}(\beta^{\frac{n}{2}})_{B_{2r}(\Phi^{-1}(y_0))}^{\frac{2}{n}}\quad &&\text{for} \quad n\geq 2.
\end{aligned}\right.
\]
Thus, \eqref{locally A-p} follows by combining the last two estimates. Therefore, \eqref{newbeta-cond} is proved.

\smallskip
Now, we prove \eqref{condition-1}. We start with the claim that there exists a constant $N_0=N_0(n, M_0)\geq 1$ such that
\begin{equation}\label{claim-c}
\frac{1}{\tilde\beta(B)}\int_{B}|\tilde\beta(y)-(\tilde\beta)_{B}|^2\tilde\beta^{-1}(y)\, dy\leq \frac{N_0}{\tilde\beta(B)}\int_{B}|\tilde\beta(y)-c|^2\tilde\beta^{-1}(y)\, dy
\end{equation}
holds for all $c\in\R$, and for all ball $B\subset \R^n$. To see this, by the triangle inequalities,
\begin{equation}\label{tri ineq}
\begin{aligned}
&\frac{1}{\tilde\beta(B)}\int_{B}|\tilde\beta(y)-(\tilde\beta)_{B}|^2\tilde\beta^{-1}(y)\, dy\\
&\leq \frac{2}{\tilde\beta(B)}\int_{B}|\tilde\beta(y)-c|^2\tilde\beta^{-1}(y)\, dy+\frac{2}{\tilde\beta(B)}\int_{B}|(\tilde\beta)_{B}-c|^2\tilde\beta^{-1}(y)\, dy
\end{aligned}
\end{equation}
for all $c\in \R$ and all $B\subset \R^n$. On the other hand, by H\"{o}lder's inequality, we notice that
\begin{align*}
|(\tilde\beta)_{B}-c|^2 & =\left|\frac{1}{|B|}\int_B\tilde \beta(y)-c\, dy\right|^2 \leq \left(\frac{1}{|B|}\int_B|\tilde\beta(y)-c|\, dy\right)^2\\
&\leq \left(\frac{1}{|B|}\int_B\tilde\beta(y)\, dy\right)\left(\frac{1}{|B|}\int_B|\tilde\beta(y)-c|^2\tilde \beta^{-1}(y)\, dy\right)\\
&=(\tilde\beta)_B\left(\frac{1}{|B|}\int_B|\tilde\beta(y)-c|^2\tilde \beta^{-1}(y)\, dy\right).
\end{align*}
Thus,
\begin{align*}
\frac{2}{\tilde\beta(B)}\int_{B}|(\tilde\beta)_{B}-c|^2\tilde\beta^{-1}(y)\, dy
&=\frac{2(\tilde\beta)_B\tilde\beta^{-1}(B)}{\tilde\beta(B)}\left(\frac{1}{|B|}\int_B|\tilde\beta(y)-c|^2\tilde \beta^{-1}(y)\, dy\right)\\
&=(\tilde \beta)_B(\tilde \beta^{-1})_B\left(\frac{2}{|\tilde \beta(B)|}\int_B|\tilde\beta(y)-c|^2\tilde \beta^{-1}(y)\, dy\right)\\
&\leq [\tilde \beta]_{A_2}\left(\frac{2}{|\tilde \beta(B)|}\int_B|\tilde\beta(y)-c|^2\tilde \beta^{-1}(y)\, dy\right)
\end{align*}
Plugging this into \eqref{tri ineq} and using Remark \ref{A-p-remark}-\textup{(ii)}, we achieve \eqref{claim-c} with $N_0=N_0(n, M_0)\geq 2$.

\smallskip
Now, from the claim \eqref{claim-c}, it is sufficient to prove that there is $N = N(n, M_0)>0$ such that for each $r \in (0, R_0/2)$ and any $y_0 \in \R^n$, there is $c \in \R$ such that
\begin{equation} \label{beta-os-change-var}
\frac{1}{\tilde \beta(B_{r}(y_0))}\int_{B_{r}(y_0)}|\tilde\beta(y)-c|^2\tilde\beta^{-1}(y)\, dy \leq N \delta^2.
\end{equation}
To this end, we note that
\begin{equation*}\label{equiva measure}
\tilde \beta(B_{r}(y_0))=\int_{B_{r}(y_0)}\tilde \beta(y)dy=\int_{\Phi^{-1}(B_{r}(y_0))}\beta(x)dx=\beta\big(\Phi^{-1}(B_{r}(y_0))\big).
\end{equation*}
Then, by using a change of variables, and  \eqref{flattening variables} and \eqref{phi-lip-1}, we see that 
\begin{align*}  
&\frac{1}{\tilde \beta(B_{r}(y_0))}\int_{B_{r}(y_0)}|\tilde\beta(y)-c|^2\tilde\beta^{-1}(y)\, dy\\  
&=\frac{1}{\beta\big(\Phi^{-1}(B_{r}(y_0))\big)}\int_{\Phi^{-1}(B_{r}(y_0))}|\beta(x)-c|^2\beta^{-1}(x)dx\\  
&\leq\frac{1}{\beta\big(\Phi^{-1}(B_{r}(y_0))\big)}\int_{B_{2r}(\Phi^{-1}(y_0))}|\beta(x)-c|^2\beta^{-1}(x)dx.
\end{align*}
From this, and  by taking $c=(\beta)_{B_{2r}(\Phi^{-1}(y_0))}$, we infer from \eqref{phi-lip-1},  the doubling property of $\beta$, and \eqref{small delta} that
\begin{align*}  
\frac{1}{\tilde \beta(B_{r}(y_0))}\int_{B_{r}(y_0)}|\tilde\beta(y)-c|^2\tilde\beta^{-1}(y)dy  
&\leq \frac{1}{\beta\big(\Phi^{-1}(B_{r}(y_0))\big)}\int_{B_{2r}(\Phi^{-1}(y_0))}|\beta(x)-c|^2\beta^{-1}(x)dx\\  
&\leq \frac{\beta\big(B_{2r}(\Phi^{-1}(y_0))\big)}{\beta\big(\Phi^{-1}(B_{r}(y_0))\big)}\delta^2\leq N(n, M_0)\delta^2.
\end{align*}
This implies \eqref{beta-os-change-var}, and the proof of \eqref{condition-1} is completed. 

\smallskip
We now move our attention to the assertions on  $\tilde \bA(y,t)$. By the definition of $\tilde \bA(y,t)$ in \eqref{flattening variables}, it is not difficult to find that $\tilde \bA(y,t)$ satisfies \eqref{ellip-cond}. It then remains to prove \eqref{condition-2}. By a direct computation, we find
\[
\tilde \bA(y,t)=\bA(\Phi^{-1}(y),t)+\mathbf{B}(\Phi^{-1}(y),t),
\]
where
\begin{align*}
\mathbf{B}=\begin{bmatrix}
0&\cdots& 0& -\sum_{j=1}^{n-1}a_{1j}\phi_{x_j} \\
\vdots&\ddots&\vdots&\vdots\\
0&\cdots& 0& -\sum_{j=1}^{n-1}a_{(n-1)j}\phi_{x_j}\\
-\sum_{i=1}^{n-1}a_{i1}\phi_{x_i}&\cdots&-\sum_{i=1}^{n-1}a_{i(n-1)}\phi_{x_i}&b_{nn}\\
\end{bmatrix},
\end{align*}
in which $a_{ij}$ are the entries of matrix $\bA(x,t)$, and \[
b_{nn}=\sum_{i,j=1}^{n-1}a_{ij}\phi_{x_i}\phi_{x_j}-2\sum_{j=1}^{n-1}a_{nj}\phi_{x_j}.
\]
Then, it follows from \eqref{ellip-cond} and \eqref{h-lip} that
\begin{equation} \label{B-est-10-16}
\|\bB\|_{L^\infty(\R^{n+1})} \leq N \delta \quad \text{with} \ N = N(n, \nu).
\end{equation}

Now, let $\tilde z_0=(y_0,t_0) \in Q_{\rho, \tilde \beta}^+$ and $r\in (0, R_0/2)$ be fixed. It follows from \eqref{phi-lip-1} that
\[
\Phi^{-1}(B^+_r(y_0))\subset B_{2r}(\Phi^{-1}(y_0))\cap \Omega.
\]
This, the definitions of $\Gamma_{\tilde \beta, y_0}$, and $\Gamma_{\beta, \Phi^{-1}(y_0)}$ in \eqref{cylinder-def} imply
\begin{equation}\label{phi-inclusion-2}  
\Phi^{-1}(Q_{r, \tilde\beta}^+(\tilde z_0))\subset Q_{2r,\beta}(\Phi^{-1}(\tilde z_0))\cap \Omega_T \quad \text{with}\quad \Phi^{-1}(\tilde z_0)=(\Phi^{-1}(y_0), t_0),
\end{equation}
and by the doubling property,
\begin{equation}\label{quotient mea}
\frac{|Q_{2r,\beta}(\Phi^{-1}(\tilde z_0))\cap \Omega_T|}{|Q^+_{r, \tilde\beta}(\tilde z_0)|}\leq N_0(n,M_0)
\end{equation}
for some $N_0=N_0(n, M_0)>1$.

\smallskip
Now, by the change of variables, and the triangle inequality, \eqref{phi-inclusion-2}, and \eqref{quotient mea}, we obtain
\begin{equation*}
\begin{aligned}
&\fint_{Q_{r, \tilde\beta}^+(\tilde z_0)}|\tilde \bA(y,t)-(\bA)_{B_{2r}(\Phi^{-1}(y_0))\cap \Omega}(t)|^2dydt\\
&\leq\frac{1}{|Q^+_{r, \tilde\beta}(\tilde z_0)|} \int_{Q_{2r,\beta}(\Phi^{-1}(\tilde z_0))\cap \Omega_T}|\bA(x,t)+\mathbf{B}(x,t)-(\bA)_{B_{2r}(\Phi^{-1}(y_0))\cap \Omega}(t)|^2dxdt\\
&\leq 2N_0(n, M_0)\fint_{Q_{2r,\beta}(\Phi^{-1}(\tilde z_0))\cap \Omega_T}|\bA(x,t)-(\bA)_{B_{2r}(\Phi^{-1}(y_0))\cap \Omega}(t)|^2dxdt\\
&\quad +2N_0(n, M_0)\fint_{Q_{2r,\beta}(\Phi^{-1}(\tilde z_0))\cap \Omega_T}|\bB(x,t)|^2 dxdt\\
&\leq 2N_0(n,M_0)\delta^2+2N_0(n, M_0)N(n,\nu)\delta^2,
\end{aligned}
\end{equation*}
where in the last step, we used \eqref{small delta}, and \eqref{B-est-10-16}. Now, note that
\begin{align*}
\fint_{Q_{r, \tilde\beta}^+(\tilde z_0)} \Theta_{\tilde \bA, r, y_0} (y,t)^2 dydt
&= \fint_{Q_{r, \tilde\beta}^+(\tilde z_0)}|\tilde \bA(y,t)-(\tilde \bA)_{B^+_r(y_0)}(t)|^2dydt\\
&\leq 4\fint_{Q_{r, \tilde\beta}^+(\tilde z_0)}|\tilde \bA(y,t)-(\bA)_{B_{2r}(\Phi^{-1}(y_0))\cap \Omega}(t)|^2dydt.
\end{align*}
Then \eqref{condition-2} follows from the last two estimates. The proof of the lemma is completed.
\end{proof}
\section*{Acknowledgement}
The research of T. Phan was partially supported by Simons Foundation, grant \# 769369. 

\end{document}